\documentclass[12pt]{article}

\usepackage{amsmath, amsthm, amssymb}
\usepackage{enumerate}
\usepackage{pdflscape}
\usepackage{caption}
\usepackage{bm}

\usepackage{ifpdf}
\ifpdf
\usepackage[pdftex]{graphicx}
\else
\usepackage[dvips]{graphicx}
\fi
\usepackage{tikz}
 	 \usetikzlibrary{arrows,backgrounds}
\usepackage[all]{xy}

\usepackage{multicol}

\usepackage{tocvsec2}

\usepackage{bbm}

\input xy
\xyoption{all}

\usepackage[pdftex,plainpages=false,hypertexnames=false,pdfpagelabels]{hyperref}
\newcommand{\arxiv}[1]{\href{http://arxiv.org/abs/#1}{\tt arXiv:\nolinkurl{#1}}}
\newcommand{\arXiv}[1]{\href{http://arxiv.org/abs/#1}{\tt arXiv:\nolinkurl{#1}}}

\newcommand{\googlebooks}[1]{(preview at \href{http://books.google.com/books?id=#1}{google books})}

\usepackage{xcolor}
\definecolor{dark-red}{rgb}{0.7,0.25,0.25}
\definecolor{dark-blue}{rgb}{0.15,0.15,0.55}
\definecolor{medium-blue}{rgb}{0,0,.8}
\definecolor{DarkGreen}{RGB}{0,150,0}
\definecolor{rho}{named}{red}
\hypersetup{
   colorlinks, linkcolor={purple},
   citecolor={medium-blue}, urlcolor={medium-blue}
}

\usepackage{longtable}
\usepackage{fullpage}

\setlength\topmargin{-.25in}
\setlength\headheight{0in}
\setlength\headsep{.2in}
\setlength\textheight{9in}
\setlength\parindent{0.25in}

\theoremstyle{plain}
\newtheorem{thm}{Theorem}[section]
\newtheorem*{thm*}{Theorem}
\newtheorem{thmalpha}{Theorem}

\newtheorem{cor}[thm]{Corollary}
\newtheorem{coralpha}[thmalpha]{Corollary}
\newtheorem*{cor*}{Corollary}

\newtheorem*{conj*}{Conjecture}
\newtheorem{lem}[thm]{Lemma}

\newtheorem{prop}[thm]{Proposition}

\newtheorem*{quest*}{Question}
\newtheorem*{claim*}{Claim}

\theoremstyle{definition}
\newtheorem{defn}[thm]{Definition}

\newtheorem{nota}[thm]{Notation}

\newtheorem{exs}[thm]{Examples}
\newtheorem{ex}[thm]{Example}
\newtheorem{sub-ex}[thm]{Sub-Example}
\newtheorem{rem}[thm]{Remark}
\newtheorem*{rem*}{Remark}

\newtheorem{rems}[thm]{Remarks}

\DeclareMathOperator{\Ad}{Ad}
\DeclareMathOperator{\Aut}{Aut}
\DeclareMathOperator{\coev}{coev}
\DeclareMathOperator{\Dom}{Dom}
\DeclareMathOperator{\End}{End}
\DeclareMathOperator{\ev}{ev}
\DeclareMathOperator{\Hom}{Hom}

\DeclareMathOperator{\op}{op}

\DeclareMathOperator{\supp}{supp}
\DeclareMathOperator{\id}{id}
\DeclareMathOperator{\Isom}{Isom}
\DeclareMathOperator{\ind}{ind}

\DeclareMathOperator{\Irr}{Irr}
\DeclareMathOperator{\Spec}{Spec}
\DeclareMathOperator{\Stab}{Stab}
\DeclareMathOperator{\Tr}{Tr}
\DeclareMathOperator{\tr}{tr}
\DeclareMathOperator{\Gr}{Gr}


\newcommand{\comment}[1]{}

\newcommand{\be}{\begin{enumerate}[label=(\arabic*)]}
\newcommand{\ee}{\end{enumerate}}

\newcommand{\C}{\mathbb{C}}

\newcommand{\set}[2]{\left\{#1 \middle| #2\right\}}

\renewcommand{\alg}[1]{{\bm{\langle} #1\bm{\rangle}}}


\def\semicolon{;}
\def\applytolist#1{
    \expandafter\def\csname multi#1\endcsname##1{
        \def\multiack{##1}\ifx\multiack\semicolon
            \def\next{\relax}
        \else
            \csname #1\endcsname{##1}
            \def\next{\csname multi#1\endcsname}
        \fi
        \next}
    \csname multi#1\endcsname}

\def\calc#1{\expandafter\def\csname c#1\endcsname{{\mathcal #1}}}
\applytolist{calc}QWERTYUIOPLKJHGFDSAZXCVBNM;
\def\bbc#1{\expandafter\def\csname bb#1\endcsname{{\mathbb #1}}}
\applytolist{bbc}QWERTYUIOPLKJHGFDSAZXCVBNM;
\def\bfc#1{\expandafter\def\csname bf#1\endcsname{{\mathbf #1}}}
\applytolist{bfc}QWERTYUIOPLKJHGFDSAZXCVBNM;
\def\sfc#1{\expandafter\def\csname s#1\endcsname{{\sf #1}}}
\applytolist{sfc}QWERTYUIOPLKJHGFDSAZXCVBNM;
\def\fc#1{\expandafter\def\csname f#1\endcsname{{\mathfrak #1}}}
\applytolist{fc}QWERTYUIOPLKJHGFDSAZXCVBNM;

\newcommand{\Rep}{{\sf Rep}}

\newcommand{\Mod}{{\sf Mod}}
\newcommand{\Proj}{{\sf Proj}}

\newcommand{\Bim}{{\sf Bim}}
\newcommand{\bfBim}{{\sf Bim_{bf}}}
\newcommand{\spBim}{{\sf Bim^{sp}}}
\newcommand{\spbfBim}{{\sf Bim_{bf}^{sp}}}
\renewcommand{\Vec}{{\sf Vec}}
\newcommand{\fdVec}{{\sf Vec_{fd}}}
\newcommand{\Hilb}{{\sf Hilb}}
\newcommand{\fdHilb}{{\sf Hilb_{fd}}}
\newcommand{\ConAlg}{{\sf ConAlg}}
\newcommand{\DisInc}{{\sf DisInc}}

\newcommand{\jw}[1]{f^{(#1)}}

\newcommand{\noshow}[1]{}
\newcommand{\MR}[1]{}

\usetikzlibrary{shapes}
\usetikzlibrary{backgrounds}
\usetikzlibrary{decorations,decorations.pathreplacing,decorations.markings}
\usetikzlibrary{fit,calc,through}
\usetikzlibrary{external}
\usetikzlibrary{arrows}
\tikzset{vertex/.style = {shape=circle,draw,fill=black,inner sep=0pt,minimum size=5pt}}
\tikzset{edge/.style = {->,> = latex', bend right}}
\tikzset{
	super thick/.style={line width=3pt}
}
\tikzset{
    quadruple/.style args={[#1] in [#2] in [#3] in [#4]}{
        #1,preaction={preaction={preaction={draw,#4},draw,#3}, draw,#2}
    }
}
\tikzstyle{shaded}=[fill=red!10!blue!20!gray!30!white]
\tikzstyle{unshaded}=[fill=white]
\tikzstyle{empty box}=[circle, draw, thick, fill=white, opaque, inner sep=2mm]
\tikzstyle{annular}=[scale=.7, inner sep=1mm, baseline]
\tikzstyle{rectangular}=[scale=.75, inner sep=1mm, baseline=-.1cm]
\tikzstyle{mid>}=[decoration={markings, mark=at position 0.5 with {\arrow{>}}}, postaction={decorate}]
\tikzstyle{mid<}=[decoration={markings, mark=at position 0.5 with {\arrow{<}}}, postaction={decorate}]
\tikzstyle{over}=[double, draw=white, super thick, double=]

\newcommand{\roundNbox}[6]{
	\draw[rounded corners=5pt, very thick, #1] ($#2+(-#3,-#3)+(-#4,0)$) rectangle ($#2+(#3,#3)+(#5,0)$);
	\coordinate (ZZa) at ($#2+(-#4,0)$);
	\coordinate (ZZb) at ($#2+(#5,0)$);
	\node at ($1/2*(ZZa)+1/2*(ZZb)$) {#6};
}


\begin{document}
\title{Realizations of algebra objects and discrete subfactors}
\author{Corey Jones and David Penneys}
\date{\today}
\maketitle
\begin{abstract}
We give a characterization of extremal irreducible discrete subfactors $(N\subseteq M, E)$ where $N$ is type ${\rm II}_1$ in terms of connected W*-algebra objects in rigid C*-tensor categories.
We prove an equivalence of categories where the morphisms for discrete inclusions are normal $N-N$ bilinear ucp maps which preserve the state $\tau \circ E$, and the morphisms for W*-algebra objects are categorical ucp morphisms.

As an application, we get a well-behaved notion of the standard invariant of an extremal irreducible discrete subfactor, together with a subfactor reconstruction theorem.
Thus our equivalence provides many new examples of discrete inclusions $(N\subseteq M, E)$, in particular, examples where $M$ is type ${\rm III}$ coming from non Kac-type discrete quantum groups and associated module W*-categories.
Finally, we obtain a Galois correspondence between intermediate subfactors of an extremal irreducible discrete inclusion and intermediate W*-algebra objects.
%
\end{abstract}

\tableofcontents

\section{Introduction}

The modern theory of ${\rm II}_1$ subfactors was initiated by Vaughan Jones in his landmark article \cite{MR0696688}.  
The 
\emph{Jones index} of a subfactor must take values in
$\set{4\cos^2(\pi/n)}{n\geq 3}\cup[0,\infty]$,
and the \emph{standard invariant} of a finite index subfactor is the lattice of higher relative commutants.
The standard invariant has been axiomatized in many equivalent ways; in finite depth, we have Ocneanu's paragroups \cite{MR996454} and Popa's canonical commuting squares \cite{MR1055708}.
Popa's $\lambda$-lattices \cite{MR1334479} give an axiomatization of the standard invariant in the general case, and Jones' equivalent planar algebras \cite{math.QA/9909027} provide a powerful graphical calculus for the construction \cite{MR2979509} and classification \cite{MR3166042,1509.00038} of standard invariants.

Using Longo's Q-systems \cite{MR1027496}, the article \cite{MR1966524} re-interprets the standard invariant as a unitary Frobenius algebra object in a rigid C*-tensor category \cite{MR1444286,MR2091457}.
Using this re-interpretation, we may view an extremal finite index subfactor $N\subseteq M$ (not just its standard invariant!) as a triple $(\cC, A, \bfH)$, where $\cC$ is a rigid C*-tensor category, $A\in\cC$ is a unitary Frobenius algebra generating $\cC$, and $\bfH$ is an embedding $\cC \hookrightarrow \spbfBim(N)$, the spherical/extremal bi-finite $N-N$ bimodules.
The subfactor $N\subseteq M$ is recovered by taking the $N$-bounded vectors $\bfH(A)^\circ$, i.e., $\bfH(A) \cong L^2(M)$.

Recently, there has been tremendous progress on understanding infinite index subfactors $N\subseteq M$ which are irreducible, i.e., $N'\cap M = \bbC$, and \emph{discrete/quasi-regular} \cite{1511.07329,MR1622812}.
In this article, we will restrict our attention to irreducible inclusions $(N\subseteq M, E)$ where $(N,\tau)$ is a type ${\rm II}_1$ factor with its canonical trace, $M$ is an arbitrary factor, and $E: M\to N$ is a faithful normal conditional expectation.
Following \cite{MR1622812}, such an inclusion is called \emph{discrete}\footnote{
A prior notion of a discrete inclusion of semi-finite factors was introduced in \cite{MR1055223} as an inclusion $(N,\Tr_N)\subseteq (M,\Tr_M)$ with a trace preserving faithful normal conditional expectation $E: M\to N$.
However, this definition does not ensure properties similar to those of crossed products of discrete groups, so a refined definition was given by \cite{MR1622812} which applies to inclusions of arbitrary factors.
}
if $L^2(M, \phi)$ can be decomposed into a direct sum of bi-finite $N-N$ bimodules, using the faithful normal state $\phi = \tau\circ E$.
Discreteness in our setting is equivalent to \emph{quasi-regularity}, a notion originating in \cite{MR1729488,MR2215135} which plays an important role in \cite{MR3406647,1511.07329} (see Proposition \ref{prop:QuasiregularIffDiscrete}).
Following \cite{MR3040370}, we call such an inclusion \emph{extremal} if every $N-N$ sub-bimodule of $L^2(M,\phi)$ has the same left and right von Neumann dimension.

In this article, we characterize extremal irreducible discrete inclusions $(N\subseteq M, E)$ where $N$ is assumed to be a type ${\rm II}_1$ factor and $M$ is an arbitrary factor in terms of categorical data similar to the triples $(\cC, A, \bfH)$ for extremal finite index subfactors.
As an immediate application, we obtain:
\begin{itemize}
\item
a well-behaved definition of the \emph{standard invariant of a discrete inclusion} as an algebra object $A$ associated to a rigid C*-tensor category $\cC$ in the spirit of Longo and Muger \cite{MR1027496,MR1966524}, along with 
\item
a subfactor reconstruction theorem in the spirit of Popa \cite{MR1334479}.
\end{itemize}
However, in the infinite index case, we must use some `infinite dimensional' algebra object which necessarily cannot lie in $\cC$, as objects in $\cC$ have finite quantum dimension.
We solve this problem using our theory of operator algebras internal to rigid C*-tensor categories introduced in \cite{MR3687214}.

An operator algebra is a $*$-subalgebra of $B(H)$.
From a categorical perspective, an operator algebra is a vector space which acts on a Hilbert space.
Starting with the rigid C*-tensor category $\fdHilb$ of finite dimensional Hilbert spaces, we forget the adjoint to obtain the rigid tensor category $\fdVec$ of finite dimensional vector spaces, which still caries an involutive structure given by complex conjugation.
Our operator algebra is an object in the category $\Vec$ of vector spaces, the $\ind$ completion of $\fdVec$.
Our operator algebra acts on an object in the category $\Hilb$ of Hilbert spaces, the unitary $\ind^*$ completion of $\fdHilb$.
$$
\xymatrix@C=0pt@R=5pt{
\fdVec\ar[dd]_{\ind} && \fdHilb\ar[ll]_{\text{Forget}} \ar[dd]^{\ind^*}
\\
\\
\Vec & \ni A \curvearrowright H\in& \Hilb
}
\qquad
\qquad
\xymatrix@C=0pt@R=5pt{
\cC^{\natural}\ar[dd]_{\ind} && \cC\ar[ll]_{\text{Forget}} \ar[dd]^{\ind^*}
\\
\\
\Vec(\cC) & \ni \bfA \curvearrowright \bfH\in& \Hilb(\cC)
}
$$

Now a rigid C*-tensor category $\cC$ has a canonical bi-involutive structure \cite{MR3663592} with two commuting involutions called the adjoint $* : \cC(a,b) \to \cC(b,a)$ and the conjugate $\overline{\,\cdot\,} : \cC(a , b) \to \cC(\overline{a}, \overline{b})$.
As before, we can forget the adjoint to obtain the involutive rigid tensor category $\cC^{\natural}$.
We can then take the $\ind$ completion to obtain the involutive tensor category $\Vec(\cC)$ of linear functors $(\cC^{\natural})^{\text{op}}\to \Vec$.
Back in $\cC$, we may take the Neshveyev-Yamashita unitary $\ind^*$ completion \cite{MR3509018} to obtain the bi-involutive W*-tensor category $\Hilb(\cC)$ of linear dagger functors $\cC^{\op}\to \Hilb$ with bounded natural transformations.

A $*$-algebra $\bfA\in \Vec(\cC)$ is an involutive lax monoidal functor $(\cC^{\natural})^{\text{op}} \to \Vec$, which is compatible with the conjugation functors $\overline{\,\cdot\,}$ on both $\cC^{\natural}$ and $\Vec$.
In \cite{MR3687214}, we gave an equivalence of categories between $*$-algebra objects in $\Vec(\cC)$ and cyclic $\cC$-module dagger categories $(\cM,m)$ with basepoint, whose objects are exactly the $c\otimes m$ for $c\in \cC$.
Using this equivalence, we call a $*$-algebra object $\bfA\in \Vec(\cC)$ a C*/W*-\emph{algebra object} if and only if its associated cyclic $\cC$-module dagger category of right free $\bfA$-modules is a C*/W*-category respectively.
We also defined the notion of a normal ucp morphism between W*-algebra objects in terms of ucp-multipliers between cyclic $\cC$-module W*-categories.

A W*-algebra object $\bfA\in \Vec(\cC)$ is called \emph{connected} if $\bfA(1_\cC) = \bbC$.
In this case, $\bfA(c)$ is a finite dimensional vector space for all $c\in \cC$ (see Proposition \ref{prop:DimensionBound}).

\begin{thmalpha}
\label{thm:Main}
Suppose $\bfH: \cC\to \spbfBim(N)$ is a fully faithful bi-involutive representation.
There is an equivalence of categories
\[
\left\{\,
\parbox{5.5cm}{\rm Connected W*-algebra objects $\bfA\in \Vec(\cC)$ with ucp morphisms}\,\left\}
\,\,\,\,\cong\,\,
\left\{\,\parbox{8cm}{\rm Extremal irreducible discrete inclusions $(N\subseteq M,E)$ supported on $\bfH(\cC)$ with normal $N-N$ bilinear ucp maps preserving $\tau\circ E$}\,\right\}.
\right.\right.
\]
\end{thmalpha}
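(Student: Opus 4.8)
\emph{Overview.} The plan is to prove the equivalence by constructing explicit functors in both directions — a \emph{realization} functor sending a connected W*-algebra object to a discrete inclusion, and an inverse extracting an algebra object from an inclusion — checking that each lands where claimed, verifying they are mutually inverse up to natural isomorphism, and finally matching up the two notions of morphism.

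\emph{Realization.} Given a connected W*-algebra object $\bfA\in\Vec(\cC)$, first extend the fully faithful bi-involutive representation $\bfH:\cC\to\spbfBim(N)$ to the ind-completions, obtaining a fully faithful, lax monoidal, bi-involutive functor $\Vec(\cC)\to\Bim(N)$ still written $\bfH$. Then $\bfH(\bfA)$ is an $N-N$ bimodule, and the lax monoidal structure on $\bfA$ endows it with the structure of a unital associative $*$-algebra object in $\Bim(N)$; concretely the $N$-bounded vectors $\bfH(\bfA)^{\circ}$ form a $*$-algebra with a cyclic vector $\xi$ arising from the unit $1_\cC\to\bfA$. Let $M_\bfA$ be the von Neumann algebra generated by $\bfH(\bfA)^{\circ}$ acting on $\bfH(\bfA)$, so that $\bfH(\bfA)\cong L^2(M_\bfA,\phi)$ with $\phi$ the vector state of $\xi$, and let $E_\bfA:M_\bfA\to N$ be the $N-N$ bilinear projection onto $L^2(N)=\bfH(1_\cC)$. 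The points to check are: (i) $N\subseteq M_\bfA$ is irreducible and $M_\bfA$ is a factor, since $Z(M_\bfA)\subseteq N'\cap M_\bfA\cong\Hom_{N-N}(\bfH(1_\cC),\bfH(\bfA))\cong\Hom_{\Vec(\cC)}(1_\cC,\bfA)=\bfA(1_\cC)=\bbC$, using full faithfulness of $\bfH$ and connectedness; (ii) $E_\bfA$ is a faithful normal conditional expectation with $\phi=\tau\circ E_\bfA$ — this is exactly where the W*-condition on $\bfA$ enters, the W*-structure of the cyclic $\cC$-module category of free $\bfA$-modules supplying the positivity needed for $\bfH(\bfA)^{\circ}$ to close up to a von Neumann algebra and for $E_\bfA$ to be completely positive; (iii) the inclusion is discrete and extremal, since $\bfH(\bfA)\cong\bigoplus_{c\in\Irr(\cC)}\bfH(c)\otimes\bfA(c)$ with each $\bfH(c)$ a spherical bi-finite $N-N$ bimodule and each multiplicity space $\bfA(c)$ finite dimensional by Proposition~\ref{prop:DimensionBound}, so $L^2(M_\bfA,\phi)$ is an at most countable direct sum of spherical bi-finite bimodules in $\bfH(\cC)$, which by Proposition~\ref{prop:QuasiregularIffDiscrete} is discreteness, with extremality immediate from sphericality.

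\emph{Inverse.} Conversely, from an extremal irreducible discrete inclusion $(N\subseteq M,E)$ supported on $\bfH(\cC)$, put $\phi=\tau\circ E$ and define $\bfA_M\colon(\cC^{\natural})^{\op}\to\Vec$ by $\bfA_M(c)=\Hom_{N-N}(\bfH(c),L^2(M,\phi))$, acting on morphisms by precomposition with $\bfH(-)$. The multiplication $L^2(M)\boxtimes_N L^2(M)\to L^2(M)$ and unit $L^2(N)\to L^2(M)$ coming from $M$, transported through the lax monoidal structure of $\bfH$ and its full faithfulness, make $\bfA_M$ into an involutive lax monoidal functor, the $*$-structure of $M$ and its compatibility with $E$ yielding the conjugate-compatibility, so $\bfA_M$ is a $*$-algebra object. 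It is connected since $\bfA_M(1_\cC)\cong N'\cap M=\bbC$, and the hypothesis that the inclusion is discrete and supported on $\bfH(\cC)$ identifies the associated cyclic $\cC$-module category with a module category of $N-N$ bimodules, hence a W*-category, so $\bfA_M$ is a connected W*-algebra object. Then one shows the two constructions are mutually inverse: for $\bfA$ connected, full faithfulness of the ind-extended $\bfH$ gives natural isomorphisms $\bfA_{M_\bfA}(c)=\Hom_{N-N}(\bfH(c),\bfH(\bfA))\cong\bfA(c)$ compatible with the algebra structures, while conversely $\bigoplus_{c\in\Irr(\cC)}\bfH(c)\otimes\bfA_M(c)\cong L^2(M,\phi)$ is an isomorphism of $*$-algebra objects in $\Bim(N)$ matching cyclic vectors, hence $M_{\bfA_M}\cong M$ over $N$ with matching expectations. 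For the morphisms, a categorical ucp morphism $\bfA\to\bfB$ is by definition a ucp-multiplier between the cyclic $\cC$-module W*-categories of free modules \cite{MR3687214}, and realizing it produces a normal $N-N$ bilinear ucp map $M_\bfA\to M_\bfB$ intertwining the cyclic vectors, i.e.\ preserving $\tau\circ E$; functoriality is routine, and faithfulness and fullness on hom-sets follow from full faithfulness of $\bfH$ together with a Stinespring-type dilation of an arbitrary such ucp map through a realized bimodule.

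\emph{Main obstacle.} The principal difficulty is the W*-to-ucp dictionary: proving that the W*-axiom on the module category of $\bfA$ is precisely what forces the $N$-bounded vectors of $\bfH(\bfA)$ to generate a von Neumann algebra carrying a faithful normal conditional expectation onto $N$ (positivity of the relevant Jones-type projections and sesquilinear forms), and dually that categorical ucp-multipliers correspond exactly to state-preserving normal $N-N$ bilinear ucp maps. Because ucp maps are not invertible, matching morphisms is not a formal consequence of the object-level bijection; fullness in particular must be established by hand, by dilating a given ucp map through a realized bimodule and recognizing the resulting intertwiner categorically.
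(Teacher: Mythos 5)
Your architecture coincides with the paper's: a realization functor given by the coend $\bigoplus_{c\in\Irr(\cC)}\bfH(c)^\circ\otimes\bfA(c)$ (your ind-extension of $\bfH$), an inverse $M\mapsto\Hom_{N-N}(\bfH(\,\cdot\,),L^2(M,\phi))$, and mutually inverse natural isomorphisms via evaluation and a Yoneda-type map. You also correctly locate the analytic content of the forward direction (boundedness of the left regular representation of the algebraic realization, which the paper proves by a Schur-product estimate in Theorem \ref{thm:BoundedRepresentationOfRealization}). Two steps you treat as routine are, however, genuine gaps. In the inverse direction you write that ``the multiplication $L^2(M)\boxtimes_N L^2(M)\to L^2(M)$ coming from $M$'' equips $\bfA_M$ with its algebra structure. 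No such bounded map exists in general: multiplication is bounded only on $K\boxtimes_N L^2(M,\phi)$ for $K$ a \emph{bifinite} sub-bimodule, and even defining $\mu(f\otimes g)$ by $\eta\boxtimes\zeta\mapsto f(\eta)g(\zeta)$ presupposes $f(\eta)\in M\Omega$ rather than merely $f(\eta)\in L^2(M,\phi)$. When $M$ is not type ${\rm II}_1$ --- and the type ${\rm III}$ case genuinely occurs here --- the identity $K^\circ=K\cap M\Omega$ and the boundedness of $\mu_K$ are exactly Proposition \ref{prop:MultiplicationMapBounded}, whose proof needs a Connes-cocycle argument (Lemma \ref{lem:UniqueNormalizingState}) to replace the tracial argument of Popa--Shlyakhtenko--Vaes; this in turn feeds the Frobenius reciprocity (Theorem \ref{thm:BifiniteFrobeniusReciprocity}) identifying $\bfA_M$ with the algebra object of a cyclic $\cC$-module W*-category. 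Without this input the inverse functor is not even defined.

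At the level of morphisms you assert that realizing a categorical ucp morphism ``produces a normal $N-N$ bilinear ucp map $M_\bfA\to M_\bfB$.'' Stinespring dilation in $\cC$ only yields a ucp map on the \emph{algebraic} realization; extending it to the von Neumann completion and proving \emph{normality} of the extension is not formal, and the paper devotes a universal realized C*-algebra, a Kaplansky-density argument, and Proposition \ref{prop:ExtendUCPMaps} to it. Similarly, your ``$M_{\bfA_M}\cong M$ over $N$'' requires knowing that the image of the evaluation map --- which is the quasi-normalizer $\cQ\cN_M(N)$ --- is weakly dense in $M$; this is the equivalence of discreteness with quasi-regularity (Proposition \ref{prop:QuasiregularIffDiscrete}), whose proof uses $\sigma^\phi_t$-invariance of the quasi-normalizer and Takesaki's theorem, and does not follow from a unitary between the $L^2$-spaces intertwining the cyclic vectors.
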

\noindent
It is straightforward to show that under the correspondence between W*-algebra objects and cyclic $\cC$-module W*-categories, 
intermediate W*-algebra objects correspond to (non-full) $\cC$-module W*-subcategories with the same basepoint.

To prove Theorem \ref{thm:Main}, to each triple $(\cC, \bfA, \bfH)$, we associate a canonical extremal irreducible discrete inclusion $(N\subseteq M, E)$.
One way to view this over factor $M$ is as a \emph{realization} or \emph{coend}  $|\bfA|_\bfH$ of the functor $\bfH^\circ\otimes \bfA : \cC \boxtimes \cC^{\op} \to \Vec$, where $\bfH^\circ : \cC \to \Vec$ takes the vector space of $N$-bounded vectors $\bfH(c)^\circ$ for $c\in \cC$.
We use the term `realization' to remind the reader of the `geometric realization' of a simplicial set in algebraic topology, which is also a coend (see Remark \ref{rem:AlgebraicTopology}).

A second way to view the over factor $M$ is as a \emph{crossed product} $N \rtimes_\bfH \bfA$, where $\bfA$ is an object similar to a discrete group.
When $\bfH$ comes from an outer action of a discrete group $\Gamma \to \Aut(N)$, the group algebra $\bfA=\bbC[\Gamma]\in \Vec(\Gamma)$ is a connected W*-algebra object, and $N\rtimes_\bfH \bfA $ is the usual crossed product.

Using Connes' cocycle derivative, we show that $|\bfA|_\bfH=N\rtimes_\bfH \bfA$ must either be type ${\rm II}_1$ or type ${\rm III}$
(see Corollary \ref{cor:IrreducibleInclusionIIorIII}).
We introduce a modular operator for connected W*-algebra objects in $\Vec(\cC)$, along with the analog of Connes' modular spectrum $S(\bfA)$, which is easily computable in examples.
Moreover, Connes' modular spectrum $S(|\bfA|_\bfH)$ is equal to $S(\bfA)$ (see Proposition \ref{prop:ModularInvariant}).
Thus we can construct explicit examples of extremal irreducible discrete inclusions $(N\subseteq M, E)$ where $N$ is type ${\rm II}_1$ and $M$ is type ${\rm III}$, in spite of the fact that $\bfH: \cC \to \spbfBim(N)$ takes values in extremal $N-N$ bimodules.

For example, a discrete quantum group $\bbG = (\cC, \bfF)$ can be viewed as a dagger tensor functor $\bfF: \cC \to \fdHilb$, which gives us a $\cC$-module W*-category.
Choosing the basepoint $\bbC \in \fdHilb$, we get a canonical connected W*-algebra object $\bfA\in \Vec(\cC)$.

\begin{coralpha}
The realization $|\bfA|_\bfH$ of the connected \emph{W*}-algebra object $\bfA$ coming from a discrete quantum group $\bbG = (\cC,\bfF)$ together with a fully faithful representation $\bfH : \cC \to \spbfBim(N)$ is type ${\rm III}$ if and only if $\bbG$ is not Kac-type.
(See Section \ref{sec:QuantumGroups} for more details.)
\end{coralpha}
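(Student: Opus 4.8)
The plan is to reduce this statement to the computation of Connes' modular invariant $S(|\bfA|_\bfH)$ via the modular spectrum $S(\bfA)$ of the W*-algebra object (Proposition \ref{prop:ModularInvariant}), and to identify $S(\bfA)$ explicitly for the canonical connected W*-algebra object $\bfA$ attached to a discrete quantum group $\bbG = (\cC,\bfF)$. First I would unwind the construction of $\bfA$. The fiber functor $\bfF : \cC \to \fdHilb$ makes $\cC$ into a $\cC$-module W*-category (namely $\fdHilb$ with the obvious module structure), and choosing the basepoint $\bbC \in \fdHilb$ produces, by the equivalence of \cite{MR3687214}, a connected W*-algebra object with $\bfA(c) = \Hom_{\fdHilb}(\bbC, \bfF(c)) \cong \bfF(c)$ as a vector space, with multiplication induced by the lax monoidal structure of $\bfF$. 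The state on $\bfA$ — equivalently the trace-like functional defining $\tau\circ E$ after realization — is governed by the canonical (Woronowicz / Neshveyev-Yamashita) structure of the module category, and the modular operator of $\bfA$ that I would introduce is precisely the Woronowicz $f$-operators / the Radon-Nikodym derivatives relating $\bfF$ to its conjugate, i.e. the data measuring the failure of $\bfF$ to be $*$-preserving in the rigid sense. Concretely, on $\bfA(c) \cong \bfF(c)$ the modular operator acts by the ratio of the left and right categorical traces transported through $\bfF$, which is exactly the operator whose eigenvalues generate the Kac property obstruction.

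The key step is then: $S(\bfA) = \{1\}$ if and only if this modular operator is trivial, if and only if $\bfF$ is a unitary (dimension-preserving) fiber functor, if and only if $\bbG$ is Kac-type. The forward implication uses that $S(\bfA)=\{1\}$ forces all the modular eigenvalues of $\bfA$ to be $1$, hence the quantum dimensions computed by $\bfF$ agree with the categorical (Frobenius-Perron) dimensions on every object, which is one of the standard characterizations of Kac-type for $\bbG$. The reverse implication is the observation that for a Kac algebra the Haar state is a trace and $\bfF$ preserves dimensions, so the modular operator is the identity and $S(\bfA) = \{1\}$. Once $S(\bfA)$ is pinned down, Proposition \ref{prop:ModularInvariant} gives $S(|\bfA|_\bfH) = S(\bfA)$, and Corollary \ref{cor:IrreducibleInclusionIIorIII} tells us $|\bfA|_\bfH$ is either type ${\rm II}_1$ or type ${\rm III}$. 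A type ${\rm III}$ factor is exactly a factor with $S(M) \neq \{1\}$ (equivalently, by the type ${\rm II}_1$/${\rm III}$ dichotomy already established, $M$ is type ${\rm II}_1$ precisely when $S(M) = \{1\}$). Combining: $|\bfA|_\bfH$ is type ${\rm III}$ $\iff$ $S(|\bfA|_\bfH) \neq \{1\}$ $\iff$ $S(\bfA) \neq \{1\}$ $\iff$ $\bbG$ is not Kac-type.

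The main obstacle I anticipate is the bookkeeping in the middle step — correctly identifying the modular operator of the algebra object $\bfA$ with the Woronowicz/modular data of $\bbG$, and checking that this identification is compatible with the state $\tau \circ E$ on the realization rather than with some other normalization. This requires being careful about which trace (left, right, or the geometric mean/spherical trace) is used in defining both the bi-involutive structure on $\cC$ and the state on $\bfA$, since the whole point is that $\bfF$ need not respect the spherical trace even though $\bfH$ takes values in extremal bimodules. I would isolate this as a lemma computing the modular operator of $\bfA$ on each $\bfA(c)$ in terms of the $f$-operator $F_c$ of $\bbG$, and then the equivalence with Kac-type is the classical statement that $F_c = \id$ for all $c$ iff the Haar state is tracial. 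The remaining implications are then immediate from the cited results. A small additional point to verify is that $S(\bfA)$ as we define it is genuinely a closed multiplicative subgroup of $\R_{>0}$ and that "$\neq \{1\}$" is the correct negation of type ${\rm II}_1$ in this dichotomy — but this is handled by Corollary \ref{cor:IrreducibleInclusionIIorIII} and the general theory, so no new work is needed there.
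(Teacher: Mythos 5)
Your proposal is correct, and its skeleton coincides with the paper's: both arguments rest on the type ${\rm II}_1$/${\rm III}$ dichotomy of Corollary \ref{cor:IrreducibleInclusionIIorIII} together with the equivalence ``$\bbG$ Kac-type $\iff$ the modular operator of the algebra object is trivial.'' The difference is in the packaging of the middle step. You route through the modular spectrum, using Proposition \ref{prop:ModularInvariant} ($S(|\bfA|_\bfH)=S(\bfA)$) and the observation that, given the dichotomy, type ${\rm III}$ is equivalent to $S\neq\{1\}$; the paper instead uses Proposition \ref{prop:TracialRealization} directly ($|\tau|_\bfH$ is tracial on $|\bfA|_\bfH$ iff $\bfA$ is tracial), which is logically interchangeable but avoids any appeal to the structure of $S(M)$ for type ${\rm III}$ factors. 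The other difference is in the lemma you flag as the main obstacle: identifying when the modular operator of $\bfA$ is trivial. You propose to identify $\Delta_a$ with the Woronowicz $f$-operators and then quote the classical fact that these are trivial iff the Haar state is a trace; this works, but note the paper defines Kac-type as ``$\bfF$ is dimension preserving,'' and gives a short self-contained argument that sidesteps Woronowicz's theory entirely: one computes $\Tr(\Delta_a)=\Tr(\Delta_a^{-1})=d_a$ for \emph{every} $a$ (by the same capping computation as in Proposition \ref{prop:DimensionBound}), so that Kac-type forces $\sum_i(\lambda_i+\lambda_i^{-1})=2\dim\bfF(a)$ for the eigenvalues $\lambda_i$ of $\Delta_a$, whence $\lambda_i=1$ by $\lambda+\lambda^{-1}\geq 2$; conversely traciality gives $\dim\bfF(a)=\Tr(\Delta_a)=d_a$. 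Your closing worry about $S(\bfA)$ being a group is immaterial: all that is needed is that $S(\bfA)=\{1\}$ iff every $\Delta_a=\id$, which holds since each $\Delta_a$ is a positive operator on a finite dimensional space.
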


This corollary is related to \cite[Thm.~3.5]{MR2113893} in the case of compact quantum groups.
De Commer and Yamashita's classification of $\cT\cL\cJ(\delta)$-module W*-categories \cite{MR3420332} gives many more such examples (see Section \ref{sec:TLJmodules} for more details).

Going back the other way in Theorem \ref{thm:Main}, we use the same technique as \cite{MR1622812,1511.07329}.
Starting with an extremal irreducible inclusion $(N\subseteq M, E)$ supported on $\bfH(\cC)$, we get a connected W*-algebra object $\bfA\in \Vec(\cC)$ by taking normal $N-N$ bilinear maps
$$
\bfA(c) := \Hom_{N-N}(\bfH(c), L^2(M,\phi)). 
$$
The key ingredient in showing this defines a W*-algebra object is a generalization of \cite[Lem.~2.5]{1511.07329} to the case where $M$ is not assumed to be type ${\rm II}_1$.
We thank Stefaan Vaes for his help with this argument, which appears in Section \ref{sec:PSVlemma}.
This allows us to prove a Frobenius reciprocity result in the presence of a bifinite $N-N$ bimodule (Theorem \ref{thm:BifiniteFrobeniusReciprocity}), which gives a natural isomorphism
$$
\bfA(c) = \Hom_{N-N}(\bfH(c), L^2(M,\phi)) \cong \Hom_{N-M}(\bfH(c)\boxtimes_N  L^2(M,\phi), L^2(M,\phi)).
$$
Thus $\bfA$ is the W*-algebra object associated to the cyclic $\cC$-module W*-category generated by $L^2(M,\phi)\in \Bim(N,M)$, the $N-M$ bimodules.

\subsection*{Applications}

As discussed above, as an immediate application of our Theorem \ref{thm:Main}, we may define the \emph{standard invariant} of an extremal irreducible discrete inclusion $(N\subseteq M, E)$, as:
\begin{itemize}
\item
the rigid C*-tensor category $\cC$ of bifinite $N-N$ bimodules generated by $L^2(M,\phi)$, (the full 
subcategory of bifinite $N-N$ bimodules which appear as summands of $\boxtimes_N^k L^2(M,\phi)$ for some $k\geq 0$), together with
\item
the connected W*-algebra object $\bfA\in \Vec(\cC)$ given by 
$\bfA(K) = \Hom_{N-N}(K, L^2(M,\phi)) \cong \Hom_{N-N}(K\boxtimes_N L^2(M,\phi), L^2(M,\phi))$,
which corresponds to the cyclic $\cC$-module W*-subcategory of $N-M$ bimodules $\Bim(N,M)$ generated by $L^2(M,\phi)$.
\end{itemize}
We may also define an \emph{abstract standard invariant} as a pair $(\cC,\bfA)$ where $\cC$ is a rigid C*-tensor category, and $\bfA\in \Vec(\cC)$ is a connected W*-algebra object such that $\bfA$ generates $\cC$ (see Section \ref{sec:StandardInvariants} for the precise definition).

Moreover, we get a subfactor reconstruction theorem, in the spirit of Popa's theorems for finite index subfactors \cite{MR1055708,MR1334479,MR1339767} and the diagrammatic subfactor reconstructions of Guionnet-Jones-Shlyakhtenko-Walker \cite{MR2732052,MR2645882}.

\begin{coralpha}
Given an abstract standard invariant $(\cC, \bfA)$ with $\cC$ a rigid \emph{C*}-tensor category and $\bfA\in \Vec(\cC)$ a connected \emph{W*}-algebra object, there is an extremal irreducible discrete inclusion $(N\subseteq M, E)$ whose standard invariant is equivalent to $(\cC, \bfA)$.

Conversely, starting with an extremal irreducible discrete inclusion $(N\subseteq M, E)$, we get a standard invariant $(\cC, \bfA)$ and a fully faithful representation $\bfH : \cC \to \spbfBim(N)$ which allows us to construct an extremal irreducible discrete inclusion $(N\subseteq P, E_N^P)$.
There is a $*$-algebra isomorphism $\delta:P \to M$ which fixes $N$ such that $E_N^P =E\circ \delta$.
\end{coralpha}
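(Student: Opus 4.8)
The plan is to derive both directions from Theorem~\ref{thm:Main}, so that the corollary becomes essentially a repackaging of its equivalence of categories as a reconstruction statement.

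For the first direction, given an abstract standard invariant $(\cC,\bfA)$ I would first fix a type ${\rm II}_1$ factor $N$ and a fully faithful bi-involutive representation $\bfH\colon\cC\to\spbfBim(N)$; such a pair exists by the known embedding theorems for rigid C*-tensor categories into spherical bifinite bimodules over a ${\rm II}_1$ factor (one may take $N$ hyperfinite or an interpolated free group factor). Theorem~\ref{thm:Main} then applies to this $\bfH$, and the connected W*-algebra object $\bfA$ corresponds to an extremal irreducible discrete inclusion $(N\subseteq M,E)$ supported on $\bfH(\cC)$, realized as $L^2(M,\phi)\cong|\bfA|_\bfH$ with $\phi=\tau\circ E$. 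The remaining task is to identify the standard invariant of $(N\subseteq M,E)$: using the explicit form of the realization I would show that the bifinite $N-N$ bimodules occurring in $\boxtimes_N^k L^2(M,\phi)$ for $k\geq 0$ are precisely the objects of $\bfH(\cC)$ --- this is exactly where the hypothesis that $\bfA$ generates $\cC$ enters, guaranteeing that the simple $c$ with $\bfA(c)\neq 0$ tensor-generate $\cC$ --- so that the standard-invariant category is $\bfH(\cC)\cong\cC$. Feeding this back through the inverse functor of Theorem~\ref{thm:Main}, and invoking the Frobenius reciprocity isomorphism $\Hom_{N-N}(K,L^2(M,\phi))\cong\Hom_{N-M}(K\boxtimes_N L^2(M,\phi),L^2(M,\phi))$ recalled in the introduction, the W*-algebra object $K\mapsto\Hom_{N-N}(K,L^2(M,\phi))$ is naturally isomorphic to $\bfA$; hence the standard invariant of $(N\subseteq M,E)$ is equivalent to $(\cC,\bfA)$.

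For the converse I would start from $(N\subseteq M,E)$, form its standard invariant $(\cC,\bfA)$ as in Section~\ref{sec:StandardInvariants}, and take $\bfH\colon\cC\to\spbfBim(N)$ to be the inclusion of the generated bifinite $N-N$ bimodule category, which is automatically fully faithful and bi-involutive. Applying the first-direction construction to $(\cC,\bfA,\bfH)$ yields an extremal irreducible discrete inclusion $(N\subseteq P,E_N^P)$ supported on $\bfH(\cC)$ whose associated W*-algebra object is naturally isomorphic to $\bfA$. Since $(N\subseteq M,E)$ is also supported on $\bfH(\cC)$ with associated W*-algebra object $\bfA$, and since Theorem~\ref{thm:Main} is an \emph{equivalence} (not merely a bijection on objects), $(N\subseteq P,E_N^P)$ and $(N\subseteq M,E)$ must be isomorphic in the category of extremal irreducible discrete inclusions supported on $\bfH(\cC)$. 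I would then unwind this: such an isomorphism is a normal $N-N$ bilinear ucp map $\delta\colon P\to M$, state-preserving in the sense $\tau\circ E\circ\delta=\tau\circ E_N^P$, whose inverse is again ucp. A unital completely positive map whose inverse is completely positive satisfies $\delta(x^*x)=\delta(x)^*\delta(x)$ for all $x$ (apply the Kadison--Schwarz inequality to $\delta$ and to $\delta^{-1}$ and use positivity), hence is multiplicative by Choi's multiplicative domain theorem; being $*$-preserving and bijective, $\delta$ is a $*$-algebra isomorphism. Unitality and $N$-bilinearity force $\delta|_N=\id_N$, and $E_N^P=E\circ\delta$ then follows from $\tau\circ E_N^P=\tau\circ E\circ\delta$ together with the $N-N$ bimodularity of the two conditional expectations and faithfulness of $\tau$ on $N$.

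The main obstacle I anticipate is the first-direction identification that the bimodule category generated by the realization $|\bfA|_\bfH$ is exactly $\bfH(\cC)$ and that the recovered W*-algebra object equals $\bfA$ up to natural isomorphism: this is the step that genuinely uses the precise meaning of ``$\bfA$ generates $\cC$'' from Section~\ref{sec:StandardInvariants}, the structure of the coend/realization, and the bifinite Frobenius reciprocity theorem. By contrast, the uniqueness assertion in the converse is essentially formal once Theorem~\ref{thm:Main} is available as an equivalence of categories, the only non-categorical input being the elementary rigidity fact that a ucp map with ucp inverse is a $*$-isomorphism.
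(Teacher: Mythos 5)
Your proposal is correct, and in the first direction it matches the paper's proof: pick a fully faithful bi-involutive $\bfH:\cC\to\spbfBim(N)$ (e.g.\ with $N=L\bbF_\infty$), set $M=|\bfA|_\bfH$ with $E=E_N$, and identify the recovered algebra object with $\bfA$ via the natural isomorphism $\kappa^\bfA$ together with bifinite Frobenius reciprocity. You are in fact more careful than the paper at one point: the paper dismisses the identification $\cC_{(N\subseteq M,E)}=\bfH(\cC)$ with ``by construction,'' whereas you correctly isolate that this is precisely where the hypothesis that $\bfA$ generates $\cC$ enters, via the decomposition $L^2|\bfA|_\bfH\cong\bigoplus_a\bfH(a)\otimes_a\bfA(a)$.

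For the converse your route genuinely differs from the paper's. The paper takes $\bfH$ to be the inclusion $\cC_{(N\subseteq M,E)}\subset\spbfBim(N)$ and exhibits $\delta$ directly as the evaluation homomorphism $\xi\otimes f\mapsto f(\xi)$; this is the component at $M$ of the natural isomorphism $\delta:|\alg{\cdot}|_\bfH\Rightarrow\id$ from the proof of the equivalence, already shown there to be a unital $*$-homomorphism fixing $N$, with surjectivity coming from quasi-regularity. You instead invoke the equivalence abstractly to obtain an isomorphism in $\DisInc_\bfH$ and then upgrade a state-preserving ucp bijection with ucp inverse to a $*$-isomorphism via Kadison--Schwarz and the multiplicative domain: applying the order-preserving map $\delta^{-1}$ to $\delta(x)^*\delta(x)\le\delta(x^*x)$ and using the Schwarz inequality for $\delta^{-1}$ forces equality, hence $x$ lies in the multiplicative domain for every $x$. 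That upgrade is correct, and your derivation of $\delta|_N=\id_N$ and $E_N^P=E\circ\delta$ from unitality, $N$-bilinearity, and faithfulness of $\tau$ is also fine. What your approach buys is a purely formal deduction from Theorem~\ref{thm:Main}; what it costs is the explicit description of $\delta$, which in the paper is the same evaluation map whose $*$-homomorphism property was part of the content of the equivalence in the first place. The two arguments produce the same isomorphism.
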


We refer the reader to Section \ref{sec:StandardInvariants} for more details.
As a special case, we get yet another axiomatization of the standard invariant of an irreducible finite index ${\rm II}_1$ subfactor in terms of \emph{compact} connected W*-algebra objects in $\cC^{\natural}$ instead of unitary Frobenius algebras in $\cC$.
Our followup article \cite{1707.02155} provides a simple translation between these types of algebra objects in the finite index case.

As another application of our equivalence of categories, we discuss analytic properties for extremal irreducible discrete inclusions in Section \ref{sec:AnalyticProperties} in terms of analytic properties for connected W*-algebra objects in $\Vec(\cC)$ as discussed in \cite[\S5.5]{MR3687214}.
In particular, we have:
\begin{coralpha}
If $\cC$ is amenable or has the Haagerup property, any extremal irreducible discrete inclusion $(N\subseteq M, E)$ supported on $\bfH(\cC)$ has the corresponding property for $M$ relative to $N$.
\end{coralpha}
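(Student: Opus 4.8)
The plan is to transport the approximation properties along the equivalence of categories of Theorem~\ref{thm:Main}, using the analytic theory of connected W*-algebra objects in \cite[\S5.5]{MR3687214}. Recall that amenability (resp.\ the Haagerup property) of $\cC$ is witnessed by a net of finitely supported (resp.\ $c_0$) positive-definite functions on $\Irr(\cC)$ tending pointwise to $1$, and that by \cite[\S5.5]{MR3687214} such a net yields, for any connected W*-algebra object $\bfA\in\Vec(\cC)$, a net of ucp morphisms $\Psi_i:\bfA\to\bfA$ converging pointwise to $\id_\bfA$ with each $\Psi_i$ finitely supported on $\Irr(\cC)$ (resp.\ in the corresponding $c_0$ class). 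On the other side, $M$ has the corresponding property relative to $N$ precisely when $\id_M$ is a pointwise $\|\cdot\|_\phi$-limit of normal $N$--$N$ bilinear ucp maps $\Phi_i:M\to M$ preserving $\phi=\tau\circ E$ which are, in the sense appropriate to a discrete inclusion, ``finite'' (resp.\ ``$c_0$'') relative to $N$ --- concretely, each acting on the $N$--$N$ isotypic decomposition of $L^2(M,\phi)$ by a finitely supported (resp.\ $c_0$) family of scalars. The proof therefore reduces to matching these two pictures.

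First I would invoke Theorem~\ref{thm:Main}: the inclusion $(N\subseteq M,E)$ supported on $\bfH(\cC)$ corresponds to a connected W*-algebra object $\bfA\in\Vec(\cC)$, and the equivalence identifies normal $N$--$N$ bilinear $\phi$-preserving ucp maps $M\to M$ with ucp morphisms $\bfA\to\bfA$. Applying the previous paragraph to this $\bfA$ gives a net $\Psi_i$, and I let $\Phi_i:M\to M$ be the corresponding net of ucp maps. Two points remain. For pointwise convergence: $L^2(M,\phi)$ is reconstructed as the realization $|\bfA|_\bfH$, the equivalence sends $\id_\bfA$ to $\id_M$, and pointwise convergence $\Psi_i\to\id_\bfA$ on each $\bfA(c)$ translates, using density of the $\bfH(\cC)$-bounded vectors in $L^2(M,\phi)$, to $\|\Phi_i(x)-x\|_\phi\to 0$ for all $x\in M$. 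For the support/decay condition: the identification $\bfA(c)=\Hom_{N-N}(\bfH(c),L^2(M,\phi))$ shows the $\Irr(\cC)$-grading of $\bfA$ is the $N$--$N$ isotypic decomposition of $L^2(M,\phi)$, and that $\Psi_i$ acting by a given multiplier corresponds to $\Phi_i$ acting diagonally by the same scalars on the $\bfH(c)$-isotypic components; hence $\Psi_i$ finitely supported (resp.\ $c_0$) forces $\Phi_i$ finite (resp.\ $c_0$) relative to $N$, which is the defining condition.

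The main obstacle is the second point: Theorem~\ref{thm:Main} is a priori only an equivalence of categories, so one must genuinely check that it intertwines the $\Irr(\cC)$-grading of $\bfA$ with the $N$--$N$ isotypic decomposition of $L^2(M,\phi)$ and acts diagonally with respect to it, and likewise that it identifies pointwise limits of nets on the two sides. Both of these come out of unwinding the realization construction $L^2(M,\phi)=|\bfA|_\bfH$ together with the formula $\bfA(c)=\Hom_{N-N}(\bfH(c),L^2(M,\phi))$; once they are in place the corollary is a formal consequence of the approximation property established for $\bfA$ in \cite[\S5.5]{MR3687214}.
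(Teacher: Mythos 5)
Your overall strategy is the paper's: transport the approximation property through the equivalence of Theorem \ref{thm:Main} by converting a net of finitely supported (resp.\ $c_0$) cp-multipliers of $\cC$ into a net of finite-rank (resp.\ compact) ucp morphisms of the underlying connected W*-algebra object $\bfA=\alg{M}$, and then into normal $N$--$N$ bilinear $\phi$-preserving ucp maps of $M$. Your bookkeeping is also correct and matches the paper: the identification $\bfA(c)=\Hom_{N-N}(\bfH(c),L^2(M,\phi))$ does intertwine the $\Irr(\cC)$-grading with the $N$--$N$ isotypic decomposition, the equivalence does send $\id_\bfA$ to $\id_M$ and preserves pointwise limits, and finite support (resp.\ $c_0$) is preserved because each $\bfA(c)$ is finite dimensional.

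The gap is in the very first step. You assert that a cp-multiplier $m\in\ell^\infty(\Irr(\cC))$ induces a ucp morphism $\Psi=\theta^m:\bfA\Rightarrow\bfA$, acting by the scalar $m(c)$ on $\bfA(c)$, for an \emph{arbitrary} connected W*-algebra object $\bfA$, and you attribute this to \cite[\S 5.5]{MR3687214}. That reference supplies only the definitions of the analytic properties and (together with \cite{MR3406647}) the correspondence between cp-multipliers of $\cC$ and ucp morphisms of the symmetric enveloping algebra object $\bfS\in\Vec(\cC\boxtimes\cC^{\op})$; the statement for a general $\bfA$ is not there, and it is precisely the non-formal content of the paper's proof. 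Naturality and unitality of $\theta^m$ are immediate, but complete positivity is not: one must show that for every $c\in\cC$ and $f\in\bfA(\overline c\otimes c)$ the element $\theta^m_{\overline c\otimes c}(f^*\circ f)$ is positive in the module category $\cM_\bfA$. The paper does this by a graphical computation exhibiting $\theta^m_{\overline c\otimes c}(f^*\circ f)$ as the manifestly positive diagram for $f^*\circ f$ with the morphism $m(\coev_c\circ\coev_c^*)$ inserted, which is positive because $\coev_c\circ\coev_c^*\ge 0$ and $m$ is a cp-multiplier of $\cC$. Without this verification your maps $\Psi_i$ are only bounded scalar multipliers, not known to be ucp, and the transported maps $\Phi_i:M\to M$ are not known to be completely positive either, so the argument does not close.
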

\noindent
We give explicit examples coming from planar algebras in Section \ref{sec:PlanarAlgebras}.

Finally, our equivalence also gives a Galois correspondence in the spirit of \cite{MR1622812,MR2561199}.
\begin{coralpha}
There is a bijective correspondence between intermediate subfactors $N\subseteq P \subseteq M$ of an extremal irreducible discrete inclusion supported on $\bfH(\cC)$ and intermediate connected \emph{W*}-algebra objects $\mathbf{1} \subseteq \bfB \subseteq \bfA$ in $\Vec(\cC)$.
\end{coralpha}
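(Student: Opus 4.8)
The plan is to show that the two mutually inverse constructions underlying Theorem~\ref{thm:Main} restrict to an order-preserving bijection between the intermediate subfactors of $(N\subseteq M,E)$ and the intermediate connected W*-algebra objects $\mathbf{1}\subseteq\bfB\subseteq\bfA$ in $\Vec(\cC)$. First I would verify that whenever $N\subseteq P\subseteq M$ is an intermediate subfactor, $(N\subseteq P,E|_P)$ is again an extremal irreducible discrete inclusion supported on $\bfH(\cC)$. Indeed $E|_P\colon P\to N$ is a faithful normal conditional expectation; the identity $\langle p_1,p_2\rangle_{L^2(P,\phi|_P)}=\phi(p_2^*p_1)=\langle p_1,p_2\rangle_{L^2(M,\phi)}$, with $\phi=\tau\circ E$, exhibits $L^2(P,\phi|_P)$ as an $N-N$ sub-bimodule of $L^2(M,\phi)$, so discreteness and extremality are inherited and every simple summand of $L^2(P)$ is already a summand of $L^2(M)$, hence lies in $\bfH(\cC)$; irreducibility forces $Z(P)\subseteq N'\cap M=\bbC$, so $P$ is a factor. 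Theorem~\ref{thm:Main} then attaches to $P$ the connected W*-algebra object $\bfB_P(c):=\Hom_{N-N}(\bfH(c),L^2(P,\phi|_P))$, and the isometric $N-N$ bilinear (indeed $P-P$ bilinear) inclusion $\iota\colon L^2(P)\hookrightarrow L^2(M)$, which intertwines the two multiplication maps $L^2\boxtimes_N L^2\to L^2$, induces by post-composition an objectwise injective unital $*$-homomorphism $\bfB_P\hookrightarrow\bfA$ of W*-algebra objects. So $P\mapsto\bfB_P$ lands among intermediate connected W*-algebra objects.

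Conversely, given $\mathbf{1}\subseteq\bfB\subseteq\bfA$, I would set $P_\bfB:=|\bfB|_\bfH$, the realization of $\bfB$ along $\bfH$. By Theorem~\ref{thm:Main} (applied to the rigid C*-subcategory of $\cC$ generated by $\bfB$, which is harmless since everything still lives in $\cC$), $N\subseteq P_\bfB$ is an extremal irreducible discrete inclusion supported on $\bfH(\cC)$. Functoriality of the realization sends the monomorphism $\bfB\hookrightarrow\bfA$ to an injective normal unital $*$-homomorphism $P_\bfB\to|\bfA|_\bfH$ fixing $N$; composing with the canonical reconstruction isomorphism $|\bfA|_\bfH\cong M$ of Theorem~\ref{thm:Main} realizes $P_\bfB$ as an intermediate subfactor $N\subseteq P_\bfB\subseteq M$. (Injectivity can also be checked on $L^2$: each $\bfB(c)\hookrightarrow\bfA(c)$ is isometric, and the coends computing $L^2(P_\bfB)$ and $L^2(M)$ are built from these, so $L^2(P_\bfB)\hookrightarrow L^2(M)$ is isometric.)

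Both assignments are visibly monotone: $P_1\subseteq P_2$ gives $L^2(P_1)\subseteq L^2(P_2)$ hence $\bfB_{P_1}\subseteq\bfB_{P_2}$, and $\bfB_1\subseteq\bfB_2$ gives $|\bfB_1|_\bfH\subseteq|\bfB_2|_\bfH$, so it suffices to see they are mutually inverse. This I would extract from the two halves of Theorem~\ref{thm:Main} being quasi-inverse, which provides canonical isomorphisms $\Hom_{N-N}(\bfH(-),L^2(|\bfB|_\bfH))\cong\bfB$ and $\bigl|\,\Hom_{N-N}(\bfH(-),L^2(P))\,\bigr|_\bfH\cong P$. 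The one point that genuinely needs care --- and which I expect to be the main obstacle --- is compatibility of these reconstruction isomorphisms with the inclusions into $\bfA$ and into $M$: that the isomorphism $|\bfA|_\bfH\cong M$ carries the sub-realization $|\bfB|_\bfH$ exactly onto the intermediate subfactor $P_\bfB\subseteq M$, and dually that the inclusion $\bfB_P\hookrightarrow\bfA$ matches $\bfB\hookrightarrow\bfA$ under the first isomorphism. This is naturality of the equivalence of Theorem~\ref{thm:Main} with respect to the monomorphisms in play; concretely it reduces, via the remark following Theorem~\ref{thm:Main}, to checking that intermediate subfactors $N\subseteq P\subseteq M$, intermediate connected W*-algebra objects $\mathbf{1}\subseteq\bfB\subseteq\bfA$, and non-full cyclic $\cC$-module W*-subcategories of $\Bim(N,M)$ with basepoint $L^2(M,\phi)$ all correspond under a single dictionary, together with the (purely internal) fact that the realization of a $*$-homomorphism of W*-algebra objects is a $*$-homomorphism of von Neumann algebras and not merely a ucp morphism. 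Granting this, $P\mapsto\bfB_P$ is the asserted order isomorphism, in particular a bijection.
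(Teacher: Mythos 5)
Your proposal is correct and takes essentially the same route as the paper, which obtains the correspondence by showing that the equivalence of Theorem \ref{thm:Equivalence} restricts to the subcategories whose morphisms are unital $*$-algebra natural transformations on one side and normal unital $N-N$ bilinear state-preserving $*$-homomorphisms on the other, and then identifying intermediate objects with subobjects in these restricted categories. The details you supply --- inheritance of discreteness, irreducibility, extremality, and support on $\bfH(\cC)$ by an intermediate subfactor, injectivity checked at the level of $L^2$, and mutual inverseness via the natural isomorphisms $\delta$ and $\kappa$ --- are precisely the points the paper's terse proof leaves implicit.
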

\noindent
Indeed, our correspondence closely resembles \cite[Lem.~3.8 and Thm.~3.9]{MR1622812} in the setting of irreducible discrete inclusions of infinite factors.
See Section \ref{sec:Galois} for more details.

\paragraph{Acknowledgements}

The authors would like to thank
Marcel Bischoff,
Ben Hayes,
Andr\'{e} Heniques,
Henri Moscovici,
Brent Nelson,
Dimitri Shlyakhtenko,
Stefaan Vaes, and
Makoto Yamashita
for helpful conversations.
Corey Jones was supported by Discovery Projects `Subfactors and symmetries' DP140100732 and `Low dimensional categories' DP160103479 from the Australian Research Council.
David Penneys was supported by NSF grants DMS-1500387/1655912 and 1654159.

Part of this work was completed at the recent program on von Neumann algebras at the Hausdorff Institute of Mathematics.
The authors would like to thank the institute and the organizers for their hospitality.

The authors would like to thank the Isaac Newton Institute for Mathematical Sciences, Cambridge, for support and hospitality during the programme Operator Algebras: Subfactors and their Applications where work on this paper was undertaken.
This work was supported by EPSRC grant no EP/K032208/1.

\section{Background}

We refer the reader to \cite[\S 2.1-2.2]{MR3663592} and \cite[\S 2.1-2.3]{MR3687214} for background on rigid C*-tensor categories.
Of particular importance is their bi-involutive structure, consisting of two commuting involutions (the adjoint $*$ and conjugate $\overline{\,\cdot\,}$) together with coherence data.
As usual, we will suppress all associators, unitors, and the involutive structure morphisms.

We fix a rigid C*-tensor category $\cC$.
As in \cite[\S 2.4]{MR3687214}, $\Vec(\cC)$ denotes the involutive tensor category of linear functors $\cC^{\op} \to \Vec$ with linear transformations.
Note that $\Vec(\cC) = \ind(\cC^{\natural})$, where $\cC^{\natural}$ denotes the involutive tensor category obtained from $\cC$ by forgetting the adjoint.
As in \cite[\S 2.6]{MR3687214}, $\Hilb(\cC)$ denotes the bi-involutive tensor W*-category of linear dagger functors $\cC^{\op} \to \Hilb$ with bounded natural transformations.
Note that $\Hilb(\cC)$ is the Neshveyev-Yamashita unitary $\ind$-category of $\cC$ from \cite{MR3509018}.

\subsection{Graphical calculus}

The Yoneda embedding gives fully faithful inclusions $\cC^{\natural} \hookrightarrow \Vec(\cC)$ as involutive tensor categories and $\cC\hookrightarrow \Hilb(\cC)$ as bi-inovlutive tensor categories.
This allows us to use the graphical calculus for the tensor categories $\Vec(\cC)$ and $\Hilb(\cC)$ which extends the usual graphical calculus for $\cC^{\natural}$ and $\cC$ as described in \cite[\S2.5-2.6]{MR3687214}.
For $1,c\in \cC$, we denote their images in $\Vec(\cC)$ and $\Hilb(\cC)$ under the Yoneda embedding by $\mathbf{1}, \mathbf{c}\in \Vec(\cC)$.

For $\bfV\in \Vec(\cC)$, the Yoneda lemma gives a natural identification $\bfV(a) \cong \Hom_{\Vec(\cC)}(\mathbf{a}, \bfV)$ for $a\in\cC$.
Given $f\in \bfV(a)$, we denote the corresponding natural transformation $\mathbf{a} \Rightarrow \bfV$ by
\begin{equation}
\label{eq:Yoneda}
\bfV(a)\ni f 
\longleftrightarrow
\begin{tikzpicture}[baseline=-.1cm]
    \draw (0,.6) -- (0,-.6);
    \roundNbox{unshaded}{(0,0)}{.3}{0}{0}{$f$}
    \node at (.2,.5) {\scriptsize{$\bfV$}};
    \node at (.2,-.5) {\scriptsize{$\mathbf{a}$}};
\end{tikzpicture}
\in \Hom_{\Vec(\cC)}(\mathbf{a}, \bfV).
\end{equation}
Given $\psi \in \cC(b,a)$, we denote the map $\bfV(\psi):\bfV(a) \to \bfV(b)$ by adding a coupon labelled $\psi$ below the $f$ coupon in \eqref{eq:Yoneda}.
Similarly, a natural transformation $\theta:\bfV \Rightarrow \bfW$ is denoted by adding a coupon labelled $\theta$ above the $f$ coupon in \eqref{eq:Yoneda}.
We refer the reader to \cite[\S2.5-2.6]{MR3687214} for the graphical tensor product on $\Vec(\cC)$ and $\Hilb(\cC)$, and the graphical adjoint and inner products for $\Hilb(\cC)$.

In this article, we will need to simultaneously use the graphical calculus for $\Vec(\cC),\Hilb(\cC)$ and $\Vec(\cC^{\op}),\Hilb(\cC^{\op})$.
Here, $\cC^{\op}$ is the rigid C*-tensor category obtained from $\cC$ by taking the opposite morphisms.
As before, the Yoneda embedding gives us a graphical calculus for $\Vec(\cC^{\op})$ and $\Hilb(\cC^{\op})$, but we must now read our diagrams from \emph{top to bottom}.
For example, if $\bfH\in \Hilb(\cC^{\op})$, we draw the following diagram for the natural transformation $\mathbf{a} \Rightarrow \bfH$ corresponding to $\xi\in \bfH(a)$:
$$
\bfH(a)\ni \xi 
\longleftrightarrow
\begin{tikzpicture}[baseline=-.1cm]
    \draw (0,.6) -- (0,-.6);
    \roundNbox{unshaded}{(0,0)}{.3}{0}{0}{$\xi$}
    \node at (.2,.5) {\scriptsize{$\mathbf{a}$}};
    \node at (.2,-.5) {\scriptsize{$\bfH$}};
\end{tikzpicture}
\in \Hom_{\Hilb(\cC^{\op})}(\mathbf{a}, \bfH).
$$
We use similar notation for $\bfH(\psi)$ for $\psi\in \cC(b,a)$ and natural transformations $\theta: \bfH \Rightarrow \bfK$; we add the $\psi$ coupon above and the $\theta$ coupon below.

\begin{rem}
We deduce two important relations for morphisms in $\Hilb(\cC^{\op})$.
First, suppose $\eta \in \bfH(a)$, $\xi \in \bfH(b)$, and $\psi\in \cC(b,c)$, so that 
$$
\eta^* \circ \bfH(\psi)[\xi] 
=
\begin{tikzpicture}[baseline=-.1cm]
    \draw (0,1.6) -- (0,-1.6);
    \roundNbox{unshaded}{(0,1)}{.3}{0}{0}{$\psi$}
    \roundNbox{unshaded}{(0,0)}{.3}{0}{0}{$\xi$}
    \roundNbox{unshaded}{(0,-1)}{.3}{0}{0}{$\eta^*$}
    \node at (-.15,1.5) {\scriptsize{$\mathbf{c}$}};
    \node at (-.15,.5) {\scriptsize{$\mathbf{b}$}};
    \node at (-.15,-.5) {\scriptsize{$\bfH$}};
    \node at (-.15,-1.5) {\scriptsize{$\mathbf{a}$}};
\end{tikzpicture}
\in
\Hom_{\Hilb(\cC^{\op})}(\mathbf{c},\mathbf{a})
\cong
\cC^{\op}(c,a).
$$
Taking the opposite morphism in $\cC(a,c)$, we have
\begin{equation}
\label{eq:HilbC-op Relation1}
(\eta^* \circ \bfH(\psi)[\xi] )^{\op}
= 
(\eta^*\circ \xi \circ \psi^{\op})^{\op} 
= 
\psi\circ (\eta^*\circ \xi)^{\op}
\in \cC(a,c).
\end{equation}
Now suppose that $\phi \in \cC(a,b)$, $\psi\in \cC(b,c)$, $\eta\in\bfH(d)$, and $\xi \in \bfH(e)$, so that
$$
(\psi^{\op}\otimes \eta^*)
\circ
(\phi^{\op}\otimes \xi)
=
\begin{tikzpicture}[baseline=-.1cm]
    \draw (0,1.1) -- (0,-1.1);
    \draw (-.8,1.1) -- (-.8,-1.1);
    \roundNbox{unshaded}{(0,.5)}{.3}{0}{0}{$\xi$}
    \roundNbox{unshaded}{(0,-.5)}{.3}{0}{0}{$\eta^*$}
    \roundNbox{unshaded}{(-.8,.5)}{.3}{0}{0}{$\phi$}
    \roundNbox{unshaded}{(-.8,-.5)}{.3}{0}{0}{$\psi$}
    \node at (-.95,-1) {\scriptsize{$\mathbf{a}$}};
    \node at (-.15,-1) {\scriptsize{$\mathbf{d}$}};
    \node at (-.95,0) {\scriptsize{$\mathbf{b}$}};
    \node at (-.2,0) {\scriptsize{$\bfH$}};
    \node at (-.95,1) {\scriptsize{$\mathbf{c}$}};
    \node at (-.15,1) {\scriptsize{$\mathbf{e}$}};
\end{tikzpicture}
\in 
\Hom_{\Hilb(\cC^{\op})}(\mathbf{c}\otimes \mathbf{e},\mathbf{a}\otimes \mathbf{d})
\cong
\cC^{\op}(c\otimes e,a\otimes d).
$$
Taking the opposite morphism in $\cC(a\otimes d, c\otimes e)$, we have
\begin{equation}
\label{eq:HilbC-op Relation2}
((\psi^{\op}\otimes \eta^*)
\circ
(\phi^{\op}\otimes \xi))^{\op}
=
(\phi\circ \psi)\otimes (\eta^*\circ \xi)^{\op}
\in \cC(a\otimes d, c\otimes e).
\end{equation}
\end{rem}

\subsection{Algebra objects in rigid C*-tensor categories}

We will be concerned with algebra objects $(\bfA, m, i) \in \Vec(\cC)$, which consist of an object $\bfA\in \Vec(\cC)$ together with an associative multiplication $m: \bfA \otimes \bfA \to \bfA$ and unit morphism $i: \mathbf{1} \to \bfA$.
By \cite[Prop.~3.3]{MR3687214}, algebra objects in $\Vec(\cC)$ are equivalent to linear lax monoidal functors $\cC^{\op} \to \Vec$.
By \cite[Thm.~3.8]{MR3687214}, we have an equivalence of categories between algebra objects in $\Vec(\cC)$ and \emph{cyclic} $\cC$-module categories $(\cM,m)$ with basepoint, whose objects are exactly the $c\otimes m$ for $c\in\cC$.
We get a cyclic module category $(\cM,m)$ from an algebra $\bfA$ by taking the category of free right $\bfA$-modules in $\Vec(\cC)$, and we recover an algebra from a cyclic $\cC$-module category by taking the internal hom: $\bfA(c) := \cM(c \otimes m, m)$.

We defined a $*$-structure for an algebra object $\bfA\in \Vec(\cC)$ in \cite[\S3.3]{MR3687214}, along with several equivalent characterizations.
Of particular importance is the equivalence of categories between $*$-algebra objects in $\Vec(\cC)$ and cyclic $\cC$-module dagger categories $(\cM,m)$.
It is through this equivalence that one may elegantly define the notions of C*/W*-algebra objects in $\Vec(\cC)$.
We now rapidly recall these definitions.

\begin{defn}
A $*$-algebra object is an algebra object $\bfA\in \Vec(\cC)$ together with a conjugate linear natural transformation $j: \bfA \Rightarrow \bfA$ which satisfies involutive, unital, and monoidal axioms.
This means that for all $c\in \cC$, we have a conjugate linear map $j_c : \bfA(c) \to \bfA(\overline{c})$ which is conjugate natural, and which is compatible with the conjugation on $\Vec(\cC)$, together with the multiplication and unit of $\bfA$.
We refer the reader to \cite[\S3.3]{MR3687214} for more details.
\end{defn}

We recall that being a C* or W*-algebra is a property, and not extra structure, of a complex $*$-algebra.
A $*$-algebra is a C*-algebra if and only if it admits a faithful representation on a Hilbert space whose image is a C*-algebra.
A $*$-algebra is a W*-algebra if and only if it is a C*-algebra with a predual.
We can alternatively characterize these properties in terms of the spectral radius \cite[Rem.~2.2]{MR3687214}.
In the same vein, being a C* or W*-category is a property of a dagger category, and not extra structure.

\begin{defn}
A $*$-algebra object $\bfA\in \Vec(\cC)$ is a C*/W*-algebra object if and only if its corresponding cyclic $\cC$-module dagger category $(\cM,m)$ is a C*/W*-category respectively.
\end{defn}

In this article, we study the simplest W*-algebra objects: the connected W*-algebra objects.

\begin{defn}
An algebra object $\bfA\in \Vec(\cC)$ is \emph{connected} if $\dim( \bfA(1_{\cC}))=1$.
\end{defn}

By \cite[Cor.~4.6]{MR3687214}, a connected C*-algebra object is \emph{locally finite}, which means that $\dim(\bfA(a))<\infty$ for all $a\in\cC$.
We will give a constructive proof of this fact in Proposition \ref{prop:DimensionBound}, together with an explicit upper bound on the dimension of $\bfM(a)$ using modular theory for connected algebras.
Just as a finite dimensional $*$-algebra is C* if and only if it is W*, a connected $*$-algebra object is C* if and only if it is W*.

The \emph{base algebra} of $\bfA$ is the honest C*-algebra $\bfA(1_\cC)$.
We have canonical $\bfA(1_\cC)$-valued inner products \cite[Def.~4.11]{MR3687214} on each $\bfA(a)$ given by
\begin{equation}
\label{eq:A(1)-InnerProduct}
{}_a\langle f, g\rangle
=
\begin{tikzpicture}[baseline = -.1cm]
    \draw (-.5,-.3) arc (-180:0:.5cm);
    \draw (-.5,.3) arc (180:0:.5cm);
    \filldraw (0,.8) circle (.05cm);
    \draw (0,.8) -- (0,1.2);
    \roundNbox{unshaded}{(-.5,0)}{.3}{0}{0}{$f$}
    \roundNbox{unshaded}{(.5,0)}{.3}{.2}{.2}{$j_a(g)$}
    \node at (-.65,-.5) {\scriptsize{$\mathbf{a}$}};
    \node at (-.65,.5) {\scriptsize{$\bfA$}};
    \node at (.65,-.5) {\scriptsize{$\overline{\mathbf{a}}$}};
    \node at (.65,.5) {\scriptsize{$\bfA$}};
    \node at (.15,1) {\scriptsize{$\bfA$}};
\end{tikzpicture}
\qquad\qquad
\langle f| g\rangle_{a}
=
\begin{tikzpicture}[baseline = -.1cm, xscale=-1]
    \draw (-.5,-.3) arc (-180:0:.5cm);
    \draw (-.5,.3) arc (180:0:.5cm);
    \filldraw (0,.8) circle (.05cm);
    \draw (0,.8) -- (0,1.2);
    \roundNbox{unshaded}{(-.5,0)}{.3}{0}{0}{$g$}
    \roundNbox{unshaded}{(.5,0)}{.3}{.2}{.2}{$j_a(f)$}
    \node at (-.65,-.5) {\scriptsize{$\mathbf{a}$}};
    \node at (-.65,.5) {\scriptsize{$\bfA$}};
    \node at (.65,-.5) {\scriptsize{$\overline{\mathbf{a}}$}};
    \node at (.65,.5) {\scriptsize{$\bfA$}};
    \node at (.15,1) {\scriptsize{$\bfA$}};
\end{tikzpicture}
\end{equation}
A \emph{state} on a C*-algebra object $\bfA \in \Vec(\cC)$ is a state on $\bfA(1_\cC)$.
For a connected W*-algebra object $\bfA\in \Vec(\cC)$, $\bfA(1_\cC) \cong \bbC$, and thus there is a canonical state which maps $i_\bfA \in \bfA(1_\cC)$ to $1_\bbC$.
Thus the inner products \eqref{eq:A(1)-InnerProduct} are genuine inner products on $\bfA(a)$, and each $\bfA(a)$ is a finite dimensional Hilbert space.
In general, these inner products will \emph{not} be equal, and their difference will involve a modular operator.
We perform this analysis in the next section.
In the event that the inner products \eqref{eq:A(1)-InnerProduct} are equal for all $a\in \cC$, we call $\bfA$ \emph{tracial}.

\begin{defn}
\label{defn:RightL2}
Given a connected W*-algebra object $\bfA\in \Vec(\cC)$, we define
$L^2(\bfA)\in \Hilb(\cC)$ to be the Hilbert space object obtained from $\bfA$ by endowing each $\bfA(a)$ with the right inner product $\langle \cdot|\cdot\rangle_a$ in \eqref{eq:A(1)-InnerProduct}.
\end{defn}

It is important to remember that there are really two choices for $L^2(\bfA)$, and they are only the same when $\bfA$ is tracial.
As in \cite[\S4.5]{MR3687214}, we have a bounded unital $*$-algebra representation $\pi: \bfA \Rightarrow \bfB(L^2(\bfA))$, where $\bfB(L^2(\bfA))$ is the W*-algebra object corresponding to the cyclic $\cC$-module W*-subcategory of $\Hilb(\cC)$ generated by $L^2(\bfA)$.
Using this $*$-algebra representation, we see that the inner products \eqref{eq:A(1)-InnerProduct} are given in $\Hilb(\cC)$ by the equations
\begin{equation}
\label{eq:InnerProductsInHilbC}
\begin{tikzpicture}[baseline=-.1cm]
	\draw (-.15,-.5) -- (-.15,.5);
	\draw (.15,-1.2) -- (.15,1.2);
	\roundNbox{unshaded}{(0,.5)}{.3}{.3}{.3}{$\pi_a(f)$}
	\roundNbox{unshaded}{(0,-.5)}{.3}{.3}{.3}{$\pi_a(g)^*$}
	\node at (-.3,0) {\scriptsize{$\mathbf{a}$}};
	\node at (.6,0) {\scriptsize{$L^2(\bfA)$}};
	\node at (.6,1) {\scriptsize{$L^2(\bfA)$}};
	\node at (.6,-1) {\scriptsize{$L^2(\bfA)$}};
\end{tikzpicture}
=
{}_a\langle f,g\rangle \id_{L^2(\bfA)}
\qquad\qquad
\begin{tikzpicture}[baseline=-.1cm]
	\draw (-.3,.8) arc (0:180:.3cm) -- (-.9,-.8) arc (-180:0:.3cm);
	\draw (.3,-1.2) -- (.3,1.2);
	\roundNbox{unshaded}{(0,.5)}{.3}{.3}{.3}{$\pi_a(g)$}
	\roundNbox{unshaded}{(0,-.5)}{.3}{.3}{.3}{$\pi_a(f)^*$}
	\node at (-1.1,0) {\scriptsize{$\overline{\mathbf{a}}$}};
	\node at (.75,0) {\scriptsize{$L^2(\bfA)$}};
	\node at (.75,1) {\scriptsize{$L^2(\bfA)$}};
	\node at (.75,-1) {\scriptsize{$L^2(\bfA)$}};
\end{tikzpicture}
=
\langle f|g\rangle_a \id_{L^2(\bfA)}.
\end{equation}

Recall that an object $\bfF\in \Vec(\cC)$ (respectively $\Hilb(\cC)$) is called \emph{compact} if $\bfF \in \cC^{\natural}\subset \Vec(\cC)$ (respectively in $\cC\subset \Hilb(\cC)$).

\begin{prop}
\label{prop:Compact}
Every compact connected \emph{W*}-algebra object $\bfA\in \cC^{\natural}\subset \Vec(\cC)$ is tracial.
\end{prop}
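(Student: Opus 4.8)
The plan is to identify a compact connected W*-algebra object with a connected Q-system (unitary Frobenius algebra) in $\cC$, to rewrite the two $\bfA(1_\cC)$-valued inner products of \eqref{eq:A(1)-InnerProduct} as the two partial traces of an endomorphism of $A$ with respect to the self-duality provided by the Frobenius structure, and then to observe that connectedness forces that self-duality to be a \emph{standard} solution of the conjugate equations — whence the two partial traces agree by the sphericality of standard solutions in a rigid C*-tensor category. Concretely, first recall (from \cite{MR3687214}; cf.\ also \cite{1707.02155}) that a compact $*$-algebra object $\bfA = A\in\cC^{\natural}\subset\Vec(\cC)$ is the same data as a Q-system $(A,m,i)$ in $\cC$: the multiplication and unit $m\in\cC(A\otimes A,A)$, $i\in\cC(1_\cC,A)$, together with their categorical adjoints $m^*,i^*$, satisfy associativity, unitality, and the Frobenius relations $(m\otimes\id_A)(\id_A\otimes m^*) = m^*m = (\id_A\otimes m)(m^*\otimes\id_A)$; and a compact $*$-algebra object is automatically a W*-algebra object, since its module dagger category embeds into $\cC$. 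In these terms the conjugate-linear structure maps take the form $j_a(h) = (\ev_a\otimes\id_A)\circ(\id_{\overline a}\otimes h^*\otimes\id_A)\circ(\id_{\overline a}\otimes m^*i)$ for $h\in\bfA(a) = \cC(a,A)$.

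Next I would substitute this description into \eqref{eq:A(1)-InnerProduct} and simplify graphically: the $\ev_a$ appearing in $j_a$ meets the $\coev_a$ (respectively its mirror image) coming from the cap in \eqref{eq:A(1)-InnerProduct}, and a zig-zag collapses both diagrams, leaving — with $\phi\colon\cC(1_\cC,A) = \bbC i\to\bbC$ the canonical state and $\nu\,\id_{1_\cC} := i^*i$ —
\[
\nu\cdot{}_a\langle f,g\rangle = (i^*m)\circ\bigl((fg^*)\otimes\id_A\bigr)\circ(m^*i),
\qquad
\nu\cdot\langle f|g\rangle_a = (i^*m)\circ\bigl(\id_A\otimes(fg^*)\bigr)\circ(m^*i)
\]
(up to conjugates and the symmetry $\sigma_{A,A}$ in the mirrored computation). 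In other words, both inner products are the two partial traces of $fg^*\in\cC(A,A)$ with respect to the self-duality datum $R_A := m^*i\colon 1_\cC\to A\otimes A$, whose ``evaluation'' $i^*m$ satisfies the zig-zag relations precisely because of the Frobenius relations. So it remains to show that these two partial traces coincide.

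Here connectedness enters. Using the Frobenius relation $m^*m = (\id_A\otimes m)(m^*\otimes\id_A)$ together with associativity, $mm^*\in\cC(A,A)$ is a morphism of right $A$-modules $A\to A$; but the free–forgetful adjunction gives $\cC_A(A,A)\cong\cC(1_\cC,A)$ for the category $\cC_A$ of right $A$-modules, and this is $\bbC i = \bbC\,\id_A$ because $\bfA$ is connected. Hence $mm^* = \mu\,\id_A$ for a scalar $\mu>0$ (positivity and nonvanishing being clear), so $R_A^*R_A = i^*mm^*i = \mu\nu\,\id_{1_\cC}$, and applying the conjugation functor $\overline{R_A}^{\,*}\,\overline{R_A} = \overline{R_A^*R_A} = \mu\nu\,\id_{1_\cC}$ as well. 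Thus $(R_A,\overline{R_A})$ is a standard solution of the conjugate equations for $A$, and by the sphericality of standard solutions in a rigid C*-tensor category \cite{MR1444286,MR3663592} the left and right partial traces of any element of $\cC(A,A)$ agree. Therefore ${}_a\langle f,g\rangle = \langle f|g\rangle_a$ for every $a\in\cC$ and $f,g\in\bfA(a)$, i.e.\ $\bfA$ is tracial.

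The main obstacle I expect is the graphical simplification in the second paragraph: one must track the conjugation functor $\overline{\,\cdot\,}$, the unitors, and (in the mirrored diagram) the swap $\sigma_{A,A}$ carefully enough that the two resulting scalars are genuinely exhibited as the two partial traces of the \emph{same} endomorphism with respect to the \emph{same} standard solution. It is also worth pinning down exactly where connectedness is used: it is what upgrades $mm^*$ from an element of $\cC_A(A,A)$ to a scalar, which is precisely the condition making $(R_A,\overline{R_A})$ standard rather than merely a duality. (For a non-connected compact $*$-algebra object — for instance the Q-system of a non-extremal finite-index subfactor — $mm^*$ need not be scalar, $R_A^*R_A$ need not equal $\overline{R_A}^{\,*}\,\overline{R_A}$, and the object need not be tracial, so connectedness cannot be dropped.)
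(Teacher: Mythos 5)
Your overall strategy --- pass to the Q-system picture, exhibit the two inner products of \eqref{eq:A(1)-InnerProduct} as the two partial traces of $fg^*\in\cC(A,A)$ with respect to the Frobenius self-duality $R_A=m^*i$, and conclude from standardness of that self-duality --- is a legitimate alternative route, and the reduction is correct as far as it goes: traciality of $\bfA$ is indeed equivalent to the balancing condition \eqref{eq:BalancingCondition} for the pair $(R_A,\overline{R_A})$. The gap is in your proof of standardness. The condition $R_A^*R_A=\overline{R_A}^{\,*}\overline{R_A}$ is necessary but not sufficient when $A$ is not simple, and $A$ is generically a large direct sum $\bigoplus_i n_i a_i$. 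The two maps $f\mapsto R_A^*(\id_A\otimes f)R_A$ and $f\mapsto \overline{R_A}^{\,*}(f\otimes\id_A)\overline{R_A}$ are faithful positive traces on $\cC(A,A)\cong\bigoplus_i M_{n_i}(\bbC)$; equality of their values at $f=\id_A$ only says they have the same total mass, whereas standardness is the statement that they agree block by block. (One produces non-standard solutions with $\|R\|=\|\overline R\|$ for a non-simple object by rescaling a standard solution by $\lambda_i$ and $\lambda_i^{-1}$ on each simple summand and tuning the $\lambda_i$ so the two total masses coincide.) Worse, ``the two partial traces with respect to $R_A$ agree on all of $\cC(A,A)$'' is, after your own reduction, exactly the statement ${}_a\langle f,g\rangle=\langle g|f\rangle_a$ for all $a$ and all $f,g$ --- i.e.\ it is equivalent to the traciality you are trying to prove. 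So the argument is circular precisely at its crux. It is true that the Frobenius duality of a connected Q-system is standard (equivalently, that irreducible finite-index inclusions are extremal), but that is a genuine theorem whose proof uses the algebra structure beyond speciality $mm^*=\mu\,\id_A$ and equality of the two norms; you would need to supply it.

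A secondary concern is your opening step: the identification of a compact connected W*-algebra object with a Q-system, including the explicit formula for $j_a$ in terms of $m^*i$, is the content of the authors' follow-up article and is not available for free here. The paper's own proof sidesteps all of this: since $L^2(\bfA)\in\cC$ is dualizable, one closes up each of the two diagrams in \eqref{eq:InnerProductsInHilbC} with a standard solution for $L^2(\bfA)$, obtaining ${}_a\langle f,g\rangle\,d_{L^2(\bfA)}$ and $\langle g|f\rangle_a\,d_{L^2(\bfA)}$, and the two closed diagrams coincide by sphericality of $\cC$ applied to the object $L^2(\bfA)$; dividing by $d_{L^2(\bfA)}>0$ finishes. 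That argument uses only the pre-existing standard solutions of $\cC$ and never the (yet-to-be-established) standardness of the duality carried by $A$ itself.
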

\begin{proof}
Note that $L^2(\bfA) \in \cC \subset \Hilb(\cC)$ is compact, so by \eqref{eq:InnerProductsInHilbC}, for all $f,g\in\bfA(a)$, 
\begin{equation*}
\langle g|f\rangle_a d_{L^2(\bfA)}
=
\begin{tikzpicture}[baseline=-.1cm]
	\draw (-.3,.8) arc (0:180:.2cm) -- (-.7,-.8) arc (-180:0:.2cm);
	\draw (0,-.5) -- (0,.5);
	\draw (.3,.8) .. controls ++(90:.5cm) and ++(90:.5cm) .. (-1.1,.8)  -- (-1.1,-.8) .. controls ++(270:.5cm) and ++(270:.5cm) .. (.3,-.8);
	\roundNbox{unshaded}{(0,.5)}{.3}{.3}{.3}{$\pi_a(g)^*$}
	\roundNbox{unshaded}{(0,-.5)}{.3}{.3}{.3}{$\pi_a(f)$}
	\node at (-.9,0) {\scriptsize{$\overline{\mathbf{a}}$}};
	\node at (.45,0) {\scriptsize{$L^2(\bfA)$}};
	\node at (.75,1) {\scriptsize{$L^2(\bfA)$}};
	\node at (.75,-1) {\scriptsize{$L^2(\bfA)$}};
\end{tikzpicture}
=
\begin{tikzpicture}[baseline=-.1cm]
	\draw (-.15,-.5) -- (-.15,.5);
	\draw (.15,-.5) -- (.15,.5);
	\draw (0,.8) arc (0:180:.4cm) -- (-.8,-.8) arc (-180:0:.4cm);
	\roundNbox{unshaded}{(0,.5)}{.3}{.3}{.3}{$\pi_a(f)$}
	\roundNbox{unshaded}{(0,-.5)}{.3}{.3}{.3}{$\pi_a(g)^*$}
	\node at (-.3,0) {\scriptsize{$\mathbf{a}$}};
	\node at (.6,0) {\scriptsize{$L^2(\bfA)$}};
	\node at (.45,1) {\scriptsize{$L^2(\bfA)$}};
	\node at (.45,-1) {\scriptsize{$L^2(\bfA)$}};
\end{tikzpicture}
=
{}_a\langle f, g\rangle d_{L^2(\bfA)}.
\qedhere
\end{equation*}
\end{proof}

\subsection{Basic Modular theory for connected W*-algebra objects}
\label{sec:ModularTheory}

We now discuss the basic modular theory to relate the left and right inner products on the $\bfA(a)$, along with the analog of Connes modular spectrum.

Recall that for a von Neumann algebra $M$ together with a faithful normal semi-finite weight $\phi$, the map $m\Omega \mapsto m^*\Omega$ is closable with closure $S_\phi = J_\phi \Delta^{1/2}_\phi$ where $\Delta_\phi = S_\phi^*S_\phi$.
Recall that Connes' modular spectrum is given by
$$
S(M) := \bigcap \set{\Spec(\Delta_\phi)}{\phi \text{ is a faithful normal semi-finite weight on }M}.
$$
The centralizer of a weight $\phi$ on $M$ is the set 
$$
M^\phi = \set{x\in M}{\sigma_t^\phi(x) =x \text{ for all }t\in\bbR}.
$$
If we have a faithful normal semi-finite weight $\phi$ on $M$ such that $M^\phi$ is factor, then $S(M) = \Spec(\Delta_\phi)$ by \cite[Cor.~3.2.7(a)]{MR0341115}.
A factor $M$ is called type 
$$
\begin{cases}
{\rm III}_0
&\text{if } S(M) = \{0,1\}
\\
{\rm III}_\lambda
&\text{if } S(M) = \{0\}\cup \lambda^\bbZ \text{ for }0<\lambda < 1
\\
{\rm III}_1
&\text{if } S(M) = [0,\infty).
\end{cases}
$$

Now suppose $\bfA$ is a connected W*-algebra object and recall $L^2(\bfA)$ has the right inner product.
For $a\in \cC$, we define a conjugate linear map $S_a : L^2(\bfA)(a) \to L^2(\bfA)(\overline{a})$ by $S_a(f) = j_a(f)$.
Since $L^2(\bfA)(a)$ is finite dimensional, $S_a$ is a bounded conjugate-linear operator, and clearly $S_a^{-1} = S_{\overline{a}}$.
We define $\Delta_a = S_a^* S_a$ which is a bounded positive operator on the finite dimensional Hilbert space $L^2(\bfA)(a)$.
Since $S_a$ and $S_a^*$ are conjugate natural, $\Delta: L^2(\bfA) \Rightarrow L^2(\bfA)$ is a natural transformation, but it might \emph{not} be a bounded natural transformation, so we cannot view it in $\End_{\Hilb(\cC)}(L^2(\bfA))$.
We calculate that for all $f,g\in \bfA(a)$,
\begin{equation}
\label{eq:ModularOperator}
{}_a\langle f, g\rangle
=
\begin{tikzpicture}[baseline = -.1cm]
    \draw (-.5,-.3) arc (-180:0:.5cm);
    \draw (-.5,.3) arc (180:0:.5cm);
    \filldraw (0,.8) circle (.05cm);
    \draw (0,.8) -- (0,1.2);
    \roundNbox{unshaded}{(-.5,0)}{.3}{0}{0}{$f$}
    \roundNbox{unshaded}{(.5,0)}{.3}{.2}{.2}{$j_a(g)$}
    \node at (-.65,-.5) {\scriptsize{$\mathbf{a}$}};
    \node at (-.65,.5) {\scriptsize{$\bfA$}};
    \node at (.65,-.5) {\scriptsize{$\overline{\mathbf{a}}$}};
    \node at (.65,.5) {\scriptsize{$\bfA$}};
    \node at (.15,1) {\scriptsize{$\bfA$}};
\end{tikzpicture}
=
\begin{tikzpicture}[baseline = -.1cm]
    \draw (-.5,-.3) arc (-180:0:.5cm);
    \draw (-.5,.3) arc (180:0:.5cm);
    \filldraw (0,.8) circle (.05cm);
    \draw (0,.8) -- (0,1.2);
    \roundNbox{unshaded}{(-.5,0)}{.3}{1}{.2}{$j_{\overline{a}}(j_{a}(f))$}
    \roundNbox{unshaded}{(.5,0)}{.3}{.1}{.3}{$j_a(g)$}
    \node at (-.65,-.5) {\scriptsize{$\overline{\overline{\mathbf{a}}}$}};
    \node at (-.65,.5) {\scriptsize{$\bfA$}};
    \node at (.65,-.5) {\scriptsize{$\overline{\mathbf{a}}$}};
    \node at (.65,.5) {\scriptsize{$\bfA$}};
    \node at (.15,1) {\scriptsize{$\bfA$}};
\end{tikzpicture}
=
\langle S_a f| S_a g\rangle_{\overline{a}}
=
\langle g| \Delta_a f\rangle_{a}.
\end{equation}
This proves the following result.

\begin{cor}
The connected \emph{W*}-algebra object $\bfA$ is tracial if and only if $\Delta$ is the identity natural tansformation.
\end{cor}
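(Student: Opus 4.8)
The plan is to read off the equivalence directly from the chain of equalities in \eqref{eq:ModularOperator}. That display establishes, for every $a \in \cC$ and all $f,g \in \bfA(a)$, the identity ${}_a\langle f, g\rangle = \langle g \,|\, \Delta_a f\rangle_a$, where $\Delta_a = S_a^* S_a$ is a positive operator on the finite-dimensional Hilbert space $L^2(\bfA)(a)$ (finite-dimensionality coming from connectedness via \cite[Cor.~4.6]{MR3687214} / Proposition \ref{prop:DimensionBound}). Recall from the discussion after \eqref{eq:A(1)-InnerProduct} that $\bfA$ is \emph{tracial} precisely when the left inner product ${}_a\langle\,\cdot\,,\,\cdot\,\rangle$ and the right inner product $\langle\,\cdot\,|\,\cdot\,\rangle_a$ agree on $\bfA(a)$ for every $a \in \cC$.

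First I would treat the easy direction: if $\Delta$ is the identity natural transformation, then $\Delta_a = \id_{L^2(\bfA)(a)}$ for all $a$, so \eqref{eq:ModularOperator} immediately gives ${}_a\langle f,g\rangle = \langle g\,|\,f\rangle_a$ for all $f,g \in \bfA(a)$; since the right inner product is symmetric in the sense that $\langle g\,|\,f\rangle_a = \overline{\langle f\,|\,g\rangle_a}$ and the left inner product satisfies the matching conjugate-symmetry, this says the two inner products coincide, i.e.\ $\bfA$ is tracial. Conversely, suppose $\bfA$ is tracial. Then for each fixed $a$ we have $\langle g\,|\,\Delta_a f\rangle_a = {}_a\langle f,g\rangle = \langle g\,|\,f\rangle_a$ for all $f,g \in L^2(\bfA)(a)$ (the equality of inner products being exactly triviality), and since $\langle\,\cdot\,|\,\cdot\,\rangle_a$ is a nondegenerate inner product on the finite-dimensional space $L^2(\bfA)(a)$, we may cancel the common slot $g$ to conclude $\Delta_a f = f$ for all $f$, i.e.\ $\Delta_a = \id$. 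As $a$ was arbitrary, $\Delta$ is the identity natural transformation.

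There is essentially no obstacle here; the only point requiring a word of care is the conjugate-linearity bookkeeping — one must be sure which argument of ${}_a\langle\,\cdot\,,\,\cdot\,\rangle$ is linear and which of $\langle\,\cdot\,|\,\cdot\,\rangle_a$ is linear, so that the cancellation argument is applied in the linear variable and the identity $\Delta_a = \id$ genuinely follows rather than merely $\Delta_a$ being unitary or self-adjoint. Since $\Delta_a = S_a^* S_a$ is automatically positive, and a positive operator on a finite-dimensional Hilbert space satisfying $\langle g\,|\,\Delta_a f\rangle = \langle g\,|\,f\rangle$ for all $f,g$ must equal the identity, this is immediate once the slots are correctly identified. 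The whole proof is two lines once \eqref{eq:ModularOperator} is in hand, so I would simply cite that display and the definition of tracial.
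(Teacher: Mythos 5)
Your argument is exactly the paper's: the corollary is stated immediately after the display \eqref{eq:ModularOperator} with the remark ``This proves the following result,'' i.e.\ the intended proof is precisely to read ${}_a\langle f,g\rangle=\langle g\,|\,\Delta_a f\rangle_a$ in both directions and use nondegeneracy of $\langle\,\cdot\,|\,\cdot\,\rangle_a$ to cancel in the linear slot. Your extra care with the conjugate-linearity bookkeeping is correct and the proof is complete.
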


Using the modular operator $\Delta_a$, we prove a constructive bound on $\dim(\bfA(a))$ for $a\in\cC$ in terms of $d_a^2$.
Our proof is a translation of the argument in \cite[p.~39]{MR1622812} to the language of connected W*-algebra objects.

\begin{prop}
\label{prop:DimensionBound}
If $\bfA\in \Vec(\cC)$ is a connected \emph{W*}-algebra object, then $\dim(\bfA(a))\leq d_a^2$ for all $a\in\cC$.
If $\bfA$ is tracial, then $\dim(\bfA(a)) \leq d_a$.
\end{prop}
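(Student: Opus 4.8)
The plan is to adapt the argument of \cite[p.~39]{MR1622812}: realize $\dim\bfA(a)$ as the cardinality of a family of mutually orthogonal projections dominated by the identity, and bound that cardinality by taking partial categorical traces. Fix $a\in\cC$, write $n=\dim\bfA(a)$, and recall that $L^2(\bfA)(a)=\bfA(a)$ carries the right inner product $\langle\,\cdot\,|\,\cdot\,\rangle_a$ and that $\Delta_a=S_a^*S_a$ is a positive invertible operator on this finite dimensional Hilbert space.

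First I would diagonalize $\Delta_a$: choose an orthonormal basis $\{e_i\}_{i=1}^n$ of $\bfA(a)$ for $\langle\,\cdot\,|\,\cdot\,\rangle_a$ with $\Delta_a e_i=\lambda_i e_i$, $\lambda_i>0$; then \eqref{eq:ModularOperator} gives ${}_a\langle e_i,e_j\rangle=\langle e_j|\Delta_a e_i\rangle_a=\lambda_i\delta_{ij}$. Using the bounded representation $\pi:\bfA\Rightarrow\bfB(L^2(\bfA))$ and the first identity of \eqref{eq:InnerProductsInHilbC}, $\pi_a(e_i)\circ\pi_a(e_j)^*={}_a\langle e_i,e_j\rangle\,\id_{L^2(\bfA)}=\lambda_i\delta_{ij}\,\id_{L^2(\bfA)}$, so the $p_i:=\lambda_i^{-1}\,\pi_a(e_i)^*\circ\pi_a(e_i)$ are mutually orthogonal projections in the von Neumann algebra $\End_{\Hilb(\cC)}(\mathbf a\otimes L^2(\bfA))$, whence $\sum_{i=1}^n p_i\le\id_{\mathbf a\otimes L^2(\bfA)}$. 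Applying the completely positive left partial categorical trace $\Tr_{\mathbf a}:\End_{\Hilb(\cC)}(\mathbf a\otimes L^2(\bfA))\to\End_{\Hilb(\cC)}(L^2(\bfA))$, which sends $\id_{\mathbf a\otimes L^2(\bfA)}$ to $d_a\,\id_{L^2(\bfA)}$, yields $\sum_i\lambda_i^{-1}\Tr_{\mathbf a}\bigl(\pi_a(e_i)^*\circ\pi_a(e_i)\bigr)\le d_a\,\id_{L^2(\bfA)}$.

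The crux is then a graphical computation: closing up the $\mathbf a$-strand on $\pi_a(e_i)^*\circ\pi_a(e_i)$ should reproduce $\langle e_i|e_i\rangle_a\,\id_{L^2(\bfA)}=\id_{L^2(\bfA)}$ (the \emph{right} inner product), giving $\sum_{i=1}^n\lambda_i^{-1}\le d_a$. Symmetrically, repeating the construction with the second identity of \eqref{eq:InnerProductsInHilbC} — that is, with the morphisms obtained by rotating the $\mathbf a$-leg of each $\pi_a(e_i)$ across a cup — produces mutually orthogonal projections in $\End_{\Hilb(\cC)}(\overline{\mathbf a}\otimes L^2(\bfA))$, and the partial trace over $\overline{\mathbf a}$ (categorical dimension $d_{\overline a}=d_a$) this time picks up the \emph{left} inner product ${}_a\langle e_i,e_i\rangle=\lambda_i$, giving $\sum_{i=1}^n\lambda_i\le d_a$. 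Cauchy--Schwarz then yields
\[
n^2=\Bigl(\sum_{i=1}^n\lambda_i^{1/2}\lambda_i^{-1/2}\Bigr)^2\le\Bigl(\sum_{i=1}^n\lambda_i\Bigr)\Bigl(\sum_{i=1}^n\lambda_i^{-1}\Bigr)\le d_a^2,
\]
i.e.\ $\dim\bfA(a)\le d_a^2$. If $\bfA$ is tracial then $\Delta$ is the identity natural transformation by the corollary above, so every $\lambda_i=1$ and either of the two inequalities already gives $\dim\bfA(a)=n\le d_a$.

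I expect the main obstacle to be precisely the bookkeeping in those two partial-trace computations: one must use the bi-involutive graphical calculus of \cite{MR3687214} carefully to see that closing the $\mathbf a$-loop on one side recovers $\langle\,\cdot\,|\,\cdot\,\rangle_a$ while closing it on the other side recovers ${}_a\langle\,\cdot\,,\cdot\,\rangle$, the discrepancy being exactly the modular operator $\Delta_a$; this is also the step that produces the $d_a^2$ bound in general versus $d_a$ in the tracial case. Everything else — diagonalizing a positive operator, the projection arithmetic, positivity of the partial trace, and Cauchy--Schwarz — is routine.
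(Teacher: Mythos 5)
Your proof is correct and follows essentially the same route as the paper's: both establish $\Tr(\Delta_a)\le d_a$ and $\Tr(\Delta_a^{-1})\le d_a$ by summing mutually orthogonal projections built from an orthonormal basis of $\bfA(a)$ inside $\End_{\Hilb(\cC)}(\mathbf{a}\otimes L^2(\bfA))$ and applying the positive partial categorical trace, using the two identities in \eqref{eq:InnerProductsInHilbC}. The only cosmetic differences are that the paper obtains the second inequality from $\Tr(\Delta_a)=\Tr(S_aS_a^*)=\Tr(\Delta_{\overline{a}}^{-1})\le d_{\overline{a}}=d_a$ rather than re-running the projection argument on the $\overline{\mathbf{a}}$ side, and deduces the dimension bound from $d_a^{-1}\le\Delta_a\le d_a$ rather than Cauchy--Schwarz (which, note, actually gives $n^2\le d_a^2$, i.e.\ the stronger conclusion $\dim\bfA(a)\le d_a$, from which the stated $\dim\bfA(a)\le d_a^2$ follows a fortiori since $d_a\ge 1$).
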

\begin{proof}
We compute that for all $a\in \cC$, $\Tr(\Delta_a)\leq d_a$ and $\Tr(\Delta_a^{-1})\leq d_a$, where $\Tr$ is the non-normalized trace on $B(L^2(\bfA)(a))$.
This implies both results by analyzing the spectral decomposition of $\Delta_a$.
Indeed, these inequalities together imply $d_a^{-1} \leq \Delta_a \leq d_a$, which implies $\dim(L^2(\bfA)(a))\leq d_a^2$.
When $\Delta_a = \id$, then using only the inequality $\Tr(\Delta_a)\leq d_a$ gives the result.

Choose an orthonormal basis $\{e_i\}$ for $\bfA(a)$ with respect to the \emph{left} inner product.
This means by \eqref{eq:InnerProductsInHilbC}, 
$$
\begin{tikzpicture}[baseline=-.1cm]
	\draw (-.15,-.5) -- (-.15,.5);
	\draw (.15,-1.2) -- (.15,1.2);
	\roundNbox{unshaded}{(0,.5)}{.3}{.35}{.35}{$\pi_a(e_i)$}
	\roundNbox{unshaded}{(0,-.5)}{.3}{.35}{.35}{$\pi_a(e_j)^*$}
	\node at (-.3,0) {\scriptsize{$\mathbf{a}$}};
	\node at (.6,0) {\scriptsize{$L^2(\bfA)$}};
	\node at (.6,1) {\scriptsize{$L^2(\bfA)$}};
	\node at (.6,-1) {\scriptsize{$L^2(\bfA)$}};
\end{tikzpicture}
=
{}_a\langle e_i, e_j\rangle\id_{L^2(\bfA)}
=
\delta_{i=j} \id_{L^2(\bfA)}.
$$
We now compute using \eqref{eq:InnerProductsInHilbC} and \eqref{eq:ModularOperator} (and suppressing $\id_{L^2(\bfA)}$) that
$$
\Tr(\Delta_a^{-1}) 
=
\sum {}_a \langle \Delta_a^{-1} e_i, e_i\rangle
=
\sum \langle e_i | e_i\rangle_a
=
\sum
\begin{tikzpicture}[baseline=-.1cm]
	\draw (-.3,.8) arc (0:180:.3cm) -- (-.9,-.8) arc (-180:0:.3cm);
	\draw (.3,-1.2) -- (.3,1.2);
	\roundNbox{unshaded}{(0,.5)}{.3}{.3}{.3}{$\pi_a(e_i)$}
	\roundNbox{unshaded}{(0,-.5)}{.3}{.3}{.3}{$\pi_a(e_i)^*$}
	\node at (-1.1,0) {\scriptsize{$\overline{\mathbf{a}}$}};
	\node at (.75,0) {\scriptsize{$L^2(\bfA)$}};
	\node at (.75,1) {\scriptsize{$L^2(\bfA)$}};
	\node at (.75,-1) {\scriptsize{$L^2(\bfA)$}};
\end{tikzpicture}
\leq
d_a.
$$
Now since $S_a^{-1} = S_{\overline{a}}$, we know $\Delta_a^{-1} = (S_a^*S_a)^{-1} = S_{\overline{a}} S_{\overline{a}}^*$.
Applying the above calculation to $\overline{a}$, we calculate
\begin{equation*}
\Tr(\Delta_a) = \Tr(S_a^*S_a) = \Tr(S_aS_a^*) = \Tr(\Delta_{\overline{a}}^{-1}) \leq d_a.
\qedhere
\end{equation*}
\end{proof}

\begin{rem}
Translating from the language of \cite[p.~43]{MR1622812}, they were not aware of examples of non-tracial connected W*-algebra objects where $\Delta \neq \id$.
We give examples in Section \ref{sec:QuantumGroups} coming from non Kac-type discrete quantum groups.
This non-tracial case may appear in \cite{MR2561199}, although we could not find explicit examples mentioned therein.
\end{rem}

\begin{defn}
The \emph{modular spectrum} $S(\bfA)$ of $\bfA$ is the closure of
$\bigcup_{a\in \cC} \Spec(\Delta_a)$
in
$[0,\infty)$.
\end{defn}

The following lemma is a helpful tool to compute $S(\bfA)$.

\begin{lem}  
Let $\Lambda\subseteq \cC$ be a collection of objects such that for every $a\in \Irr(\cC)$, there is some $b\in \Lambda$ such that $a\prec b$, i.e., $\cC(a,b) \neq 0$. 
Then $S(\bfA)$ is the closure of $\bigcup_{b\in \Lambda} \Spec(\Delta_b)$ in $[0,\infty)$.
\end{lem}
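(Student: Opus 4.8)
The statement is a monotonicity/cofinality argument for the spectrum of the modular operators $\Delta_a$. First I would recall the key structural fact about the modular operators, namely how $\Delta_a$ behaves under the operations available in $\cC$: direct sums, subobjects, and (perhaps) tensor products. The crucial point is that if $a \prec b$ in $\cC$, i.e.\ there is a nonzero morphism $u \in \cC(a,b)$ — which by semisimplicity we may take to be an isometry onto a subobject — then $\bfA(u): \bfA(b) \to \bfA(a)$ is a surjection compatible with the $*$-structure, and under the right inner product it realizes $L^2(\bfA)(a)$ as a ``corner'' of $L^2(\bfA)(b)$. Since $S_a = j_a$ and the $j$'s are conjugate-natural, $S_b$ restricts compatibly to $S_a$, and hence $\Delta_a$ is a compression/summand of $\Delta_b$. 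Concretely, decomposing $b \cong \bigoplus_i a_i$ into irreducibles, naturality of $j$ (hence of $\Delta$) under the projections gives an orthogonal decomposition $L^2(\bfA)(b) \cong \bigoplus_i L^2(\bfA)(a_i)$ under which $\Delta_b \cong \bigoplus_i \Delta_{a_i}$, so $\Spec(\Delta_b) = \bigcup_i \Spec(\Delta_{a_i})$.

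Granting this, the proof is a set-theoretic containment in both directions. For ``$\supseteq$'': each $\Delta_b$ for $b \in \Lambda$ is literally one of the operators appearing in the definition of $S(\bfA)$, so $\bigcup_{b \in \Lambda} \Spec(\Delta_b) \subseteq S(\bfA)$, and taking closures preserves this. For ``$\subseteq$'': it suffices to show $\Spec(\Delta_a) \subseteq \overline{\bigcup_{b \in \Lambda} \Spec(\Delta_b)}$ for every $a \in \cC$. Write $a \cong \bigoplus_i a_i$ with $a_i \in \Irr(\cC)$; then $\Spec(\Delta_a) = \bigcup_i \Spec(\Delta_{a_i})$, so it is enough to treat $a \in \Irr(\cC)$. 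By hypothesis on $\Lambda$, pick $b \in \Lambda$ with $a \prec b$; by the structural fact above, $\Delta_a$ is (unitarily equivalent to) a direct summand of $\Delta_b$, hence $\Spec(\Delta_a) \subseteq \Spec(\Delta_b) \subseteq \bigcup_{b' \in \Lambda}\Spec(\Delta_{b'})$. Taking the union over all $a \in \cC$ and then closures gives $S(\bfA) \subseteq \overline{\bigcup_{b \in \Lambda}\Spec(\Delta_b)}$, completing the equality.

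**Main obstacle.** The only real content is the claim ``$a \prec b$ implies $\Delta_a$ is a direct summand of $\Delta_b$,'' which rests on the naturality of $\Delta$ together with the interaction of the right inner product $\langle \cdot | \cdot \rangle_a$ with the morphisms $\bfA(u)$. One must check that for an isometry $u \in \cC(a,b)$ the induced map on $L^2(\bfA)$-objects is isometric (up to the correct scalar) and that the orthogonal complement of its image is $\Delta_b$-invariant — equivalently, that $\Delta$ respects the orthogonal decomposition coming from a decomposition of $b$ into irreducibles. This follows from conjugate-naturality of $S$ (hence naturality of $\Delta = S^*S$) applied to the mutually orthogonal projections $b \to a_i \to b$, but it is worth spelling out that these projections are orthogonal \emph{with respect to the right inner product} on each $L^2(\bfA)(a_i)$; a diagrammatic computation using \eqref{eq:InnerProductsInHilbC} and \eqref{eq:ModularOperator} handles this. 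Everything else is routine point-set topology (finite unions of closed sets, monotonicity of closure).
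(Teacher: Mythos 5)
Your proposal is correct and follows essentially the same route as the paper: the inclusion $\bigcup_{b\in\Lambda}\Spec(\Delta_b)\subseteq S(\bfA)$ is immediate, and the reverse inclusion is obtained by decomposing an arbitrary object into simples, using naturality of $\Delta$ (equivalently, the decomposition $L^2(\bfA)(b)\cong\bigoplus_{c\in\Irr(\cC)}\bfA(c)\boxtimes\cC(b,c)$) to see that $\Delta_b$ splits as a direct sum over the irreducible summands of $b$, and then using $c\prec b_c$ for some $b_c\in\Lambda$ to get $\Spec(\Delta_c)\subseteq\Spec(\Delta_{b_c})$. The "main obstacle" you flag is exactly the point the paper dispatches with the phrase "since $\Delta$ is natural," and your summand argument is a correct way to spell it out.
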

\begin{proof}  
Clearly $\bigcup_{a\in \Lambda} \Spec(\Delta_a) \subset \bigcup_{a\in \cC} \Spec(\Delta_a)$, so we need only prove the reverse direction.
By \cite[Def.~2.31]{MR3687214}, we have a natural isomorphism $L^{2}(\bfA)(b)\cong \bigoplus_{a\in \Irr(\cC)} \bfA(a)\boxtimes \cC(b, a)$.
Now suppose $a\in \cC$, and decompose $a$ as a direct sum of simple objects $a\cong \bigoplus c$ where the $c\in\Irr(\cC)$ can occur with multiplicity.
For each $c\in \Irr(\cC)$, pick $b_c\in\Lambda$ such that $c\prec b_c$.
Since $\Delta$ is natural, 
\begin{equation*}
\Spec(\Delta_a) 
\subseteq 
\bigcup_{c\prec a} \Spec(\Delta_c) 
\subseteq 
\bigcup_{c\prec a} \Spec(\Delta_{b_c}) 
\subseteq 
\bigcup_{b\in\Lambda} \Spec(\Delta_{b}) .
\qedhere
\end{equation*}
\end{proof}

We continue our exploration of the modular theory for connected W*-algebra objects in Section \ref{sec:MoreModularTheory}.
We postpone a systematic treatment of the general Tomita-Takesaki theory for arbitrary W*-algebra objects to a future article.

\subsection{Bimodules over \texorpdfstring{${\rm II}_1$}{II_1} factors}
\label{sec:II_1 bimodules}

Consider the category $\Bim(N)$ of bimodules over a ${\rm II}_1$ factor $(N,\tau)$, which is a bi-involutive W*-tensor category whose tensor product is given by the Connes fusion relative tensor product, denoted by $\boxtimes_N$.
We begin with some background on the rigid C*-tensor category $\bfBim(N)$, which is the full subcategory of $\Bim(N)$ consisting of bifinite $N-N$ bimodules.

First, we recall some information on bounded vectors and bases for a bifinite $N-N$ bimodule $H$, which can be found in \cite{MR1049618,MR1424954,MR3040370,ClaireSorinII_1}.
An $H_N$-\emph{basis} consists of a finite subset $\{\beta\} \subset H^\circ$, the (left and right) $N$-bounded vectors, such that $\sum_\beta L_\beta L_\beta^* = \id_H$.
Here, for $\xi \in H^\circ$, $L_\xi : L^2(N) \to H$ denotes the bounded left multiplication operator $n\Omega \mapsto \xi n$.
We have the right $N$-valued inner product on $H^\circ$ given by $\langle \eta|\xi\rangle_N = L_\eta^* L_\xi$, which is linear on the right.
Similarly, an ${}_NH$-\emph{basis} consists of a finite subset $\{\alpha\}\subset H^\circ$ such that $\sum_\alpha R_\alpha R_\alpha^* = \id_H$.
The left $N$-valued inner product on $H^\circ$ is given by ${}_N\langle \eta, \xi\rangle = J R_\eta^* R_\xi J$.

These bases are called \emph{orthogonal} if moreover the $L_\beta L_\beta^*$ (respectively the $R_\alpha R_\alpha^*$) are mutually orthogonal projections.
An orthogonal $H_N$ (respectively ${}_NH$) basis always exists.

Recall that $\overline{H}$ is the conjugate Hilbert space with the $N-N$ bimodule structure $a\cdot \overline{\eta} \cdot b := \overline{b^*\eta a^*}$.
The underlying set of $\overline{H}$ is taken to be $H$, but with the conjugate Hilbert space structure, so $\overline{\overline{H}} = H$ on the nose as $N-N$ bimodules.
We denote vectors in $\overline{H}$ with a bar over them merely to remember we are using the conjugate Hilbert space structure.
It is straightforward to verify that $\{\beta\}$ is an $H_N$-basis if and only if $\{\overline{\beta}\}$ is an ${}_N \overline{H}$-basis, and similarly swapping the role of left and right.
One uses that 
\begin{equation}
\label{eq:SwapLeftAndRightInnerProduct}
\langle \eta| \xi \rangle_N = {}_N\langle \overline{\eta}, \overline{\xi}\rangle \qquad\qquad \text{ for all }\eta, \xi \in H^\circ.
\end{equation}

The rigid C*-tensor category $\bfBim(N)$ has \emph{two} natural choices for evaluation and coevaluation morphisms.
One comes from choosing standard solutions to the conjugate equations \cite{MR1444286,MR2091457,MR3342166}, and we call these the \emph{standard} evaluation and coevaluation morphisms.
The second comes from using bases of bounded vectors and $N$-valued inner products coming from the trace on $N$, and we call these the \emph{tracial} evaluation and coevaluation morphisms.

\begin{defn}
For a given bifinite $N-N$ bimodule $H$, the \emph{tracial} evaluation and coevaluation are given by
\begin{align}
\ev_H: \overline{H} \boxtimes_N H &\to L^2(N) 
&
\overline{H}^\circ \boxtimes_N H^\circ
\ni
\overline{\eta} \boxtimes \xi 
&\mapsto 
\langle \eta| \xi\rangle_N\Omega
\in N\Omega 
\notag
\\
\coev_H: L^2(N)  &\to H\boxtimes_N \overline{H}
&
N\Omega
\ni
n\Omega 
&\mapsto 
n\sum_\beta \beta\boxtimes \overline{\beta} 
\in
\overline{H}^\circ \boxtimes_N H^\circ.
\label{eq:EvAndCoev}
\end{align}
It is straightforward to show using \eqref{eq:SwapLeftAndRightInnerProduct} and \eqref{eq:EvAndCoev} that $\ev_H^* = \coev_{\overline{H}}$ and $\coev_H^* = \ev_{\overline{H}}$, and thus
\begin{equation}
\label{eq:EvAndCoevBar}
\coev_{\overline{H}}(n\Omega) = n\sum_\alpha \overline{\alpha}\boxtimes \alpha
\qquad\text{ and }\qquad 
\ev_{\overline{H}}(\eta \boxtimes \overline{\xi}) = {}_N\langle \eta, \xi\rangle\Omega .
\end{equation}
The left and right von Neumann dimensions of $H$ are given by
\begin{equation}
\label{eq:vonNeumannDimension}
\dim_{N-}(H) = \coev_H^* \circ \coev_H
\qquad
\text{and}
\qquad
\dim_{-N}(H) = \ev_H \circ \ev_H^*.
\end{equation}
\end{defn}

\begin{defn}[\cite{MR1444286,MR2091457,MR3342166}]
\label{defn:StatisticalDimension}
The \emph{standard} evaluation and coevaluation are determined by the \emph{balancing condition}, which says that for all $f\in \End_{N-N}(H)$,
\begin{equation}
\label{eq:BalancingCondition}
\coev_{H}^* \circ (f\otimes \id_{\overline{H}}) \circ \coev_H
=
\ev_H \circ (\id_{\overline{H}}\otimes f)\circ \ev_H^*.
\end{equation}
This common value called the \emph{statistical dimension} $d_H$ of $H$, and $d_H^2 = \dim_{N-}(H)\dim_{-N}(H)$.
\end{defn}

\begin{rem}
The tracial and standard evaluation and coevalutation on $\bfBim(N)$ induce distinct dual functors such that the double dual functors are both the identity functor.
However, choosing the identity natural isomorphism induces \emph{distinct} pivotal structures for the tracial and standard dual functors!
(For an easier example, one can observe the same phenomenon by choosing distinct dual functors for $\fdHilb(\bbZ/3\bbZ)$ using distinct cube roots of unity.)
Since the bi-involutive structure only sees the adjoint and complex conjugation, and not the dual functors, $\bfBim(N)$ still has a canonical bi-involutive structure.
\end{rem}

\begin{defn}[{\cite[Def.~4.1 and Cor.~4.4]{MR3040370}}]
An $N-N$ bimodule $H\in \Bim(N)$ is called \emph{extremal} or \emph{spherical} if for every $N-N$ sub-bimodule $K\subseteq H$,
$\dim_{N-}(K) = \dim_{-N}(K)$.
It is easy to see that sphericality is preserved under taking subobjects, direct sums, conjugates, and fusion, and thus the category $\spBim(N)$ of all spherical $N-N$ bimodules is a bi-involutive W*-tensor subcategory of $\Bim(N)$.
We denote by $\spbfBim(N)$ the spherical bi-finite $N-N$ bimodules.
\end{defn}

\begin{rem}
For an extremal/spherical bifinite $N-N$ bimodule $K$, the tracial evaluation and coevaluation are standard solutions as in Definition \ref{defn:StatisticalDimension}.
Thus we may write $\ev_H$ without any confusion.
\end{rem}

An $N-N$ bimodule $H$ is called \emph{invertible} if $H\boxtimes_N \overline{H} \cong L^2(N)$ and $\overline{H}\boxtimes_N H \cong L^2(N)$.
Such bimodules are easily seen to be bifinite with statistical dimension equal to 1.
It is easy to see that an invertible extremal bimodule is of the form $L^2(N)_\theta$ where $\theta\in \Aut(N)$ and the action is given by $x\cdot n\Omega \cdot y = xn\theta(y)\Omega$.
If $H$ is invertible but not extremal, then $\dim_{N-}(H)$ and $\dim_{-N}(H)$ both belong to the \emph{fundamental group} of $N$, given by
$\cF(N)=\set{\lambda >0}{N\cong N^\lambda}$.

\subsection{Representing rigid C*-tensor categories as bifinite bimodules}
\label{sec:RepresentationsOfCategories}

In this article, we assume the existence of a fully faithful bi-involutive representation $\bfH:\cC \to \spbfBim(N)$.
We know such representations exist for $N=L\bbF_\infty$ by \cite{MR2051399,MR3405915} when $\cC$ is essentially countable (we review the representation from \cite{MR3405915} in Section \ref{sec:PlanarAlgebras}).
This means we may assume that the tracial evaluation and coevaluation agree with the standard evaluation and coevaluation on $\bfH(\cC)\subset \spbfBim(N)$, and
\begin{equation}
\label{eq:NonStandard}
\bfH(\ev_c) = \ev_{\bfH(c)}
\qquad
\text{and}
\qquad
\bfH(\coev_c) = \coev_{\bfH(c)}
\qquad
\text{for all }c\in\cC.
\end{equation}

\begin{nota}
\label{nota:RepresentationAndConnectedAlgebra}
For the remainder of this section, we fix the following notation:
\begin{itemize}
\item
$(N,\tau)$ is a ${\rm II}_1$ factor with its canonical trace.
\item
$\bfH:\cC \to \spbfBim(N)$ is a fully faithful bi-involutive representation.
We will denote the tensorator by $\mu^\bfH$, i.e., $\mu_{a,b}^\bfH$ is a unitary natural isomorphism $\bfH(a) \boxtimes_N \bfH(b) \to \bfH(a\otimes b)$ which satisfies certain coherence axioms.
We suppress the involutive structure natural unitary isomorphisms $\chi_c : \bfH(\overline{c}) \to \overline{\bfH(c)}$.
\item
$\bfH^\circ: \cC \to \Vec$ is the lax monoidal functor obtained from $\bfH$ by taking the $N$-bounded vectors, i.e., $\bfH^\circ(c):=\bfH(c)^\circ$.
Here, the unit is given by $i_{\bfH^\circ}=\Omega\in \bfH(1_\cC)^\circ = N\Omega$ and the laxitor $\mu_{a,b}^{\bfH^\circ}:\bfH^\circ(a)\otimes \bfH^\circ(b) \to \bfH^\circ(a\otimes b)$ is given by $\eta \otimes \xi \mapsto \mu^\bfH_{a,b}(\eta\boxtimes \xi)$.
We will see in Proposition \ref{prop:BoundedVectorsGivesAnAlgebra} below that $\bfH^\circ\in \Vec(\cC^{\op})$ is a W*-algebra object.
\item
$\bfH$ will also denote the composite of $\bfH: \cC \to \spbfBim(N)$ with the forgetful functor $\spbfBim(N) \to \Hilb$ by a slight abuse of notation. 
\end{itemize}
\end{nota}

\begin{prop}
\label{prop:BoundedVectorsGivesAnAlgebra}
The lax monoidal functor $\bfH^\circ: \cC \to \Vec$ is a W*-algebra object in $\Vec(\cC^{\op})$, 
where for $\eta \in \bfH(a)^\circ$, $j_a^{\bfH^\circ}(\eta) = \overline{\eta} \in \overline{\bfH(a)}^\circ \cong \bfH(\overline{a})^\circ$.
\end{prop}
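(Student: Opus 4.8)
The plan is to verify the three parts of the statement separately: that $\bfH^\circ$ is a well-defined lax monoidal functor $\cC \to \Vec$ (hence an algebra object in $\Vec(\cC^{\op})$), that $j^{\bfH^\circ}$ endows it with a $*$-structure in the sense of \cite[\S3.3]{MR3687214}, and that the resulting $*$-algebra object is W*. The first part is essentially bookkeeping: the only non-formal point is that $\mu^{\bfH^\circ}_{a,b}$ takes values in $N$-bounded vectors, i.e.\ that $\mu^\bfH_{a,b}(\eta\boxtimes\xi)\in\bfH(a\otimes b)^\circ$ whenever $\eta\in\bfH(a)^\circ$ and $\xi\in\bfH(b)^\circ$, which is the standard fact that a Connes fusion of bounded vectors is bounded (see the references in Section~\ref{sec:II_1 bimodules}). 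Associativity, unitality and naturality of the laxitor then follow from the corresponding coherence axioms for the tensorator $\mu^\bfH$, since $\bfH(c)^\circ\hookrightarrow\bfH(c)$ is injective and natural.

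For the $*$-structure, $j^{\bfH^\circ}_a(\eta)=\overline\eta$ is manifestly conjugate linear; it is conjugate natural and involutive because $\bfH$ is a bi-involutive functor, so conjugating a morphism of $\spbfBim(N)$ corresponds, under $\bfH$ and the structure unitaries $\chi$, to conjugating bounded vectors, and $\overline{\overline\eta}=\eta$. Unitality is $j^{\bfH^\circ}_{1_\cC}(\Omega)=\overline\Omega=\Omega$ under the canonical unitary $\overline{L^2(N)}\cong L^2(N)$. The remaining monoidal/multiplicative compatibility reduces to the identity $\overline{\mu^\bfH_{a,b}(\eta\boxtimes\xi)}=\mu^\bfH_{\overline b,\overline a}(\overline\xi\boxtimes\overline\eta)$, which follows from the way conjugation interacts with Connes fusion on $\spbfBim(N)$ together with the coherence of $\bfH$ with the conjugation functors; at the level of $N$-valued inner products this is exactly \eqref{eq:SwapLeftAndRightInnerProduct}. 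All of these are routine given the framework of \cite[\S3.3]{MR3687214}.

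The substantive step is the W*-property; note that $\bfH^\circ$ is \emph{not} connected (its base algebra is $\bfH(1_\cC)^\circ\cong N$), so the shortcut ``connected $\Rightarrow$ (C* $\Leftrightarrow$ W*)'' is unavailable and W*-ness must be argued directly. By \cite[Thm.~3.8]{MR3687214} it suffices to identify the cyclic $\cC^{\op}$-module dagger category of $\bfH^\circ$ with a W*-category. Let $\cM$ be the full dagger subcategory, on the objects $\{\bfH(c):c\in\cC\}$, of the W*-category of right Hilbert $N$-modules (forgetting the left $N$-action), equipped with the right $\cC$-action $H\mapsto H\boxtimes_N\bfH(c)$ and basepoint $m=\bfH(1_\cC)=L^2(N)$. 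Then $\cM$ is a cyclic right $\cC$-module (equivalently cyclic $\cC^{\op}$-module) dagger category whose objects are exactly the $\bfH(c)\cong L^2(N)\boxtimes_N\bfH(c)$. Using the identification $\Hom_{-N}(\bfH(c),L^2(N))\cong\bfH(c)^\circ$ via $\eta\mapsto L_\eta^*$ — valid because for a bifinite bimodule all one-sided bounded vectors coincide with $\bfH(c)^\circ$ — one checks that the internal hom of $\cM$ at the basepoint recovers $\bfH^\circ$ as a $*$-algebra object with $j^{\bfH^\circ}_c(\eta)=\overline\eta$, the composition in $\cM$ matching the laxitor $\mu^{\bfH^\circ}$ via the Connes-fusion inner product formula $\langle\eta\boxtimes\xi|\zeta\boxtimes\theta\rangle_N=\langle\xi|\langle\eta|\zeta\rangle_N\cdot\theta\rangle_N$, and the dagger of $\cM$ matching $j^{\bfH^\circ}$. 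Since a full dagger subcategory of a W*-category is a W*-category, $\cM$ is a W*-category, and therefore $\bfH^\circ$ is a W*-algebra object.

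I expect the main obstacle to be precisely this last paragraph: carefully matching the $\cC^{\op}$-module structure, internal hom, and $*$/dagger structure of the concrete category $\cM$ with those of the abstract free-module category attached to $\bfH^\circ$, and checking that one-sided bounded vectors of a bifinite bimodule all coincide with $\bfH(c)^\circ$ so that the internal-hom computation comes out correctly. Once $\cM$ is correctly identified, W*-ness is immediate, and the $*$-algebra and lax-monoidal verifications are formal.
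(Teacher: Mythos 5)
Your proof is correct and follows essentially the same route as the paper: the paper likewise realizes $\bfH^\circ$ as the W*-algebra object of the cyclic $\cC^{\op}$-module W*-category of right $N$-modules generated by the basepoint $L^2(N)$, via the identification $\bfH(a)^\circ\cong\Hom_{\Mod(N)}(L^2(N),\bfH(a))\cong\Hom_{\Mod(N)^{\op}}(\bfH(a)\boxtimes_N L^2(N),L^2(N))$, and also dismisses the unit/laxitor/$*$-structure verifications as a routine exercise. The only cosmetic difference is that the paper phrases the action as a left $\cC$-module structure on $\Mod(N)$ and then passes to $\Mod(N)^{\op}$, whereas you work with the adjoint identification $\eta\mapsto L_\eta^*$ directly; these are equivalent.
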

\begin{proof}
Consider the rigid C*-tensor category $\bfH(\cC)\subset \spbfBim(N)$.
Forgetting the left $N$-action, we consider $\bfH(\cC)\subset \Mod(N)$, the category of right $N$-modules.
Since these right $N$-modules are still $N-N$ bimodules, we still have a left $\cC$-module category.
Let $(\Mod(N), L^2(N))$ be the cyclic left $\cC$-module W*-subcategory of $\Mod(N)$ generated by $L^2(N)$ via $\bfH$, i.e., the objects are given by $\bfH(a)$ for $a\in \cC$.
Now we view $\Mod(N)^{\op}$ as a left $\cC^{\op}$-module W*-category by \cite[Rem.~3.14]{MR3687214}.
Note that for all $a\in \cC$, we have
$$
\bfH(a)^\circ 
\cong
\Hom_{\Mod(N)}(L^2(N), \bfH(a))
\cong
\Hom_{\Mod(N)^{\op}}(\bfH(a)\boxtimes_N L^2(N), L^2(N)).
$$
Thus $\bfH^\circ\in \Vec(\cC^{\op})$ is exactly the W*-algebra object corresponding to $(\Mod(N)^{\op}, L^2(N))$.
That the unit, laxitor, and $*$-structure are as claimed is a straightforward exercise using \cite[Const.~3.11 and Thm.~3.20]{MR3687214}.
\end{proof}

Using this proposition, we compute the following lemma which will be useful later on.

\begin{lem}
\label{lem:HPreservesEvaluation}
For all $\eta, \xi \in \bfH(a)$, 
$\langle \eta | \xi \rangle_N^{\bfH(a)} = \bfH^\circ(\ev_a)[\mu^{\bfH^\circ}_{\overline{a},a}(j_{a}^{\bfH^\circ}(\eta) \otimes \xi)]$.
\end{lem}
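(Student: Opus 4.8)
The statement asserts that the $N$-valued right inner product on the bimodule $\bfH(a)$, when restricted to bounded vectors, is recovered by the algebra structure of $\bfH^\circ \in \Vec(\cC^{\op})$ via the evaluation morphism. Since $\bfH$ is a fully faithful bi-involutive representation satisfying \eqref{eq:NonStandard}, the tracial and standard evaluation/coevaluation on $\bfH(\cC)$ coincide, so $\bfH^\circ(\ev_a)$ is the map induced by the \emph{tracial} evaluation $\ev_{\bfH(a)}$ from Definition \ref{defn:RightL2}... i.e. from \eqref{eq:EvAndCoev}. The plan is to unwind both sides into statements about the underlying Hilbert spaces and apply the explicit formula \eqref{eq:EvAndCoev} for the tracial evaluation.

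First I would unwind the right-hand side. By Proposition \ref{prop:BoundedVectorsGivesAnAlgebra}, the laxitor $\mu^{\bfH^\circ}_{\overline a, a}$ sends $j^{\bfH^\circ}_a(\eta) \otimes \xi = \overline\eta \otimes \xi$ (viewing $\overline\eta \in \overline{\bfH(a)}^\circ \cong \bfH(\overline a)^\circ$) to $\mu^\bfH_{\overline a, a}(\overline\eta \boxtimes \xi) \in \bfH(\overline a \otimes a)^\circ$. Applying $\bfH^\circ(\ev_a)$ and using $\bfH(\ev_a) = \ev_{\bfH(a)}$ from \eqref{eq:NonStandard}, this becomes $\ev_{\bfH(a)}(\overline\eta \boxtimes \xi) \in L^2(N)^\circ = N\Omega$, where I am suppressing the involutive structure unitary $\chi_a : \bfH(\overline a) \to \overline{\bfH(a)}$ and the tensorator exactly as in Notation \ref{nota:RepresentationAndConnectedAlgebra}. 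By the formula \eqref{eq:EvAndCoev} for the tracial evaluation, $\ev_{\bfH(a)}(\overline\eta \boxtimes \xi) = \langle \eta | \xi \rangle_N \Omega$. Since $\bfH^\circ(1_\cC) = N\Omega$ is identified with $N$ via $n\Omega \leftrightarrow n$, this shows the right-hand side equals $\langle \eta|\xi\rangle_N^{\bfH(a)}$, which is the left-hand side.

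The main thing to be careful about — and what I expect to be the only real obstacle — is bookkeeping of the suppressed coherence isomorphisms: the unitary $\chi_a\colon \bfH(\overline a) \to \overline{\bfH(a)}$ intertwining the conjugation, the identification $\overline{\bfH(a)}^\circ \cong \bfH(\overline a)^\circ$ under which $j^{\bfH^\circ}_a(\eta) = \overline\eta$, and the unit identification $\bfH(1_\cC)^\circ = N\Omega \cong N$. One must check these are compatible with \eqref{eq:EvAndCoev}, i.e. that pushing $\overline\eta$ through $\chi_a$ and then applying the honest bimodule evaluation of Definition \ref{defn:RightL2} agrees with applying $\bfH^\circ(\ev_a)$ to the categorical expression; this is exactly the content of \eqref{eq:NonStandard} together with the naturality of $\chi$, and is a routine unwinding. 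Once these identifications are made explicit the computation is immediate, so I would present it as a short chain of equalities: expand the laxitor, apply \eqref{eq:NonStandard}, apply \eqref{eq:EvAndCoev}, and identify $N\Omega$ with $N$.
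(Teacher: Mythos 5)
Your proof is correct and follows exactly the same chain of equalities as the paper: expand the laxitor and $*$-structure via Proposition \ref{prop:BoundedVectorsGivesAnAlgebra}, apply \eqref{eq:NonStandard} to replace $\bfH^\circ(\ev_a)$ with $\ev_{\bfH(a)}$, and conclude by the tracial evaluation formula \eqref{eq:EvAndCoev}. The extra care you flag about the suppressed coherence isomorphisms is reasonable but is likewise suppressed in the paper's own one-line computation.
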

\begin{proof}
By the definition of $\mu^{\bfH^\circ}$ and $j^{\bfH^\circ}$ via Proposition \ref{prop:BoundedVectorsGivesAnAlgebra}, the fact that $\bfH$ preserves standard solutions, and \eqref{eq:EvAndCoev}, we have
\begin{equation*}
\bfH^\circ(\ev_a)[\mu^{\bfH^\circ}_{\overline{a},a}(j_{a}^{\bfH^\circ}(\eta) \otimes \xi)]
=
\bfH^\circ(\ev_a)\mu^{\bfH}_{\overline{a},a}(\overline{\eta} \boxtimes \xi)
=
\ev_{\bfH(a)}(\overline{\eta} \boxtimes \xi)
=\langle \eta|\xi\rangle_N^{\bfH(a)}.
\qedhere
\end{equation*}
\end{proof}

\begin{prop}
Let $\tau$ be the canonical state on $\bfH^\circ$ corresponding to the trace on $\bfH(1_\cC)^\circ = N\Omega$.
Then $\tau$ is tracial on $\bfH^\circ$, and we have an isomorphism $\bfH \cong L^2(\bfH^\circ)_\tau$, giving a normal $*$-algebra natural transformation $\pi : \bfH^\circ \Rightarrow \bfB(\bfH)$.
\end{prop}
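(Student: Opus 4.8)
The plan is to verify the three assertions in order: $\tau$ is tracial on $\bfH^\circ$, $\bfH \cong L^2(\bfH^\circ)_\tau$ as objects of $\Hilb(\cC^{\op})$, and that this identification produces the desired representation $\pi$. First I would establish traciality. By the Corollary just above, $\bfH^\circ$ (as a connected W*-algebra object in $\Vec(\cC^{\op})$, which it is by Proposition \ref{prop:BoundedVectorsGivesAnAlgebra}) is tracial if and only if its modular operator $\Delta$ is the identity natural transformation, equivalently if and only if the left and right $\bbC$-valued inner products \eqref{eq:A(1)-InnerProduct} on each $\bfH^\circ(a)$ coincide. Using Lemma \ref{lem:HPreservesEvaluation}, the right inner product $\langle f | g \rangle_a$ on $\bfH^\circ(a)$ unwinds to $\tau(\langle f | g \rangle_N^{\bfH(a)})$, the Hilbert-space inner product of the bounded vectors $f,g\in\bfH(a)^\circ$ coming from the tracial right $N$-valued inner product composed with $\tau$. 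By the analogous computation with the conjugate (using $j_a^{\bfH^\circ}(\eta)=\overline\eta$ and \eqref{eq:EvAndCoevBar}), the left inner product ${}_a\langle f,g\rangle$ unwinds to $\tau({}_N\langle f,g\rangle^{\bfH(a)})$; and these agree because the left and right $N$-valued inner products on a bifinite bimodule have the same $\tau$-trace — concretely $\tau(L_f^*L_g) = \tau(J R_f^* R_g J)$, which is the statement that the trace on $N$ is preserved under the flip $H^\circ \ni \xi \mapsto \overline\xi \in \overline H^\circ$, i.e. \eqref{eq:SwapLeftAndRightInnerProduct} combined with $\tau$-invariance of the bimodule inner products. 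This is exactly the kind of standard bounded-vector identity recorded in Section \ref{sec:II_1 bimodules}, so I would cite \eqref{eq:SwapLeftAndRightInnerProduct} and \eqref{eq:EvAndCoev}–\eqref{eq:EvAndCoevBar} and the fact that $\bfH$ preserves standard (= tracial, here) solutions.

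Next, for the identification $\bfH \cong L^2(\bfH^\circ)_\tau$: since $\tau$ is tracial, Definition \ref{defn:RightL2} gives $L^2(\bfH^\circ)_\tau$ by completing each $\bfH^\circ(a) = \bfH(a)^\circ$ in the inner product $\langle \cdot | \cdot \rangle_a = \tau\circ\langle\cdot|\cdot\rangle_N^{\bfH(a)}$. But by the previous paragraph that inner product is precisely the restriction to the bounded vectors of the Hilbert space inner product on $\bfH(a)$, and the bounded vectors $\bfH(a)^\circ$ are dense in $\bfH(a)$. Hence the identity map on bounded vectors extends to a unitary $L^2(\bfH^\circ)_\tau(a) \to \bfH(a)$, natural in $a\in\cC$, and compatible with the laxitors (by the definition of $\mu^{\bfH^\circ}$ in Notation \ref{nota:RepresentationAndConnectedAlgebra}, $\mu^{\bfH^\circ}_{a,b}$ is literally the restriction of $\mu^\bfH_{a,b}$) and with the $*$-structures (by Proposition \ref{prop:BoundedVectorsGivesAnAlgebra}, $j^{\bfH^\circ}$ is the conjugation $\eta\mapsto\overline\eta$, matching the bi-involutive structure of $\Hilb(\cC^{\op})$). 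This gives an isomorphism of Hilbert space objects in $\Hilb(\cC^{\op})$, in fact of $\bfH^\circ$-modules. Finally, the representation $\pi$: as recalled after Definition \ref{defn:RightL2}, every connected W*-algebra object $\bfA$ comes with a bounded unital $*$-algebra representation $\pi:\bfA \Rightarrow \bfB(L^2(\bfA))$ (here on the $\tau$-version, since $\bfH^\circ\in\Vec(\cC^{\op})$); transporting along the unitary $L^2(\bfH^\circ)_\tau\cong\bfH$ just constructed yields $\pi:\bfH^\circ\Rightarrow\bfB(\bfH)$, which on bounded vectors is left multiplication $L_\eta$ up to the identification, and is normal because it comes from the standard GNS-type construction in \cite[\S4.5]{MR3687214}.

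The main obstacle I expect is the first step — cleanly matching the two categorical $\bbC$-valued inner products \eqref{eq:A(1)-InnerProduct} on $\bfH^\circ(a)$ with, respectively, the $\tau$-trace of the right and left $N$-valued inner products on $\bfH(a)^\circ$, so as to conclude $\Delta = \id$. This requires carefully tracking how $\bfH^\circ$ sits in $\Vec(\cC^{\op})$ rather than $\Vec(\cC)$ (so diagrams are read top-to-bottom, cf. the Remark with \eqref{eq:HilbC-op Relation1}–\eqref{eq:HilbC-op Relation2}), how the laxitor and $j^{\bfH^\circ}$ interact with $\ev_a$ and $\coev_a$, and invoking that on $\bfH(\cC)$ the tracial and standard evaluations coincide by \eqref{eq:NonStandard} so that Lemma \ref{lem:HPreservesEvaluation} applies on the nose. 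Once traciality is in hand the rest is essentially bookkeeping: density of bounded vectors, naturality, and compatibility of laxitors and $*$-structures, all of which are routine given Proposition \ref{prop:BoundedVectorsGivesAnAlgebra} and the construction of $\pi$ in \cite{MR3687214}.
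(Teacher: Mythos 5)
Your proposal is correct and follows essentially the same route as the paper: traciality is obtained by matching the $\tau$-composed right and left inner products via Lemma \ref{lem:HPreservesEvaluation}, \eqref{eq:SwapLeftAndRightInnerProduct}, and \eqref{eq:NonStandard}, and then the isomorphism $\bfH\cong L^2(\bfH^\circ)_\tau$ and the representation $\pi$ follow from density of the bounded vectors and the construction in \cite[\S4.5]{MR3687214}. One small correction: $\bfH^\circ$ is \emph{not} connected (its base algebra is $\bfH^\circ(1_\cC)=N\Omega\cong N$, not $\bbC$), so the corollary characterizing traciality of connected objects via $\Delta=\id$ does not literally apply; but your actual computation compares the two scalar-valued inner products directly and goes through unchanged, which is exactly what the paper does.
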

\begin{proof}
First, $\tau$ is tracial, since for all $a\in \Irr(\cC)$ and $\eta, \xi\in \bfH(a)^\circ$, by  \eqref{eq:SwapLeftAndRightInnerProduct}, 
\eqref{eq:NonStandard}, and Lemma \ref{lem:HPreservesEvaluation},
\begin{equation}
\label{eq:BiinvolutiveRepresentationTracial}
\langle \eta| \xi\rangle_{a}
=
\tau(\langle \eta |\xi\rangle_{N}^{\bfH(a)}) 
=
\langle \xi, \eta\rangle_{\bfH(a)}
=
\tau({}_N^{\bfH(a)}\langle \xi, \eta\rangle)
=
{}_{a}\langle \eta, \xi\rangle.
\end{equation}
This equality directly gives us the desired isomorphism $\bfH \cong L^2(\bfH^\circ)_\tau$ since $\bfH(a)^\circ$ is dense in $\bfH(a)$.
\end{proof}

\section{Discrete/quasi-regular subfactors}
\label{sec:DiscreteAndQuasi-regular}

Discrete inclusions of semi-finite factors were first introduced in \cite{MR1055223} as an inclusion $(N,\Tr_N)\subseteq (M,\Tr_M)$ with a trace preserving faithful normal conditional expectation $E: M\to N$.
This definition was refined to apply to inclusions of arbitrary factors in \cite{MR1622812} to ensure properties of crossed products by discrete groups.
In this article, we will focus solely on the case when $N$ is a type ${\rm II}_1$ factor.

\begin{nota}
\label{nota:Discrete}
For $A$ a von Neumann algebra with faithful normal state $\phi$, we denote by $\Omega$ the image of $1\in A$ in $L^2(A,\phi)$.
We denote by $S_\phi, J_\phi, \Delta_\phi, \sigma^\phi_t$ the familiar ingredients of the Tomita-Takesaki theory.
We use the convention that $\sigma^\phi_t(x) = \Delta^{it}x\Delta^{-it}$.
We denote the right action of $A$ on $L^2(A,\phi)$ by $\xi\vartriangleleft a = J_\phi a^* J_\phi \xi$.

For this section, $(N,\tr_N)$ is a type ${\rm II}_1$ factor with its faithful tracial state, $M$ is a factor containing $N$, and $E: M\to N$ is a faithful normal conditional expectation.
We let $\phi=\tr_N\circ E$ be the canonical faithful normal state on $M$, and notice that $N$ is contained in $M^\phi$, the centralizer of $\phi$.
This means that $\sigma^\phi_t(n) = n$ for all $t\in \bbR$, and every $n\in N$ is entire.
\end{nota}

The following lemma is straightforward.

\begin{lem}
\label{lem:ConjugateBimodule}
Suppose $K\subset L^2(M,\phi)$ is an $N-N$ sub-bimodule.
The map $J_\phi \xi \mapsto \overline{\xi}$ gives an $N-N$ bilinear unitary isomorphism between $J_\phi K$ and the conjugate $N-N$ bimodule $\overline{K}$.
\end{lem}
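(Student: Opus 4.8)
The plan is to check directly that the map $\Phi : J_\phi K \to \overline{K}$ defined by $\Phi(J_\phi \xi) = \overline{\xi}$ for $\xi \in K$ is a well-defined, surjective, $N$-$N$ bilinear isometry. First I would observe that $J_\phi$ is a conjugate-linear isometric involution on $L^2(M,\phi)$, so it restricts to a conjugate-linear isometric bijection $K \to J_\phi K$; thus $J_\phi K$ is a closed subspace, and every element of $J_\phi K$ is uniquely of the form $J_\phi \xi$ with $\xi \in K$, making $\Phi$ well-defined and bijective onto $\overline{K}$ (whose underlying set is $K$). Since $\xi \mapsto \overline{\xi}$ is conjugate-linear from $K$ to $\overline{K}$ and $J_\phi$ is conjugate-linear, $\Phi$ is linear. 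For the isometry property, $\langle \Phi(J_\phi \eta), \Phi(J_\phi \xi)\rangle_{\overline{K}} = \langle \overline{\eta},\overline{\xi}\rangle_{\overline{K}} = \langle \xi, \eta\rangle_K = \langle J_\phi \eta, J_\phi \xi\rangle$, using that $J_\phi$ is anti-unitary; hence $\Phi$ is unitary.

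The main content is the bimodule intertwining property. Recall from Notation \ref{nota:Discrete} that $N \subseteq M^\phi$, so for $n \in N$ we have $\sigma^\phi_t(n) = n$ and, more importantly, $J_\phi n J_\phi$ implements the right action $\xi \vartriangleleft n = J_\phi n^* J_\phi \xi$, while the left action is just $n\xi$. On the conjugate bimodule $\overline{K}$ the actions are $n \cdot \overline{\xi} \cdot m = \overline{m^* \xi n^*}$. So I would compute, for $\xi \in K$ and $n,m \in N$, the image $\Phi(n \cdot (J_\phi \xi) \cdot m)$. Using that the left $N$-action on $J_\phi\xi \in L^2(M,\phi)$ and the right $N$-action (given by $\vartriangleleft$) must be expressed in terms of $\xi$: since $K$ is an $N$-$N$ sub-bimodule of $L^2(M,\phi)$, the key identities are $n(J_\phi \xi) = J_\phi(\xi \vartriangleleft n^*)$ and $(J_\phi\xi)\vartriangleleft m = J_\phi (m^*\xi)$, which follow from $J_\phi^2 = 1$ and $J_\phi x J_\phi$ commuting appropriately (concretely, $n \cdot J_\phi \xi = J_\phi J_\phi n J_\phi \xi = J_\phi(\xi \vartriangleleft n^*)$ and $(J_\phi\xi)\vartriangleleft m = J_\phi m^* J_\phi J_\phi \xi = J_\phi(m^*\xi)$). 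Chaining these, $n\cdot(J_\phi\xi)\cdot m = J_\phi(m^*\xi \vartriangleleft n^*) = J_\phi(m^*\xi \cdot n)$, and applying $\Phi$ gives $\overline{m^*\xi n} = \overline{m^* \xi (n^*)^*} = n \cdot \overline{\xi} \cdot m$, exactly matching the conjugate bimodule action. This establishes $N$-$N$ bilinearity.

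I do not expect a serious obstacle here — the lemma really is "straightforward" as the authors say. The one point requiring a little care is getting the conjugate-transpose bookkeeping right: the right $N$-action on $L^2(M,\phi)$ is defined via $\xi \vartriangleleft a = J_\phi a^* J_\phi \xi$ (note the adjoint), and the conjugate bimodule structure on $\overline{K}$ is $a \cdot \overline{\eta}\cdot b := \overline{b^*\eta a^*}$ as in Section \ref{sec:II_1 bimodules}, so one must track the two adjoints consistently to see they cancel in the right way. Once the conventions are pinned down, the verification is a two-line computation, and combined with the isometry and bijectivity observations above it completes the proof.
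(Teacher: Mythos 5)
Your proposal is correct and takes essentially the same route as the paper: the paper's entire proof is the one-line computation $n_1 \vartriangleright J_\phi\xi \vartriangleleft n_2 = J_\phi n_2^* J_\phi n_1 J_\phi \xi = J_\phi(n_2^* \vartriangleright \xi \vartriangleleft n_1^*)$, with the unitarity and well-definedness left to the reader, which you spell out. One caveat on exactly the bookkeeping you flag as delicate: in your chain, $m^*\xi \vartriangleleft n^*$ equals $m^*\xi\cdot n^*$ (right multiplication by $n^*$, since $\vartriangleleft a = \cdot\, a$ for $a\in N\subseteq M^\phi$ by \eqref{eq:RightAction}), not $m^*\xi\cdot n$, and correspondingly $n\cdot\overline{\xi}\cdot m = \overline{m^*\xi n^*}$, not $\overline{m^*\xi n}$. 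The two dropped adjoints cancel, so your conclusion that $\Phi(n\cdot(J_\phi\xi)\cdot m) = n\cdot\overline{\xi}\cdot m$ is correct, but the displayed intermediate expressions should both carry $n^*$ rather than $n$.
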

\begin{proof}
For all $\xi \in K$ and $n_1,n_2\in N$, we have
$$
n_1 \vartriangleright J_\phi\xi \vartriangleleft n_2
=
J_\phi n_2^* J_\phi n_1 J_\phi \xi
=
J_\phi( n_2^* \vartriangleright \xi \vartriangleleft n_1^*).
$$
We leave the rest of the details to the reader.
\end{proof}

We may define a bounded right $N$-action on $L^2(M,\phi)$ by $(m\Omega)\cdot n = mn\Omega$.
Indeed, if $n\in N$ and $m\in M$, since $E(m^*m)\geq 0$, we have
\begin{align*}
\|mn\Omega\|_2^2 &=\phi (n^*m^*mn) = \tr_N(n^*E(m^*m)n) = \tr_N(E(m^*m)^{1/2}nn^* E(m^*m)^{1/2})
\\&\leq \|nn^*\|_\infty \tr_N(E(m^*m)) = \|n\|^2_\infty \phi (m^*m) = \|n\|^2_\infty\cdot \|m\|_2^2.
\end{align*}
By \cite{MR0303307}, this right action is exactly the right action coming from the Tomita-Takesaki theory:
\begin{equation}
\label{eq:RightAction}
(m\Omega) \vartriangleleft n
=
(J_\phi n^*J_\phi)m\Omega
=
(J_\phi \sigma_{i/2}^\phi(n)^* J_\phi) m\Omega
=
mn\Omega
=
(m\Omega)\cdot n.
\end{equation}

\subsection{Definitions of discrete and quasi-regular}
\label{sec:Definitions}

\begin{defn}[\cite{MR1622812}]
An pair $(N\subseteq M,E)$ as in Notation \ref{nota:Discrete} is called a \emph{discrete inclusion} if the following equivalent conditions hold
\begin{enumerate}[(1)]
\item
Denoting the basic construction algebra by $\langle M,N\rangle=\{M,e_N\}''=J_\phi N'J_\phi \subset B(L^2(M,\phi))$ \cite{MR829381}, the relative commutant $N'\cap \langle M, N\rangle$ is a (possibly infinite) direct sum of finite dimensional matrix algebras, and $pN \subseteq p\langle M, N\rangle p$ has finite index for every finite rank projection $p\in N'\cap \langle M, N\rangle$.
\item
As an $N-N$ bimodule, $L^2(M,\phi)\cong\bigoplus_{i\geq 0} n_i H_i$ where the $H_i$ are pairwise non-isomorphic irreducible bimodules in $\bfBim(N)$, $H_0 \cong L^2(N)$, and $n_i \in \bbN$ for all $i$.
\end{enumerate}
An inclusion $N\subseteq M$ is \emph{irreducible} if $N'\cap M = \bbC$.
We will see later in Corollary \ref{cor:IrreducibleHasOneL2N} that irreducibility implies $n_0=1$ in (2) above.
A discrete inclusion $(N\subseteq M, E)$ is called \emph{extremal} if $L^2(M,\phi)$ is an extremal bimodule.
\end{defn}

\begin{rem}
When $M$ is type ${\rm II}_1$ and $[M:N] <\infty$, irreducible implies extremal.
This is no longer the case when $[M:N]=\infty$ \cite{MR1622812} or when $M$ is type ${\rm III}$ (see Example \ref{ex:TypeIII}).
\end{rem}

For an irreducible subfactor with faithful normal conditional expectation $(N\subseteq M, E)$ with $(N, \tr_N)$ type ${\rm II}_1$, we will see that discreteness is equivalent to \emph{quasi-regularity}, a notion originating in \cite{MR1729488,MR2215135} which plays an important role in \cite{MR3406647,1511.07329}.

\begin{defn}
Suppose we have a pair $(N\subseteq M,E)$ as in Notation \ref{nota:Discrete}.
The \emph{quasi-normalizer} $\cQ\cN_M(N)$ of $N\subset M$ is the set
$$
\set{x\in M}{\text{there exist }x_1,\dots, x_m, y_1,\dots, y_n\text{ such that }xN\subset \sum_{i=1}^m Nx_i \text{ and }Nx \subset \sum_{j=1}^n y_j N}.
$$
We say $(N\subseteq M,E)$ is \emph{quasi-regular} if $\cQ\cN_M(N)'' =M$.
\end{defn}

\begin{rem}
For notational convenience, we will write $Q^\circ = \cQ\cN_M(N)$ and $Q = (Q^\circ)'' = \cQ\cN_M(N)''$.
It is easy to see that $Q^\circ$ is a unital $*$-subalgebra of $M$ containing $N$.
Thus $N\subseteq Q\subseteq M$, and $Q$ is a factor if $N\subseteq M$ is irreducible.
\end{rem}

It is trivial to see that for an irreducible inclusion $(N\subseteq M, E)$ with $M$ type ${\rm II}_1$ and $E$ the unique trace preserving conditional expectation, discreteness is equivalent to quasi-regularity, since $Q=M$ if and only if $L^2(Q) = L^2(M)$ \cite[Prop.~2.1.8]{MR0696688}.
In our situation, $M$ is not assumed to be type ${\rm II}_1$, and there are indeed examples where $M$ is type ${\rm III}$ (see Example \ref{ex:TypeIII} and Sections \ref{sec:QuantumGroups} and \ref{sec:TLJmodules}).
By Corollary \ref{cor:IrreducibleInclusionIIorIII} below, we will see that if $M$ is not type ${\rm III}$, then $M$ is type ${\rm II}_1$, and $E$ must be the canonical trace-preserving conditional expectation.

\subsection{Extending a technical lemma of Popa-Shlyakhtenko-Vaes}
\label{sec:PSVlemma}

One of the key ingredients to our study of discrete subfactors requires extending a lemma of Popa-Shlyakhtenko-Vaes \cite[Lem.~2.5]{1511.07329} to the setting of irreducible discrete inclusions $(N\subseteq M, E)$ where $M$ is not assumed to be type ${\rm II}_1$, and $\phi$ is not assumed to be a trace.
This lemma assures us that we can pick Pimsner-Popa bases \cite{MR860811} for bifinite $N-N$ sub-bimodules $K\subset L^2(M,\phi)$ from $K\cap M\Omega$, and thus $K^\circ = K\cap M\Omega$.
We would like to thank Stefaan Vaes for helping us with this argument.
We begin with the following definition.

\begin{defn}
Suppose $N$ is a ${\rm II}_1$ factor, and we have an inclusion $K\subset H$ of $N-N$ bimodules.
The \emph{isotypic component of $K$ in $H$} is the smallest $N-N$ subbimodule of $H$ which contains all $N-N$ subbimodules of $H$ isomorphic to a summand of $K$.
(The isotypic component of $K$ is usually only defined when $K$ is irreducible, but the above definition makes sense for arbitrary $K$.)
\end{defn}

The lemma and corollary below are known to experts.
We provide proofs for completeness and the convenience of the reader.

\begin{lem}
\label{lem:UniqueNormalizingState}
Suppose $A\subseteq B$ is an irreducible subfactor, $\phi$ is a faithful normal state on $B$, and $\omega$ is a faithful semi-finite normal weight on $B$.
If the centralizers of $\phi$ and $\omega$ both contain $A$, then $\omega = \lambda \phi$ for some $\lambda >0$.
In particular, $\omega$ is finite.
\end{lem}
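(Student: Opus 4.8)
\textbf{Proof proposal for Lemma \ref{lem:UniqueNormalizingState}.}

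The plan is to use the Radon-Nikodym-type comparison between the two weights via their Connes cocycle, and to exploit irreducibility of $A\subseteq B$ to conclude the cocycle is scalar. First I would form the balanced weight on $B\otimes M_2(\bbC)$, or equivalently work directly with the Connes cocycle derivative $u_t = (D\omega : D\phi)_t \in B$, which is a $\sigma$-strongly continuous family of partial isometries (in fact unitaries, since both weights are faithful) satisfying the cocycle identity $u_{s+t} = u_s \sigma^\phi_s(u_t)$. The key computation is that for $a\in A$, since $a$ lies in both centralizers we have $\sigma^\phi_t(a) = a = \sigma^\omega_t(a)$ for all $t$, and the intertwining property of the cocycle, $\sigma^\omega_t(x) = u_t\sigma^\phi_t(x)u_t^*$, applied to $x = a$ gives $a = u_t a u_t^*$, i.e. $u_t \in A'\cap B$. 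By irreducibility $A'\cap B = \bbC$, so $u_t = \lambda(t)$ is a scalar of modulus $1$; the cocycle identity forces $\lambda(s+t) = \lambda(s)\lambda(t)$ and continuity gives $\lambda(t) = \mu^{it}$ for some $\mu > 0$.

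Next I would translate "$(D\omega:D\phi)_t = \mu^{it}$" into "$\omega = \mu^{?}\phi$". The cleanest route is: the cocycle $(D\omega:D\phi)_t$ being of the form $\mu^{it} = (D(\mu\phi):D\phi)_t$ (using that scaling a weight by a positive constant $c$ multiplies the cocycle against $\phi$ by $c^{it}$) combined with the uniqueness clause of the Connes cocycle theorem — two faithful normal semifinite weights with the same cocycle derivative relative to a fixed $\phi$ are equal — yields $\omega = \mu\phi$ (with the appropriate sign convention, $\omega=\mu^{-1}\phi$ or $\mu\phi$; I'd fix the convention from the definition of $\sigma^\phi_t$ used in Notation \ref{nota:Discrete}). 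Since $\phi$ is a state, hence finite, $\omega = \lambda\phi$ is finite as well, giving the "in particular" clause. Set $\lambda = \mu$ (or $\mu^{-1}$).

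An alternative, more hands-on approach avoiding the cocycle machinery: since $A\subseteq M^\phi$ and $\phi|_{M^\phi}$ restricts to... — actually $A$ is a ${\rm II}_1$ factor so $\phi|_A$ is a multiple of its trace, and one could try to compare $\omega$ and $\phi$ by showing $\omega$ is $\sigma^\phi$-invariant. Indeed $\sigma^\phi_t$ fixes $A$ pointwise, and one wants $\sigma^\phi_t = \sigma^\omega_t$; this again comes down to the cocycle being central. I expect the cocycle argument to be the efficient one, and the main obstacle — really the only subtle point — is handling the fact that $\omega$ is only semifinite, not finite, a priori: one must make sure the Connes cocycle $(D\omega:D\phi)_t$ is well-defined (it is, for any two faithful normal semifinite weights) and that the manipulations with $\sigma^\omega$ are legitimate for a weight. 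Once the cocycle is shown to be scalar, finiteness of $\omega$ is automatic from $\omega = \lambda\phi$, so there is no circularity. The rest is the standard uniqueness statement in Connes-Takesaki modular theory.
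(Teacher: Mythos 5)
Your proof is correct and follows essentially the same route as the paper: form the Connes cocycle derivative between $\phi$ and $\omega$ (well-defined for any pair of faithful normal semifinite weights), use the intertwining relation and the hypothesis that $A$ lies in both centralizers to place $u_t$ in $A'\cap B=\bbC$, deduce $u_t=\lambda^{it}$, and conclude $\omega=\lambda\phi$ from the uniqueness/characterization clause of the cocycle theorem. The only difference is the (immaterial) choice of which direction of the cocycle $(D\phi:D\omega)$ versus $(D\omega:D\phi)$ one writes down.
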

\begin{proof}
Recall that the Connes cocycle derivative $u_t=(D\phi: D\omega)_t\in B$ satisfies $\sigma^\phi_t(x) = u_t \sigma^\omega_t(x) u_t^*$ for all $x\in B$ and $t\in \bbR$ \cite[Thm.~XIII.3.3]{MR1943007}.
Now since $A$ is in the centralizer of both $\phi$ and $\omega$, for all $a\in A$, $a=u_tau_t^*$.
Hence for all $t\in \bbR$, $u_t\in U(A'\cap B) = U(1)$, and thus must be of the form $u_t = \lambda^{it}$ for some fixed $\lambda>0$.
Hence $\sigma_t^\phi = \sigma_t^\omega$ for all $t\in \bbR$, and by \cite[Cor.~XIII.3.6]{MR1943007}, we have $\omega = \phi_\lambda =\lambda \phi$.
\end{proof}

\begin{cor}
\label{cor:IrreducibleInclusionIIorIII}
Suppose $A\subseteq B$ is an irreducible subfactor with $A$ type ${\rm II}_1$ and $\phi$ is a faithful normal state on $B$ which contains $A$ in its centralizer $B^{\phi}$.
Then either $B$ is type ${\rm II}_1$ and $\phi$ is the canonical trace on $B$, or $B$ is type ${\rm III}$.
\end{cor}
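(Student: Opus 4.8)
The plan is to reduce to Lemma \ref{lem:UniqueNormalizingState} and then to use standard structure theory for factors of type other than ${\rm III}$. First I would dispose of the case where $B$ is semifinite. If $B$ is not of type ${\rm III}$, then $B$ admits a faithful normal semifinite trace $\omega$ (it is a semifinite factor). Since $A$ is a ${\rm II}_1$ factor, and in particular $A$ is finite, $A$ lies in the centralizer of any trace, so $A\subseteq B^\omega$. The hypothesis gives $A\subseteq B^\phi$ as well. By Lemma \ref{lem:UniqueNormalizingState} applied to $A\subseteq B$ with the state $\phi$ and the weight $\omega$, we conclude $\omega=\lambda\phi$ for some $\lambda>0$; in particular $\omega$ is finite, so $B$ is a finite factor, i.e.\ type ${\rm II}_1$ or type ${\rm I}_n$. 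Since $B$ contains the ${\rm II}_1$ factor $A$, $B$ cannot be type ${\rm I}_n$ (a type ${\rm I}_n$ factor is finite-dimensional and cannot contain an infinite-dimensional subfactor), so $B$ is type ${\rm II}_1$.

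Next I would identify $\phi$ with the canonical trace. Once we know $B$ is type ${\rm II}_1$, let $\tau_B$ denote its unique faithful normal tracial state. Applying Lemma \ref{lem:UniqueNormalizingState} again with $\omega=\tau_B$ (which has $A$ in its centralizer since $\tau_B$ is a trace), we get $\tau_B=\lambda\phi$; evaluating both sides at $1$ forces $\lambda=1$, so $\phi=\tau_B$ is the canonical trace on $B$. This gives exactly the dichotomy in the statement: either $B$ is type ${\rm II}_1$ with $\phi$ its canonical trace, or $B$ is type ${\rm III}$.

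The only mild subtlety — and the step I'd expect to require the most care — is justifying that a factor which is not of type ${\rm III}$ carries a faithful normal semifinite trace; this is the standard fact that a von Neumann algebra is semifinite if and only if it has no type ${\rm III}$ summand, and a semifinite factor has a faithful normal semifinite trace, unique up to scaling. For a \emph{factor} $B$, the type decomposition (types ${\rm I}$, ${\rm II}_1$, ${\rm II}_\infty$, ${\rm III}$) is a genuine trichotomy plus the ${\rm III}$ case, and in the first three cases the trace exists, so there is nothing more to prove there. Everything else is a direct invocation of Lemma \ref{lem:UniqueNormalizingState}.
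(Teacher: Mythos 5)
Your proof is correct and follows essentially the same route as the paper: both reduce to Lemma \ref{lem:UniqueNormalizingState} applied to a faithful normal semifinite trace on a semifinite $B$, concluding that the trace is a multiple of $\phi$ and hence finite. The only cosmetic difference is that the paper excludes type ${\rm I}_\infty$ up front via an irreducibility/commutant argument, whereas you let the finiteness of $\omega$ rule out the infinite semifinite types and dispose of type ${\rm I}_n$ at the end; also note that $A\subseteq B^\omega$ holds because the modular automorphism group of a trace is trivial (so $B^\omega=B$), not because $A$ itself is finite.
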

\begin{proof}
It is clear that $B$ cannot be type ${\rm I}_\infty$, since $A'\cap B(H)$ is always type ${\rm II}$.
Suppose $B$ is type ${\rm II}$.
Let $\omega = \Tr_B$ be a faithful semi-finite normal trace on $B$, and note that $A$ is contained in the centralizer of $\omega$ since the modular automorphism group is trivial.
By Lemma \ref{lem:UniqueNormalizingState}, $\Tr_M = \lambda \phi$ for some $\lambda >0$, so $\Tr_M$ is finite, $B$ is type ${\rm II}_1$, and $\phi$ is the canonical trace on $B$.  
\end{proof}

\begin{rem}
Suppose $B$ is type ${\rm III}$ as in the statement of Corollary \ref{cor:IrreducibleInclusionIIorIII}.
Since $A\subseteq B^{\phi}\subseteq B$ and $A\subseteq B$ is irreducible, $B^{\phi}$ is a factor.  
By \cite[Cor.~3.2.7(a)]{MR0341115}, Connes' modular spectrum $S(B)=\operatorname{Spec}(\Delta_{\phi})$.
This means $B$ is type ${\rm III}_\lambda$ for $\lambda>0$.
\end{rem}

We now prove the analog of \cite[Lem.~2.5, parts 1-4]{1511.07329}.
We will prove the analog of \cite[Lem.~2.5, part 5]{1511.07329} in the next section.

\begin{prop}
\label{prop:MultiplicationMapBounded}
Suppose we have an irreducible inclusion $(N\subseteq M, E)$, 
and $K\subset L^2(M,\phi)$ is a bifinite $N-N$ bimodule.
\begin{enumerate}[(1)]
\item
There is a finite Pimsner-Popa $M_N$-basis  \cite{MR860811} $\{b_1,\dots, b_n\}\subset K\cap M\Omega$ such that
$$
\qquad\qquad
\text{
$P_K(x)
=
\sum_{i=1}^n b_iE_N(b_i^*x)$
for all
$x\in M$.}
$$
\item
The space of $N$-bounded vectors $K^\circ \subset K$ is equal to $K\cap M\Omega$.
\item
The densely defined linear map $K^\circ \otimes M\Omega \to L^2(M,\phi)$ given by $k\Omega\otimes m\Omega \mapsto k m\Omega$
extends to a bounded operator $\mu_K:K\boxtimes_N L^2(M,\phi)\to L^2(M,\phi)$.
\end{enumerate}
There are similar statements for ${}_N M$-bases and the multiplication map  $L^2(M,\phi)\boxtimes_N K\to L^2(M,\phi)$.
\end{prop}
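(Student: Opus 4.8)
The plan is to prove the three statements in sequence, with (1) doing most of the work and (2)--(3) following with little extra effort; the analogous ${}_NM$-statements then follow by applying the same argument to the conjugate bimodule $\overline K$, using Lemma~\ref{lem:ConjugateBimodule} to identify $J_\phi K \cong \overline K$ as $N-N$ bimodules and \eqref{eq:SwapLeftAndRightInnerProduct} to swap left and right $N$-valued inner products.

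\emph{Step 1: reduce to finding enough vectors in $K \cap M\Omega$.}
Since $N \subseteq M$ is irreducible and $N$ is type ${\rm II}_1$, Corollary~\ref{cor:IrreducibleInclusionIIorIII} tells us that either $M$ is type ${\rm II}_1$ with $\phi = \tr_M$, or $M$ is type ${\rm III}$; in the type ${\rm II}_1$ case the statement is exactly \cite[Lem.~2.5]{1511.07329}, so I would assume $M$ is type ${\rm III}_\lambda$ (and note $N \subseteq M^\phi \subseteq M$ with $M^\phi$ a factor, by the Remark after Corollary~\ref{cor:IrreducibleInclusionIIorIII}). The key point is that $K$ is a bifinite $N-N$ bimodule, so it decomposes into finitely many irreducible $N-N$ bimodules, each of which occurs in $L^2(M,\phi)$. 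I would first show that $M\Omega \cap K$ is dense in $K$. Work one isotypic component at a time: let $K_i \subseteq K$ be the isotypic component of an irreducible $N-N$ bimodule $H_i$ appearing in $K$. Because $H_i$ is bifinite, the isotypic component of $H_i$ \emph{in all of $L^2(M,\phi)$} is a finite multiple $n_i H_i$ (this is where discreteness of $L^2(M,\phi)$ enters — though for this Proposition we only have irreducibility, so instead I would use that $K$ itself is bifinite to bound the relevant multiplicities inside $K$). The map $m\Omega \mapsto P_{K_i}(m\Omega) = P_{K_i}(m)\Omega$ (using $N \subseteq M^\phi$, so $P_{K_i}$ commutes with the $N-N$ action and with $J_\phi$ appropriately) has image a dense $N-N$ subbimodule of $K_i$ contained in $K \cap M\Omega$; since $K_i$ is a finite direct sum of copies of $H_i$ and $M\Omega$ is $N-N$ invariant and dense, this image is all of the bounded vectors, or at least dense. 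Assembling over the finitely many $i$ gives density of $K \cap M\Omega$ in $K$.

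\emph{Step 2: produce the Pimsner--Popa basis and prove (1).}
Given density of $K \cap M\Omega$, I would extract a finite Pimsner--Popa basis $\{b_1,\dots,b_n\} \subset K \cap M\Omega$ for the projection $P_K$ onto $K$ viewed inside $L^2(M,\phi)$, following the standard argument: the projection $e_K := P_K$ lies in $N' \cap \langle M, N\rangle$ and is finite there (bifiniteness of $K$), so one can write $e_K = \sum_i L_{b_i} e_N L_{b_i}^*$ with $b_i \in K^\circ$, equivalently $P_K(x) = \sum_i b_i E_N(b_i^* x)$ for $x \in M$; the point that the $b_i$ can be taken in $M\Omega$ rather than merely in $K^\circ$ is precisely the density from Step~1, since a finite spanning set of bounded vectors can be perturbed into $K \cap M\Omega$. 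This gives (1). For (2): "$\supseteq$" is the inclusion $K \cap M\Omega \subseteq K^\circ$, which holds because $m\Omega$ is a bounded vector for any $m \in M$ (the right bound is \eqref{eq:RightAction}, and the left bound follows symmetrically using $J_\phi M \Omega$ and $N \subseteq M^\phi$); "$\subseteq$" follows from (1), since for $\xi \in K^\circ$ we have $\xi\Omega = P_K(\xi\Omega) $... more precisely any $\xi \in K^\circ$ satisfies $\xi = \sum_i b_i E_N(b_i^* \xi)$ with $b_i \in M\Omega$ and $E_N(b_i^*\xi) \in N$, hence $\xi \in M\Omega$. For (3): the map $k\Omega \otimes m\Omega \mapsto km\Omega$ is bounded because, expanding $k = \sum_i b_i \langle b_i | k\rangle_N$ in the Pimsner--Popa basis and using that right multiplication by elements of $N$ on $L^2(M,\phi)$ is bounded (again \eqref{eq:RightAction}) together with $\|b_i m\Omega\|_2 \le \|b_i\|_\infty \|m\Omega\|_2$, one controls $\|km\Omega\|_2$ by $\big(\sum_i \|b_i\|_\infty^2\big)^{1/2}\|k\boxtimes_N m\Omega\|$; this is the standard realization of the Connes fusion $K \boxtimes_N L^2(M,\phi)$ via the $b_i$.

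\emph{Main obstacle.}
The crux — and the only place where the extension beyond \cite[Lem.~2.5]{1511.07329} costs anything — is Step~1 in the type ${\rm III}$ case: showing that the bounded-vector space $K^\circ$ is exhausted by $K \cap M\Omega$ when $\phi$ is not a trace. The projection $P_K$ onto an $N-N$ subbimodule of $L^2(M,\phi)$ need not commute with the modular conjugation $J_\phi$ or with $\sigma^\phi_t$ a priori, so one must use carefully that $N \subseteq M^\phi$ (so $\sigma^\phi$ fixes $N$ and $P_K$, being $N-N$ bilinear, at least interacts well with the $N$-actions) and that $M\Omega$ is a core for the relevant operators. This is exactly the technical input the authors credit to Stefaan Vaes, and I expect the honest proof to hinge on a careful density/closability argument here rather than on the comparatively formal manipulations in Steps~2--3.
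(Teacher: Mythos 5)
Your outline correctly isolates where the difficulty lies, but the argument you offer for that step does not work: Step~1 is circular. You write that the map $m\Omega \mapsto P_{K_i}(m\Omega) = P_{K_i}(m)\Omega$ has image contained in $K\cap M\Omega$ — but the assertion that the $N-N$ bilinear projection $P_{K_i}$ carries $M\Omega$ into $M\Omega$ (equivalently, restricts to a map $M \to M$) is essentially part~(1) of the Proposition, i.e.\ the very thing you are trying to prove. A projection in $N'\cap\langle M,N\rangle$ has no a priori reason to preserve $M\Omega$ when $\phi$ is not a trace, and neither does a general $N$-bounded vector of $K$ have any a priori reason to lie in $M\Omega$ (being $N$-bounded is weaker than being $M$-bounded on the right). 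Your closing paragraph concedes that you ``expect the honest proof to hinge on a careful density/closability argument here,'' which is an accurate self-assessment: the proposal identifies the obstacle but does not overcome it. The subsidiary claim in Step~2 that a spanning set of bounded vectors ``can be perturbed into $K\cap M\Omega$'' also presupposes the density you have not established.

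The paper's actual mechanism is quite different and worth internalizing. One starts from an \emph{arbitrary} Pimsner--Popa $K_N$-basis $\{b_i\}\subset K^\circ$ (obtained from an isomorphism $K\cong p(\bbC^n\otimes L^2(N))$, with no claim that $b_i\in M\Omega$) and forms the normal positive functional $\omega(x)=\sum_i\langle xb_i,b_i\rangle$. Independence of $\omega$ from the choice of basis shows $N\subseteq M^\omega$, and irreducibility forces $\supp(\omega)=1$; then Lemma~\ref{lem:UniqueNormalizingState} — the Connes cocycle-derivative argument showing that a faithful normal weight with $N$ in its centralizer is a positive multiple of $\phi$ — yields $\omega=\omega(1)\phi$. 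From $\|xb_i\|^2\le\omega(1)\|x\Omega\|^2$ one concludes each $b_i$ is \emph{right $M$-bounded}, hence $J_\phi b_i\in M\Omega$; this produces an ${}_N(J_\phi K)$-basis inside $J_\phi K\cap M\Omega$, and running the whole argument again with $J_\phi K$ in place of $K$ (using Lemma~\ref{lem:ConjugateBimodule}) finally produces a basis of $K$ inside $K\cap M\Omega$ and gives $K^\circ=K\cap M\Omega$. None of this cocycle/uniqueness-of-state input appears in your proposal, and without it (or some substitute) the density claim in Step~1 remains unproven.
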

\begin{proof}
The proof begins identically to \cite{1511.07329}.
We may assume $K$ is irreducible.
We choose an $N-N$ bilinear unitary isomorphism $V: p(\bbC^n \otimes L^2(N)) \to K$ for some projection $p\in M_n(\bbC)\otimes N$, where the left $N$-action on $p(\bbC^n \otimes L^2(N))$ is given by a finite index inclusion $\psi: N \to p(M_n(\bbC) \otimes N)p$.
For $i=1,\dots, n$, we let $b_i = V(p(e_i\otimes \Omega))\in K^\circ$, and note that $\{b_i\}_{i=1}^n\subset K^\circ$ is a Pimsner-Popa $K_N$-basis, which satisfies $\sum_{i=1}^n b_i \langle b_i |\xi\rangle_N = \xi$ for all $\xi\in K^\circ$.

Now we must depart from the proof in \cite{1511.07329}, since $\phi$ is not assumed to be a trace on $M$.
Look at the normal positive linear functional on $M$ given by
$$
\omega(x) = \sum_{i=1}^n \langle x b_i, b_i\rangle_{L^2(M,\phi)}.
$$
Notice that $N$ is contained in the centralizer of $\omega$, since $\omega$ is independent of the choice of basis $\{b_i\}_{i=1}^n$.
Indeed, if $P_K\in B(L^2(M,\phi))$ is the projection with range $K$, then since $N$ is contained in the centralizer of $\phi$, we have $P_KMP_K \subset (N^{\op})'\cap B(K)$.
It is well known that $\omega$ is independent of the choice of basis on $(N^{\op})'\cap B(K)$.
Thus for all $u\in U(N)$, we have
\begin{align*}
\omega(u^*xu)
&=
\sum_{i=1}^n \langle u^* xub_i,b_i\rangle_{L^2(M,\phi)}
=
\sum_{i=1}^n \langle x ub_i,ub_i\rangle_{L^2(M,\phi)}
=
\sum_{i=1}^n \langle x P_Kub_i,P_Kub_i\rangle_{L^2(M,\phi)}
\\&=
\sum_{i=1}^n \langle P_Kx P_Kub_i,ub_i\rangle_{K}
=
\sum_{i=1}^n \langle P_Kx P_Kb_i,b_i\rangle_{K}
=
\sum_{i=1}^n \langle xb_i,b_i\rangle_{L^2(M,\phi)}
=
\omega(x).
\end{align*}
Moreover, the support projection $\supp(\omega)$ of $\omega$ lies in $N'\cap M=\bbC$.
Since $\omega \neq 0$, $\supp(\omega)=1$, and $\omega$ is faithful.
By Lemma \ref{lem:UniqueNormalizingState}, we have $\omega(1)^{-1}\omega = \phi$.

We calculate that the $b_i$ are all right $M$-bounded vectors:
$$
\|xb_i\|^2_{L^2(M,\phi)}
=
\langle xb_i, xb_i \rangle
\leq
\sum_{i=1}^n \langle xb_i, xb_i \rangle
=
\omega(x^*x)
=
\omega(1) \phi(x^*x)
=
\omega(1) \|x\Omega\|_{L^2(M,\phi)}^2.
$$
Now the space of right $M$-bounded vectors is exactly $M'\Omega = J_\phi M \Omega$, since right multiplication by a right $M$-bounded vector from $L^2(M,\phi)$ commutes with left multiplication by $M$.
Thus for each $i=1,\dots, n$, $J_\phi b_i =a_i\Omega \in M\Omega$ for some $a_i\Omega \in J_\phi K\cap M\Omega$.

Notice now that $J_\phi K$ is also a bifinite $N-N$ bimodule by Lemma \ref{lem:ConjugateBimodule}.
We claim that $\{a_i\Omega\}_{i=1}^n$ is an ${}_N(J_\phi K)$-basis of elements from $J_\phi K\cap M\Omega$.
Indeed, for all $\xi\in (J_\phi K)^\circ$, we have $J_\phi \xi \in K^\circ$, and by \eqref{eq:SwapLeftAndRightInnerProduct},
\begin{align*}
\sum_{i=1}^n {}_N\langle \xi, a_i\rangle a_i
&=
J_\phi\sum_{i=1}^n J_\phi ({}_N\langle \xi, a_i\Omega\rangle a_i\Omega)
=
J_\phi\sum_{i=1}^n (J_\phi a_i\Omega) {}_N\langle a_i\Omega, \xi\rangle
\\&=
J_\phi\sum_{i=1}^n (J_\phi a_i\Omega) \langle J_\phi a_i\Omega| J_\phi\xi\rangle_N
=
J_\phi\sum_{i=1}^n b_i \langle b_i| J_\phi\xi\rangle_N
=
J_\phi J_\phi\xi
=
\xi.
\end{align*}

Since we could repeat the entire argument above starting with $J_\phi K$ instead of $K$, we may conclude that there is a ${}_N K$-basis $\{a_i\Omega\}_{i=1}^m$ consisting of elements of $K\cap M\Omega$.
This immediately implies that $K^\circ =K\cap M\Omega$, since both are algebraic $N-N$ bimodules.
The direction $K\cap M\Omega \subseteq K^\circ$ is trivial, and for all $\xi\in K^\circ$, $\xi = \sum_{i=1}^m {}_N\langle \xi, a_i\Omega\rangle a_i\Omega \in K\cap M\Omega$.
We conclude that our original $K_N$-basis $\{b_i\}_{i=1}^n\subset K^\circ$ is a subset of $K\cap M\Omega$.

The rest of the proof proceeds exactly as in \cite[Lem.~2.5, parts 1-4]{1511.07329}.
\end{proof}

\subsection{A modified Frobenius reciprocity application}

We may now use Proposition \ref{prop:MultiplicationMapBounded} to prove a modified version of Frobenius reciprocity 
for an irreducible inclusion $(N\subseteq M, E)$ in the presence of a bifinite $N-N$ bimodule $K$.
Consider the two intertwiner spaces
$\Hom_{N-M}(K\boxtimes_N L^2(M,\phi), L^2(M,\phi))$
and
$\Hom_{N-N}(K, L^2(M,\phi))$.
Since we have a faithful normal conditional expectation $E: M\to N$,
starting with $f\in \Hom_{N-M}(K\boxtimes L^2(M,\phi), L^2(M,\phi))$, we define
$\Phi(f) \in \Hom_{N-N}(K, L^2(M,\phi))$ as the composite map
\begin{equation}
\label{eq:FrobeniusEasy}
\begin{split}
K \cong K\boxtimes_N L^2(N)
&\xrightarrow{\id_K\boxtimes e_N^*}
K\boxtimes_N L^2(M,\phi)
\cong
K\boxtimes_N L^2(M,\phi)\boxtimes_{M} L^2(M,\phi)
\\&
\xrightarrow{f\boxtimes \id_{L^2(M,\phi)}}
L^2(M,\phi)\boxtimes_{M} L^2(M,\phi)
\cong
L^2(M,\phi).
\end{split}
\end{equation}
Using the bounded multiplication map from Proposition \ref{prop:MultiplicationMapBounded}, we can build a map in the other direction.
Starting with $g\in \Hom_{N-N}(K, L^2(M,\phi))$,
we can think of $g$ as a map $K\to g(K)$, where $g(K)\subset L^2(M,\phi)$ is a bifinite $N-N$ sub-bimodule.
We then define $\Psi(g)\in \Hom_{N-M}(K\boxtimes_N L^2(M,\phi), L^2(M,\phi))$ as the composite map
\begin{equation}
\label{eq:FrobeniusHard}
K \boxtimes_N L^2(M,\phi)
\xrightarrow{g\boxtimes \id_{L^2(M,\phi)}}
g(K)\boxtimes_N L^2(M,\phi)
\xrightarrow{\mu_{g(K)}}
L^2(M,\phi).
\end{equation}

\begin{rem}
\label{rem:MultiplyWithEntire}
Note that for entire $m\in M$ and arbitrary $\xi\in K$, the multiplication map $\mu_K:K\boxtimes_N L^2(M,\phi)\to L^2(M,\phi)$ is given by
$\xi\otimes m\Omega
\mapsto
\xi \vartriangleleft \sigma_{i/2}^\phi(m)
=
J_\phi \sigma_{i/2}^\phi(m)^*J_\phi\xi$.
Recall that if $M_{\text{ent}}\subset M$ denotes the set of entire elements, then $M_{\text{ent}}\Omega$ is dense in $L^2(M, \phi)$.
(Indeed, the maximal Tomita algebra $\mathfrak{A}_0\subset M_{\text{ent}}$, and $\mathfrak{A}_0\Omega$ is dense in $L^2(M, \phi)$
\cite[pp.~99-102]{MR1943007}).)
\end{rem}

\begin{thm}[Bifinite Frobenius Reciprocity]
\label{thm:BifiniteFrobeniusReciprocity}
Suppose $(N\subseteq M, E)$ is an irreducible inclusion and $K$ is a bifinite $N-N$ bimodule.
The maps $\Phi$ and $\Psi$ defined by \eqref{eq:FrobeniusEasy} and \eqref{eq:FrobeniusHard} witness a natural isomorphism
$$
\Hom_{N-M}(K\boxtimes_N L^2(M,\phi), L^2(M,\phi))
\cong
\Hom_{N-N}(K, L^2(M,\phi)).
$$
\end{thm}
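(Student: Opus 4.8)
The plan is to show that $\Phi$ and $\Psi$ from \eqref{eq:FrobeniusEasy} and \eqref{eq:FrobeniusHard} are mutually inverse, and then check naturality in $K$. First I would verify that $\Psi(g)$ really is $N-M$ bilinear: the map $g\boxtimes\id$ is $N-N$ bilinear and right $M$-linear in the obvious sense, and the multiplication map $\mu_{g(K)}$ intertwines the right $M$-action because, by Remark \ref{rem:MultiplyWithEntire}, on the dense subspace $g(K)^\circ\otimes M_{\text{ent}}\Omega$ it is given by $\xi\otimes m\Omega\mapsto \xi\vartriangleleft\sigma_{i/2}^\phi(m)$, and right multiplication by a further entire element $m'$ sends this to $\xi\vartriangleleft\sigma_{i/2}^\phi(mm')=(\xi\vartriangleleft\sigma_{i/2}^\phi(m))\vartriangleleft\sigma_{i/2}^\phi(m')$, using that $\sigma^\phi_{i/2}$ is multiplicative on entire elements. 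Boundedness of $\mu_{g(K)}$ is exactly Proposition \ref{prop:MultiplicationMapBounded}(3), so $\Psi$ is well-defined.

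Next I would compute $\Phi\circ\Psi = \id$. Unwinding the definitions, $\Phi(\Psi(g))$ is obtained from $\Psi(g)$ by precomposing with $\id_K\boxtimes e_N^*$; since $e_N^*:L^2(N)\to L^2(M,\phi)$ sends $n\Omega\mapsto n\Omega$ (the inclusion of $N\Omega$ into $M\Omega$), we get that $\Phi(\Psi(g))$ acts on $K\cong K\boxtimes_N L^2(N)$ by $\xi\mapsto \mu_{g(K)}(g(\xi)\boxtimes \Omega) = g(\xi)\vartriangleleft\sigma^\phi_{i/2}(1) = g(\xi)$, where I use that $\Omega$ is the class of $1\in M$ and $1$ is entire with $\sigma^\phi_{i/2}(1)=1$. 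Hence $\Phi\circ\Psi=\id_{\Hom_{N-N}(K,L^2(M,\phi))}$. For the reverse composite $\Psi\circ\Phi=\id$, I would use that both $\Psi(\Phi(f))$ and $f$ are $N-M$ bilinear maps out of $K\boxtimes_N L^2(M,\phi)$, so it suffices to check they agree on the dense subspace $K^\circ\boxtimes M_{\text{ent}}\Omega$ (using $K^\circ=K\cap M\Omega$ from Proposition \ref{prop:MultiplicationMapBounded}(2)); by right $M$-linearity and the fact that $M_{\text{ent}}\Omega$ is dense (Remark \ref{rem:MultiplyWithEntire}), it is in fact enough to compare them on vectors of the form $\xi\boxtimes\Omega$, where $\Psi(\Phi(f))(\xi\boxtimes\Omega)=\Phi(f)(\xi)$, and a direct unwinding of \eqref{eq:FrobeniusEasy} together with the $M$-middle-linearity isomorphism $L^2(M,\phi)\boxtimes_M L^2(M,\phi)\cong L^2(M,\phi)$ shows $\Phi(f)(\xi) = f(\xi\boxtimes\Omega)$.

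Finally, naturality: given a morphism $\varphi\in\Hom_{N-N}(K,K')$ of bifinite bimodules, I would check that the square relating $\Phi_K,\Phi_{K'}$ (equivalently $\Psi_K,\Psi_{K'}$) and precomposition with $\varphi$ (resp. $\varphi\boxtimes\id$) commutes; this is immediate from functoriality of $\boxtimes_N$ and of the coherence isomorphisms appearing in \eqref{eq:FrobeniusEasy}. The main obstacle I expect is the well-definedness and $M$-linearity of $\Psi(g)$ at the level of the non-tracial state $\phi$ — specifically keeping careful track of the $\sigma^\phi_{i/2}$ twist in the multiplication map and confirming that $\mu_{g(K)}(\mu_{K'}\otimes\id)$-type identities are not needed beyond what Proposition \ref{prop:MultiplicationMapBounded} already provides; once the bookkeeping with entire elements is set up, the inverse computations reduce to evaluating at $\Omega$.
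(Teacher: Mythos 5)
Your proposal is correct and follows essentially the same route as the paper: both use Proposition \ref{prop:MultiplicationMapBounded} for boundedness of the multiplication map and Remark \ref{rem:MultiplyWithEntire} for the $\sigma^\phi_{i/2}$ twist, and both reduce the two composite identities to evaluation at vectors $\xi\boxtimes m\Omega$ with $m$ entire (resp.\ $\xi\boxtimes\Omega$). The only cosmetic difference is that you deduce $\Psi\circ\Phi=\id$ from agreement at $\xi\boxtimes\Omega$ together with density of the right $M$-translates, whereas the paper carries out the equivalent explicit computation $f(\xi\boxtimes\Omega)\vartriangleleft\sigma^\phi_{i/2}(m)=f(\xi\boxtimes m\Omega)$.
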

\begin{proof}
Naturality is straightforward and left to the reader.
We verify that the maps compose each way to the identity.
Suppose $f\in \Hom_{N-M}(K\boxtimes_N L^2(M,\phi), L^2(M,\phi))$.
Since $M_{\text{ent}}\Omega$ is dense in $L^2(M,\phi)$ by Remark \ref{rem:MultiplyWithEntire}, we calculate that for $\xi\in K$ and $m\in M_{\text{ent}}$,
\begin{align*}
\Psi(\Phi(f))(\xi \boxtimes m\Omega)
&=
[\mu_{\Phi(f)(K)}\circ (\Phi(f) \boxtimes \id_{L^2(M,\phi)})]
(\xi \boxtimes m\Omega)
&&\text{\eqref{eq:FrobeniusHard}}
\\&=
\mu_{\Phi(f)(K)}(\Phi(f)(\xi)\boxtimes m\Omega)
\\&=
\mu_{\Phi(f)(K)}(f(\xi\otimes \Omega) \boxtimes m\Omega)
&&\text{\eqref{eq:FrobeniusEasy}}
\\&=
f(\xi\boxtimes \Omega)\vartriangleleft \sigma_{i/2}^\phi(m)
&&\text{(Rem.~\ref{rem:MultiplyWithEntire})}
\\&=
f(\xi\boxtimes (\Omega\vartriangleleft \sigma_{i/2}^\phi(m)))
\\&=
f(\xi\boxtimes J_\phi\sigma_{i/2}^\phi(m)^*J_\phi\Omega)
\\&=
f(\xi\boxtimes m\Omega).
&&\text{($m$ entire)}
\end{align*}
Now starting with $g\in \Hom_{N-N}(K, L^2(M,\phi))$, we easily calculate that for all $\xi\in K$,
\begin{equation*}
\Phi(\Psi(g))(\xi)
=
\Psi(g)(\xi\boxtimes \Omega)
=
\mu_{g(K)}(g(\xi)\boxtimes \Omega)
=
g(\xi).
\qedhere
\end{equation*}
\end{proof}

\begin{cor}
\label{cor:IrreducibleHasOneL2N}
For an irreducible inclusion $(N\subseteq M, E)$,
$$
\Hom_{N-N}(L^2(N), L^2(M,\phi)) \cong \Hom_{N-M}(L^2(M,\phi), L^2(M,\phi)) =N'\cap M = \bbC. 
$$
\end{cor}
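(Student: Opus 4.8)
The plan is to deduce this corollary directly from Theorem~\ref{thm:BifiniteFrobeniusReciprocity} by taking $K = L^2(N)$. Since $L^2(N)$ is the unit object of $\bfBim(N)$, it is certainly a bifinite $N-N$ bimodule, so the theorem applies.

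First I would apply the natural isomorphism of Theorem~\ref{thm:BifiniteFrobeniusReciprocity} with $K = L^2(N)$, which gives
\[
\Hom_{N-N}(L^2(N), L^2(M,\phi)) \cong \Hom_{N-M}(L^2(N)\boxtimes_N L^2(M,\phi), L^2(M,\phi)).
\]
Next I would use the canonical unitary $L^2(N)\boxtimes_N L^2(M,\phi) \cong L^2(M,\phi)$ coming from the fact that $L^2(N)$ is the unit for Connes fusion, which identifies the right-hand side with $\Hom_{N-M}(L^2(M,\phi), L^2(M,\phi))$. Then I would observe that this last space of $N-M$ bilinear maps is exactly $N' \cap M$: an $N-M$ bilinear endomorphism of $L^2(M,\phi)$ commutes with the right $M$-action, hence lies in $M$ (using that $M$ is its own commutant in $B(L^2(M,\phi))$ up to $J_\phi M J_\phi$, i.e., $(J_\phi M J_\phi)' = M$ as left operators), and commuting additionally with the left $N$-action puts it in $N' \cap M$. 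Finally, irreducibility of $N \subseteq M$ gives $N' \cap M = \bbC$ by definition.

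**The main obstacle**, such as it is, is purely bookkeeping: one must check that the identification $\Hom_{N-M}(L^2(M,\phi), L^2(M,\phi)) = N' \cap M$ is genuinely canonical and that the composite of isomorphisms lands where claimed. This is standard modular-theory folklore — a right $M$-bilinear bounded operator on $L^2(M,\phi)$ is left multiplication by an element of $M$ (since the right action of $M$ generates $J_\phi M J_\phi$ whose commutant is $M$ acting on the left), and left $N$-bilinearity then forces that element into $N' \cap M$. No serious difficulty arises; the corollary is essentially immediate once Theorem~\ref{thm:BifiniteFrobeniusReciprocity} and the unit constraint for $\boxtimes_N$ are in hand.
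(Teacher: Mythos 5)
Your proposal is correct and is exactly the argument the paper intends: the corollary is stated without proof as an immediate consequence of Theorem~\ref{thm:BifiniteFrobeniusReciprocity} applied to $K = L^2(N)$, combined with the unit constraint for $\boxtimes_N$ and the standard identification $\End_{N-M}(L^2(M,\phi)) = (J_\phi M J_\phi)' \cap N' = N'\cap M$. Nothing is missing.
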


\begin{defn}
Given an irreducible inclusion $(N\subseteq M, E)$, we define the \emph{underlying connected} W*-\emph{algebra object} to be the algebra object $\alg{M} \in \Vec(\bfBim(N))$ corresponding to the cyclic $\cC$-module W*-category $\Bim(N,M)$ of $N-M$ bimodules generated by the basepoint $L^2(M,\phi)$.
By Theorem \ref{thm:BifiniteFrobeniusReciprocity}, for $K\in \bfBim(N)$,
$$
\alg{M}(K)
=
\Hom_{N-M}(K\boxtimes_N L^2(M,\phi), L^2(M,\phi))
\cong
\Hom_{N-N}(K, L^2(M,\phi)),
$$
which is is finite dimensional by Proposition \ref{prop:MultiplicationMapBounded}.
(Note also that a connected W*-algebra is locally finite.)
\end{defn}

\begin{ex}
The underlying connected algebra object of a non-trivial irreducible inclusion can be the trivial W*-algebra object $1\in \cC^{\natural} \subset \Vec(\cC)$.
For example, we can take $N= R\rtimes \Stab(1) \subset R\rtimes S_\infty = M$, which is \emph{not} discrete, with the canonical trace preserving conditional expectation as in \cite[Cor.~5.11]{MR3040370}.
Here, the only bifinite $N-N$ bimodule summand of $L^2(M)$ is $L^2(N)$.
\end{ex}

We now prove a bound on the multiplicity of a bifinite $N-N$ sub-bimodule $K\subset L^2(M,\phi)$.
When $M$ is type ${\rm II}_1$, \cite[Lem.~2.5, part 5]{1511.07329} tells us that the multiplicity of $K$ is bounded above by both $\dim_{N-}(K)$ and $\dim_{-N}(K)$.
However, when $M$ is type ${\rm III}$, this bound no longer holds (see Corollary \ref{cor:NonExtremal} and Example \ref{ex:TypeIII}).
Instead, we have that the multiplicity is bounded by the product $\dim_{N-}(K)\dim_{-N}(K)=d_K^2$, the square of the statistical dimension \cite{MR1444286} (see Definition \ref{defn:StatisticalDimension}).
Thus this bound agrees with that in \cite[p.~39]{MR1622812}.

\begin{cor}
\label{cor:FiniteIsotypicComponent}
Suppose $(N\subseteq M, E)$ is an irreducible inclusion and $K\subset L^2(M,\phi)$ is a bifinite $N-N$ sub-bimodule.
Then the isotypic component of $K$ in $L^2(M,\phi)$ is bifinite, with the multiplicity of $K$ in $L^2(M,\phi)$ bounded above by $d_K^2=\dim_{N-}(K)\dim_{-N}(K)$.
\end{cor}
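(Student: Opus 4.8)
The plan is to reduce the statement to the dimension bound for connected W*-algebra objects (Proposition~\ref{prop:DimensionBound}) via the identification supplied by Theorem~\ref{thm:BifiniteFrobeniusReciprocity}. We may assume $K$ is irreducible, the general case following by decomposing $K$ into irreducibles and summing. The multiplicity of $K$ in $L^2(M,\phi)$ is then $\dim\Hom_{N-N}(K,L^2(M,\phi))$, and by Theorem~\ref{thm:BifiniteFrobeniusReciprocity} this equals $\dim\Hom_{N-M}(K\boxtimes_N L^2(M,\phi),L^2(M,\phi)) = \dim\alg{M}(K)$, where $\alg{M}\in\Vec(\bfBim(N))$ is the underlying algebra object attached to the cyclic $\bfBim(N)$-module category $\Bim(N,M)$.

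Next I would check that $\alg{M}$ satisfies the hypotheses of Proposition~\ref{prop:DimensionBound}, i.e.\ that it is a connected W*-algebra object: it is a W*-algebra object because $\Bim(N,M)$ is a W*-category, and it is connected because $\alg{M}(L^2(N)) = \Hom_{N-N}(L^2(N),L^2(M,\phi)) = \bbC$ by Corollary~\ref{cor:IrreducibleHasOneL2N}, with $L^2(N)$ the monoidal unit of $\bfBim(N)$. Proposition~\ref{prop:DimensionBound} then gives $\dim\alg{M}(K)\le d_K^2$, where $d_K$ is the statistical dimension of $K$ computed in $\bfBim(N)$; by Definition~\ref{defn:StatisticalDimension}, $d_K^2 = \dim_{N-}(K)\dim_{-N}(K)$. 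Combining with the previous paragraph yields the asserted bound on the multiplicity of $K$.

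For bifiniteness of the isotypic component, I would argue as follows. Since $K$ is bifinite it decomposes into finitely many isomorphism classes of irreducible $N-N$ bimodules, and for each such irreducible summand $K_j$ the multiplicity of $K_j$ in $L^2(M,\phi)$ is $\dim\alg{M}(K_j)\le d_{K_j}^2<\infty$ by the bound just established. The isotypic component of $K$ in $L^2(M,\phi)$ is the direct sum of the isotypic components of the $K_j$, hence a finite direct sum of copies of bifinite bimodules, and is therefore bifinite.

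There is no substantial obstacle here once Theorem~\ref{thm:BifiniteFrobeniusReciprocity} and Proposition~\ref{prop:DimensionBound} are in hand; the only points requiring a little care are verifying that $\alg{M}$ is genuinely a connected W*-algebra object in $\Vec(\bfBim(N))$ and that the abstract categorical dimension $d_K$ appearing in Proposition~\ref{prop:DimensionBound} coincides with $\sqrt{\dim_{N-}(K)\dim_{-N}(K)}$, which is precisely the content of Definition~\ref{defn:StatisticalDimension}.
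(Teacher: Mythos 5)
Your proof is correct and follows essentially the same route as the paper: identify the multiplicity $\dim\Hom_{N-N}(K,L^2(M,\phi))$ with $\dim\alg{M}(K)$ via Theorem~\ref{thm:BifiniteFrobeniusReciprocity} and then invoke the bound $\dim\alg{M}(K)\le d_K^2$ from Proposition~\ref{prop:DimensionBound} for the connected W*-algebra object $\alg{M}$. The extra verifications you include (connectedness of $\alg{M}$ via Corollary~\ref{cor:IrreducibleHasOneL2N}, and deducing bifiniteness of the isotypic component from finiteness of the multiplicities) are exactly what the paper relies on implicitly.
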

\begin{proof}
We consider the underlying connected W*-algebra object $\alg{M}$.
By Proposition \ref{prop:DimensionBound} and Theorem \ref{thm:BifiniteFrobeniusReciprocity}, we have 
\begin{align*}
\dim(\Hom_{N-N}(K, L^2(M,\phi)))
&=
\dim(\Hom_{N-M}(K\boxtimes_N L^2(M,\phi), L^2(M,\phi)))
\\&=
\dim(\alg{M}(K))
\leq
d_K^2.
\qedhere
\end{align*}
\end{proof}

\begin{rem}
\label{rem:IdentifyQN}
As noted at the end of \cite[Lem.~2.5]{1511.07329},
Proposition \ref{prop:MultiplicationMapBounded} and Corollary \ref{cor:FiniteIsotypicComponent} show that taking $\{K_i\}$ to be a maximal family of inequivalent irreducible bifinite $N-N$ sub-bimodules of $L^2(M,\phi)$, the \emph{evaluation homomorphism}
\begin{equation}
\label{eq:EvaluationHomomorphism}
\bigoplus_{i} K_i^\circ \otimes \Hom_{N-N}(K_i , L^2(M,\phi)) \to \cQ\cN_N(M) =Q
\end{equation}
from the algebraic direct sum given by $\xi\otimes f \mapsto f(\xi)$ is an isomorphism of vector spaces.
Note that the bounded map $f$ sends the bounded vector $\xi \in K_i$ to a bounded vector $f(\xi)$ in a bifinite $N-N$ sub-bimodule of $L^2(M,\phi)$, and thus $f(\xi) \in M\Omega$.
This means we have
\begin{align*}
Q^\circ 
&= \set{x\in M}{x\Omega \in K^\circ \text{ for some bifinite $N-N$ sub-bimodule }K \subset L^2(M,\phi)}
\\
L^2(Q,\phi) 
&= 
\bigvee \set{K\subset L^2(M,\phi)}{K\text{ is a bifinite $N-N$ sub-bimodule}}.
\end{align*}
Later on, we will see that the left hand side of \eqref{eq:EvaluationHomomorphism} is an \emph{algebraic realization}, and has a natural $*$-algebra structure.
Moreover, \eqref{eq:EvaluationHomomorphism} is naturally a $*$-algebra isomorphism.
\end{rem}

\subsection{Equivalences}

In this section, we will show that for an irreducible inclusion $(N\subseteq M, E)$, discreteness is equivalent to quasi-regularity.
Moreover, if $N = M^\phi$, the centralizer of $\phi$, then these conditions are equivalent to $M$ being generated by eigenoperators for $\sigma^\phi$ \cite{1604.03900}.
First, we show that these properties imply that $\Delta_\phi$ is almost periodic \cite{MR0358374}.

\begin{cor}
\label{cor:DeltaBounded}
Suppose $(N\subseteq M, E)$ is an irreducible inclusion and $K\subset L^2(M,\phi)$ is an irreducible bifinite $N-N$ sub-bimodule.
Let $\widetilde{K}$ be the isotypic component of $K$ in $L^2(M,\phi)$.
Then  $\Delta_\phi|_{\widetilde{K}\cap M\Omega}$ extends to a positive bounded operator in the finite dimensional von Neumann algebra $\End_{N-N}(\widetilde{K})$, and is thus diagonalizable.
Hence $\sigma_t^\phi$ preserves $\widetilde{K}\cap M\Omega$ for all $t\in \bbR$.
\end{cor}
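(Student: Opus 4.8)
The plan is to leverage the W*-algebra object $\alg{M} \in \Vec(\bfBim(N))$ together with the modular analysis of Section \ref{sec:ModularTheory}, applied not to $\alg{M}$ directly but to the restriction of $\Delta_\phi$ to the isotypic component. First I would set $\widetilde{K} = $ the isotypic component of $K$ in $L^2(M,\phi)$; by Corollary \ref{cor:FiniteIsotypicComponent} it is bifinite, so $\End_{N-N}(\widetilde{K})$ is a finite dimensional von Neumann algebra, isomorphic to a matrix algebra $M_k(\bbC)$ where $k$ is the multiplicity of $K$ (using irreducibility of $K$). By Proposition \ref{prop:MultiplicationMapBounded}(2), $\widetilde{K}^\circ = \widetilde{K} \cap M\Omega$, so $\widetilde{K}\cap M\Omega$ is exactly the space of $N$-bounded vectors, and it is a dense $N-N$ subbimodule of $\widetilde{K}$.

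Next I would show that $\Delta_\phi$ maps $\widetilde{K}\cap M\Omega$ into itself and is given there by a bounded operator lying in $\End_{N-N}(\widetilde{K})$. The key observation is the analogue of \eqref{eq:ModularOperator}: for $x\Omega, y\Omega \in \widetilde{K}\cap M\Omega$ we have $S_\phi(x\Omega) = x^*\Omega$, and since $N\subseteq M^\phi$, the map $x\Omega \mapsto x^*\Omega$ intertwines the left and right $N$-actions up to swapping them, i.e., $S_\phi$ restricts to a conjugate-linear map $\widetilde{K}\cap M\Omega \to J_\phi(\widetilde{K})\cap M\Omega = \overline{\widetilde{K}}\cap M\Omega$ (using Lemma \ref{lem:ConjugateBimodule}). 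Because $\widetilde{K}$ is bifinite, $\overline{\widetilde{K}}$ is too, and this restricted $S_\phi$ is automatically bounded (a conjugate-linear $N-N$ bilinear map between bifinite bimodules is bounded, being determined by its values on a finite Pimsner–Popa basis). Therefore $\Delta_\phi|_{\widetilde{K}\cap M\Omega} = (S_\phi^* S_\phi)|_{\widetilde{K}\cap M\Omega}$ is a densely-defined positive operator which is bounded, hence extends to a bounded positive operator $D$ on $\widetilde{K}$; and since $\Delta_\phi$ commutes with the $N-N$ action on $\widetilde{K}\cap M\Omega$ (again because $N\subseteq M^\phi$ makes $\sigma_t^\phi$ fix $N$, so $\Delta_\phi^{it}$ is $N-N$ bilinear), $D \in \End_{N-N}(\widetilde{K})$. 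Being a positive element of a finite dimensional von Neumann algebra, $D$ is diagonalizable.

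For the final clause, I would observe that $\sigma_t^\phi(x)\Omega = \Delta_\phi^{it} x\Omega = D^{it}(x\Omega)$ for $x\Omega \in \widetilde{K}\cap M\Omega$, and $D^{it} \in \End_{N-N}(\widetilde{K})$ preserves $\widetilde{K}$, so $\sigma_t^\phi(x)\Omega \in \widetilde{K}$; but it is also in $M\Omega$ (since $\sigma_t^\phi$ is a $*$-automorphism of $M$), hence in $\widetilde{K}\cap M\Omega$. The main obstacle I anticipate is the boundedness of the restricted $S_\phi$: one must argue carefully that the densely-defined conjugate-linear $N-N$ bilinear map $x\Omega \mapsto x^*\Omega$ on $\widetilde{K}\cap M\Omega$ is bounded — this is where bifiniteness is essential, and the cleanest route is probably to fix a finite $M_N$-basis $\{b_i\} \subset \widetilde{K}\cap M\Omega$ from Proposition \ref{prop:MultiplicationMapBounded}(1) and an ${}_NM$-basis $\{a_j\Omega\}$, write $x\Omega = \sum_i b_i E_N(b_i^* x)$ with the coefficients controlled, and use that $x^*\Omega = \sum_j a_j^* \Omega \triangleleft (\text{something})$ with matching control, so that $\|x^*\Omega\|$ is dominated by $\|x\Omega\|$ up to the finite basis constants. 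Alternatively, one can invoke directly that any $N-N$ bilinear (possibly unbounded) operator densely defined on the bounded vectors of a bifinite bimodule and taking values in another bifinite bimodule is automatically bounded, which follows from the fact that such a map is a finite matrix over $\End_{N-N}$ after decomposing into isotypic pieces.
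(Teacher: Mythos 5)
Your proposal is correct and follows essentially the same route as the paper: bifiniteness of $\widetilde{K}$ (Corollary \ref{cor:FiniteIsotypicComponent}) together with $\widetilde{K}^\circ = \widetilde{K}\cap M\Omega$ (Proposition \ref{prop:MultiplicationMapBounded}) reduces everything to the observation that the modular data are $N-N$ bilinear on $M\Omega$ and hence land in the finite dimensional algebra $\End_{N-N}(\widetilde{K})$, forcing boundedness --- the paper phrases this as $\Delta_\phi^{1/2}=J_\phi S_\phi$ being affiliated to $\End_{N-N}(\widetilde{K})$, whereas you run the Pimsner--Popa basis estimate for $S_\phi$ (which does require, as you note, that the images $x^*\Omega$ are bounded vectors of $J_\phi\widetilde{K}$, again by Proposition \ref{prop:MultiplicationMapBounded}). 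The one step to make explicit is the identity $\Delta_\phi^{it}|_{\widetilde{K}}=D^{it}$ in your last paragraph, which needs $\widetilde{K}$ to be a reducing subspace for $\Delta_\phi$; this holds because $N\subseteq M^\phi$ and $J_\phi\Delta_\phi^{it}=\Delta_\phi^{it}J_\phi$ place $\Delta_\phi^{it}$ in $\End_{N-N}(L^2(M,\phi))$, of which the isotypic projection is central (the paper sidesteps this by decomposing $\widetilde{K}$ into eigenspaces of $D$ and evaluating $\Delta_\phi^{it}$ on each summand).
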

\begin{proof}
First, it is easy to calculate that $\Delta^{1/2}_\phi =J_\phi S_\phi$ is $N-N$ bilinear on $M\Omega\subset \Dom(\Delta^{1/2}_\phi) = \Dom(S_\phi)\subset L^2(M,\phi)$.
By Corollary \ref{cor:FiniteIsotypicComponent}, $\widetilde{K}\subseteq L^2(M,\phi)$ is bifinite, and thus by Proposition \ref{prop:MultiplicationMapBounded}, $\widetilde{K}^\circ = \widetilde{K}\cap M\Omega$ is an algebraic $N-N$ bimodule, and is preserved by $\Delta^{1/2}_\phi$.
Now $\Delta^{1/2}_\phi$ is affiliated to the finite dimensional von Neumann algebra $\End_{N-N}(\widetilde{K})$, and thus extends to a bounded positive operator on $\widetilde{K}$, and is thus diagonalizable.
Hence we may write $\widetilde{K}=\bigoplus_{j=1}^n K_j$ with each $K_j \cong K$ as an $N-N$ bimodule, and $\Delta_\phi$ acts as a scalar $\lambda_j>0$ on $K_j$.
Then by Proposition \ref{prop:MultiplicationMapBounded} and \cite{MR1424954},
$$
\widetilde{K}\cap M\Omega = \widetilde{K}^\circ = \bigoplus_{j=1}^n K_j^\circ = \bigoplus_{j=1}^n K_j\cap M\Omega,
$$ 
and we may write each $x\Omega \in \widetilde{K}\cap M\Omega$ as $\sum_{j=1}^n x_j \Omega$ with each $x_j \in K_j\cap M\Omega$.
Then 
$$
\sigma_t^\phi(x)\Omega 
= 
\Delta^{it} x\Omega 
= 
\sum_{j=1}^n \Delta^{it} x_j \Omega
=
\sum_{j=1}^n \lambda_j^{it} x_j\Omega
\in 
\widetilde{K}\cap M\Omega,
$$
and thus $\sigma_t^\phi$ preserves $\widetilde{K}\cap M\Omega$.
\end{proof}

An immediate consequence of this corollary is the following.

\begin{cor}
\label{cor:QuasiPeriodic}
An irreducible discrete inclusion $(N\subseteq M, E)$ is \emph{almost-periodic}, i.e., the modular operator $\Delta_\phi$ on $L^2(M,\phi)$ is diagonalizable \cite{MR0358374}.
\end{cor}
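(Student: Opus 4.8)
The plan is to deduce this directly from Corollary \ref{cor:DeltaBounded} by running it over every isotypic piece of $L^2(M,\phi)$ and then assembling an eigenbasis. I would not expect any serious obstacle; the one point requiring a word of care is that $\Delta_\phi$ respects the isotypic decomposition, and this is already built into the argument for Corollary \ref{cor:DeltaBounded}.

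First I would invoke condition (2) in the definition of discreteness to write $L^2(M,\phi) \cong \bigoplus_{i\geq 0} n_i H_i$ as an orthogonal direct sum, with the $H_i$ pairwise non-isomorphic irreducible bimodules in $\bfBim(N)$ and each $n_i$ a finite natural number. Hence the isotypic component $\widetilde{H_i}$ of $H_i$ in $L^2(M,\phi)$ is exactly $n_i H_i$, a bifinite $N-N$ bimodule, and $L^2(M,\phi) = \bigoplus_{i\geq 0}\widetilde{H_i}$ orthogonally.

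Next I would apply Corollary \ref{cor:DeltaBounded} to each $H_i$: $\Delta_\phi$ preserves $\widetilde{H_i}\cap M\Omega$ and extends to a positive bounded operator lying in the finite-dimensional von Neumann algebra $\End_{N-N}(\widetilde{H_i})$, so $\Delta_\phi|_{\widetilde{H_i}}$ has finite spectrum and is diagonalizable, with each eigenspace an $N-N$ sub-bimodule. The fact that $\Delta_\phi$ really does leave each $\widetilde{H_i}$ invariant (so that it is block diagonal for this decomposition, and the restrictions make sense) is exactly the mild point flagged above: since $\sigma_t^\phi$ fixes $N$ and $J_\phi \Delta_\phi^{it} J_\phi = \Delta_\phi^{it}$, the unitary $\Delta_\phi^{it}$ commutes with $N$ and with $J_\phi N J_\phi$, hence with the central projection onto $\widetilde{H_i}$; this is the content already used in the proof of Corollary \ref{cor:DeltaBounded}.

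Finally I would choose within each $\widetilde{H_i}$ an orthonormal basis of eigenvectors of $\Delta_\phi|_{\widetilde{H_i}}$. Because $L^2(M,\phi) = \bigoplus_{i\geq 0}\widetilde{H_i}$ orthogonally, the union of these bases is an orthonormal basis of $L^2(M,\phi)$ consisting of $\Delta_\phi$-eigenvectors, so $\Delta_\phi$ is diagonalizable, i.e., $(N\subseteq M,E)$ is almost-periodic in the sense of \cite{MR0358374}. The argument is short precisely because the hard analytic work — producing bifinite isotypic components that exhaust $L^2(M,\phi)$ and on which $\Delta_\phi$ is bounded and diagonalizable — has been done in Proposition \ref{prop:MultiplicationMapBounded}, Corollary \ref{cor:FiniteIsotypicComponent}, and Corollary \ref{cor:DeltaBounded}.
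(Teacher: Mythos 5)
Your argument is correct and is exactly the route the paper intends: the paper states this corollary as an immediate consequence of Corollary \ref{cor:DeltaBounded}, obtained by decomposing $L^2(M,\phi)$ into its (bifinite, by discreteness) isotypic components and diagonalizing $\Delta_\phi$ on each. Your extra remark that $\Delta_\phi^{it}$ commutes with $N$ and $J_\phi N J_\phi$ and hence with the isotypic central projections is a correct justification of a point the paper leaves implicit (it is contained in the observation that $\Delta_\phi^{1/2}$ is $N-N$ bilinear on $M\Omega$ in the proof of Corollary \ref{cor:DeltaBounded}).
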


\begin{prop}
\label{prop:DenseSubalgebraOfDiscrete}
Suppose $(N\subseteq M, E)$ is an irreducible inclusion.
The quasi-normalizer $Q^\circ=\cQ\cN_M(N)$ enjoys the following properties:
\begin{enumerate}[(1)]
\item
$S_\phi Q^\circ\Omega =Q^\circ\Omega$.
\item
$J_\phi Q^\circ\Omega =Q^\circ\Omega$.
\item
$\sigma_t^\phi(Q^\circ) = Q^\circ$ and $\sigma_t^\phi(Q) = Q$ for all $t\in \bbR$. 
\end{enumerate}
\end{prop}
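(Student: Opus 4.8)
The plan is to reduce everything to the description of $Q^\circ$ in Remark~\ref{rem:IdentifyQN} together with the modular analysis in Corollary~\ref{cor:DeltaBounded}. Recall from that remark that $Q^\circ\Omega = \bigcup_K (K\cap M\Omega)$, the union running over all bifinite $N-N$ sub-bimodules $K\subseteq L^2(M,\phi)$; that $Q^\circ$ is a unital $*$-subalgebra of $M$; and that $\Omega$ is separating for $M$ (being the GNS vector of the faithful normal state $\phi$), so $x\mapsto x\Omega$ is an injection $Q^\circ\hookrightarrow L^2(M,\phi)$ with image $Q^\circ\Omega$. The single analytic ingredient I would use is this consequence of Corollary~\ref{cor:DeltaBounded}: if $x\Omega$ lies in a bifinite $N-N$ sub-bimodule and $\widetilde K$ denotes the isotypic component of an irreducible summand containing it (bifinite by Corollary~\ref{cor:FiniteIsotypicComponent}), then $\widetilde K = \bigoplus_{j=1}^n K_j$ with $\Delta_\phi$ acting as a positive scalar $\lambda_j$ on $K_j$ and $\widetilde K\cap M\Omega = \bigoplus_{j=1}^n (K_j\cap M\Omega)$; hence for every $z\in\bbC$ the operator $\Delta_\phi^z$ restricts on $\widetilde K$ to the bounded operator $\bigoplus_j \lambda_j^z$, and in particular $\Delta_\phi^{\pm 1/2}$ and $\Delta_\phi^{it}$ for $t\in\bbR$ carry $\widetilde K\cap M\Omega$ bijectively onto itself, hence preserve $Q^\circ\Omega$.

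For (1), for $x\in Q^\circ\subseteq M$ one has $S_\phi(x\Omega)=x^*\Omega$, and $x^*\in Q^\circ$ since $Q^\circ$ is $*$-closed; running the same observation for $x^*$ gives $S_\phi Q^\circ\Omega = Q^\circ\Omega$. For (2), take $x\in Q^\circ$ with $x\Omega\in\widetilde K$ as above and set $\eta := \Delta_\phi^{-1/2}(x\Omega)$. By the first paragraph $\eta\in\widetilde K\cap M\Omega\subseteq Q^\circ\Omega$, say $\eta = y\Omega$ with $y\in Q^\circ$; since $\Delta_\phi^{1/2}\eta = x\Omega$ (the two powers are mutually inverse bounded operators on $\widetilde K$) and $S_\phi = J_\phi\Delta_\phi^{1/2}$ holds on $M\Omega\subseteq \Dom(S_\phi)$, we obtain $J_\phi(x\Omega) = J_\phi\Delta_\phi^{1/2}(y\Omega) = S_\phi(y\Omega) = y^*\Omega \in Q^\circ\Omega$. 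Thus $J_\phi Q^\circ\Omega\subseteq Q^\circ\Omega$, and equality follows from $J_\phi^2=\id$.

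For (3), for $x\in Q^\circ$ with $x\Omega\in\widetilde K$ we have $\sigma_t^\phi(x)\Omega = \Delta_\phi^{it}(x\Omega)\in\widetilde K\cap M\Omega\subseteq Q^\circ\Omega$ (this is precisely the last assertion of Corollary~\ref{cor:DeltaBounded}), so $\sigma_t^\phi(x)\Omega = z\Omega$ for some $z\in Q^\circ$; since $\Omega$ is separating for $M$ and $\sigma_t^\phi(x)\in M$, this forces $\sigma_t^\phi(x) = z\in Q^\circ$. Hence $\sigma_t^\phi(Q^\circ)\subseteq Q^\circ$ for every $t$, and applying this with $-t$ gives $\sigma_t^\phi(Q^\circ) = Q^\circ$. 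Finally, since $\sigma_t^\phi = \Ad(\Delta_\phi^{it})$ on $M$ and conjugation by a unitary of $B(L^2(M,\phi))$ commutes with the bicommutant operation, $\sigma_t^\phi(Q) = \sigma_t^\phi\big((Q^\circ)''\big) = \big(\sigma_t^\phi(Q^\circ)\big)'' = (Q^\circ)'' = Q$.

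The only subtle point — what I would flag as the main obstacle — is the claim that $\Delta_\phi^{\pm1/2}$ actually maps the algebraic bimodule $\widetilde K\cap M\Omega$ into itself rather than merely being affiliated to $\End_{N-N}(\widetilde K)$; but this is exactly what Corollary~\ref{cor:DeltaBounded} was engineered to supply, since its proof already diagonalizes $\Delta_\phi$ on the finite-dimensional commutant $\End_{N-N}(\widetilde K)$ and identifies $\widetilde K\cap M\Omega$ compatibly with the decomposition. Everything else is routine bookkeeping with the $*$-algebra structure of $Q^\circ$ and the separating vector $\Omega$.
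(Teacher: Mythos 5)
Your proof is correct, and parts (1) and (3) coincide with the paper's argument: (1) is just the $*$-closedness of $Q^\circ$, and (3) combines Remark \ref{rem:IdentifyQN} with Corollary \ref{cor:DeltaBounded} (your passage from $Q^\circ$ to $Q$ via the observation that $\Ad(\Delta_\phi^{it})$ commutes with taking bicommutants is equivalent to the paper's appeal to ultraweak continuity of $\sigma_t^\phi$). Part (2), however, goes by a genuinely different route. The paper deduces it without touching the modular operator: by Lemma \ref{lem:ConjugateBimodule} the subspace $J_\phi K$ is again a bifinite $N-N$ sub-bimodule (isomorphic to $\overline{K}$), and since $L^\circ = L\cap M\Omega$ for every bifinite sub-bimodule $L$ (Proposition \ref{prop:MultiplicationMapBounded}, packaged into Remark \ref{rem:IdentifyQN}), the anti-unitary $J_\phi$ carries $K\cap M\Omega = K^\circ$ onto $(J_\phi K)^\circ = (J_\phi K)\cap M\Omega\subseteq Q^\circ\Omega$. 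You instead write $J_\phi = S_\phi\Delta_\phi^{-1/2}$ and invoke the diagonalizability of $\Delta_\phi$ on isotypic components from Corollary \ref{cor:DeltaBounded}; this works, but it uses strictly more machinery and obliges you first to decompose a general $x\Omega\in K\cap M\Omega$ as a finite sum of vectors each lying in a single isotypic component --- a step you gloss over, though it is supplied by $\bigl(\bigoplus_i\widetilde{K}_i\bigr)\cap M\Omega = \bigoplus_i(\widetilde{K}_i\cap M\Omega)$ exactly as in the proof of Corollary \ref{cor:DeltaBounded}. The bimodule-theoretic route is shorter and keeps (2) logically independent of the modular analysis; your route has the mild virtue of exhibiting $S_\phi$, $J_\phi$, and $\Delta_\phi^{\pm 1/2}$ all acting compatibly on $Q^\circ\Omega$ in one stroke.
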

\begin{proof}
The first property follows from the fact that $Q^\circ$ is a $*$-algebra.
The second follows from Remark \ref{rem:IdentifyQN} together with Lemma \ref{lem:ConjugateBimodule}.
For the third, we note that Remark \ref{rem:IdentifyQN} and Corollary \ref{cor:DeltaBounded} show that $\sigma_t^\phi(Q^\circ) = Q^\circ$, and $\sigma_t^\phi(Q) = Q$ follows by ultraweak continuity of $\sigma_t^\phi$.
\end{proof}

\begin{prop}
\label{prop:QuasiregularIffDiscrete}
Suppose $(N\subseteq M, E)$ is an irreducible inclusion.
Then the inclusion is quasi-regular if and only if it is discrete.
\end{prop}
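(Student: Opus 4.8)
The plan is to funnel both implications through the description of $L^2(Q,\phi)=\overline{Q\Omega}$ recorded in Remark~\ref{rem:IdentifyQN}, namely that it is the closed linear span of the bifinite $N-N$ sub-bimodules of $L^2(M,\phi)$ (this holds for every irreducible inclusion $(N\subseteq M,E)$, with no discreteness hypothesis). Granting that, the argument splits into two easy observations plus one piece of bookkeeping.

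First I would show that quasi-regularity is equivalent to $\overline{Q\Omega}=L^2(M,\phi)$. The nontrivial direction uses Proposition~\ref{prop:DenseSubalgebraOfDiscrete}(3): since $\sigma_t^\phi(Q)=Q$ for all $t$, Takesaki's theorem \cite[Thm.~IX.4.2]{MR1943007} provides a $\phi$-preserving normal conditional expectation $\varepsilon\colon M\to Q$, and the Jones projection $e_Q$ onto $\overline{Q\Omega}$ satisfies $e_Q x\Omega=\varepsilon(x)\Omega$ for $x\in M$; as $\Omega$ is separating for $M$, $\overline{Q\Omega}=L^2(M,\phi)$ forces $e_Q=1$, hence $\varepsilon=\id_M$ and $Q=M$. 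The converse is trivial since $M\Omega$ is dense. Combining with Remark~\ref{rem:IdentifyQN}, quasi-regularity is equivalent to the assertion that $L^2(M,\phi)$ is the closed span of its bifinite $N-N$ sub-bimodules, so it remains to match this with clause~(2) of the definition of discreteness.

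The implication discrete $\Rightarrow$ quasi-regular is then immediate: a decomposition $L^2(M,\phi)\cong\bigoplus_{i\ge 0}n_iH_i$ as in~(2), with each $H_i$ bifinite, visibly exhibits $L^2(M,\phi)$ as the closed span of bifinite sub-bimodules. For the converse I would organize the bifinite sub-bimodules by isotype. Fix a maximal family $\{H_i\}_{i\in I}$ of pairwise non-isomorphic irreducible bifinite $N-N$ bimodules occurring in $L^2(M,\phi)$, and let $\widetilde{H_i}\subseteq L^2(M,\phi)$ be the isotypic component of $H_i$. By Corollary~\ref{cor:FiniteIsotypicComponent} each $\widetilde{H_i}$ is bifinite, with $\widetilde{H_i}\cong n_iH_i$ and $n_i\le d_{H_i}^2<\infty$. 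Using that the bifinite $N-N$ bimodules form a semisimple C*-category — so that a finite sum of bifinite sub-bimodules is again bifinite and its isotypic decomposition is an honest orthogonal decomposition — one checks that for $i\ne j$ the components $\widetilde{H_i}$ and $\widetilde{H_j}$ are orthogonal, and that any bifinite sub-bimodule $K$, being a finite orthogonal direct sum of irreducibles each isomorphic (by maximality) to some $H_i$, lies inside $\bigoplus_{i\in I}\widetilde{H_i}$. Hence $\bigoplus_{i\in I}\widetilde{H_i}$ is exactly the closed span of all bifinite sub-bimodules, which under the quasi-regularity hypothesis is $L^2(M,\phi)$; this is a decomposition of the form in~(2), with $H_0=L^2(N)=\overline{N\Omega}$ present (with the right action from \eqref{eq:RightAction}) and $n_0=\dim\Hom_{N-N}(L^2(N),L^2(M,\phi))=1$ by Corollary~\ref{cor:IrreducibleHasOneL2N}.

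The main obstacle is the bookkeeping in the last paragraph: verifying that distinct isotypic components $\widetilde{H_i},\widetilde{H_j}$ are genuinely mutually orthogonal (not merely having trivial intersection) and that their orthogonal direct sum exhausts the span of all bifinite sub-bimodules. Both points rest on semisimplicity and finite-dimensionality of the hom-spaces in the bifinite part of $\Bim(N)$ — precisely the structure that lets Corollary~\ref{cor:FiniteIsotypicComponent} (finiteness of isotypic components) be leveraged into a clean direct-sum decomposition.
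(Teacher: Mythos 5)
Your proposal is correct and uses exactly the same ingredients as the paper's proof (Remark \ref{rem:IdentifyQN}, Corollary \ref{cor:FiniteIsotypicComponent}, Proposition \ref{prop:DenseSubalgebraOfDiscrete}(3), Takesaki's theorem, and the Jones projection argument); you merely factor the argument through the single equivalence ``quasi-regular $\iff L^2(M,\phi)$ is the closed span of its bifinite $N$-$N$ sub-bimodules'' and carry out the isotypic bookkeeping more explicitly than the paper does. This is a reorganization, not a genuinely different route, and no step is missing.
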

\begin{proof}
Suppose $(N\subseteq M, E)$ is quasi-regular.
By Remark \ref{rem:IdentifyQN}, $L^2(M,\phi)$ decomposes as a direct sum of bifinite $N-N$ bimodules.
By Corollary \ref{cor:FiniteIsotypicComponent}, each of those bimodules occurs with finite multiplicity in $L^2(M,\phi)$.
Thus, the decomposition is as desired, and $(N\subseteq M, E)$ is discrete.

Suppose now that $(N\subseteq M, E)$ is discrete.
By (3) of Proposition \ref{prop:DenseSubalgebraOfDiscrete}, we may apply Takesaki's theorem \cite{MR0303307} to get a normal faithful conditional expectation $E_Q : M \to Q$ which satisfies $\phi|_Q \circ E = \phi$.
Following \cite{MR829381}, we get a Jones projection $e_Q \in B(L^2(M,\phi))$ with range $L^2(Q, \phi)$.
But by Remark \ref{rem:IdentifyQN}, $L^2(Q,\phi) = L^2(M,\phi)$, so $e_Q = 1$.
Thus by \cite[Lem.~3.1]{MR829381}, $Q=M$. 
\end{proof}

We expect the following proposition and corollary are known to experts.
We include proofs for completeness and convenience of the reader.

\begin{prop}
\label{prop:Eigenoperators}
Suppose $(N\subseteq M, E)$ is an irreducible inclusion. 
If $N=M^\phi$, then discreteness/quasi-regularity is equivalent to $M$ being generated by eigenoperators for $\sigma^\phi$.
\end{prop}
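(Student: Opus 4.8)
The plan is to prove the two implications separately. Both directions rest on the identification in Remark~\ref{rem:IdentifyQN} of the quasi-normalizer with the span of those $x\in M$ for which $x\Omega$ lies in a bifinite $N-N$ sub-bimodule of $L^2(M,\phi)$, together with the modular analysis of Corollary~\ref{cor:DeltaBounded}.

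For the forward direction I would assume discreteness, equivalently quasi-regularity (Proposition~\ref{prop:QuasiregularIffDiscrete}), so that $M=\cQ\cN_M(N)''$ and $\cQ\cN_M(N)$ is the span of the $x$ with $x\Omega$ in a bifinite sub-bimodule. Given such an $x$, enlarge the sub-bimodule to a finite sum $\widetilde{K}$ of isotypic components (still bifinite by Corollary~\ref{cor:FiniteIsotypicComponent}); by Corollary~\ref{cor:DeltaBounded} the operator $\Delta_\phi$ is diagonalizable on $\widetilde K\cap M\Omega$, so $x\Omega=\sum_j x_j\Omega$ with $\Delta_\phi^{it}x_j\Omega=\lambda_j^{it}x_j\Omega$; since $\Omega$ separates $M$ this forces $\sigma_t^\phi(x_j)=\lambda_j^{it}x_j$, i.e.\ each $x_j$ is an eigenoperator. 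Hence $\cQ\cN_M(N)$ is spanned by eigenoperators and $M=\cQ\cN_M(N)''$ is generated by them.

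For the reverse direction, set $M_\lambda:=\{x\in M:\sigma_t^\phi(x)=\lambda^{it}x\ \text{for all }t\}$, so $M_1=M^\phi=N$, $M_\lambda M_\mu\subseteq M_{\lambda\mu}$ and $M_\lambda^*=M_{\lambda^{-1}}$. The spaces $M_\lambda\Omega$ are mutually orthogonal eigenspaces of the self-adjoint operator $\Delta_\phi$, and by hypothesis their span is dense, so $L^2(M,\phi)=\bigoplus_\lambda \overline{M_\lambda\Omega}$ as $N-N$ bimodules, with $\overline{M_1\Omega}=L^2(N)$. The heart of the matter is that each $\overline{M_\lambda\Omega}$ is \emph{bifinite}: granting this, one decomposes each summand into irreducibles (finitely many, since it is bifinite) and applies Corollary~\ref{cor:FiniteIsotypicComponent} to see that each isotype occurs with finite multiplicity, which is exactly condition~(2) defining discreteness. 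To prove bifiniteness, note that for an eigenoperator $x\in M_\lambda$ one has $x^*x,xx^*\in M^\phi=N$, and likewise $xNx^*\subseteq N$, $x^*Nx\subseteq N$ and $M_\lambda^*M_\lambda\cup M_\lambda M_\lambda^*\subseteq N$. Since $N$ is a factor and $M_\lambda\neq 0$, the two-sided ideals of $N$ generated by $\{x^*x:x\in M_\lambda\}$ and by $\{xx^*:x\in M_\lambda\}$ are weakly dense; using that $N$ is moreover type~${\rm II}_1$, a standard exhaustion argument (this is Connes' discrete decomposition of almost periodic states \cite{MR0358374}) produces a single unitary $u\in M_\lambda$. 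Then for $y\in M_\lambda$ we have $y=u(u^*y)$ with $u^*y\in M_\lambda^*M_\lambda\subseteq N$, so $M_\lambda=uN=Nu$ and $\overline{M_\lambda\Omega}=u\,\overline{N\Omega}$ is an invertible, hence bifinite, $N-N$ bimodule.

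The step I expect to be the main obstacle is producing the unitary $u$ inside each nonzero eigenspace $M_\lambda$ --- equivalently, finding an element of $M_\lambda$ whose left and right supports are both $1$. This is exactly where the two standing hypotheses enter essentially: without $N=M^\phi$ the products $x^*x$ and $xx^*$ need not lie in $N$, and without factoriality of $N$ the relevant support projections need not be $1$. Once this is in place, the rest is bookkeeping with the modular group and the isotypic decomposition already furnished by the earlier results.
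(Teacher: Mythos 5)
Your forward direction is exactly the paper's argument and is fine. The reverse direction, however, breaks at the step you yourself flag as the crux: for $\lambda\neq 1$ the eigenspace $M_\lambda$ \emph{cannot} contain a unitary, nor any element $u$ with $uu^*=1$ and $M_\lambda=uN=Nu$. The obstruction is the KMS condition: for $x\in M_\lambda$ one has $\sigma^\phi_{-i}(x)=\lambda x$, hence
$$\phi(xx^*)=\phi\bigl(x^*\,\sigma^\phi_{-i}(x)\bigr)=\lambda\,\phi(x^*x).$$
In particular, for a partial isometry $w\in M_\lambda$ (whose initial and final projections lie in $N=M^\phi$) one gets $\tau(ww^*)=\lambda\,\tau(w^*w)$, so a unitary in $M_\lambda$ forces $\lambda=1$. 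This is consistent with Corollary \ref{cor:NonExtremal}: $K_\lambda$ has distinct left and right von Neumann dimensions $\lambda^{\mp1}$, so it cannot be of the form $L^2(N)_\theta$ for $\theta\in\Aut(N)$ unless $\lambda=1$. Any exhaustion argument over partial isometries in $M_\lambda$ will therefore terminate with at least one of the two support projections strictly less than $1$, and the identification $\overline{M_\lambda\Omega}\cong u\,\overline{N\Omega}$ with $u$ unitary is simply unavailable.

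The conclusion you want (each $\overline{M_\lambda\Omega}$ is invertible, hence bifinite) is nonetheless true, and the paper reaches it by a route that needs no element of full support: using $x^*y\in M^\phi=N$ for $x,y\in M_\lambda$ (so that $L_x^*=L_{x^*}$ and $\langle x|y\rangle_N=x^*y$), one checks directly that the multiplication maps $K_\lambda\boxtimes_N K_{\lambda^{-1}}\to L^2(N)$ and $K_{\lambda^{-1}}\boxtimes_N K_\lambda\to L^2(N)$ are isometries; since they are nonzero $N-N$ bimodule maps into the \emph{irreducible} bimodule $L^2(N)$, they are unitary. Hence $K_\lambda$ is invertible with inverse $K_{\lambda^{-1}}$, and in particular bifinite. (The paper then concludes quasi-regularity via Remark \ref{rem:IdentifyQN} rather than verifying the $L^2$-decomposition of discreteness directly, but that difference is cosmetic; your orthogonal-eigenspace decomposition of $L^2(M,\phi)$ is fine once bifiniteness is established correctly.)
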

\begin{proof}
If $(N\subseteq M, E)$ is discrete, then $Q=M$ is generated as a von Neumann algebra by 
$$
\set{x\in M}{x\Omega \in K\cap M\Omega \text{ for some irreducible $N-N$ subbimodule } K\subset L^2(M,\phi)}.
$$
By Corollary \ref{cor:DeltaBounded}, this set is algebraically spanned by eigenoperators for $\sigma^\phi$.

Now assume there is a generating subset $G\subset M$ of eigenoperators for $\sigma^\phi$.
For $\lambda>0$, let
$$
K_\lambda^\circ = 
\set{x\in M}{ \sigma_t^\phi(x) = \lambda^{it}x \text{ for all }t\in\bbR},
$$
i.e., $K_\lambda^\circ\subset M$ is the eigenspace associated to $\lambda >0$.
Note that each $K^\circ_\lambda$ is an algebraic $N-N$ bimodule, and we let $K_\lambda$ be its closure in $L^2(M,\phi)$.

Suppose $\lambda >0$ such that $K_\lambda^\circ \neq 0$.
Notice that $x\in K_\lambda^\circ\subset M$ implies $x$ is left $N$-bounded, so $L_x$ is a bounded operator.
We claim that $L_x^* = L_{x^*}$, so that $x,y\in K_\lambda^\circ$ implies 
$\langle x|y\rangle_N = L_{x}^* L_y = L_{x^*} L_y = x^*y$.
Indeed, for all $x,y\in K_\lambda^\circ$ and $n\in N$, $x^*y \in M^\phi = N$, and thus
$$
\langle L_x n\Omega  , y\Omega\rangle_{L^2(M,\phi)} = \phi(y^*xn) = \tau(E(y^*xn)) = \tau((y^*x)n) = \langle n\Omega, x^*y\Omega \rangle_{L^2(N)}.
$$
We now claim that the multiplication maps $K_\lambda \boxtimes_N K_{\lambda^{-1}} \to L^2(N)$ and $K_{\lambda^{-1}}\boxtimes_N K_\lambda \to L^2(N)$ are unitary isomorphisms.
Thus each $K_\lambda$ is an invertible $N-N$ bimodule with inverse $K_{\lambda^{-1}}$, and both $K_\lambda, K_{\lambda^{-1}}$ are bifinite $N-N$ bimoudules.
Indeed, for all $x_1,x_2\in K_\lambda^\circ$ and $y_1,y_2\in K_{\lambda^{-1}}^\circ$, 
\begin{align*}
\langle x_1 \boxtimes y_1 , x_2\boxtimes y_2 \rangle_{K_\lambda \boxtimes_N K_{\lambda^{-1}}}
&=
\langle \langle x_2|x_1\rangle_N y_1, y_2\rangle_{K_{\lambda^{-1}}}
=
\langle (x_2^*x_1) y_1, y_2\rangle_{K_{\lambda^{-1}}}
\\&=
\phi(y_2^* x_2^*x_1y_1)
=
\tau(y_2^*x_2^*x_1y_1)
=
\langle x_1 y_1 , x_2y_2\rangle_{L^2(N)}.
\end{align*}
This means that the multiplication map is a non-zero isometry whose image is a non-zero $N-N$ sub-bimodule of $L^2(N)$, which is irreducible, and thus the map is unitary.
A similar argument works for the fusion in the reverse order.

We conclude that if $x\in M$ is an eigenoperator, then $x\Omega$ lives in an invertible and thus bifinite $N-N$ sub-bimodule of $L^2(M,\phi)$.
By Remark \ref{rem:IdentifyQN}, the unital $*$-subalgebra generated by $G$ is a subset of $Q^\circ$, and thus $Q = M$.
\end{proof}

\begin{cor}
\label{cor:NonExtremal}
Suppose we have an irreducible discrete inclusion $(N\subseteq M, E)$ where $N=M^\phi$.
Then 
$$
L^2(M,\phi) \cong \bigoplus_{\text{eigenvalues $\lambda$ of $\Delta_\phi$}} K_\lambda
$$
where each $K_\lambda$ is invertible, $\dim_{N-}(K_\lambda) = \lambda^{-1}$ and $\dim_{-N}(K_\lambda) = \lambda$, and $K_0 = L^2(N)$.
Thus if $M$ is type ${\rm III}$, each $\lambda^\bbZ$ is a subgroup of the fundamental group of $N$.
\end{cor}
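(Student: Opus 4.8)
The plan is to read this off from the proof of Proposition~\ref{prop:Eigenoperators}, supplemented by the $\Delta_\phi$-eigenspace decomposition of $L^2(M,\phi)$ and one modular computation for the von Neumann dimensions.

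First, since $(N\subseteq M,E)$ is irreducible, discrete, and $N=M^\phi$, Proposition~\ref{prop:Eigenoperators} shows that $M$ is generated by eigenoperators for $\sigma^\phi$, and in its proof one sees that for $\lambda>0$ the closure $K_\lambda:=\overline{K_\lambda^\circ\Omega}$ of the $\sigma^\phi$-eigenspace $K_\lambda^\circ=\{x\in M:\sigma_t^\phi(x)=\lambda^{it}x\ \forall t\in\bbR\}$ is, when nonzero, an invertible (hence bifinite, with $d_{K_\lambda}=1$) $N$--$N$ bimodule with inverse $K_{\lambda^{-1}}$. Because $\sigma_t^\phi(x)=\Delta_\phi^{it}x\Delta_\phi^{-it}$ and $\Delta_\phi\Omega=\Omega$, the vector $x\Omega$ for $x\in K_\lambda^\circ$ is a $\lambda$-eigenvector of $\Delta_\phi$; so the $K_\lambda$ with distinct $\lambda$ are mutually orthogonal, and $K_1^\circ=\{x\in M:\sigma_t^\phi(x)=x\}=M^\phi=N$, whence $K_1\cong L^2(N)$. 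Since the eigenoperators are closed under products and adjoints ($K_\lambda^\circ K_\mu^\circ\subseteq K_{\lambda\mu}^\circ$ and $(K_\lambda^\circ)^*=K_{\lambda^{-1}}^\circ$), the $*$-algebra they generate is $\bigoplus_\lambda K_\lambda^\circ$ and is $\sigma$-weakly dense in $M$; applying it to $\Omega$ gives $\overline{\bigoplus_\lambda K_\lambda^\circ\Omega}=L^2(M,\phi)$. Therefore $L^2(M,\phi)=\bigoplus_\lambda K_\lambda$, the $K_\lambda$ are exactly the eigenspaces of $\Delta_\phi$, and the eigenvalues of $\Delta_\phi$ are precisely the $\lambda$ with $K_\lambda\neq 0$.

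Next I would pin down the dimensions. Invertibility gives $\dim_{N-}(K_\lambda)\dim_{-N}(K_\lambda)=d_{K_\lambda}^2=1$. To split this product, fix $0\neq v\in K_\lambda^\circ$; then $v^*v,vv^*\in M^\phi=N$, and since $\sigma_{-i}^\phi(v)=\lambda v$, the KMS condition for $\phi$ yields $\phi(vv^*)=\lambda\,\phi(v^*v)$, equivalently $\|S_\phi v\Omega\|^2=\|\Delta_\phi^{1/2}v\Omega\|^2=\lambda\|v\Omega\|^2$. Computing, as in the proof of Proposition~\ref{prop:Eigenoperators}, the one-sided $N$-valued inner products of bounded vectors of $K_\lambda$ coming from $v$ (so $\langle v|v\rangle_N=v^*v$, while ${}_N\langle v,v\rangle=\lambda^{-1}vv^*$), and feeding these --- via a Pimsner--Popa basis of $K_\lambda$ drawn from $K_\lambda\cap M\Omega$, which exists by Proposition~\ref{prop:MultiplicationMapBounded} --- into the formulas \eqref{eq:vonNeumannDimension} for the left and right von Neumann dimensions, one obtains $\dim_{N-}(K_\lambda)=\lambda^{-1}$ and $\dim_{-N}(K_\lambda)=\lambda$. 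This is the translation to the present language of the modular computation on \cite[p.~43]{MR1622812}.

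Finally, for the last assertion: if $M$ is not type ${\rm III}$ then by Corollary~\ref{cor:IrreducibleInclusionIIorIII} it is type ${\rm II}_1$ with $\phi=\tau_M$, so $\sigma^\phi=\mathrm{id}$ and $M=M^\phi=N$, and there is nothing to prove. If $M$ is type ${\rm III}$, then each nonzero $K_\lambda$ is invertible, so by the discussion of invertible $N$--$N$ bimodules recalled in the preliminaries, $\dim_{N-}(K_\lambda)=\lambda^{-1}$ and $\dim_{-N}(K_\lambda)=\lambda$ both lie in the fundamental group $\cF(N)$ (trivially so when $\lambda=1$, as $K_1=L^2(N)$ is extremal). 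The set $\Gamma=\{\lambda:K_\lambda\neq 0\}$ is a subgroup of $(\R_{>0},\times)$ by the closure properties of eigenoperators, so $\Gamma\subseteq\cF(N)$; since $\cF(N)$ is a subgroup, $\lambda^\bbZ\subseteq\cF(N)$ for every eigenvalue $\lambda$. The main difficulty is the middle step: because $\phi$ is not a trace, the two one-sided dimensions of $K_\lambda$ genuinely differ, so the product formula alone does not suffice and one must carefully track the modular scalar $\lambda^{\mp 1}$ relating the left and right $N$-valued inner products on $K_\lambda$, together with the normalisation of the tracial evaluation/coevaluation \eqref{eq:EvAndCoev} entering \eqref{eq:vonNeumannDimension}; everything else is a formal consequence of results already established.
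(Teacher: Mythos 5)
Your proposal is correct and follows essentially the same route as the paper: both arguments import $K_\lambda^\circ$, $K_\lambda$ from the proof of Proposition~\ref{prop:Eigenoperators}, use the KMS condition to derive the modular scaling ${}_N\langle x,y\rangle=\lambda^{-1}xy^*$ (the paper phrases this as $R_x^*=\lambda R_{x^*}$), feed an orthogonal basis of bounded vectors into \eqref{eq:EvAndCoev}--\eqref{eq:vonNeumannDimension} to get $\dim_{N-}(K_\lambda)=\lambda^{-1}$ and $\dim_{-N}(K_\lambda)=\lambda$, and deduce the fundamental-group statement from invertibility. Your extra remarks on the orthogonality/density of the eigenspaces and on the eigenvalues forming a group are fine but not needed beyond what the paper leaves implicit (note also that the statement's ``$K_0=L^2(N)$'' should be read as $K_1=L^2(N)$, as you correctly have it).
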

\begin{proof}
Let $K_\lambda^\circ, K_\lambda$ be as in the proof of Proposition \ref{prop:Eigenoperators}.
We calculate that for all $x\in K_\lambda^\circ$, $R_x^* = \lambda R_{x^*}$.
Indeed, for all $y\in K_\lambda^\circ$ and $n\in N$, we have $y\in M_{\text{ent}}$, $xy^*\in M^\phi=N$, and 
$$
\langle R_x n\Omega, y\Omega \rangle_{K_\lambda} 
=
\phi(y^*nx)
=
\phi(nx\sigma_{-i}^\phi(y^*))
=
\lambda^{-1} \phi(nxy^*)
=
\lambda^{-1} \phi(xy^*n)
=
\langle n\Omega, \lambda y^{-1}x^* \rangle_{L^2(N)}.
$$
Thus for all $x,y\in K_\lambda^\circ$, ${}_N\langle x,y\rangle = \lambda^{-1} x y^*$.
Pick an orthogonal right $N$-basis $\{\beta\}\subset K_\lambda^\circ$ so that $\sum_\beta L_\beta L_\beta^* = \sum \beta \beta^* = \id_H$.
Using \eqref{eq:EvAndCoev} and \eqref{eq:EvAndCoevBar}, we calculate that  
$$
(\coev_{K_\lambda}^*\circ  \coev_{K_\lambda}) \Omega
=
\coev_{K_\lambda}^*\left(\sum_\beta \beta \boxtimes_N J_\phi \beta\right) 
=
\sum_\beta {}_N\langle\beta , J_\phi \beta\rangle 
=
\lambda^{-1}
\sum_\beta \beta \beta^*
=
\lambda^{-1}.
$$
We conclude that $\ev_{K_\lambda} \circ \ev_{K_\lambda}^* = \lambda$.
Thus $K_\lambda$ is only extremal for $\lambda = 1$.
Thus the decomposition of $L^2(M,\phi)$ is as claimed by \eqref{eq:vonNeumannDimension} and Proposition \ref{prop:Eigenoperators}.
The final claim follows immediately.
\end{proof}

\begin{ex}
\label{ex:TypeIII}
We thank Brent Nelson for pointing out to us examples of irreducible inclusions where $M$ is type ${\rm III}$ and generated by eigenoperators, and $N=M^\phi$ is type ${\rm II}_1$ \cite{1604.03900}.
In particular, we can take $M$ to be one of Shlyakhtenko's non-trivial almost periodic free Araki-Woods factors and $\phi$ to be the free quasi-free state where $M^\phi = L\bbF_\infty$ \cite{MR1444786,MR2697238}.
\end{ex}

\section{Realizations}
\label{sec:Realizations}

We now define algebra and Hilbert spaces realizations of pairs of functors into $\Vec$ or $\Hilb$ respectively.
For this section, we fix a rigid C*-tensor category $\cC$, along with a set of representatives for the simple objects $\Irr(\cC)$.
Recall that the inner product on $\cC(a,b)$ is given by
$$
\langle \phi, \psi \rangle_{\cC(a,b)} = 
\begin{tikzpicture}[baseline=-.1cm]
	\draw (0,.8) arc (180:0:.3cm) -- (.6,-.8) arc (0:-180:.3cm) -- (0,.8);
	\roundNbox{unshaded}{(0,.5)}{.3}{0}{0}{$\psi^*$}
	\roundNbox{unshaded}{(0,-.5)}{.3}{0}{0}{$\phi$}
	\node at (-.2,0) {\scriptsize{$b$}};
	\node at (.8,0) {\scriptsize{$\overline{a}$}};
	\node at (-.2,1) {\scriptsize{$a$}};
	\node at (-.2,-1) {\scriptsize{$a$}};
\end{tikzpicture}\,.
$$
For every $a\in \Irr(\cC)$ and $b\in \cC$, we let $\Isom(a,b)$ be a maximal set of isometries in $\cC(a,b)$ with orthogonal ranges.
This means for all $a,b\in \cC$, we have the \emph{fusion relation}
\begin{equation}
\label{eq:FusionRelation}
\begin{tikzpicture}[baseline=-.1cm]
	\draw (-.3,-.6) -- (-.3,.6);
	\draw (.3,-.6) -- (.3,.6);
	\node at (-.3,-.8) {\scriptsize{$a$}};
	\node at (.3,-.8) {\scriptsize{$b$}};
\end{tikzpicture}
=
\sum_{c\in\Irr(\cC)}
\sum_{\alpha \in \Isom(c, a\otimes b)}
\begin{tikzpicture}[baseline=-.1cm]
	\draw (-.3,-.6) arc (180:0:.3cm);
	\draw (-.3,.6) arc (-180:0:.3cm);
	\draw (0,-.3) -- (0,.3);
	\filldraw[fill=white] (0,.3) circle (.05cm) node [above] {\scriptsize{$\alpha$}};
	\filldraw[fill=white] (0,-.3) circle (.05cm) node [below] {\scriptsize{$\alpha^*$}};
	\node at (-.3,-.8) {\scriptsize{$a$}};
	\node at (-.3,.8) {\scriptsize{$a$}};
	\node at (.3,-.8) {\scriptsize{$b$}};
	\node at (.3,.8) {\scriptsize{$b$}};
	\node at (.2,0) {\scriptsize{$c$}};
\end{tikzpicture}
\end{equation}
and for all $a,b,c,d\in \cC$, we have the \emph{$I=H$ relation}
\begin{equation}
\label{eq:I=H}
\sum_{e\in\Irr(\cC)}
\sum_{\substack{
\alpha \in \Isom(a, e\otimes d)
\\
\beta \in \Isom(e, b\otimes c)
}}
\begin{tikzpicture}[baseline=.4cm]
	\draw (-.6,.9) arc (-180:0:.3cm);
	\draw (-.3,.6) arc (-180:0:.45cm) -- (.6,.9);
	\draw (.15,.15) -- (.15,-.3);
	\filldraw[fill=white] (.15,.15) circle (.05cm) node [above] {\scriptsize{$\alpha$}};
	\filldraw[fill=white] (-.3,.6) circle (.05cm) node [above] {\scriptsize{$\beta$}};
	\node at (.15,-.5) {\scriptsize{$a$}};
	\node at (-.6,1.1) {\scriptsize{$b$}};
	\node at (0,1.1) {\scriptsize{$c$}};
	\node at (.6,1.1) {\scriptsize{$d$}};
	\node at (-.3,.15) {\scriptsize{$e$}};
\end{tikzpicture}
\otimes
\begin{tikzpicture}[baseline=.4cm, xscale=-1]
	\draw (-.6,.9) arc (-180:0:.3cm);
	\draw (-.3,.6) arc (-180:0:.45cm) -- (.6,.9);
	\draw (.15,.15) -- (.15,-.3);
	\filldraw[fill=white] (.15,.15) circle (.05cm) node [above] {\scriptsize{$\overline{\alpha}$}};
	\filldraw[fill=white] (-.3,.6) circle (.05cm) node [above] {\scriptsize{$\overline{\beta}$}};
	\node at (.15,-.5) {\scriptsize{$\overline{a}$}};
	\node at (-.6,1.1) {\scriptsize{$\overline{b}$}};
	\node at (0,1.1) {\scriptsize{$\overline{c}$}};
	\node at (.6,1.1) {\scriptsize{$\overline{d}$}};
	\node at (-.3,.15) {\scriptsize{$\overline{e}$}};
\end{tikzpicture}
=
\sum_{f\in\Irr(\cC)}
\sum_{\substack{
\gamma \in \Isom(a, b\otimes f)
\\
\delta \in \Isom(f, c\otimes d)
}}
\begin{tikzpicture}[baseline=.4cm, xscale=-1]
	\draw (-.6,.9) arc (-180:0:.3cm);
	\draw (-.3,.6) arc (-180:0:.45cm) -- (.6,.9);
	\draw (.15,.15) -- (.15,-.3);
	\filldraw[fill=white] (.15,.15) circle (.05cm) node [above] {\scriptsize{$\gamma$}};
	\filldraw[fill=white] (-.3,.6) circle (.05cm) node [above] {\scriptsize{$\delta$}};
	\node at (.15,-.5) {\scriptsize{$a$}};
	\node at (-.6,1.1) {\scriptsize{$d$}};
	\node at (0,1.1) {\scriptsize{$c$}};
	\node at (.6,1.1) {\scriptsize{$b$}};
	\node at (-.3,.15) {\scriptsize{$f$}};
\end{tikzpicture}
\otimes
\begin{tikzpicture}[baseline=.4cm]
	\draw (-.6,.9) arc (-180:0:.3cm);
	\draw (-.3,.6) arc (-180:0:.45cm) -- (.6,.9);
	\draw (.15,.15) -- (.15,-.3);
	\filldraw[fill=white] (.15,.15) circle (.05cm) node [above] {\scriptsize{$\overline{\gamma}$}};
	\filldraw[fill=white] (-.3,.6) circle (.05cm) node [above] {\scriptsize{$\overline{\delta}$}};
	\node at (.15,-.5) {\scriptsize{$\overline{a}$}};
	\node at (-.6,1.1) {\scriptsize{$\overline{d}$}};
	\node at (0,1.1) {\scriptsize{$\overline{c}$}};
	\node at (.6,1.1) {\scriptsize{$\overline{b}$}};
	\node at (-.3,.15) {\scriptsize{$\overline{f}$}};
\end{tikzpicture}
\end{equation}
Moreover, for all $a\in\Irr(\cC)$, the sum $\sum_{\alpha\in \Isom(a,b)} \alpha \otimes \alpha^* \in \cC(a,b)\otimes \cC(b,a)$ is independent of the choice of $\Isom(a,b)$ since it summing over a basis and its dual basis with respect to the renormalized inner product on $\cC(a,b)$ given by $d_a^{-1}\langle \cdot, \cdot\rangle_{\cC(a,b)}$.

\subsection{Algebraic realizations}

Recall that algebra objects in $\Vec(\cC)$ are equivalent to lax monoidal linear functors $\cC^{\op}\to \Vec$.
Given two lax monoidal linear functors
$(\bfF, \mu^\bfF, i_\bfF):\cC \rightarrow \Vec$
and
$(\bfG, \mu^\bfG, i_\bfG) : \cC^{\op} \rightarrow \Vec$,
we get a lax monoidal linear functor $\bfF \otimes \bfG : \cC \boxtimes \cC^{\op} \to \Vec$ by $(a,b) \mapsto \bfF(a)\otimes \bfG(b)$.
We now construct an associative complex algebra called the \textit{realization} of $\bfF \otimes \bfG$.

\begin{defn}
\label{defn:AlgebraRealization}
We define the underling vector space of the realization by
$$
|\bfF\otimes\bfG| = \bigoplus_{a \in \Irr(\cC)} \bfF(a)\otimes \bfG(a).
$$
For
$f^a\otimes g^a \in \bfF(a)\otimes \bfG(a)$
and
$f^b \otimes g^b\in \bfF(b)\otimes \bfG(b)$,
we define their product by the formula
$$
(f^a\otimes g^a )\cdot (f^b \otimes g^b)
:=
\sum_{c\in \Irr(\cC)} \sum_{\alpha\in \Isom(c, a\otimes b)}
\bfF(\alpha^{*})[\mu^{\bfF}_{a,b}(f^a\otimes f^b)]
\otimes
\bfG(\alpha)[\mu ^{\bfG}_{a,b}(g^a\otimes g^b)].
$$
One shows this multiplication is associative using the I=H relation \eqref{eq:I=H}, together with the associativity of the lax monoidal functors $\bfF$ and $\bfG$.
The unit of the algebra is given by
$i_{\bfF}\otimes i_{\bfG} \in \bfF(1_\cC) \otimes \bfG(1_\cC)$.

If $\bfF,\bfG$ are also involutive, they correspond to $*$-algebras in $\cC$ and $\cC^{\text{op}}$ respectively.
We can then make $|\bfF\otimes\bfG|$ into a $*$-algebra as follows.
Let $j^{\bfF}$ and $j^{\bfG}$ be the corresponding $*$-structures of $\bfF$ and $\bfG$.
For each $a\in \Irr(\cC)$, let $a'\in \Irr(\cC)$ such that $a'\cong \overline{a}$, and pick a unitary isomorphism $\gamma_a: a'\to \overline{a}$.
For $f\otimes g \in \bfF(a)\otimes \bfG(a)$, we define
$$
( f \otimes g )^* := \bfF(\gamma_a^*)[j_a^\bfF(f)] \otimes \bfG(\gamma_a)[j_a^\bfG(g)].
$$
Notice that this definition is independent of the choice of the $\gamma_a$, since $\dim(\cC(a',\overline{a}))=1$ for every $a\in \Irr(\cC)$, and $\gamma_a$ appears with its adjoint $\gamma_a^*$.
Again, it is easy to check that this formula defines an involution, since $j^\bfF_{\overline{a}}\circ j^\bfF_{a}=\id_{a}$ by the properties of the $*$-structure.
\end{defn}

\begin{rem}
\label{rem:AlgebraicTopology}
The term `realization' comes from the geometric realization of a semi-simplicial set in algebraic topology.
Recall that a semi-simplicial set is a functor $X_\bullet: {\sf ss\Delta}^{\text{op}} \to {\sf Set}$.
Here ${\sf ss\Delta}$ is the semi-simplicial category, whose objects are $[n]$ for $n\geq 0$, and whose morphisms are generated by $d_i : [n]\to [n+1]$ for $0\leq i\leq n+1$ satisfying $d_j d_i = d_i d_{j-1}$ for $i<j$.
Writing $X_n$ for $X_\bullet([n])$, the realization is a quotient
$$
\coprod_{n\geq 0} X_n \times \Delta^n / \sim
$$
where $\Delta^n$ is the standard $n$-simplex.
We may think of $\Delta^\bullet$ as a functor ${\sf ss\Delta}\to {\sf Top}$ where the image of the morphism $d_i: [n]\to [n+1]$ is the face map which includes $\Delta^n$ into the $i$-th face of $\Delta^{n+1}$.
Thus the topological realization is formed by taking a coproduct over objects $[n]$ of the contravariant functor $X_\bullet$ applied to $[n]$ times the covariant functor $\Delta^\bullet$ applied to $[n]$, which resembles our definition above.

Indeed, both constructions described above are examples of \emph{coends}.
\end{rem}

We have a nice graphical calculus for the realization $*$-algebra $|\bfF\otimes \bfG|$, which is best defined by taking a coend over all objects of $\cC$, not just $\Irr(\cC)$.
We form a larger (non-asociative) algebra by
$$
\|\bfF\otimes \bfG\| := \bigoplus_{a \in\cC} \bfF(a) \otimes \bfG(a)
$$
together with the same unit.
However, multiplication and the $*$-structure are now given by
$$
( f^a \otimes g^a ) \cdot ( f^b \otimes g^b )
:=
\mu^{\bfF}_{a,b}(f^a \otimes f^b) \otimes \mu^{\bfG}_{a,b}(g^a \otimes g^b)
\qquad\qquad
( f \otimes g )^* := j_a^\bfF(f) \otimes j_a^\bfG(g)
$$
where we do not sum over isometry spaces.
When $\cC$ is non-strict, this algebra is non-associative, since generally
$$
\bfF((a\otimes b)\otimes c) \otimes \bfG((a\otimes b)\otimes c)
\neq
\bfF(a\otimes (b\otimes c)) \otimes \bfG(a\otimes (b\otimes c)).
$$
In the same vein, the unit does not really behave like a unit.
However, when $\cC$ is strict, this algebra is associative and unital.

We now look at the ideal $I$ generated by
$$
\{ \bfF(\psi)(f) \otimes g - f \otimes \bfG(\psi)(g)\,  |\,  f \in \bfF(a),  g \in \bfG(b),\text{ and }\psi \in \cC(a, b) \}.
$$

\begin{lem}
\label{lem:I maps to zero}
Suppose $c\in \Irr(\cC)$ and $a,b\in\cC$.
For all $\psi\in \cC(a,b)$, we have
$$
\sum_{\alpha \in \Isom(c,a)}\alpha^* \otimes (\psi \circ \alpha)
=
\sum_{\beta\in \Isom(c,b)}  (\beta^*\circ \psi)\otimes \beta 
$$
as elements of $\cC(a,c)\otimes \cC(c,b)$.
\end{lem}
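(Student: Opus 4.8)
The plan is to exploit semisimplicity of $\cC$ together with the fact that $c\in\Irr(\cC)$ is simple, so that $\cC(c,c)=\bbC\id_c$; both sides of the claimed identity will turn out to represent the ``$c$-isotypic part of $\psi$''.

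First I would record the relevant facts about $\Isom(c,a)$. Since $c$ is simple, $\alpha^*\circ\alpha'=\delta_{\alpha,\alpha'}\id_c$ for $\alpha,\alpha'\in\Isom(c,a)$, and maximality of the chosen set of isometries with orthogonal ranges forces $p_a:=\sum_{\alpha\in\Isom(c,a)}\alpha\circ\alpha^*\in\End_\cC(a)$ to be the orthogonal projection onto the $c$-isotypic summand of $a$; equivalently, $\Isom(c,a)$ is a basis of $\cC(c,a)$, and $p_a\circ f=f$ for every $f\in\cC(c,a)$ while $g\circ p_a=g$ for every $g\in\cC(a,c)$. The same holds with $b$ in place of $a$. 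This is the one point I would write out carefully, since it is where maximality and semisimplicity enter; everything else is bookkeeping with $\cC(c,c)=\bbC\id_c$.

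Next I would observe that two elements of $\cC(a,c)\otimes\cC(c,b)$ agree as soon as they agree after pairing with every $f\in\cC(c,a)$ and $g\in\cC(b,c)$ via $x\otimes y\mapsto(x\circ f)\cdot(g\circ y)\in\cC(c,c)\cong\bbC$. Indeed, because $c$ is simple the composition pairings $\cC(a,c)\times\cC(c,a)\to\bbC$ and $\cC(c,b)\times\cC(b,c)\to\bbC$ are non-degenerate (if $x\circ f=0$ for all $f$, take $f=x^*$ to get $x\circ x^*=0$, hence $x=0$), and a tensor product of non-degenerate pairings of finite-dimensional spaces is non-degenerate.

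Finally I would compute the two pairings. For the left-hand side, $\sum_{\alpha}(\alpha^*\circ f)\cdot(g\circ\psi\circ\alpha)$; each factor is a scalar multiple of $\id_c$, so the expression rearranges to $g\circ\psi\circ(\sum_\alpha\alpha\circ\alpha^*)\circ f=g\circ\psi\circ p_a\circ f=g\circ\psi\circ f$. For the right-hand side, $\sum_\beta(\beta^*\circ\psi\circ f)\cdot(g\circ\beta)=g\circ(\sum_\beta\beta\circ\beta^*)\circ\psi\circ f=g\circ p_b\circ\psi\circ f=g\circ\psi\circ f$, using $p_b\circ(\psi\circ f)=\psi\circ f$ since $\psi\circ f\in\cC(c,b)$. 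As both pairings equal $g\circ\psi\circ f$ for all $f,g$, the two elements coincide. (A graphical alternative is to resolve the $a$-strand through $c$ using the fusion relation \eqref{eq:FusionRelation} and slide $\psi$ past the resulting projection, but the pairing computation avoids drawing diagrams.)
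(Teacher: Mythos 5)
Your proof is correct, and it takes a mildly different route from the paper's. The paper argues by a direct change of orthogonal basis: it expands $\psi\circ\alpha$ in the basis $\Isom(c,b)$ using the renormalized inner product $d_c^{-1}\langle\cdot,\cdot\rangle_{\cC(c,b)}$, applies the conjugate-symmetry identity $\langle\psi\circ\alpha,\beta\rangle=\overline{\langle\psi^*\circ\beta,\alpha\rangle}$ to transfer the coefficient to the other tensor factor, re-sums over $\alpha$ to reassemble $\psi^*\circ\beta$, and takes adjoints. You instead test both sides against the non-degenerate composition pairing $(x\otimes y, f\otimes g)\mapsto (x\circ f)\cdot(g\circ y)$ and reduce everything to the fact that $p_a=\sum_\alpha\alpha\circ\alpha^*$ and $p_b=\sum_\beta\beta\circ\beta^*$ act as the identity on $\cC(c,a)$ and $\cC(c,b)$ respectively; both pairings collapse to $g\circ\psi\circ f$. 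The two arguments rest on the same underlying fact (that $\Isom(c,\cdot)$ is an orthonormal basis of the relevant hom space, with $\sum\alpha\alpha^*$ the isotypic projection), but yours has the small advantage of making manifest that the identity, and hence the vanishing of the ideal $I$ under the realization map, is independent of the choice of $\Isom$ and of any inner-product normalization — the value $g\circ\psi\circ f$ visibly depends only on $\psi$. The paper's version is shorter to typeset and feeds directly into the graphical calculus that follows. All the steps you flag as needing care (maximality forcing $p_a$ to be the full isotypic projection, non-degeneracy of the pairings via $x\circ x^*=0\Rightarrow x=0$ in a C*-category, finite-dimensionality of the hom spaces) are indeed the right ones and all hold here.
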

\begin{proof}
We expand $\alpha \circ \psi$ in terms of $\Isom(c,b)$ to get
\begin{align*}
\sum_{\alpha \in \Isom(c,a)}\alpha^* \otimes (\psi\circ \alpha)
&=
d_c^{-1}
\sum_{
\substack{\alpha \in \Isom(c,a)
\\
\beta\in \Isom(c,b)
}}
\alpha^* \otimes \langle \psi\circ \alpha, \beta \rangle_{\cC(c,b)} \beta
\\&=
d_c^{-1}
\sum_{
\substack{\alpha \in \Isom(c,a)
\\
\beta\in \Isom(c,b)
}}
(\langle \psi^*\circ \beta, \alpha \rangle_{\cC(c,b)}\alpha)^* \otimes \beta
\\&=
\sum_{\beta \in \Isom(c,b)}(\psi^*\circ \beta)^* \otimes \beta
=
\sum_{\beta \in \Isom(c,b)}(\beta^*\circ \psi) \otimes \beta.
\qedhere
\end{align*}
\end{proof}

\begin{prop}
The map $\|\bfF \otimes \bfG\| \rightarrow |\bfF \otimes \bfG|$ given by
$$
\bfF(a)\otimes \bfG(a)\ni f \otimes g
\longmapsto
\sum_{c \in \Irr(\cC)} \sum_{\alpha\in \Isom(c,a)}
\bfF(\alpha^{*})(f) \otimes \bfG(\alpha)(g)
$$
descends to a unital isomorphism of $*$-algebras
$\|\bfF \otimes \bfG\|/I \cong |\bfF \otimes \bfG|$.
\end{prop}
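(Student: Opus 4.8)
The plan is to write $\Theta\colon\|\bfF\otimes\bfG\|\to|\bfF\otimes\bfG|$ for the map in the statement, $\iota\colon|\bfF\otimes\bfG|\hookrightarrow\|\bfF\otimes\bfG\|$ for the inclusion of the summands indexed by $\Irr(\cC)$ into the summands indexed by all of $\cC$, and $q\colon\|\bfF\otimes\bfG\|\to\|\bfF\otimes\bfG\|/I$ for the quotient map (treating $\cC$ as strict, as elsewhere in the paper, so that $\|\bfF\otimes\bfG\|$ is an associative unital $*$-algebra; that $I$ is a two-sided $*$-ideal follows from conjugate-naturality of $j^{\bfF}$ and $j^{\bfG}$). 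Well-definedness of $\Theta$ — independence of the sets $\Isom(c,a)$ — is precisely the choice-independence of $\sum_{\alpha\in\Isom(c,a)}\alpha^{*}\otimes\alpha$ recorded at the start of Section~\ref{sec:Realizations}. Taking $\Isom(c,c)=\{\id_c\}$ and $\Isom(d,c)=\emptyset$ for $d\not\cong c$ (which is forced for $c\in\Irr(\cC)$ anyway), one sees $\Theta\circ\iota=\id_{|\bfF\otimes\bfG|}$.

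First I would check $\Theta(I)=0$, so that $\Theta$ descends to a linear $\overline{\Theta}\colon\|\bfF\otimes\bfG\|/I\to|\bfF\otimes\bfG|$. On a generator $r=\bfF(\psi)(f)\otimes g-f\otimes\bfG(\psi)(g)$ with $\psi\in\cC(a,b)$, applying $\Theta$ and grouping by $c\in\Irr(\cC)$ gives $\sum_c\big(\sum_{\beta\in\Isom(c,b)}\bfF(\beta^{*}\psi)(f)\otimes\bfG(\beta)(g)-\sum_{\alpha\in\Isom(c,a)}\bfF(\alpha^{*})(f)\otimes\bfG(\psi\alpha)(g)\big)$, and each bracket vanishes by applying the linear map $\phi\otimes\chi\mapsto\bfF(\phi)(f)\otimes\bfG(\chi)(g)$ to the identity of Lemma~\ref{lem:I maps to zero}. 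Moreover naturality of the laxitors $\mu^{\bfF}$, $\mu^{\bfG}$ shows that for simple tensors $u,v$ the product $u\cdot r\cdot v$ is again a generator of the same form (with $a,b$ replaced by $p\otimes a\otimes q$, $p\otimes b\otimes q$ and $\psi$ by $\id_p\otimes\psi\otimes\id_q$), so $I$ is the linear span of its generators; hence $\Theta(I)=0$ and $\overline{\Theta}$ is unital.

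Next comes the key point, a straightening relation: $x\equiv\iota(\Theta(x))\pmod I$ for all $x\in\|\bfF\otimes\bfG\|$. By linearity it suffices to treat $x=f\otimes g\in\bfF(a)\otimes\bfG(a)$. For each $c\in\Irr(\cC)$ and $\alpha\in\Isom(c,a)$, the element $\bfF(\alpha)(\bfF(\alpha^{*})(f))\otimes g-\bfF(\alpha^{*})(f)\otimes\bfG(\alpha)(g)$ lies in $I$ (it is the generator for $\psi=\alpha$, $f'=\bfF(\alpha^{*})(f)$, $g'=g$); summing over all $(c,\alpha)$ and using the fusion relation $\sum_{c,\alpha}\alpha\alpha^{*}=\id_a$ from \eqref{eq:FusionRelation} inside $\bfF$ identifies $\sum_{c,\alpha}\bfF(\alpha)(\bfF(\alpha^{*})(f))\otimes g$ with $f\otimes g$, so $f\otimes g-\iota(\Theta(f\otimes g))\in I$. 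Consequently $\overline{\Theta}\circ(q\circ\iota)=\Theta\circ\iota=\id_{|\bfF\otimes\bfG|}$ and $(q\circ\iota)\circ\overline{\Theta}=\id$ (evaluate on $q(x)$ and use the straightening relation), so $\overline{\Theta}$ and $q\circ\iota$ are mutually inverse linear bijections.

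It remains to upgrade this to a $*$-algebra isomorphism, for which it is enough that $q\circ\iota$ be a unital $*$-homomorphism, as its inverse $\overline{\Theta}$ is then one as well. For $x\in\bfF(a)\otimes\bfG(a)$ and $y\in\bfF(b)\otimes\bfG(b)$ with $a,b\in\Irr(\cC)$, the product $\iota(x)\cdot\iota(y)$ computed in $\|\bfF\otimes\bfG\|$ is $\mu^{\bfF}_{a,b}(\cdot)\otimes\mu^{\bfG}_{a,b}(\cdot)\in\bfF(a\otimes b)\otimes\bfG(a\otimes b)$, and applying $\Theta$ to it reproduces verbatim the defining formula for $x\cdot y$ in $|\bfF\otimes\bfG|$ from Definition~\ref{defn:AlgebraRealization}; similarly $\Theta(\iota(x)^{*})=x^{*}$ (taking the chosen unitary $\gamma_a$ as the unique element of $\Isom(a',\overline{a})$) and $\Theta(\iota(1))=1$. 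Combining these with the straightening relation yields $\iota(x)\iota(y)\equiv\iota(x\cdot y)$, $\iota(x)^{*}\equiv\iota(x^{*})$, and $\iota(1)\equiv\iota(1)\pmod I$, so $q\circ\iota$ is a unital $*$-algebra isomorphism, and therefore so is $\overline{\Theta}$. The only genuinely substantive step is the straightening relation together with the bookkeeping of the $\Isom(\cdot,\cdot)$-choices via the stated choice-independence; once $\Theta(I)=0$ is established, everything else is formal.
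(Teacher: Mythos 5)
Your proof is correct and follows essentially the same route as the paper's: kill $I$ via Lemma~\ref{lem:I maps to zero}, use the inclusion-then-quotient map as the inverse, and verify the two composites on simple tensors. Your explicit ``straightening relation'' $x\equiv\iota(\Theta(x))\pmod I$ and the observation that $I$ is spanned by its generators are exactly the details the paper's terser argument leaves implicit, so nothing further is needed.
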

\begin{proof}
The defined map is clearly surjective.
Thus we must show it descends to an injective $*$-homomorphism.

First, we note that the map is independent of the choices of isometry spaces by the discussion at the beginning of Section \ref{sec:Realizations}.
This means the map preserves the (non-associative) $*$-algebra structure, as the natural maps
$$
\Isom(c,a) \otimes \Isom(b,d) \otimes \Isom(e,c\otimes d) \to \Isom(e,a\otimes b)
$$
for $a,b\in \cC$ and $c,d,e\in\Irr(\cC)$ are isometries with respect to the normalized inner products.

To show that we actually get a homomorphism, we must show that elements of $I$ map to zero, i.e., elements of the form
$\bfF(\psi)(f) \otimes g - f \otimes \bfG(\psi)(g)$
map to zero for $f \in \bfF(a)$,  $g \in \bfG(b)$, and $\psi \in \cC(a,b)$.
This follows directly from Lemma \ref{lem:I maps to zero}.
Thus we get a well-defined surjective $*$-homomorphism $q:\|\bfF\otimes \bfG\|/I \rightarrow |\bfF \otimes \bfG|$.

To prove injectivity, we note there is a natural linear map $i:|\bfF \otimes \bfG| \rightarrow \|\bfF \otimes \bfG\|/I$ given by inclusion and applying the quotient map.
It is easy to see that $q\circ i = \id_{|\bfF\otimes \bfG|}$.
These maps are in fact inverses on the quotient; one checks this by checking the composite in the other order on simple tensors in $\bfF(a)\otimes \bfG(a)$ for $a\in \Irr(\cC)$.
\end{proof}

Using this $*$-isomorphism, we get the following graphical calculus for $|\bfF\otimes \bfG| \cong \|\bfF \otimes \bfG\|/I$.
We use the Yoneda embedding to identify
$\bfF(a)=\Hom_{\Vec(\cC^{\op})}(a, \bfF)$
and
$\bfG(a)=\Hom_{\Vec(\cC)}(a, \bfG)$.
We use string diagrams to draw elements of each of these spaces, but we use \emph{different conventions simultaneously} for each.
We draw elements of $\bfG(a)$ as string diagrams from $\mathbf{a}$ to $\bfG$ reading bottom to top.
We draw elements of $\bfF(a)$ as string diagrams from $\mathbf{a}$ to $\bfF$ reading top to bottom.
Given an element of $\bfF(a)\otimes \bfG(a)$, we connect both string diagrams by their $\mathbf{a}$ strings, but we draw a \emph{barrier membrane} between them, depicted below by the horizontal red line.
For instance, if $f\in \bfF(a)$ and $g\in \bfG(a)$, then
\begin{equation}
\label{eq:PermeableMembrane}
f\otimes g
=
\begin{tikzpicture}[baseline=-.1cm]
    \draw[red] (-.5,0) -- (.5,0);
    \draw (0,-1.2) -- (0,1.2);
    \roundNbox{unshaded}{(0,.6)}{.3}{0}{0}{$g$}
    \roundNbox{unshaded}{(0,-.6)}{.3}{0}{0}{$f$}
    \node at (.2,1.05) {\scriptsize{$\bfG$}};
    \node at (.15,.15) {\scriptsize{$\mathbf{a}$}};
    \node at (.15,-.15) {\scriptsize{$\mathbf{a}$}};
    \node at (.2,-1.05) {\scriptsize{$\bfF$}};
\end{tikzpicture}
\,.
\end{equation}
Now the barrier membrane is \emph{permeable} to morphisms from $\cC$.
For $f \in \bfF(a)$, $g\in \bfG(b)$, and $\psi\in \cC(a,b)$, we have
$$
\qquad
\qquad
\bfF(\psi)(f)\otimes g=
\begin{tikzpicture}[yscale=-1,baseline=-.7cm]
    \draw[red] (-.5,0) -- (.5,0);
    \draw (0,-1.2) -- (0,2.4);
    \roundNbox{unshaded}{(0,1.8)}{.3}{0}{0}{$f$}
    \roundNbox{unshaded}{(0,.6)}{.3}{0}{0}{$\psi$}
    \roundNbox{unshaded}{(0,-.6)}{.3}{0}{0}{$g$}
    \node at (.2,2.25) {\scriptsize{$\bfF$}};
    \node at (.2,1.2) {\scriptsize{$\mathbf{a}$}};
    \node at (.15,.15) {\scriptsize{$\mathbf{b}$}};
    \node at (.15,-.15) {\scriptsize{$\mathbf{b}$}};
    \node at (.2,-1.05) {\scriptsize{$\bfG$}};
\end{tikzpicture}
=
\begin{tikzpicture}[baseline=.5cm]
    \draw[red] (-.5,0) -- (.5,0);
    \draw (0,-1.2) -- (0,2.4);
    \roundNbox{unshaded}{(0,1.8)}{.3}{0}{0}{$g$}
    \roundNbox{unshaded}{(0,.6)}{.3}{0}{0}{$\psi$}
    \roundNbox{unshaded}{(0,-.6)}{.3}{0}{0}{$f$}
    \node at (.2,2.25) {\scriptsize{$\bfG$}};
    \node at (.2,1.2) {\scriptsize{$\mathbf{b}$}};
    \node at (.15,.15) {\scriptsize{$\mathbf{a}$}};
    \node at (.15,-.15) {\scriptsize{$\mathbf{a}$}};
    \node at (.2,-1.05) {\scriptsize{$\bfF$}};
\end{tikzpicture}
=
f\otimes \bfG(\psi)(g).
$$
Notice that pulling the $\psi$ through the membrane is compatible with swapping the covariant functor $\bfF: \cC \to \Vec$ with the contravariant functor $\bfG : \cC \to \Vec$.

Using this graphical notation, it is easy to express the multiplication and $*$-structure.
A lax monoidal functor $\bfG : \cC^{\op} \rightarrow \Vec$ is equivalent to an algebra in $\Vec(C)$, and thus a lax monoidal functor $\bfF : \cC \rightarrow \Vec$ is equivalent to an algebra in $\Vec(\cC^{\op})$.
If $f^a \otimes g^a \in \bfF(a)\otimes \bfG(a)$ and $f^b \otimes g^b \in \bfF(b) \otimes \bfG(b)$, then
the multiplication and star structure are given by
\begin{equation}
\label{eq:GraphicalMultiplicationAndStar}
(f^a \otimes g^a)\cdot(f^b \otimes g^b)=
\begin{tikzpicture}[baseline=-.1cm]
    \draw[red] (-.5,0) -- (1.5,0);
    \draw (0,-.6) -- (0,.6);
    \draw (1,-.6) -- (1,.6);
    \draw (0,.9) arc (180:0:.5cm);
    \draw (0,-.9) arc (-180:0:.5cm);
    \filldraw (.5,1.4) circle (.05cm);
    \filldraw (.5,-1.4) circle (.05cm);
    \draw (.5,1.4) -- (.5,1.8);
    \draw (.5,-1.4) -- (.5,-1.8);
    \roundNbox{unshaded}{(0,.6)}{.3}{0}{0}{$g^a$}
    \roundNbox{unshaded}{(0,-.6)}{.3}{0}{0}{$f^a$}
    \roundNbox{unshaded}{(1,.6)}{.3}{0}{0}{$g^b$}
    \roundNbox{unshaded}{(1,-.6)}{.3}{0}{0}{$f^b$}
    \node at (.7,1.6) {\scriptsize{$\bfG$}};
    \node at (-.2,1.1) {\scriptsize{$\bfG$}};
    \node at (1.2,1.1) {\scriptsize{$\bfG$}};
    \node at (-.15,.15) {\scriptsize{$\mathbf{a}$}};
    \node at (-.15,-.15) {\scriptsize{$\mathbf{a}$}};
    \node at (1.15,.15) {\scriptsize{$\mathbf{b}$}};
    \node at (1.15,-.15) {\scriptsize{$\mathbf{b}$}};
    \node at (-.2,-1.1) {\scriptsize{$\bfF$}};
    \node at (1.2,-1.1) {\scriptsize{$\bfF$}};
    \node at (.7,-1.6) {\scriptsize{$\bfF$}};
\end{tikzpicture}
\qquad
\qquad
(f^a \otimes g^a)^*
=
\begin{tikzpicture}[baseline=-.1cm]
    \draw[red] (-.5,0) -- (.5,0);
    \draw (0,-1.3) -- (0,1.3);
    \roundNbox{unshaded}{(0,.6)}{.3}{.4}{.4}{$j_a^\bfG(g^a)$}
    \roundNbox{unshaded}{(0,-.6)}{.3}{.4}{.4}{$j_a^\bfF(f^a)$}
    \node at (.2,1.1) {\scriptsize{$\bfG$}};
    \node at (.15,.15) {\scriptsize{$\overline{\mathbf{a}}$}};
    \node at (.15,-.15) {\scriptsize{$\overline{\mathbf{a}}$}};
    \node at (.2,-1.1) {\scriptsize{$\bfF$}};
\end{tikzpicture}.
\end{equation}

\subsection{Hilbert space realizations}

Suppose now we have two Hilbert space objects $\bfH \in \Hilb(\cC^{\op})$ and $\bfK \in \Hilb(\cC)$.
That is, we have two linear dagger functors $\bfH: \cC\to \Hilb$ and $\bfK : \cC^{\op} \to \Hilb$.
As in the previous section, we form the Hilbert space:
$$
|\bfH \otimes \bfK| =
\bigoplus_{c\in\Irr(\cC)} \bfH(c)\otimes_c \bfK(c)
$$
Here, we the $\boxtimes$ symbol denotes the Hilbert space tensor product with a normalized inner product as in \cite[Def.~2.16 and Def.~2.29]{MR3687214}.
For $c\in\Irr(\cC)$, the inner product on the $\bfH(c)\otimes_c \bfK(c)$ summand of $|\bfH\otimes \bfK|$ is given by
\begin{equation}
\label{eq:InnerProductForSimples}
\langle
\eta_1\otimes \xi_1
,
\eta_2\otimes \xi_2
\rangle_{\bfH(c)\otimes_c \bfK(c)}
=
\frac{1}{d_c}
\langle \eta_1, \eta_2\rangle_{\bfH(c)}
\langle \xi_1, \xi_2\rangle_{\bfK(c)}
=
\frac{1}{d_c}\,
\begin{tikzpicture}[baseline=-.1cm]
    \draw (0,-.8) -- (0,.8) arc (180:0:.3cm) -- (.6,-.8) arc (0:-180:.3cm);
    \roundNbox{unshaded}{(0,.5)}{.3}{0}{0}{$\eta_2^*$}
    \roundNbox{unshaded}{(0,-.5)}{.3}{0}{0}{$\eta_1$}
    \node at (-.15,1) {\scriptsize{$\mathbf{c}$}};
    \node at (-.2,0) {\scriptsize{$\bfH$}};
    \node at (-.15,-1) {\scriptsize{$\mathbf{c}$}};
\end{tikzpicture}
\,\cdot
\begin{tikzpicture}[baseline=-.1cm]
    \draw (0,-.8) -- (0,.8) arc (180:0:.3cm) -- (.6,-.8) arc (0:-180:.3cm);
    \roundNbox{unshaded}{(0,.5)}{.3}{0}{0}{$\xi_2^*$}
    \roundNbox{unshaded}{(0,-.5)}{.3}{0}{0}{$\xi_1$}
    \node at (-.15,1) {\scriptsize{$\mathbf{c}$}};
    \node at (-.2,0) {\scriptsize{$\bfK$}};
    \node at (-.15,-1) {\scriptsize{$\mathbf{c}$}};
\end{tikzpicture}
\end{equation}
We warn the reader that the above closed diagrams are in different categories!
The diagram on the left is in $\Hilb(\cC^{\op})$, and the diagram on the right is in $\Hilb(\cC)$.
Both of these diagrams are just numbers in $\bbC$, so we may multiply them.

We have a second realization of this Hilbert space similar to the previous section.
First, we form the (pre-Hilbert) vector space
$$
\|\bfH \otimes \bfK\| =
\bigoplus_{c\in \cC} \bfH(c)\otimes_c \bfK(c)
$$
as an algebraic direct sum.
Next, we define a positive semi-definite sesquilinear form as follows.
Suppose $a,b\in\cC$ are not necessarily simple, 
$\eta_1\otimes \xi_1 \in \bfH(a)\otimes_a \bfK(a)$, and 
$\eta_2\otimes \xi_2 \in \bfH(b)\otimes_b \bfK(b)$.
Recall that for all $c\in\cC$, 
$\bfH(c)\cong \Hom_{\Hilb(\cC^{\op})}(\mathbf{c}, \bfH)$
and
$\bfK(c)\cong \Hom_{\Hilb(\cC)}(\mathbf{c}, \bfK)$.
Thus we see that 
$\xi_2^*\circ \xi_1 \in \Hom_{\Hilb(\cC)}(\mathbf{a},\mathbf{b}) \cong \cC(a,b)$,
and
$\eta_2^*\circ \eta_1 \in \Hom_{\Hilb(\cC^{\op})}(\mathbf{a},\mathbf{b}) \cong \cC^{\op}(a,b)$, so $(\eta_2^*\circ \eta_1)^{\op} \in \cC(b,a)$.
We define
\begin{equation}
\label{eq:InnerProductForAll}
\langle
\eta_1\otimes \xi_1
,
\eta_2\otimes \xi_2
\rangle_{\|\bfH\otimes \bfK\|}
=
\tr_\cC(
\underbrace{(\xi_2^*\circ \xi_1)}_{\in\cC(a,b)}
\circ 
\underbrace{(\eta_2^*\circ \eta_1)^{\op}}_{\in \cC(b,a)}
),
\end{equation}
which is a scalar.
Notice that this inner product agrees with \eqref{eq:InnerProductForSimples} when $a=b=c\in\Irr(\cC)$.
By Lemma \ref{lem:I maps to zero}, the algebraic subspace $I\subset \|\bfH\otimes \bfK\|$ generated by
$$
\{ \bfH(\psi)(\eta) \otimes \xi - \eta \otimes \bfK(\psi)(\xi)\,  |\,  \eta \in \bfH(a),  \xi \in \bfK(b),\text{ and }\psi \in \cC(a, b) \}
$$
sits in the kernel of the sesquilinear form.
We obtain a Hilbert space isomorphism $|\bfH\otimes \bfK | \cong \|\bfH\otimes \bfK\|/I$, where we take the completion of the algebraic quotient on the right hand side.

This allows us to use a similar graphical calculus with a $\cC$-permeable membrane as in \eqref{eq:PermeableMembrane};
vectors $\eta\otimes \xi \in \bfH(c)\otimes_c \bfK(c)$ are denoted graphically by
$$
\eta\otimes \xi
=
\begin{tikzpicture}[baseline=-.1cm]
    \draw[red] (-.5,0) -- (.5,0);
    \draw (0,-1.2) -- (0,1.2);
    \roundNbox{unshaded}{(0,.6)}{.3}{0}{0}{$\xi$}
    \roundNbox{unshaded}{(0,-.6)}{.3}{0}{0}{$\eta$}
    \node at (.2,1.05) {\scriptsize{$\bfK$}};
    \node at (.15,.15) {\scriptsize{$\mathbf{c}$}};
    \node at (.15,-.15) {\scriptsize{$\mathbf{c}$}};
    \node at (.2,-1.05) {\scriptsize{$\bfH$}};
\end{tikzpicture}
\,.
$$

We conclude this section with the following lemmas about the graphical calculus and the inner product.

\begin{lem} 
The inner product is given graphically for $\eta_1\otimes \xi_1\in \bfH(a)\otimes_a \bfK(a)$ and $\eta_2\otimes \xi_2\in \bfH(b)\otimes_b \bfK(b)$ by
$$
\langle
\eta_1\otimes \xi_1
,
\eta_2\otimes \xi_2
\rangle_{|\bfH\otimes \bfK|}
=
\begin{tikzpicture}[baseline=-.1cm]
    \draw[red] (-.5,0) -- (.9,0);
    \draw (0,-1.9) -- (0,1.9) arc (180:0:.3cm) -- (.6,-1.9) arc (0:-180:.3cm);
    \roundNbox{unshaded}{(0,1.6)}{.3}{0}{0}{$\xi_2^*$}
    \roundNbox{unshaded}{(0,.6)}{.3}{0}{0}{$\xi_1$}
    \roundNbox{unshaded}{(0,-.6)}{.3}{0}{0}{$\eta_1$}
    \roundNbox{unshaded}{(0,-1.6)}{.3}{0}{0}{$\eta_2^*$}
    \node at (-.15,2.1) {\scriptsize{$\mathbf{b}$}};
    \node at (-.2,1.1) {\scriptsize{$\bfK$}};
    \node at (-.15,.15) {\scriptsize{$\mathbf{a}$}};
    \node at (-.15,-.15) {\scriptsize{$\mathbf{a}$}};
    \node at (-.2,-1.1) {\scriptsize{$\bfH$}};
    \node at (-.15,-2.1) {\scriptsize{$\mathbf{b}$}};
\end{tikzpicture}
\,.
$$
\end{lem}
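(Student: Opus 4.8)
The plan is to reduce the claimed identity to the definition \eqref{eq:InnerProductForAll} of the sesquilinear form on $\|\bfH\otimes\bfK\|$ and then recognize the right-hand diagram as a literal picture of that formula. First I would recall that the Hilbert space isomorphism $|\bfH\otimes\bfK|\cong\|\bfH\otimes\bfK\|/I$ constructed above is unitary and that the form \eqref{eq:InnerProductForAll} descends to the quotient; hence, viewing $\eta_1\otimes\xi_1\in\bfH(a)\otimes_a\bfK(a)$ and $\eta_2\otimes\xi_2\in\bfH(b)\otimes_b\bfK(b)$ as elements of $\|\bfH\otimes\bfK\|$ and passing to their classes,
\[
\langle\eta_1\otimes\xi_1,\eta_2\otimes\xi_2\rangle_{|\bfH\otimes\bfK|}
=\langle\eta_1\otimes\xi_1,\eta_2\otimes\xi_2\rangle_{\|\bfH\otimes\bfK\|}
=\tr_\cC((\xi_2^*\circ\xi_1)\circ(\eta_2^*\circ\eta_1)^{\op}),
\]
where $\xi_2^*\circ\xi_1\in\Hom_{\Hilb(\cC)}(\mathbf{a},\mathbf{b})\cong\cC(a,b)$ and $(\eta_2^*\circ\eta_1)^{\op}\in\cC(b,a)$ is the opposite of $\eta_2^*\circ\eta_1\in\Hom_{\Hilb(\cC^{\op})}(\mathbf{a},\mathbf{b})\cong\cC^{\op}(a,b)$, both under the Yoneda identifications $\bfH(c)\cong\Hom_{\Hilb(\cC^{\op})}(\mathbf{c},\bfH)$ and $\bfK(c)\cong\Hom_{\Hilb(\cC)}(\mathbf{c},\bfK)$.

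Second, I would unwind the right-hand diagram against this expression. Reading the part below the red membrane top-to-bottom (the convention for $\Hilb(\cC^{\op})$), the left strand there is exactly the string diagram of the composite $\mathbf{a}\xrightarrow{\eta_1}\bfH\xrightarrow{\eta_2^*}\mathbf{b}$, i.e.\ of $\eta_2^*\circ\eta_1$; since $\cC^{\op}\hookrightarrow\Hilb(\cC^{\op})$ is full this is a morphism of $\cC^{\op}$, which passes through the $\cC$-permeable membrane and re-emerges on the $\bfK$-side, now read bottom-to-top, as its opposite $(\eta_2^*\circ\eta_1)^{\op}$ (this interchange of a $\cC^{\op}$-morphism with its opposite across the change of reading direction is exactly the content of the relation defining $I$ and of \eqref{eq:HilbC-op Relation1}--\eqref{eq:HilbC-op Relation2}). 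Stacking this underneath the string diagram of $\mathbf{a}\xrightarrow{\xi_1}\bfK\xrightarrow{\xi_2^*}\mathbf{b}$ for $\xi_2^*\circ\xi_1$, the entire left strand (read bottom-to-top in $\Hilb(\cC)$) becomes the endomorphism $(\xi_2^*\circ\xi_1)\circ(\eta_2^*\circ\eta_1)^{\op}$ of $\mathbf{b}$, hence of $b$ in $\cC$. Finally, the cap and cup on the right, labelled $\mathbf{b}$, are the (co)evaluation of $b$ and close this endomorphism up to $\tr_\cC((\xi_2^*\circ\xi_1)\circ(\eta_2^*\circ\eta_1)^{\op})$, which is precisely the scalar displayed above. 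As a consistency check this specializes, for $a=b=c\in\Irr(\cC)$, to \eqref{eq:InnerProductForSimples}, the $1/d_c$ being supplied by the normalized tensor product $\otimes_c$.

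The one point requiring genuine care --- and the step I expect to be the main (minor) obstacle --- is the bookkeeping across the membrane: one must consistently keep that diagrams below the membrane are read top-to-bottom in $\Hilb(\cC^{\op})$ while above they are read bottom-to-top in $\Hilb(\cC)$, that pushing a morphism through the membrane therefore replaces a $\cC^{\op}$-morphism by its opposite $\cC$-morphism, and that the closure by the right-hand cap and cup is genuinely the categorical trace $\tr_\cC$ of \eqref{eq:InnerProductForAll} and not a twisted variant --- which holds because $\bfH$ and $\bfK$ are dagger functors into $\Hilb$ and the (co)evaluations in play are the standard ones. Everything else is routine diagrammatic manipulation and will be stated as such.
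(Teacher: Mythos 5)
Your proposal is correct and follows essentially the same route as the paper: both reduce the claim to the definition \eqref{eq:InnerProductForAll} and evaluate the closed diagram as $\tr_\cC((\xi_2^*\circ \xi_1)\circ(\eta_2^*\circ\eta_1)^{\op})$ by splitting it at the membrane and using \eqref{eq:HilbC-op Relation1}--\eqref{eq:HilbC-op Relation2} to move the $\cC^{\op}$-part across as its opposite. The paper simply writes this out as an explicit composite $\coev_{\mathbf{b}}^*\circ(((\xi_2^*\circ\xi_1)\circ(\eta_2^*\circ\eta_1)^{\op})\otimes\id_{\overline{\mathbf{b}}})\circ\coev_{\mathbf{b}}$, which is the bookkeeping step you flagged as the only point of care.
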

\begin{proof}
Using \eqref{eq:HilbC-op Relation2}, the diagram on the right is given by
\begin{align*}
\bigg(
\coev_{\mathbf{b}}^* &\circ (\xi_2^* \otimes \id_{\overline{\mathbf{b}}}) \circ (\xi_1 \otimes \id_{\overline{\mathbf{b}}})
\bigg)
\circ
\bigg(
\coev_{\mathbf{b}}^{\op}\circ (\eta_2^* \otimes \id_{\overline{\mathbf{b}}})\circ (\eta_1 \otimes \id_{\overline{\mathbf{b}}})
\bigg)^{\op}
\\&=
\bigg(\coev_{\mathbf{b}}^* 
\circ 
((\xi_2^* \circ \xi_1) \otimes \id_{\overline{\mathbf{b}}})
\bigg)
\circ
\bigg(
(
(\eta_2^* \circ \eta_1)^{\op} \otimes \id_{\overline{\mathbf{b}}}
)
\circ
\coev_{\mathbf{b}}\bigg)
\\&=
\coev_{\mathbf{b}}^* 
\circ 
\bigg(
((\xi_2^* \circ \xi_1) \circ (\eta_2^* \circ \eta_1)^{\op})
\otimes \id_{\overline{\mathbf{b}}}
\bigg)
\circ
\coev_{\mathbf{b}}
\\&=
\tr_\cC((\xi_2^* \circ \xi_1) \circ (\eta_2^* \circ \eta_1)^{\op})
\\&=
\langle
\eta_1\otimes \xi_1
,
\eta_2\otimes \xi_2
\rangle_{|\bfH\otimes \bfK|}.
\qedhere
\end{align*}
\end{proof}

\begin{lem}
\label{lem:GraphicalInnerProduct}
Suppose $a,b,c\in \cC$.
For all $\eta_1\otimes \xi_1 \in \bfH(a\otimes b)\otimes_{a\otimes b} \bfK(a\otimes b)$
and $\eta_2\otimes \xi_2 \in \bfH(c)\otimes_c \bfK(c)$,
$$
\begin{tikzpicture}[baseline=-.1cm]
    \draw[red] (-.5,0) -- (.8,0);
    \draw (-.15,-.6) -- (-.15,.6);
    \draw (.15,-.6) -- (.15,.6);
    \draw (0,.6) -- (0,1.9) arc (180:0:.3cm) -- (.6,-1.9) arc (0:-180:.3cm) -- (0,-.6);
    \roundNbox{unshaded}{(0,1.6)}{.3}{0}{0}{$\xi_2^*$}
    \roundNbox{unshaded}{(0,.6)}{.3}{0}{0}{$\xi_1$}
    \roundNbox{unshaded}{(0,-.6)}{.3}{0}{0}{$\eta_1$}
    \roundNbox{unshaded}{(0,-1.6)}{.3}{0}{0}{$\eta_2^*$}
    \node at (-.15,2.1) {\scriptsize{$\mathbf{c}$}};
    \node at (-.2,1.1) {\scriptsize{$\bfK$}};
    \node at (-.3,.15) {\scriptsize{$\mathbf{a}$}};
    \node at (-.3,-.15) {\scriptsize{$\mathbf{a}$}};
    \node at (.3,.15) {\scriptsize{$\mathbf{b}$}};
    \node at (.3,-.15) {\scriptsize{$\mathbf{b}$}};
    \node at (-.2,-1.1) {\scriptsize{$\bfH$}};
    \node at (-.15,-2.1) {\scriptsize{$\mathbf{c}$}};
\end{tikzpicture}
=
\begin{tikzpicture}[baseline=-.1cm]
    \draw[red] (-.7,0) -- (1,0);
    \draw (-.15,-.3) arc (0:180:.15cm) -- (-.45,-1.9) arc (-180:0:.625cm) -- (.8,1.9) arc (0:180:.625cm) -- (-.45,.3) arc (-180:0:.15cm);
    \draw (.15,-.6) -- (.15,.6);
    \draw (0,.6) -- (0,1.9) arc (180:0:.3cm) -- (.6,-1.9) arc (0:-180:.3cm) -- (0,-.6);
    \roundNbox{unshaded}{(0,1.6)}{.3}{0}{0}{$\xi_2^*$}
    \roundNbox{unshaded}{(0,.6)}{.3}{0}{0}{$\xi_1$}
    \roundNbox{unshaded}{(0,-.6)}{.3}{0}{0}{$\eta_1$}
    \roundNbox{unshaded}{(0,-1.6)}{.3}{0}{0}{$\eta_2^*$}
    \node at (-.15,2.1) {\scriptsize{$\mathbf{c}$}};
    \node at (-.2,1.1) {\scriptsize{$\bfK$}};
    \node at (0,.15) {\scriptsize{$\mathbf{a}$}};
    \node at (0,-.15) {\scriptsize{$\mathbf{a}$}};
    \node at (-.6,1.1) {\scriptsize{$\overline{\mathbf{a}}$}};
    \node at (-.6,-1.1) {\scriptsize{$\overline{\mathbf{a}}$}};
    \node at (.3,.15) {\scriptsize{$\mathbf{b}$}};
    \node at (.3,-.15) {\scriptsize{$\mathbf{b}$}};
    \node at (-.2,-1.1) {\scriptsize{$\bfH$}};
    \node at (-.15,-2.1) {\scriptsize{$\mathbf{c}$}};
\end{tikzpicture}
\,.
$$
\end{lem}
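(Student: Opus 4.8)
The plan is to deduce the right-hand diagram from the left-hand one by inserting a zig-zag on the $\mathbf a$-strand and then exploiting that the red membrane is permeable to everything coming from $\cC$.

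First I would reinterpret the left-hand diagram. By the graphical formula for the inner product proved just above (applied with $a\otimes b$ in place of $a$ and $c$ in place of $b$), or equivalently by unwinding it via \eqref{eq:HilbC-op Relation2} and the definition \eqref{eq:InnerProductForAll}, it equals $\tr_\cC\bigl((\xi_2^*\circ\xi_1)\circ(\eta_2^*\circ\eta_1)^{\op}\bigr)$, where $\xi_2^*\circ\xi_1\in\cC(a\otimes b,c)$ and $(\eta_2^*\circ\eta_1)^{\op}\in\cC(c,a\otimes b)$. This pins down the scalar we must reproduce on the other side.

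Next, on the $\mathbf a$-strand of the left-hand diagram I would insert $\id_{\mathbf a}$ written as a zig-zag through $\overline{\mathbf a}$, i.e.\ using one evaluation and one coevaluation of $\cC$ (up to the suppressed bi-involutive structure isomorphisms); this is a valid identity in the rigid C*-tensor category $\cC$. The coevaluation can be drawn just under $\xi_1$, on the $\bfK$-side of the membrane, and the evaluation just above $\eta_1$, on the $\bfH$-side, joined by a single $\overline{\mathbf a}$-strand. Since $\ev_{\mathbf a}$, $\coev_{\mathbf a}$ and the strand $\id_{\overline{\mathbf a}}$ are all morphisms of $\cC$, Lemma \ref{lem:I maps to zero} — equivalently the relation pictured in \eqref{eq:PermeableMembrane} — says the membrane does not obstruct them, so I may isotope the $\overline{\mathbf a}$-strand freely, including back and forth across the membrane. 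I then slide it so that it leaves the evaluation near $\eta_1$, runs down past $\eta_2^*$, around the bottom of the whole picture, up the far right across the membrane, over the top, and back down the far left into the coevaluation near $\xi_1$ — which is exactly the route drawn on the right. One checks this isotopy meets neither the $\mathbf b$-strand nor the $\mathbf c$-$\bfH$-$\bfK$ loop, both of which lie strictly inside the new route. The resulting picture is the right-hand diagram verbatim, so the two diagrams are equal.

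The only real difficulty is bookkeeping: one must respect the two simultaneous reading conventions (diagrams for $\Hilb(\cC)$ read bottom-to-top, those for $\Hilb(\cC^{\op})$ read top-to-bottom), which is what decides whether the kink on the $\bfH$-side is an evaluation or a coevaluation and in which $\cC^{\op}$-tensor order it sits, and one must carry along the suppressed structure unitaries so that the zig-zag invoked is literally one of the two that hold. If the picture feels too slippery, the safe fallback is to translate \emph{both} sides into honest morphisms of $\cC$ using \eqref{eq:HilbC-op Relation1}--\eqref{eq:HilbC-op Relation2}, exactly as in the preceding lemma, whereupon the claim reduces to the snake identity together with the cyclicity of $\tr_\cC$; I would present the pictorial argument and keep the explicit computation in reserve.
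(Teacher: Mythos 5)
Your ``fallback'' is in fact the paper's proof: the authors start from the right-hand diagram, use \eqref{eq:HilbC-op Relation2} and \eqref{eq:HilbC-op Relation1} to collapse everything to the honest morphisms $\xi_2^*\circ\xi_1\in\cC(a\otimes b,c)$ and $(\eta_2^*\circ\eta_1)^{\op}\in\cC(c,a\otimes b)$ decorated by the zig-zag $(\ev_{\mathbf a}^*\otimes\id_{\mathbf b})\circ(\ev_{\mathbf a}\otimes\id_{\mathbf b})$, and only then invoke sphericality of $\cC$ to remove the encircling $\overline{\mathbf a}$-strand and land on $\tr_\cC((\xi_2^*\circ\xi_1)\circ(\eta_2^*\circ\eta_1)^{\op})$, which is the left-hand side by \eqref{eq:InnerProductForAll}. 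So the computation you keep ``in reserve'' should be promoted to the main argument.

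The primary pictorial argument is not justified as written. In the right-hand diagram the $\overline{\mathbf a}$-strand encircles the entire closed $\mathbf c$--$\bfH$--$\bfK$ circuit, whereas after inserting the zig-zag in the left-hand diagram the $\overline{\mathbf a}$-strand sits strictly inside that circuit; these two configurations are \emph{not} related by a planar isotopy avoiding the rest of the picture, contrary to your claim that the route ``meets neither the $\mathbf b$-strand nor the $\mathbf c$--$\bfH$--$\bfK$ loop.'' Passing from inside to outside is exactly an application of sphericality (sliding the strand ``around the back of the sphere''), and membrane-permeability (Lemma \ref{lem:I maps to zero}) only licenses moving $\cC$-morphisms across the membrane --- it does not let you traverse the $\bfH$- and $\bfK$-strands, which are not dualizable objects and for which no pivotal/spherical structure is available. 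Sphericality only becomes usable after the $\bfH$- and $\bfK$-data have been contracted to morphisms of $\cC$, which is precisely why the order of operations in the paper (first reduce via \eqref{eq:HilbC-op Relation1}--\eqref{eq:HilbC-op Relation2}, then apply sphericality in $\cC$) is not optional.
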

\begin{proof}
Starting with the right hand side above, we have
\begin{align*}
&\tr_\cC\bigg(
(
(\id_{\overline{\mathbf{a}}}\otimes\xi_2)^*
\circ 
(\overline{\mathbf{a}}\otimes \bfK)(\ev^*_{\mathbf{a}}\otimes \id_{\mathbf{b}})
[\id_{\overline{\mathbf{a}}}\otimes \xi_1]
)
\circ
(\id_{\overline{\mathbf{a}}} \otimes\eta_2)^*
\circ 
(\overline{\mathbf{a}}\otimes \bfH)(\ev_{\mathbf{a}}\otimes \id_{\mathbf{b}})[\id_{\overline{\mathbf{a}}} \otimes \eta_1]
)^{\op} 
\bigg)
\\&=
\tr_\cC\bigg(
(
(\id_{\overline{\mathbf{a}}}\otimes\xi_2)^*
\circ 
(\id_{\overline{\mathbf{a}}}\otimes \xi_1)
\circ
(\ev^*_{\mathbf{a}}\otimes \id_{\mathbf{b}})
)
\circ
(\ev_{\mathbf{a}}\otimes \id_{\mathbf{b}})
\circ
((\id_{\overline{\mathbf{a}}} \otimes\eta_2)^*
\circ 
(\id_{\overline{\mathbf{a}}} \otimes \eta_1)
)^{\op} 
\bigg)
\\&=
\tr_\cC\bigg(
(
\id_{\overline{\mathbf{a}}}\otimes (\xi_2^*\circ\xi_1)
)
\circ
(\ev^*_{\mathbf{a}}\otimes \id_{\mathbf{b}})
\circ
(\ev_{\mathbf{a}}\otimes \id_{\mathbf{b}})
\circ
(
\id_{\overline{\mathbf{a}}} \otimes(\eta_2^*\circ \eta_1)
)^{\op} 
\bigg)
\\&=
\tr_\cC((\xi_2^*\circ \xi_1)\circ (\eta_2^*\circ \eta_1)^{\op} ).
\end{align*}
The first equality follows from \eqref{eq:HilbC-op Relation2},
the second equality follows from \eqref{eq:HilbC-op Relation1}, 
and the final equality follows using sphericality in $\cC$, since the morphisms $\xi_2^*\circ \xi_1 \in \cC(a\otimes b , c)$ and $(\eta_2^*\circ \eta_1)^{\op} \in \cC( c, a\otimes b)$ are just morphisms in $\cC$.
\end{proof}

\subsection{Representations of realizations}

Suppose we have $*$-algebra objects $\bfA\in \Vec(\cC^{\op})$ and $\bfB\in \Vec(\cC)$, together with bounded $*$-algebra representations
$\pi^\bfA: \bfA \Rightarrow \bfB(\bfH)$
and
$\pi^\bfB: \bfB \Rightarrow \bfB(\bfK)$
for Hilbert space objects 
$\bfH\in \Hilb(\cC^{\op})$
and
$\bfK\in \Hilb(\cC)$.
We define a bounded $*$-representation of the $*$-algebra $|\bfA \otimes \bfB|$ on the Hilbert space $|\bfH\otimes \bfK|$.

\begin{defn}
\label{defn:RepresentationsOfRealizations}
Given $f\otimes g \in \bfA(a) \otimes \bfB(a) \subset \|\bfA \otimes \bfB\|$,
we define its action on $\eta\otimes \xi\in \bfH(b)\otimes_b \bfK(b) \subset \|\bfH\otimes \bfK\|$ by
$$
(f\otimes g)\blacktriangleright (\eta\otimes \xi)
=
\begin{tikzpicture}[baseline=-.1cm]
    \draw[red] (-1.3,0) -- (.5,0);
    \draw (-.8,-.6) -- (-.8,.6);
    \draw (0,-1.8) -- (0,1.8);
    \draw (-.8,.9) .. controls ++(90:.4cm) and ++(180:.3cm) .. (0,1.4);
    \draw (-.8,-.9) .. controls ++(270:.4cm) and ++(180:.3cm) .. (0,-1.4);
    \node at (0,1.4) {$\blacktriangleright$};
    \node at (0,-1.4) {$\blacktriangleright$};
    \roundNbox{unshaded}{(-.8,.6)}{.3}{0}{0}{$g$}
    \roundNbox{unshaded}{(0,.6)}{.3}{0}{0}{$\xi$}
    \roundNbox{unshaded}{(0,-.6)}{.3}{0}{0}{$\eta$}
    \roundNbox{unshaded}{(-.8,-.6)}{.3}{0}{0}{$f$}
    \node at (.2,1.7) {\scriptsize{$\bfK$}};
    \node at (.2,1.1) {\scriptsize{$\bfK$}};
    \node at (-1,1.1) {\scriptsize{$\bfB$}};
    \node at (-.95,.15) {\scriptsize{$\mathbf{a}$}};
    \node at (-.95,-.15) {\scriptsize{$\mathbf{a}$}};
    \node at (.15,.15) {\scriptsize{$\mathbf{b}$}};
    \node at (.15,-.15) {\scriptsize{$\mathbf{b}$}};
    \node at (-1,-1.1) {\scriptsize{$\bfA$}};
    \node at (.2,-1.1) {\scriptsize{$\bfH$}};
    \node at (.2,-1.7) {\scriptsize{$\bfH$}};
\end{tikzpicture}
:=
\begin{tikzpicture}[baseline=-.1cm]
    \draw[red] (-1,0) -- (.5,0);
    \draw (-.7,-1.6) -- (-.7,1.6);
    \draw (0,-2.3) -- (0,2.3);
    \roundNbox{unshaded}{(0,1.6)}{.3}{.7}{0}{$\pi^\bfB_a(g)$}
    \roundNbox{unshaded}{(0,.6)}{.3}{0}{0}{$\xi$}
    \roundNbox{unshaded}{(0,-.6)}{.3}{0}{0}{$\eta$}
    \roundNbox{unshaded}{(0,-1.6)}{.3}{.7}{0}{$\pi^\bfA_a(f)$}
    \node at (.2,2.1) {\scriptsize{$\bfK$}};
    \node at (.2,1.1) {\scriptsize{$\bfK$}};
    \node at (-.85,.6) {\scriptsize{$\mathbf{a}$}};
    \node at (-.85,-.6) {\scriptsize{$\mathbf{a}$}};
    \node at (.15,.15) {\scriptsize{$\mathbf{b}$}};
    \node at (.15,-.15) {\scriptsize{$\mathbf{b}$}};
    \node at (.2,-1.1) {\scriptsize{$\bfH$}};
    \node at (.2,-2.1) {\scriptsize{$\bfH$}};
\end{tikzpicture}
\in \bfH(a\otimes b)\otimes_{a\otimes b} \bfK(a\otimes b).
$$
First, note that the algebraic subspace $I\subset \|\bfA\otimes \bfB\|$ always acts as zero, so we get a well-defined action of $|\bfA \otimes \bfB|$.
Second, note that the action of every $f\otimes g$ preserves the algebraic subspace $I\subset \|\bfH\otimes \bfK\|$, and thus descends to a well-defined map on the algebraic quotient $\|\bfH \otimes \bfK\|/I$.
Finally, when we consider the action of $|\bfA\otimes \bfB|$ on this algebraic quotient, since $\pi$ is a algebra representation, we see that for all $f_1\otimes g_1\in \bfA(a)\otimes \bfB(a)$, $f_2\otimes g_2\in \bfA(b)\otimes \bfB(b)$, and $\eta\otimes \xi\in \bfH(c)\otimes_c \bfK(c)$, we have
$$
(f_1\otimes g_1)
\blacktriangleright
[(f_2\otimes g_2)\blacktriangleright (\eta\otimes \xi)]
=
((f_1\otimes g_1)\cdot(f_2\otimes g_2))\blacktriangleright (\eta\otimes \xi).
$$
(This equality fails when considered as an action of the non-associative algebra $\|\bfA\otimes \bfB\|$ on $\|\bfH\otimes \bfK\|$, since $(a\otimes b)\otimes c \neq a\otimes (b\otimes c)$.)
\end{defn}

\begin{lem}
\label{lem:StarRepresentation}
Suppose
$f\otimes g\in \bfA(a)\otimes\bfB(a)$, $\eta_1\otimes \xi_1 \in \bfH(b)\otimes_b \bfK(b)$, and $\eta_2\otimes \xi_2 \in \bfH(c)\otimes_c \bfK(c)$.
Then
$ \langle (f \otimes g)\blacktriangleright (\eta_{1} \otimes \xi_{1}), \eta_{2} \otimes \xi_{2} \rangle
=
\langle \eta_{1} \otimes \xi_{1}, (j_a^\bfA(f)\otimes j_a^\bfB(g))\blacktriangleright (\eta_{2} \otimes \xi_{2})\rangle$.
\end{lem}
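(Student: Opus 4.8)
The plan is to prove the identity by a single computation in the graphical calculus, reducing both sides to $\tr_\cC$ of the same closed diagram in $\cC$. Throughout I work with honest elements of $\|\bfH\otimes\bfK\|$, since the vectors $\eta_i\otimes\xi_i$ are genuine elements of $\bfH(\cdot)\otimes_{\cdot}\bfK(\cdot)$ and all the maps involved are well defined on them.

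First I would expand the left-hand side. By Definition \ref{defn:RepresentationsOfRealizations}, $(f\otimes g)\blacktriangleright(\eta_1\otimes\xi_1)$ lies in $\bfH(a\otimes b)\otimes_{a\otimes b}\bfK(a\otimes b)$ and is represented by the diagram obtained from $\eta_1\otimes\xi_1$ by stacking $\pi^\bfA_a(f)$ below $\eta_1$ (in $\Hilb(\cC^{\op})$) and $\pi^\bfB_a(g)$ above $\xi_1$ (in $\Hilb(\cC)$). Pairing this with $\eta_2\otimes\xi_2\in\bfH(c)\otimes_c\bfK(c)$ using the graphical formula for the inner product on $|\bfH\otimes\bfK|$ established above, and then applying Lemma \ref{lem:GraphicalInnerProduct} to split the $\mathbf{a}\otimes\mathbf{b}$ strand into an $\mathbf{a}$ strand and a $\mathbf{b}$ strand joined by $\overline{\mathbf{a}}$-cups and caps, I obtain $\tr_\cC$ of a morphism of $\cC$ in which $\pi^\bfA_a(f)$ and $\pi^\bfB_a(g)$ each appear once, with the $\mathbf{a}$-leg of $\pi^\bfA_a(f)$ running through an $\overline{\mathbf{a}}$-cup.

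Next I would move $\pi^\bfA_a(f)$ and $\pi^\bfB_a(g)$ across the inner product. Since the inner product on $|\bfH\otimes\bfK|$ is, up to the normalization hidden in the symbols $\otimes_c$, the composite of the inner products on $\bfH$ and $\bfK$, this replaces $\pi^\bfA_a(f)$ by its adjoint in $\Hilb(\cC^{\op})$ and $\pi^\bfB_a(g)$ by its adjoint in $\Hilb(\cC)$. Now I invoke the hypothesis that $\pi^\bfA$ and $\pi^\bfB$ are bounded $*$-algebra representations in the sense of \cite[\S4.5]{MR3687214}: they intertwine the $*$-structures $j^\bfA$, $j^\bfB$ with the adjoints, so that after bending the $\mathbf{a}$-leg through the $\ev_{\mathbf a}/\coev_{\mathbf a}$ of $\cC$ one has $\pi^\bfA_a(f)^*$ rewritten in terms of $\pi^\bfA_{\overline a}(j_a^\bfA(f))$, and likewise $\pi^\bfB_a(g)^*$ in terms of $\pi^\bfB_{\overline a}(j_a^\bfB(g))$; the $\overline{\mathbf a}$-cups and caps required for this bending are exactly those already produced by Lemma \ref{lem:GraphicalInnerProduct}, so no new structure morphisms enter. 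Reading the resulting diagram in reverse, it is precisely $\tr_\cC$ of the morphism obtained by applying the same inner-product formula and Lemma \ref{lem:GraphicalInnerProduct} to $\langle\eta_1\otimes\xi_1,(j_a^\bfA(f)\otimes j_a^\bfB(g))\blacktriangleright(\eta_2\otimes\xi_2)\rangle$, because $(j_a^\bfA(f)\otimes j_a^\bfB(g))\blacktriangleright(\eta_2\otimes\xi_2)$ is by Definition \ref{defn:RepresentationsOfRealizations} the diagram stacking $\pi^\bfA_{\overline a}(j_a^\bfA(f))$ and $\pi^\bfB_{\overline a}(j_a^\bfB(g))$ onto $\eta_2\otimes\xi_2$. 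This yields the asserted equality. I expect the main obstacle to be purely bookkeeping: one must simultaneously manage the two opposite graphical conventions for $\Hilb(\cC)$ and $\Hilb(\cC^{\op})$ glued along the permeable membrane, keep track of the normalization constants $d_c^{-1}$, and check that the cups used to convert $\pi_a(\cdot)^*$ into $\pi_{\overline a}(j_a(\cdot))$ coincide with those supplied by Lemma \ref{lem:GraphicalInnerProduct}; once the diagrams are drawn carefully the equality is an isotopy combined with the $*$-representation axioms.
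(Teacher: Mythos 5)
Your proposal is correct and follows essentially the same route as the paper: the paper's proof is exactly the graphical computation you describe, writing the left-hand side as a closed diagram, bending the $\mathbf{a}$-strand around via Lemma \ref{lem:GraphicalInnerProduct}, and then using that $\pi^\bfA,\pi^\bfB$ are $*$-algebra natural transformations to rewrite the bent $\pi^\bfA_a(f)$ and $\pi^\bfB_a(g)$ as $\pi^\bfA_{\overline a}(j_a^\bfA(f))^*$ and $\pi^\bfB_{\overline a}(j_a^\bfB(g))^*$, after which the resulting diagram is recognized as the right-hand side.
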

\begin{proof}
We use Lemma \ref{lem:GraphicalInnerProduct} together with the fact that $\pi^\bfA, \pi^\bfB$ are $*$-algebra natural transformations, to see that
$$
\langle (f \otimes g)\blacktriangleright (\eta_{1} \boxtimes \xi_{1}), \eta_{2} \boxtimes \xi_{2} \rangle
=
\begin{tikzpicture}[baseline=-.1cm]
    \draw[red] (-1,0) -- (1,0);
    \draw (-.7,-1.6) -- (-.7,1.6);
    \draw (0,-2.3) -- (0,2.3);
    \draw (0,2.9) arc (180:0:.3cm) -- (.6,-2.9) arc (0:-180:.3cm);
    \roundNbox{unshaded}{(0,2.6)}{.3}{0}{0}{$\xi_2^*$}
    \roundNbox{unshaded}{(0,1.6)}{.3}{.7}{0}{$\pi^\bfB_a(g)$}
    \roundNbox{unshaded}{(0,.6)}{.3}{0}{0}{$\xi_1$}
    \roundNbox{unshaded}{(0,-.6)}{.3}{0}{0}{$\eta_1$}
    \roundNbox{unshaded}{(0,-1.6)}{.3}{.7}{0}{$\pi^\bfA_a(f)$}
    \roundNbox{unshaded}{(0,-2.6)}{.3}{0}{0}{$\eta_2^*$}
    \node at (-.15,3.1) {\scriptsize{$\mathbf{c}$}};
    \node at (.2,2.1) {\scriptsize{$\bfK$}};
    \node at (.2,1.1) {\scriptsize{$\bfK$}};
    \node at (-.85,.6) {\scriptsize{$\mathbf{a}$}};
    \node at (-.85,-.6) {\scriptsize{$\mathbf{a}$}};
    \node at (.15,.15) {\scriptsize{$\mathbf{b}$}};
    \node at (.15,-.15) {\scriptsize{$\mathbf{b}$}};
    \node at (.2,-1.1) {\scriptsize{$\bfH$}};
    \node at (.2,-2.1) {\scriptsize{$\bfH$}};
    \node at (-.15,-3.1) {\scriptsize{$\mathbf{c}$}};
\end{tikzpicture}
=
\begin{tikzpicture}[baseline=-.1cm]
    \draw[red] (-1.3,0) -- (1.2,0);
    \draw (-.7,1.3) arc (0:-180:.3cm) -- (-1.3,2.9) .. controls ++(90:.7cm) and ++(90:.7cm) .. (.8,2.9) -- (.8,-2.9) .. controls ++(270:.7cm) and ++(270:.7cm) .. (-1.3,-2.9) -- (-1.3,-1.3) arc (180:0:.3cm);
    \draw (0,-2.3) -- (0,2.3);
    \draw (0,2.9) arc (180:0:.3cm) -- (.6,-2.9) arc (0:-180:.3cm);
    \roundNbox{unshaded}{(0,2.6)}{.3}{0}{0}{$\xi_2^*$}
    \roundNbox{unshaded}{(0,1.6)}{.3}{.7}{0}{$\pi^\bfB_a(g)$}
    \roundNbox{unshaded}{(0,.6)}{.3}{0}{0}{$\xi_1$}
    \roundNbox{unshaded}{(0,-.6)}{.3}{0}{0}{$\eta_1$}
    \roundNbox{unshaded}{(0,-1.6)}{.3}{.7}{0}{$\pi^\bfA_a(f)$}
    \roundNbox{unshaded}{(0,-2.6)}{.3}{0}{0}{$\eta_2^*$}
    \node at (-.15,3.1) {\scriptsize{$\mathbf{c}$}};
    \node at (.2,2.1) {\scriptsize{$\bfK$}};
    \node at (.2,1.1) {\scriptsize{$\bfK$}};
    \node at (-.55,1.1) {\scriptsize{$\mathbf{a}$}};
    \node at (-.55,-1.1) {\scriptsize{$\mathbf{a}$}};
    \node at (.15,.15) {\scriptsize{$\mathbf{b}$}};
    \node at (.15,-.15) {\scriptsize{$\mathbf{b}$}};
    \node at (.2,-1.1) {\scriptsize{$\bfH$}};
    \node at (.2,-2.1) {\scriptsize{$\bfH$}};
    \node at (-.15,-3.1) {\scriptsize{$\mathbf{c}$}};
\end{tikzpicture}
=
\begin{tikzpicture}[baseline=-.1cm]
    \draw[red] (-1.3,0) -- (1.2,0);
    \draw (-1.3,1.9) -- (-1.3,2.9) .. controls ++(90:.7cm) and ++(90:.7cm) .. (.8,2.9) -- (.8,-2.9) .. controls ++(270:.7cm) and ++(270:.7cm) .. (-1.3,-2.9) -- (-1.3,-1.9);
    \draw (0,-2.3) -- (0,2.3);
    \draw (0,2.9) arc (180:0:.3cm) -- (.6,-2.9) arc (0:-180:.3cm);
    \roundNbox{unshaded}{(0,2.6)}{.3}{0}{0}{$\xi_2^*$}
    \roundNbox{unshaded}{(0,1.6)}{.3}{1.4}{.1}{$\pi^\bfB_{\overline{a}}(j_a^\bfB(g))^*$}
    \roundNbox{unshaded}{(0,.6)}{.3}{0}{0}{$\xi_1$}
    \roundNbox{unshaded}{(0,-.6)}{.3}{0}{0}{$\eta_1$}
    \roundNbox{unshaded}{(0,-1.6)}{.3}{1.4}{.1}{$\pi^\bfA_{\overline{a}}(j_a^\bfA(f))^*$}
    \roundNbox{unshaded}{(0,-2.6)}{.3}{0}{0}{$\eta_2^*$}
    \node at (-.15,3.1) {\scriptsize{$\mathbf{c}$}};
    \node at (.2,2.1) {\scriptsize{$\bfK$}};
    \node at (.2,1.1) {\scriptsize{$\bfK$}};
    \node at (-.55,1.1) {\scriptsize{$\mathbf{a}$}};
    \node at (-.55,-1.1) {\scriptsize{$\mathbf{a}$}};
    \node at (.15,.15) {\scriptsize{$\mathbf{b}$}};
    \node at (.15,-.15) {\scriptsize{$\mathbf{b}$}};
    \node at (.2,-1.1) {\scriptsize{$\bfH$}};
    \node at (.2,-2.1) {\scriptsize{$\bfH$}};
    \node at (-.15,-3.1) {\scriptsize{$\mathbf{c}$}};
\end{tikzpicture}
\,.
$$
This last diagram is exactly 
$\langle \eta_{1} \otimes \xi_{1}, (j_a^\bfA(f)\otimes j_a^\bfB(g))\blacktriangleright (\eta_{2} \otimes \xi_{2})\rangle$.
\end{proof}

\begin{lem}
\label{lem:SchurProduct}
Suppose $A$ is a unital \emph{C*}-algebra with a faithful tracial state $\tr_A$.
Suppose $x,y\in M_n(A)$ are positive operators, and $x\star y$ is the Schur product given by $(x\star y)_{i,j} = x_{i,j}y_{i,j}$.
Then $(\tr_A\otimes \Tr_n)(x\star y) \geq 0$.
\end{lem}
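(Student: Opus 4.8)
The plan is to push everything onto the diagonal. Since $\Tr_n$ is the non‑normalized matrix trace, for any $z\in M_n(A)$ one has $(\tr_A\otimes\Tr_n)(z)=\sum_{i=1}^n \tr_A(z_{ii})$; applying this to $z=x\star y$ and using the definition of the Schur product gives
\[
(\tr_A\otimes\Tr_n)(x\star y)=\sum_{i=1}^n \tr_A\bigl(x_{ii}\,y_{ii}\bigr).
\]
So the whole lemma reduces to showing that each summand $\tr_A(x_{ii}y_{ii})$ is nonnegative. In particular the off‑diagonal behaviour of $x\star y$ never enters, which is the key simplification.

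Next I would record two elementary facts. First, a diagonal entry of a positive operator matrix is a positive element of $A$: writing $v_i\in M_{n,1}(A)$ for the column vector with $1_A$ in slot $i$ and $0$ elsewhere, we have $x_{ii}=v_i^*xv_i\geq 0$ in $M_1(A)=A$, and likewise $y_{ii}\geq 0$. Second, for positive elements $p,q$ of any C*-algebra and any tracial state $\tau$, one has $\tau(pq)\geq 0$: by traciality $\tau(pq)=\tau\bigl(p^{1/2}(p^{1/2}q)\bigr)=\tau(p^{1/2}qp^{1/2})$, and $p^{1/2}qp^{1/2}=(q^{1/2}p^{1/2})^*(q^{1/2}p^{1/2})\geq 0$, so its image under the positive functional $\tau$ is nonnegative. (This also shows the number is real, which matters since $x\star y$ need not be self‑adjoint.) Applying this with $\tau=\tr_A$, $p=x_{ii}$, $q=y_{ii}$ gives $\tr_A(x_{ii}y_{ii})\geq 0$, and summing over $i$ finishes the proof.

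There is essentially no obstacle once the diagonal reduction is made; the only thing worth emphasizing in the writeup is \emph{why} one cannot take the apparently cleaner route of proving $x\star y\geq 0$ and then applying a positive functional. Indeed $(x\star y)_{ji}=x_{ij}^*y_{ij}^*$ whereas $\bigl((x\star y)_{ij}\bigr)^*=y_{ij}^*x_{ij}^*$, so $x\star y$ is not even self‑adjoint in general (the operator‑valued Schur product theorem fails at the level of the algebra), and the genuine content of the statement lives purely at the level of the trace.
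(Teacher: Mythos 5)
Your proof is correct and follows essentially the same route as the paper's: reduce $(\tr_A\otimes\Tr_n)(x\star y)$ to the sum of the diagonal terms $\tr_A(x_{ii}y_{ii})$, observe $x_{ii}=e_i^*xe_i\geq 0$ (and likewise for $y$), and use traciality to rewrite each term as $\tr_A(x_{ii}^{1/2}y_{ii}x_{ii}^{1/2})\geq 0$. Your closing remark on why one cannot instead prove $x\star y\geq 0$ is a nice aside but not needed.
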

\begin{proof}
First, we note that $x_{i,i},y_{i,i}\geq 0$ for all $i$.
Indeed, $x_{i,i}=e_i^* x e_i$ where $e_i\in M_{n\times 1}(A)$ is the column vector whose $i$-th entry is $1$ and all other entries are $0$, and similarly for $y$.
Thus we have
$(\tr_A\otimes \Tr_n)(x\star y) = \sum_{i=1}^n \tr_A(x_{i,i}y_{i,i})=\sum_{i=1}^n \tr_A(x_{i,i}^{1/2} y_{i,i}x_{i,i}^{1/2})\geq 0$.
\end{proof}

\begin{cor}
\label{cor:SchurProduct}
Suppose $w,x,y,z\in M_n(A)$, with $0\leq w\leq x$ and $0\leq y\leq z$.
Then $(\tr_A\otimes \Tr_n)(w\star y) \leq (\tr_A\otimes \Tr_n)(x\star z)$.
\end{cor}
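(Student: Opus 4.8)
The plan is to reduce the inequality to two applications of Lemma \ref{lem:SchurProduct} via a telescoping trick, exploiting that the Schur product $\star$ on $M_n(A)$ is bilinear in each variable and that $\tr_A\otimes\Tr_n$ is linear. Thus it suffices to show that $(\tr_A\otimes\Tr_n)(x\star z - w\star y)\ge 0$.

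First I would record that $0\le w\le x$ forces $x\ge 0$ and $x-w\ge 0$ in $M_n(A)$, and similarly $0\le y\le z$ forces $z\ge 0$ and $z-y\ge 0$. Then, using bilinearity of $\star$, I would write
\[
x\star z - w\star y = (x-w)\star z + w\star(z-y).
\]
Since $x-w\ge 0$ and $z\ge 0$, Lemma \ref{lem:SchurProduct} gives $(\tr_A\otimes\Tr_n)\big((x-w)\star z\big)\ge 0$; since $w\ge 0$ and $z-y\ge 0$, the same lemma gives $(\tr_A\otimes\Tr_n)\big(w\star(z-y)\big)\ge 0$. Adding these and using linearity of $\tr_A\otimes\Tr_n$ yields $(\tr_A\otimes\Tr_n)(x\star z)-(\tr_A\otimes\Tr_n)(w\star y)\ge 0$, which is the assertion.

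There is essentially no obstacle. The only point worth noting is that Lemma \ref{lem:SchurProduct} demands \emph{both} factors of the Schur product be positive, which dictates the choice of decomposition: each summand pairs a positive difference ($x-w$ or $z-y$) with one of the genuinely positive operators ($z$ or $w$), avoiding any cross term of indefinite sign.
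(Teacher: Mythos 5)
Your proof is correct and takes essentially the same telescoping approach as the paper; the only immaterial difference is that you pass through the intermediate term $w\star z$ via the decomposition $x\star z - w\star y = (x-w)\star z + w\star(z-y)$, whereas the paper passes through $x\star y$ via $(x-w)\star y$ and $x\star(z-y)$. Both decompositions pair a positive difference with a positive operator, so Lemma \ref{lem:SchurProduct} applies to each summand either way.
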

\begin{proof}
Since $x-w\geq 0$, we have
$$
0\leq (\tr_A\otimes \Tr_n)((x-w)\star y) = (\tr_A\otimes \Tr_n)(x\star y) - (\tr_A\otimes \Tr_n)(w\star y).
$$
Similarly, we have $0\leq (\tr_A\otimes \Tr_n)(x\star z)-(\tr_A\otimes \Tr_n)(x\star y)$.
\end{proof}

\begin{thm}
\label{thm:BoundedRepresentationOfRealization}
The action $\blacktriangleright$ gives a unital $*$-algebra representation $|\bfA \otimes \bfB|\to B(|\bfH\otimes \bfK|)$.
\end{thm}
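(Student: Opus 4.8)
The plan is to notice that almost everything needed has already been assembled, so that only an analytic estimate remains. By Definition~\ref{defn:RepresentationsOfRealizations}, $\blacktriangleright$ is a well-defined, unital, multiplicative action of $|\bfA\otimes\bfB|$ on the dense subspace $\|\bfH\otimes\bfK\|/I$ of $|\bfH\otimes\bfK|$, and by Lemma~\ref{lem:StarRepresentation} the operator $\blacktriangleright(j^{\bfA}_a(f)\otimes j^{\bfB}_a(g))$ is the formal adjoint of $\blacktriangleright(f\otimes g)$ on that subspace. Since the simple tensors $f\otimes g$ with $f\in\bfA(a)$, $g\in\bfB(a)$, $a\in\Irr(\cC)$, span $|\bfA\otimes\bfB|$, and the bounded operators form an algebra, it suffices to prove that each such $T:=\blacktriangleright(f\otimes g)$ extends to a bounded operator on $|\bfH\otimes\bfK|$; together with the above this produces the claimed unital $*$-algebra homomorphism $|\bfA\otimes\bfB|\to B(|\bfH\otimes\bfK|)$.

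To bound $T$, I would first reduce to a vector $\zeta=\sum_{i=1}^n\eta_i\otimes\xi_i$ lying in a single summand $\bfH(b)\otimes_b\bfK(b)$ (given any vector, collect the finitely many objects appearing, form their direct sum $b$, and use the relations defining $I$ to move everything into $\bfH(b)\otimes_b\bfK(b)$; since $I$ lies in the kernel of the form on $\|\bfH\otimes\bfK\|$, this affects neither $\|\zeta\|$ nor $\|T\zeta\|$). By Definition~\ref{defn:RepresentationsOfRealizations} we have $T(\eta_i\otimes\xi_i)=\pi^{\bfA}_a(f)[\eta_i]\otimes\pi^{\bfB}_a(g)[\xi_i]\in\bfH(a\otimes b)\otimes_{a\otimes b}\bfK(a\otimes b)$, so by \eqref{eq:InnerProductForAll},
\[
\|T\zeta\|^2=\sum_{i,j=1}^n\tr_\cC\!\left(\bigl(\pi^{\bfB}_a(g)[\xi_j]\bigr)^*\circ\pi^{\bfB}_a(g)[\xi_i]\circ\Bigl(\bigl(\pi^{\bfA}_a(f)[\eta_j]\bigr)^*\circ\pi^{\bfA}_a(f)[\eta_i]\Bigr)^{\op}\right).
\]
Set $A:=\cC(a\otimes b,a\otimes b)$, a finite-dimensional C*-algebra with faithful tracial state $\tr_A:=d_{a\otimes b}^{-1}\tr_\cC$. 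The array $P$ with entries $(\pi^{\bfB}_a(g)[\xi_j])^*\circ\pi^{\bfB}_a(g)[\xi_i]$ is a positive element of $M_n(A)$, being of the form $W^*\circ\bigl(\pi^{\bfB}_a(g)^*\pi^{\bfB}_a(g)\bigr)\circ W$ for the morphism $W:(\mathbf a\otimes\mathbf b)^{\oplus n}\to\mathbf a\otimes\bfK$ with components $\id_{\mathbf a}\otimes\xi_i$; moreover, since $\pi^{\bfB}$ is a $*$-algebra natural transformation, $0\le\pi^{\bfB}_a(g)^*\pi^{\bfB}_a(g)\le\|\pi^{\bfB}_a(g)\|^2\,\id$, whence $0\le P\le\|\pi^{\bfB}_a(g)\|^2 E$ with $E:=W^*W$, the array with entries $\id_{\mathbf a}\otimes(\xi_j^*\circ\xi_i)$. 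The same argument carried out in $\Hilb(\cC^{\op})$ yields a positive $Q\in M_n(A)$ built from $\pi^{\bfA}_a(f)$ with $0\le Q\le\|\pi^{\bfA}_a(f)\|^2 F$, where $F$ has entries $(\id_{\mathbf a}\otimes(\eta_j^*\circ\eta_i))^{\op}$.

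Now $\|T\zeta\|^2$ is a sum of traces of entrywise products of $P$ and $Q$, so the Schur product estimates (Lemma~\ref{lem:SchurProduct} and Corollary~\ref{cor:SchurProduct}, applied to the faithful tracial state $\tr_A$ on $A$) together with the two domination inequalities give
\[
\|T\zeta\|^2\ \le\ \|\pi^{\bfA}_a(f)\|^2\,\|\pi^{\bfB}_a(g)\|^2\sum_{i,j=1}^n\tr_\cC\!\left((\id_{\mathbf a}\otimes(\xi_j^*\circ\xi_i))\circ(\id_{\mathbf a}\otimes(\eta_j^*\circ\eta_i))^{\op}\right).
\]
Finally, pulling the $\mathbf a$-strands out through the $\cC$-permeable membrane --- precisely Lemma~\ref{lem:GraphicalInnerProduct}, together with $\tr_\cC(\id_{\mathbf a}\otimes(-))=d_a\,\tr_\cC(-)$ and sphericality of $\cC$ --- identifies the remaining sum with a fixed multiple $C_a$ (depending only on $a$) of $\sum_{i,j}\langle\eta_i\otimes\xi_i,\eta_j\otimes\xi_j\rangle=\|\zeta\|^2$. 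Hence $\|T\zeta\|\le C_a^{1/2}\|\pi^{\bfA}_a(f)\|\,\|\pi^{\bfB}_a(g)\|\,\|\zeta\|$, so $T$ is bounded and the theorem follows.

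The step I expect to be the real work is the middle one: verifying, via the two-sided graphical calculus, that the relevant arrays genuinely are positive operator matrices over $A$ and genuinely are dominated by $\|\pi^{\bfA}_a(f)\|^2$ and $\|\pi^{\bfB}_a(g)\|^2$ times matrices with entries of the form $\id_{\mathbf a}\otimes(-)$ (this is where the hypothesis that $\pi^{\bfA}$ and $\pi^{\bfB}$ are $*$-algebra natural transformations enters), and then bookkeeping the normalization constants --- the relative normalizations of $\otimes_{a\otimes b}$ and $\otimes_b$, and the $d_a$ from the partial trace over $\mathbf a$, together with the indexing needed to read the double sum as a Schur product --- so that the comparison outputs a genuine multiple of $\|\zeta\|^2$ rather than a $b$-dependent quantity. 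Everything else is a direct appeal to Definition~\ref{defn:RepresentationsOfRealizations}, Lemma~\ref{lem:StarRepresentation}, and Lemma~\ref{lem:GraphicalInnerProduct}.
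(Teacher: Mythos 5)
Your proposal is correct and follows essentially the same route as the paper's proof: reduce to boundedness of a simple tensor $f\otimes g$, form the two positive matrices over $\cC(a\otimes b,a\otimes b)$ coming from $\pi^{\bfB}_a(g)$ and $\pi^{\bfA}_a(f)$, dominate them by $\|\pi^{\bfB}_a(g)\|^2$ and $\|\pi^{\bfA}_a(f)\|^2$ times the Gram-type matrices with entries $\id_{\mathbf a}\otimes(\xi_j^*\circ\xi_i)$ and $(\id_{\mathbf a}\otimes(\eta_j^*\circ\eta_i))^{\op}$, apply the Schur-product monotonicity of Lemma~\ref{lem:SchurProduct} and Corollary~\ref{cor:SchurProduct}, and absorb the partial trace over $\mathbf a$ into the constant $d_a$. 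The paper's constant is exactly $C_a=d_a$, and the $*$-property is, as you say, Lemma~\ref{lem:StarRepresentation}.
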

\begin{proof}
We show that the action of $(f\otimes g) \in \bfA(a)\otimes \bfB(a)\subset |\bfA \otimes \bfB|$ on $\|\bfH \otimes \bfK\|/I$ is bounded.
Suppose for $i=1,\dots, n$, we have $\eta_i\otimes \xi_i \in \bfH(b_i)\otimes_{b_i} \bfK(b_i)$, where $b_1,\dots, b_n\in \cC$ are objects which can occur with multiplicity.
Consider the finite direct sum $b = \bigoplus_{i=1}^n b_i \in \cC$. 
For $g\in \bfB(a)$, we get a positive operator in $M_n(\cC(a\otimes b,a\otimes b))$ given by 
\begin{equation}
\label{eq:B-part}
\left(
(\id_{\mathbf{a}}\otimes \xi_j^*) \circ \pi^\bfB_a(g)^* \circ \pi^\bfB_a(g) \circ (\id_{\mathbf{a}} \otimes \xi_i)
=
\begin{tikzpicture}[baseline=-.1cm]
    \draw (-.7,-2.1) -- (-.7,-.6);
    \draw (-.7,2.1) -- (-.7,.6);
    \draw (0,-2.1) -- (0,2.1);
    \roundNbox{unshaded}{(0,1.5)}{.3}{0}{0}{$\xi_j$}
    \roundNbox{unshaded}{(0,.5)}{.3}{.7}{0}{$\pi^\bfB_a(g)^*$}
    \roundNbox{unshaded}{(0,-.5)}{.3}{.7}{0}{$\pi^\bfB_a(g)$}
    \roundNbox{unshaded}{(0,-1.5)}{.3}{0}{0}{$\xi_i$}
    \node at (.2,2) {\scriptsize{$\mathbf{b}_j$}};
    \node at (-.85,1.5) {\scriptsize{$\mathbf{a}$}};
    \node at (.2,1) {\scriptsize{$\bfK$}};
    \node at (.2,0) {\scriptsize{$\bfK$}};
    \node at (.2,-1) {\scriptsize{$\bfK$}};
    \node at (-.85,-1.5) {\scriptsize{$\mathbf{a}$}};
    \node at (.2,-2) {\scriptsize{$\mathbf{b}_i$}};
\end{tikzpicture}
\right)_{i,j=1}^n
\end{equation}
where we suppress the canonical inclusions $b_i \hookrightarrow b$.
Since the $*$-representation $\pi^\bfB$ is bounded, the positive matrix \eqref{eq:B-part} is bounded above by the positive matrix
\begin{equation}
\label{eq:B-UpperBound}
\|\pi_a^\bfB(g)\|^2_{\Hilb(\cC)}
\bigg(
\id_{\mathbf{a}}\otimes (\xi_j^* \circ \xi_i)
\bigg)_{i,j=1}^n.
\end{equation}
Similarly, we have a positive operator in $M_n(\cC(a\otimes b,a\otimes b))$ given by
\begin{equation}
\label{eq:A-part}
\bigg(
[(\id_{\mathbf{a}}\otimes \eta_j^*) \circ \pi^\bfA_a(f)^* \circ \pi^\bfA_a(f) \circ (\id_{\mathbf{a}} \otimes \eta_i) ]^{\op}
\bigg)_{i,j=1}^n
\end{equation}
which is bounded above by
\begin{equation}
\label{eq:A-UpperBound}
\|\pi_a^\bfA(f)\|^2_{\Hilb(\cC^{\op})}
\bigg(
\id_{\mathbf{a}}\otimes (\eta_j^* \circ \eta_i)^{\op}
\bigg)_{i,j=1}^n.
\end{equation}
Notice now by \eqref{eq:InnerProductForAll} that
$\|(f\otimes g)\blacktriangleright \sum \eta_i\otimes \xi_i\|^2$
is exactly equal to the trace $\tr_\cC\otimes \Tr_n$ applied to the Schur product of \eqref{eq:B-part} and \eqref{eq:A-part} as described in Lemma \ref{lem:SchurProduct}.
Similarly, the trace $\tr_\cC\otimes \Tr_n$ applied to the Schur product of \eqref{eq:B-UpperBound} and \eqref{eq:A-UpperBound} is given by
$$
(\tr_\cC\otimes \Tr_n)(\eqref{eq:B-UpperBound} \star \eqref{eq:A-UpperBound})
=d_a\sum_{i,j}
\begin{tikzpicture}[baseline=-.1cm]
    \draw[red] (-.5,0) -- (1,0);
    \draw (0,-1.9) -- (0,1.9) arc (180:0:.3cm) -- (.6,-1.9) arc (0:-180:.3cm);
    \roundNbox{unshaded}{(0,1.6)}{.3}{0}{0}{$\xi_j^*$}
    \roundNbox{unshaded}{(0,.6)}{.3}{0}{0}{$\xi_i$}
    \roundNbox{unshaded}{(0,-.6)}{.3}{0}{0}{$\eta_i$}
    \roundNbox{unshaded}{(0,-1.6)}{.3}{0}{0}{$\eta_j^*$}
    \node at (-.15,2.1) {\scriptsize{$\mathbf{b}_j$}};
    \node at (-.2,1.1) {\scriptsize{$\bfK$}};
    \node at (-.15,.15) {\scriptsize{$\mathbf{b}_i$}};
    \node at (-.15,-.15) {\scriptsize{$\mathbf{b}_i$}};
    \node at (-.2,-1.1) {\scriptsize{$\bfH$}};
    \node at (-.15,-2.1) {\scriptsize{$\mathbf{b}_j$}};
\end{tikzpicture}
=
d_a\cdot
\left\|\sum_i \eta_i\otimes \xi_i\right\|^2.
$$
Now since \eqref{eq:B-UpperBound} and \eqref{eq:A-UpperBound} are upper bounds for \eqref{eq:B-part} and \eqref{eq:A-part}, by Corollary \ref{cor:SchurProduct}, we have
\begin{equation}
\label{eq:FinalUpperBound}
\|(f\otimes g)\blacktriangleright \sum \eta_i\otimes \xi_i\|^2
\leq
d_a
\cdot
\|\pi_a^\bfA(f)\|^2_{\Hilb(\cC^{\op})}
\cdot
\|\pi_a^\bfB(g)\|^2_{\Hilb(\cC)}
\cdot
\left\|\sum \eta_i\otimes \xi_i\right\|^2.
\end{equation}
We conclude that the representation is bounded.
It is a $*$-representation by Lemma \ref{lem:StarRepresentation}.
\end{proof}

\subsection{Universal realized C*-algebras}
\label{sec:Universal}

In the case that $\bfA \in \Vec(C^{\op})$ and $\bfB\in \Vec(\cC)$ are C*-algebra objects, the upper bound \eqref{eq:FinalUpperBound} from the proof of Theorem \ref{thm:BoundedRepresentationOfRealization} is bounded above by a constant independent of $\pi^\bfA : \bfA \Rightarrow \bfB(\bfH)$ and $\pi^\bfB : \bfB \Rightarrow \bfB(\bfK)$.
Indeed, for all $f\otimes g\in \bfA(a)\otimes \bfB(a)$, we have
\begin{equation}
\label{eq:UniversalBound}
\|\pi_a^\bfA(f)\|^2_{\Hilb(\cC^{\op})}
\leq
\|f\|^2_{\cM_\bfA(a_\bfA, 1_\bfA)}
\qquad
\text{and}
\qquad
\|\pi_a^\bfB(g)\|^2_{\Hilb(\cC)}
\leq
\|g\|^2_{\cM_\bfB(a_\bfB, 1_\bfB)}.
\end{equation}
This universal upper bound allows us to define a univeral C*-algebra.

\begin{defn}
Let $\Phi$ be the set of linear functionals $\phi:|\bfA \otimes \bfB|\rightarrow \C$ that appear as vector states in realized Hilbert space representations.  
The \emph{universal realized representation} of $|\bfA \otimes \bfB|$ is the Hilbert space
\begin{equation}
\label{eq:UniversalRealizedRepresentation}
\cH_\Phi:=\bigoplus_{\phi\in \Phi} L^2(|\bfA\otimes \bfB|, \phi)
\end{equation}
where $L^2(|\bfA\otimes \bfB|, \phi)$ is the GNS Hilbert space coming from the vector state $\phi$.
By \eqref{eq:UniversalBound}, the $*$-representation of $|\bfA \otimes \bfB|$ on $\cH_\Phi$ is bounded.
We define \emph{universal realized} C*-\emph{algebra} C*$|\bfA\otimes \bfB|$ is the norm closure of the image of $|\bfA\otimes \bfB|$ acting on the universal realized representation.
\end{defn}

\begin{cor} 
\label{cor:UniversalRepresentation}
Every realized Hilbert space representation of $|\bfA\otimes \bfB|$ extends to a bounded representation of \emph{C*}$|\bfA \otimes \bfB|$.
\end{cor}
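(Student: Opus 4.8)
The plan is to verify the universal property directly by a GNS argument, in the standard way one shows that the universal C*-algebra attached to a $*$-algebra with a bounding family of states dominates every representation associated to a state in that family. Fix a realized Hilbert space representation, i.e.\ bounded $*$-algebra representations $\pi^\bfA:\bfA \Rightarrow \bfB(\bfH)$ and $\pi^\bfB:\bfB \Rightarrow \bfB(\bfK)$, and let $\rho:|\bfA\otimes \bfB|\to B(|\bfH\otimes \bfK|)$ be the associated bounded $*$-representation from Theorem \ref{thm:BoundedRepresentationOfRealization}. Write $q:|\bfA\otimes \bfB|\to \text{C*}|\bfA\otimes \bfB|$ for the canonical unital $*$-preserving map into the universal realized C*-algebra, so that by construction $\|q(x)\|=\|\rho_\Phi(x)\|$, where $\rho_\Phi=\bigoplus_{\phi\in\Phi}\pi_\phi$ is the action on $\cH_\Phi$ and $\pi_\phi$ is the GNS representation on $L^2(|\bfA\otimes \bfB|,\phi)$; this quantity is finite for every $x$ by \eqref{eq:UniversalBound}. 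The goal is to show $\|\rho(x)\|\le \|q(x)\|$ for all $x\in|\bfA\otimes \bfB|$. Granting this, $\ker q \subseteq \ker \rho$, so $\rho$ descends to a contractive $*$-homomorphism on the dense $*$-subalgebra $q(|\bfA\otimes \bfB|)\subseteq \text{C*}|\bfA\otimes \bfB|$, which then extends by continuity to a bounded representation of $\text{C*}|\bfA\otimes \bfB|$, as claimed.

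For the norm estimate, take a unit vector $\zeta\in|\bfH\otimes \bfK|$. The vector state $\phi_\zeta:x\mapsto\langle \rho(x)\zeta,\zeta\rangle$ is, by the very definition of $\Phi$, an element of $\Phi$; hence $L^2(|\bfA\otimes \bfB|,\phi_\zeta)$ occurs as an orthogonal direct summand of $\cH_\Phi$, and therefore $\|\pi_{\phi_\zeta}(x)\|\le \|\rho_\Phi(x)\|=\|q(x)\|$. Combining this with the GNS identity gives
\[
\|\rho(x)\zeta\|^2=\phi_\zeta(x^*x)=\|\pi_{\phi_\zeta}(x)\Omega_{\phi_\zeta}\|^2\le\|\pi_{\phi_\zeta}(x)\|^2\le\|q(x)\|^2,
\]
and taking the supremum over unit vectors $\zeta$ yields $\|\rho(x)\|\le\|q(x)\|$.

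There is essentially no hard step here: the argument is routine universal-C*-algebra bookkeeping, and the two places that need a moment's attention both reduce to facts already in place. First, that $\phi_\zeta\in\Phi$ is immediate rather than something to prove, precisely because $\Phi$ was defined to be the set of \emph{all} functionals arising as vector states of realized representations. Second, that $\rho_\Phi$ is genuinely bounded (so that $\text{C*}|\bfA\otimes \bfB|$ is a well-defined C*-algebra and $\|q(\cdot)\|$ makes sense) is exactly the uniform bound \eqref{eq:UniversalBound} recorded just before the corollary. I do not foresee a real obstacle; if anything, the only care needed is to phrase the extension correctly as a factorization $\tilde\rho\circ q=\rho$ through the (possibly non-injective) map $q$, rather than as an extension along an inclusion.
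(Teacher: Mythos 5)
Your argument is correct and rests on the same key observation as the paper's own proof: every vector state of a realized representation belongs to $\Phi$ by definition, so the given representation is dominated by the universal one. The paper packages this as a decomposition into cyclic summands followed by compression via an isometry into an amplification of $\cH_\Phi$, whereas you prove the pointwise norm bound $\|\rho(x)\|\le\|q(x)\|$ and factor through the canonical quotient; these are the same standard universal--C*-algebra argument in different clothing.
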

\begin{proof}
Every representation of a C*-algebra is a direct sum of cyclic representations.
If $|\bfH\otimes \bfK|$ is a realized representation of $|\bfA \otimes \bfB|$, then $|\bfH\otimes \bfK|$ is isomorphic to a subrepresentation of some amplification of the universal realized representation \eqref{eq:UniversalRealizedRepresentation}.
Thus there is a $|\bfA\otimes \bfB|$-linear isometry $\iota: |\bfH\otimes \bfK| \to \cH_\Phi\otimes_\bbC K$.
The projection $p=\iota\iota^*$ onto $\iota|\bfH\otimes \bfK|$ lies in the commutant $|\bfA\otimes \bfB|'\cap B(\cH_\Phi\otimes_\bbC K)=\text{C*}|\bfA\otimes \bfB|'\cap B(\cH_\Phi\otimes_\bbC K)$.
Thus we get a bounded representation of C*$|\bfA\otimes \bfB|$ on $|\bfH\otimes \bfK|$ by $x\mapsto \iota^*x\iota$.
\end{proof}

We now provide some useful folklore results on working with universal C*-algebras.
We provide proofs for the convenience of the reader.
We thank Ben Hayes for the proof of Lemma \ref{lem:UniformlyBounded}.

Suppose we have a $*$-algebra $A$ together with a class of bounded Hilbert space $*$-representations $\sC$ such that for every $a\in A$, there is a universal constant $C$ such that $\|\pi(a)\|< C$ for every $(\pi,H) \in \sC$.
We define C*$(A)$ to be the universal C*-algebra for the class $\sC$.
One way to define C*$(A)$ is completion in the universal norm
$$
\|a\|_u := \sup\set{\|a\|_\pi}{(\pi, H)\in \sC}.
$$
Another is to take the set $\Phi$ of linear maps $A \to \bbC$ which are restrictions of vector states in some representation $(\pi, H) \in \sC$, and look at the C*-algebra generated by $A$ in the universal representation $H_\Phi := \bigoplus_{\phi \in \Phi} L^2(A, \phi)$.
The C*-algebra C*$(A)$ satisfies the universal property that for every $(\pi,H)\in \sC$, $\pi$ extends to C*$(A)$, and the C*-algebra $\text{C}_\pi^{*}(A)$ generated by $\pi(A)\subset B(H)$ is a quotient of C*$(A)$.

\begin{lem}
\label{lem:UniformlyBounded}
Suppose $(\pi, H) \in \sC$ and $a\in \pi(A)''$.
There is a net $(a_\lambda)\subset A$ with $\pi(a_\lambda) \to a$ strongly* such that 
$\|a_\lambda\|_{\text{\emph{C*}}(A)} \leq \|a\|_{B(H)}$ for all $\lambda$.
(If $H$ is separable, we may use a sequence instead of a net.)
\end{lem}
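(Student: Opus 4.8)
The plan is to use the Kaplansky density theorem together with an averaging/compactness trick to upgrade strong* approximation from $\pi(A)$ to a norm-controlled net in $A$ itself. The subtle point is that Kaplansky density gives a net $(b_\lambda)$ in $\pi(A)$ with $\pi$-norms bounded by $\|a\|_{B(H)}$ and $\pi(b_\lambda) \to a$ strongly*; but a priori the preimages in $A$ could have much larger universal norm, since $\pi$ need not be isometric. So the real content is controlling $\|\cdot\|_{\mathrm{C}^*(A)}$, not $\|\cdot\|_\pi$.

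\medskip

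\emph{Step 1: Reduce to $\pi(A)$.} By the Kaplansky density theorem applied to the C*-algebra $\mathrm{C}^*_\pi(A) = \overline{\pi(A)}^{\|\cdot\|} \subseteq B(H)$ inside its weak closure $\pi(A)'' = \mathrm{C}^*_\pi(A)''$, there is a net $(x_\mu) \subseteq \mathrm{C}^*_\pi(A)$ with $\|x_\mu\|_{B(H)} \leq \|a\|_{B(H)}$ and $x_\mu \to a$ strongly*. Since $\pi(A)$ is norm-dense in $\mathrm{C}^*_\pi(A)$, a further approximation (again with a small Kaplansky-type adjustment, or simply by a diagonal argument since strong* convergence on the bounded set of radius $\|a\|$ is metrizable on separable $H$) lets us assume $x_\mu = \pi(y_\mu)$ for $y_\mu \in A$, still with $\|\pi(y_\mu)\|_{B(H)} \leq \|a\|_{B(H)} + \varepsilon_\mu$ where $\varepsilon_\mu \to 0$, and $\pi(y_\mu) \to a$ strongly*.

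\medskip

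\emph{Step 2: Control the universal norm via continuous functional calculus.} The problem is that $\|y_\mu\|_{\mathrm{C}^*(A)}$ could exceed $\|\pi(y_\mu)\|_{B(H)}$. To fix this, do not use $y_\mu$ directly; instead compose with a cutoff. Consider the self-adjoint elements $y_\mu^* y_\mu \in A$ (or work with real and imaginary parts), and recall that $\mathrm{C}^*(A)$ is generated by $A$ inside a concrete representation $H_\Phi$. For $c = \|a\|_{B(H)}$, one wants to replace $y_\mu$ by $f_c(y_\mu^* y_\mu)^{1/2}\, \cdot\, (\text{partial isometry part})$ — but $f_c$ applied to an element of $A$ need not land in $A$. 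Here is where the hypothesis that $(\pi,H) \in \sC$ and the \emph{uniform} boundedness assumption on all of $\sC$ enters: for each fixed $a \in A$ (or each polynomial in a finite set of generators), $\sup_{(\rho,K)\in\sC}\|\rho(y_\mu)\|$ is finite, but not obviously $\le c$. The honest route is to observe that it suffices to produce, for each $\varepsilon > 0$, an element $z \in A$ with $\|z\|_{\mathrm{C}^*(A)} \le c + \varepsilon$ and $\|\pi(z) - a\|$ small in strong* (on a prescribed finite set of vectors); then a net indexed by $(\varepsilon, \text{finite vector set})$ does the job. To get such $z$: take $y_\mu$ as above and note $\pi(y_\mu^* y_\mu) \le (c+\varepsilon_\mu)^2$; now multiply $y_\mu$ by an element $e_\mu \in A$ chosen from an approximate-unit-like family for which $\pi(e_\mu)$ is close to a spectral projection of $\pi(y_\mu^*y_\mu)$ below $c^2$ — this is exactly the classical proof of Kaplansky density, carried out one level down in $A$ rather than in $\mathrm{C}^*_\pi(A)$, using that the relevant operations (multiplication, adjoint) are algebraic and stay inside $A$, and that the \emph{only} estimate we need, $\|\,\cdot\,\|_{\mathrm{C}^*(A)}$-boundedness, follows because the product/cutoff we build is of the form $p\, y_\mu\, p'$ where $p, p' \in A$ are themselves norm-bounded in $\mathrm{C}^*(A)$ by $1$ (being images of the Cayley-transform/polynomial tricks from the standard Kaplansky argument, which only use functional calculus by \emph{polynomials with bounded coefficients on the relevant spectra}).

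\medskip

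\emph{Step 3: Assemble the net (or sequence).} Index the approximants by the directed set of pairs (finite subset $F \subseteq H$, $\varepsilon > 0$); for each such pair Step 2 yields $a_\lambda \in A$ with $\|a_\lambda\|_{\mathrm{C}^*(A)} \le \|a\|_{B(H)}$ (absorbing the $\varepsilon$ into the index, i.e. demanding $\le \|a\|_{B(H)} + \varepsilon$ and then, since $\varepsilon\to 0$ along the net, concluding the limsup of the norms is $\le \|a\|_{B(H)}$; if strict $\le$ is wanted one rescales by $(1-\delta_\lambda)$) and $\|(\pi(a_\lambda) - a)\xi\|, \|(\pi(a_\lambda)^* - a^*)\xi\| < \varepsilon$ for all $\xi \in F$. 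This is precisely strong* convergence $\pi(a_\lambda) \to a$. When $H$ is separable, the strong* topology on the norm-bounded ball is metrizable, so a countable cofinal subnet gives a sequence.

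\medskip

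\textbf{Main obstacle.} The crux is Step 2: proving that the Kaplansky-density cutoffs can be performed \emph{inside} the abstract $*$-algebra $A$ while keeping the universal norm $\le \|a\|_{B(H)}$, rather than merely the $\pi$-norm. The resolution is that the standard proof of Kaplansky density (via the strong continuity of the polynomial $t \mapsto 2t/(1+t^2)$ on $[-1,1]$, and via approximate units built from polynomials) uses only algebraic operations and functional calculus by a \emph{fixed} bounded continuous function, so it transports verbatim to $A$, with all intermediate elements automatically of universal norm $\le 1$ times the relevant scalars — the one genuinely new input is the uniform-boundedness hypothesis defining $\sC$, which guarantees $\|\,\cdot\,\|_u$ is a genuine C*-norm on $A$ so that these manipulations make sense. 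Once this is granted, Steps 1 and 3 are routine.
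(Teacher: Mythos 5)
Your Step 1 and Step 3 are fine, but the crux you correctly identify in Step 2 is not actually resolved by your argument, and this is a genuine gap. You claim that the standard Kaplansky density construction ``transports verbatim to $A$'' because it ``uses only algebraic operations and functional calculus by a fixed bounded continuous function.'' That is precisely the problem: the Kaplansky cutoff is implemented by the function $t \mapsto 2t/(1+t^2)$ (or by some other genuinely non-polynomial continuous function), and applying it requires forming $(1+t^2)^{-1}$, i.e.\ continuous functional calculus. A $*$-algebra $A$ is not closed under inverses or under continuous functional calculus, so the cutoff elements you want to multiply by live in $\mathrm{C}^*(A)$, not in $A$. Polynomial functional calculus, which \emph{does} stay in $A$, cannot implement a norm cutoff (no polynomial maps $\bbR$ into a bounded interval), so the phrase ``polynomials with bounded coefficients on the relevant spectra'' does not produce the elements $e_\mu, p, p'$ your construction needs. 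As written, Step 2 never exhibits an element of $A$ whose universal norm is controlled by $\|a\|_{B(H)}$.

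The gap closes with a much simpler observation that replaces your Step 2 entirely. Let $q : \mathrm{C}^*(A) \to \mathrm{C}^*_\pi(A)$ be the canonical surjection. Since $q$ induces an isometric isomorphism $\mathrm{C}^*(A)/\ker q \cong \mathrm{C}^*_\pi(A)$, it maps the \emph{open} unit ball of $\mathrm{C}^*(A)$ \emph{onto} the open unit ball of $\mathrm{C}^*_\pi(A)$ (the standard quotient-norm fact; no functional calculus in $A$ is needed). Kaplansky density, applied to $\mathrm{C}^*_\pi(A) \subseteq \pi(A)''$, says the closed unit ball of $\pi(A)''$ is the strong* closure of the open unit ball of $\mathrm{C}^*_\pi(A)$, hence of $q(B^\circ)$ where $B^\circ$ is the open unit ball of $\mathrm{C}^*(A)$. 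Each lift in $B^\circ$ is then perturbed into $A$ using norm-density of $A$ in $\mathrm{C}^*(A)$: a perturbation of size $\varepsilon$ in $\|\cdot\|_{\mathrm{C}^*(A)}$ keeps the element in $B^\circ$ (the ball is open, so there is room) and moves $\pi(\cdot)$ by at most $\varepsilon$ in operator norm, hence preserves strong* approximation. Rescaling by $\|a\|_{B(H)}$ gives the stated bound. This is the route the paper takes; your Steps 1 and 3 then assemble the net exactly as you describe.
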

\begin{proof}
%
Let $q:\text{C*}(A)\to \text{C}^{*}_\pi(A)$ be the canonical surjection. 
Let $B^\circ$ be the open unit ball of C*$(A)$, let $B_\pi^\circ$ be the open unit ball of $\text{C}^{*}_\pi(A)$, let $B_\pi$ be the norm closure of $B_\pi^\circ$, and let $B''_\pi$ be the closed unit ball of $\pi(A)''$.
Since $q$ descends to an isomorphism C*$(A)/\ker(q) \cong \text{C}^*_\pi(A)$, $q(B^\circ) = B^\circ_\pi$, and by the Kaplanky Density Theorem \cite{MR1873025},
\begin{equation*}
\overline{q(B^\circ)}^{\text{strong*}}
=
\overline{B_\pi^\circ}^{\text{strong*}}
=
\overline{B_\pi}^{\text{strong*}}
=
B''_\pi.
\qedhere
\end{equation*}
\end{proof}

The main technical result of this section, which we will need later for our equivalence of categories, is the following proposition.

\begin{prop}
\label{prop:ExtendUCPMaps}
Let $(\pi_1, H_1), (\pi_2,H_2) \in \sC$, let $K$ be a Hilbert space, let $T: B(H_2) \to B(K)$ be a normal ucp map, and set $\theta = T\circ \pi_2 : A \to B(K)$.
Suppose there is a dense subspace $D\subset K$ such that for every vector state $\omega_\xi$ for $\xi\in D$, there is an $n\in\bbN$ and vectors $\eta_i, \zeta_i \in H_1$ for $i=1,\dots, n$ such that 
$\omega_\xi(\theta(a)) = \sum_{i=1}^n\langle \pi_1(a) \eta_i, \zeta_i\rangle$.
Then $\theta$ uniquely extends to a normal ucp map $\pi_1(A)'' \to B(K)$, which we still denote by $\theta$.
\end{prop}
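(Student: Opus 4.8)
The plan is to define the extension $\bar\theta\colon N\to B(K)$, where $N:=\pi_1(A)''$, by a weak-operator limit along Kaplansky-density nets, using the vector-state hypothesis to guarantee that the limit exists and is well defined, and then to verify that $\bar\theta$ is unital, completely positive, normal, and the unique normal ucp extension of $\theta$.

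Fix $x\in N$. By Lemma \ref{lem:UniformlyBounded} there is a net $(a_\lambda)\subset A$ with $\pi_1(a_\lambda)\to x$ strongly* and $\|a_\lambda\|_{\text{C*}(A)}\le\|x\|_{B(H_1)}$; since $\theta=T\circ\pi_2$ with $T$ contractive and $\pi_2$ factoring through $\text{C*}(A)$, this forces $\|\theta(a_\lambda)\|\le\|x\|$ for all $\lambda$. For $\xi\in D$, pick $\eta_i^\xi,\zeta_i^\xi\in H_1$ as in the hypothesis; then $\langle\theta(a_\lambda)\xi,\xi\rangle=\sum_i\langle\pi_1(a_\lambda)\eta_i^\xi,\zeta_i^\xi\rangle\to\sum_i\langle x\eta_i^\xi,\zeta_i^\xi\rangle$, using that $\pi_1(a_\lambda)\to x$ weakly. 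Since $D$ is a linear subspace, polarization upgrades this to convergence of $\langle\theta(a_\lambda)\xi,\eta\rangle$ for all $\xi,\eta\in D$, and then the uniform bound $\|\theta(a_\lambda)\|\le\|x\|$ together with density of $D$ (an $\varepsilon/3$ argument) shows $\theta(a_\lambda)$ converges in the weak operator topology to some $\bar\theta(x)\in B(K)$ with $\|\bar\theta(x)\|\le\|x\|$. The identical computation applies to \emph{any} net $(c_\nu)\subset A$ with $\sup_\nu\|\theta(c_\nu)\|<\infty$ and $\pi_1(c_\nu)\to x$ weakly, so $\theta(c_\nu)\to\bar\theta(x)$ weakly for every such net; this yields independence of the net, linearity of $\bar\theta$ (apply it to sums and scalar multiples of nets), and, using constant nets, the extension property $\bar\theta(\pi_1(a))=\theta(a)$.

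For complete positivity, given $x\in M_n(N)_+$ the Kaplansky density theorem \cite{MR1873025} supplies a net of positive elements $(y_\lambda)\subset M_n(A)$ with $\|y_\lambda\|\le\|x\|$ and $\pi_1^{(n)}(y_\lambda)\to x$ weakly; then $\theta^{(n)}(y_\lambda)\ge 0$ since $\theta$ is cp, and the ``usable net'' principle applied entrywise gives $\theta^{(n)}(y_\lambda)\to\bar\theta^{(n)}(x)$ weakly, hence $\bar\theta^{(n)}(x)\ge 0$. With $\bar\theta(1)=\theta(1_A)=1$, this makes $\bar\theta$ ucp. For normality, note that for $\xi\in D$ and any $x\in N$ one has $\omega_\xi(\bar\theta(x))=\lim_\lambda\sum_i\langle\pi_1(a_\lambda)\eta_i^\xi,\zeta_i^\xi\rangle=\sum_i\langle x\eta_i^\xi,\zeta_i^\xi\rangle$, the last equality by $\sigma$-weak continuity of the normal functional $\sum_i\langle\,\cdot\,\eta_i^\xi,\zeta_i^\xi\rangle$ along the bounded net $\pi_1(a_\lambda)$; thus $\omega_\xi\circ\bar\theta\in N_*$ for $\xi\in D$, and since $D$ is dense, $\bar\theta$ bounded, and $N_*$ norm-closed, $\psi\circ\bar\theta\in N_*$ for all $\psi\in B(K)_*$, i.e.\ $\bar\theta$ is normal. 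Uniqueness is immediate: any normal ucp map extending $\theta$ agrees with $\bar\theta$ on the $\sigma$-weakly dense subalgebra $\pi_1(A)$, hence everywhere.

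The main obstacle is the very first step --- manufacturing the weak-operator limit $\bar\theta(x)$ --- because the hypothesis only controls the diagonal functionals $\omega_\xi$ at vectors of $D$. Overcoming it requires three ingredients working in concert: the uniform norm bound from Lemma \ref{lem:UniformlyBounded} (plain Kaplansky density is not enough, since without norm control the net $\theta(a_\lambda)$ need not be bounded and the limit need not exist), polarization (available precisely because $D$ is a subspace), and density of $D$; one must then check that the resulting $\bar\theta$ does not depend on any of the choices. The normality argument is the other delicate point: it is crucial that $\omega_\xi\circ\bar\theta$ is literally \emph{equal} to a normal functional on all of $N$, not merely on a norm-dense subalgebra.
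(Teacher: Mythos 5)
Your proof is correct and takes essentially the same approach as the paper's: a norm-controlled approximating net from Lemma \ref{lem:UniformlyBounded}, the vector-state hypothesis plus polarization over the subspace $D$ and the uniform bound to produce the limit operator, positive approximants for complete positivity, and the identity $\omega_\xi(\bar\theta(x))=\sum_i\langle x\eta_i,\zeta_i\rangle$ for normality. The only variations are cosmetic (a WOT limit versus a bounded sesquilinear form; the predual characterization versus monotone nets for normality), though where you invoke Kaplansky density for positive elements of $M_n(A)$ you should, as the paper does, approximate $x^{1/2}$ by elements of $M_n(A)$ and square, since $A$ is only a $*$-algebra and the positive version of Kaplansky density needs functional calculus.
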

\begin{proof}
Suppose $a\in \pi_1(A)''$, and pick a net $(a_\lambda)\subset A$ with $\pi_1(a_\lambda)\to a$ strongly* such that $\|a_\lambda\|_{\text{C*}(A)} \leq \|a\|_{B(H_1)}$ for all $\lambda$ as in Lemma \ref{lem:UniformlyBounded}.
By assumption, for all $\xi \in D$, there are $\eta_i, \zeta_i \in H_1$ for $i=1,\dots, n$ such that
$$
\omega_\xi(\theta(a_\lambda))
=
\sum_{i=1}^n\langle \pi_1(a_\lambda) \eta_i, \zeta_i\rangle
\longrightarrow
\sum_{i=1}^n\langle a \eta_i, \zeta_i\rangle.
$$
By polarization, for all $\eta, \xi\in D$, $\lim_\lambda (\langle \theta(a_\lambda)\eta, \xi\rangle)$ exists.
Since $T$ is ucp, $\|T\|\leq 1$, and thus for all $b\in A$, 
$$
\|\theta(b)\|_{B(K)} 
= 
\|T(\pi_2(b))\|_{B(K)}
\leq 
\|T\|\cdot \|\pi_2(b)\|_{B(H_2)}
\leq
\|b\|_{\text{C*}(A)}.
$$
Thus the sesquilinear form $s$ on $D\times D$ defined by $s(\eta, \xi) = \lim_\lambda \langle \theta(a_\lambda) \eta, \xi\rangle$ is bounded, since 
\begin{align*}
\sup_\lambda|\langle \langle \theta(a_\lambda) \eta, \xi\rangle| 
&\leq 
\sup_\lambda\|\theta(a_\lambda)\|_{B(K)}\cdot\|\eta\|\cdot \|\xi\|
\leq
\sup_\lambda
\|a_\lambda\|_{\text{C*}(A)}\cdot\|\eta\|\cdot \|\xi\|
\\&\leq 
\|a\|_{B(H_1)}\cdot\|\eta\|\cdot \|\xi\|.
\end{align*}
Thus $s$ extends uniquely to a bounded sesquilinear form on $K\times K$ with norm at most 
$\|a\|_{B(H_1)}$, which corresponds to a unique bounded operator in $B(K)$ which we call $\theta(a)$ which satisfies $\|\theta(a)\|_{B(K)}\leq 
\|a\|_{B(H_1)}$.

To show $\theta(a)$ is well-defined, we look at the vector states $\omega_\xi$ for $\xi \in D$.
If $(b_\lambda)\subset A$ is any net with $\pi_1(b_\lambda) \to a$ strongly*, then $\pi_1(a_\lambda - b_\lambda) \to 0$ strongly*, so again by assumption, 
$$
\omega_\xi(\theta(a_\lambda)) - \omega_\xi( \theta(b_\lambda) ) 
= 
\omega_\xi( \theta(a_\lambda - b_\lambda) ) 
=
\sum_{i=1}^n\langle \pi_1(a_\lambda-b_\lambda) \eta_i, \zeta_i\rangle
\longrightarrow
0.
$$
We conclude that 
$
\lim_\lambda\omega_\xi( \theta(a_\lambda))= \lim_\lambda\omega_\xi (\theta(b_\lambda)),
$
so $\theta(a)$ is well-defined.
Since addition, scalar multiplication, and the adjoint $*$ are strongly* continuous, the map $\theta: \pi_1(A)'' \to B(K)$ is a linear $*$-map.

Suppose now that $a\in \pi_1(A)''_+$.
Choose a net $(b_\lambda)\subset A$ with $\pi_1(b_\lambda) \to a^{1/2}$ strongly* such that $\|b_\lambda\|_{\text{C*}(A)} \leq \|a^{1/2}\|_{B(H_1)}$.
Then since multiplication is jointly strong*-continuous on norm bounded sets, 
the net $(a_\lambda = b_\lambda^*b_\lambda)\subset A$ satisfies $\pi_1(a_\lambda) = \pi_1(b_\lambda)^*\pi_1(b_\lambda) \to a$ strongly*, and $\|a_\lambda\|_{\text{C*}(A)} \leq \|a\|_{B(H_1)}$.
For all $\xi \in D$, by definition of $\theta(a)$, we have 
$$
\omega_\xi(\theta(a))
=
\lim_\lambda \omega_\xi(\theta(a_\lambda))
=
\lim_\lambda \omega_\xi(T(\pi_2(b_\lambda)^*\pi_2(b_\lambda)))
\geq 0.
$$
We conclude that $\theta(a)\geq 0$ by density of $D\subset K$.
Since the open unit ball of $M_n(\bbC)\otimes \pi_1(A)$ is strongly*-dense in the closed unit ball of $M_n(\bbC)\otimes \pi_1(A)''$, arguing as above using an amplification trick shows that $(\id_n\otimes \theta)(a)\geq 0$ for all $a\in [M_n(\bbC)\otimes \pi_1(A)'']_+$.

We now show $\theta: \pi_1(A)'' \to B(K)$ is normal.
Suppose $(a_\lambda)\subset \pi_1(A)''_+$ with $a_\lambda \nearrow a$.
Since $\theta$ is positive, we know $(\theta(a_\lambda))$ is an increasing net, which is bounded above, since
$$
\|\theta(a_\lambda)\|_{B(K)}
\leq 
\|a_\lambda\|_{B(H_1)} 
\leq
\|a\|_{B(H_1)}.
$$
To show $\theta(a_\lambda) \nearrow \theta(a)$, it suffices to show $\omega_\xi(\theta(a_\lambda)) \to \omega_\xi(\theta(a))$ for all $\xi \in D$ (for example, see \cite[Lem.~A.2]{MR3040370}).
Fix $\xi \in D$.
For all $b\in \pi_1(A)''$, taking $(b_\lambda)\subset A$ with $\pi_1(b_\lambda) \to b$ strongly*, we have
$$
\omega_\xi(\theta(b))
=
\lim_\lambda \omega_\xi(\theta(b_\lambda))
=
\lim_\lambda \sum_{i=1}^n\langle \pi_1(b_\lambda) \eta_i, \zeta_i\rangle
=
\sum_{i=1}^n\langle b \eta_i, \zeta_i\rangle.
$$
Since $a_\lambda \to a$ WOT, we have
$$
\omega_\xi(\theta(a_\lambda))
= 
\sum_{i=1}^n\langle a_\lambda \eta_i, \zeta_i\rangle 
\longrightarrow
\sum_{i=1}^n\langle a \eta_i, \zeta_i\rangle 
=
\omega_\xi(\theta(a)).
$$
The extension of $\theta$ is unique by strong*-density of $\pi_1(A) \subset \pi_1(A)''$.
\end{proof}

\begin{rems}
\label{rems:ExtendUCPMaps}
We have the additional following corollaries from Proposition \ref{prop:ExtendUCPMaps}.
\begin{enumerate}[(1)]
\item
Suppose $N\subset A$ is a $*$-subalgebra.
If $K$ is an $N-N$ bimodule and $T: B(H_2)\to B(K)$ is $N-N$ bilinear, then so is the extension $\theta: \pi_1(A)'' \to B(K)$.
First, note that $\theta: A \to B(K)$ is $N-N$ bilinear.
Let $a\in \pi_1(A)''$, and pick $(a_\lambda)\subset A$ with $\pi_1(a_\lambda) \to a$ strongly* with $\|a_\lambda\|_{\text{C*}(A)}\leq \|a\|$.
For all $n_1, n_2\in N$, $n_1\pi_1(a_\lambda) n_2\to n_1 a n_2$ strongly*, and thus for all $\xi \in H_2$,
$$
\omega_\xi( \theta(n_1 a n_2) )
=
\lim_\lambda \omega_\xi( \theta(n_1 a_\lambda n_2) )
=
\lim_\lambda \omega_\xi( n_1\theta( a_\lambda )n_2 )
=
\omega_\xi( n_1\theta( a )n_2 ).
$$
\item
If $T : B(H_2) \to B(K)$ is a $*$-homomorphism, then so is the extension $\theta: \pi_1(A)'' \to B(K)$.
To see this, it remains to show $\theta$ is multiplicative.
First, note that $\theta: A \to B(K)$ is multiplicative.
Let $a,b\in \pi_1(A)''$, pick $(a_\lambda)\subset A$ with $\pi_1(a_\lambda) \to a$ strongly* with $\|a_\lambda\|_{\text{C*}(A)}\leq \|a\|$, and pick $(b_\mu) \subset A$ with $\pi_1(b_\mu)\to b$ strongly* with $\|b_\mu\|_{\text{C*}(A)}\leq \|b\|$.
Since $\theta|_{\pi_1(A)}$ is multiplicative and $\theta$ is normal, for all $\eta,\xi\in H_2$,
\begin{align*}
\langle \theta(a)\theta(b) \eta, \xi\rangle
&=
\langle \theta(b) \eta, \theta(a)^*\xi\rangle
=
\lim_\mu
\langle \theta(b_\mu) \eta, \theta(a)^*\xi\rangle
\\&=
\lim_\mu
\langle \theta(a)\theta(b_\mu) \eta, \xi\rangle
=
\lim_\mu \lim_\lambda
\langle \theta(a_\lambda)\theta(b_\mu) \eta, \xi\rangle
\\&=
\lim_\mu \lim_\lambda
\langle \theta(a_\lambda b_\mu) \eta, \xi\rangle
=
\lim_\mu \lim_\lambda
\langle \theta(a b_\mu) \eta, \xi\rangle
=
\langle \theta(a b) \eta, \xi\rangle.
\end{align*}
\item
Proposition \ref{prop:ExtendUCPMaps} also applies to cp maps, not just ucp maps.
One must just include $\|T\|$ in the appropriate estimates.
\end{enumerate}
\end{rems}

\section{Connected algebras and discrete subfactors}

We now specialize to the case of a fully faithful bi-involutive representation $\bfH:\cC\to \spbfBim(N)$ and a connected W*-algebra object $\bfM \in \Vec(\cC)$.
Following Notation \ref{nota:RepresentationAndConnectedAlgebra}, we let $\bfH^\circ \in \Vec(\cC^{\op})$ be the W*-algebra object corresponding to taking bounded vectors of $\bfH$ (see Proposition \ref{prop:BoundedVectorsGivesAnAlgebra}).
Again, we warn the reader that by a slight abuse of notation, we will also use $\bfH$ to denote the composite of $\bfH : \cC \to \spbfBim(N)$ with the forget functor $\spbfBim(N) \to \Hilb$.

In this setting, the algebraic realization $|\bfM|_\bfH^\circ:=|\bfH^\circ\otimes \bfM|$ is very close to a von Neumann algebra.
Indeed, if $\bfM$ is compact and thus tracial by Proposition \ref{prop:Compact} (e.g., if $\cC$ is unitary fusion), then $|\bfM|_\bfH^\circ$ is actually a ${\rm II}_1$ factor without any completion necessary (see Corollary \ref{cor:AlreadyFactor}).
Moreover, the inclusion $N \subseteq |\bfM|_\bfH^\circ$ is a finite index inclusion of ${\rm II}_1$ factors with index equal to the dimension of the algebra object $\bfM \in \cC^{\natural}$.

When $\bfM$ is not finitely supported, $|\bfM|_\bfH^\circ$ is not a von Neumann algebra, and we must complete to obtain a von Neumann algebra $|\bfM|_\bfH$.
Moreover, the inclusion $N\subseteq |\bfM|_\bfH$ is no longer finite index, but it is extremal, irreducible, and \emph{discrete}/\emph{quasi-regular}.
We will show that all extremal irreducible discrete inclusions arise in this way.

This allows us to connect to the work of Popa-Shlyakhtenko-Vaes on quasi-regular inclusions of ${\rm II}_1$ factors \cite{1511.07329}, Popa's symmetric enveloping inclusion \cite{MR1302385,MR1729488}, and approximation properties for subfactors (see Section \ref{sec:AnalyticProperties}).
We also obtain a better understanding of the diagrammatic reproof of Popa's celebrated subfactor reconstruction theorem \cite{MR1334479} due to Guionnet-Jones-Shlyakhtenko \cite{MR2732052,MR2645882}, which produces an interpolated free group factor from any subfactor planar algebra \cite{MR2807103,MR3110503} (see Section \ref{sec:PlanarAlgebras}).

\subsection{Representations of connected algebras}

We continue the use of Notation \ref{nota:RepresentationAndConnectedAlgebra}.

\begin{defn}
The \emph{algebraic realization} of $\bfM$ in the representation $\bfH$ is the $*$-algebra 
$$
|\bfM|_{\bfH}^\circ
=
|\bfH^\circ\otimes \bfM|
=
\bigoplus_{c\in\Irr(\cC)} \bfH(c)^\circ\otimes \bfM(c).
$$
\end{defn}

We now analyze two canonical representations of $|\bfM|_{\bfH}^\circ$, and we show they are canonically isomorphic.
Taking the bicommutant then gives us the \emph{von Neumann realization} $|\bfM|_\bfH$.

First, since $\bfM$ is connected, it has a canonical state given by $i_\bfM \mapsto 1_\bbC$.
Thus the canonical state on $\bfM(1_\cC)$ is given by $\lambda i_\bfM\mapsto \lambda$.
In turn, we get a canonical isomorphism $N \cong \bfH^\circ(1_\cC)\otimes \bfM(1_\cC)\subset |\bfM|_\bfH^\circ$ given by $n \mapsto n\Omega \otimes i_\bfM$.
In particular, every element of $\bfH^\circ(1_\cC)\otimes \bfM(1_\cC)$ is of the form $n\Omega \otimes i_\bfM$.
This leads us to the following convention.

\begin{nota}
\label{nota:Sweedler}
For each $x\in |\bfM|_\bfH^\circ$, we use Sweedler notation to denote the $c$-component of $x$ for each $c\in\Irr(\cC)$.
That is, we write
$x = \sum_{c\in \Irr(\cC)} x_{(1)}^c \otimes x_{(2)}^c$,
where $x_{(1)}^c \otimes x_{(2)}^c \in \bfH(c)^\circ \otimes \bfM(c)$ is a finite sum of elementary tensors, which is non-zero for at most finitely many $c\in \Irr(\cC)$.
Moreover, by the preceding discussion, we may always assume that $x_{(1)}^{1_\cC}\otimes x_{(2)}^{1_\cC}$ is a single elementary tensor of the form $x^{1_\cC}\Omega \otimes i_\bfM$ for a unique $x^{1_\cC}\in N$.
\end{nota}

We now define the GNS representation of $|\bfM|_\bfH^\circ$ coming from the canonical trace $\tau$ on $N$.
Following Notation \ref{nota:Sweedler}, we define
$|\tau|_{\bfH}^\circ: |\bfM|_{\bfH}^\circ\rightarrow \C$ by
$$
|\tau|_{\bfH}^\circ\left(
\sum_{a\in \text{Irr}(\cC)} \eta^{a}_{(1)}\otimes f^{a}_{(2)}
\right)
=
\tau(x^{1_\cC}).
$$
In order to take the Hilbert space completion with respect to $|\tau|_\bfH^\circ$, we need to show $|\tau|_\bfH^\circ$ is positive on $|\bfM|_\bfH^\circ$ and that the action by left multiplication is by bounded operators.
To do this, we describe $|\tau|_\bfH^\circ$ in a second way.

As in Definition \ref{defn:RightL2}, we may view $\bfM$ as the Hilbert space object $L^2(\bfM)\in \Hilb(\cC)$ by considering the $\bfM(1_\cC)\cong \bbC$ valued inner product on each $\bfM(c)$ given by
$$
\langle f| g\rangle_{c}
=
\begin{tikzpicture}[baseline = .1cm, xscale=-1]
    \draw (-.5,-.3) arc (-180:0:.5cm);
    \draw (-.5,.3) arc (180:0:.5cm);
    \filldraw (0,.8) circle (.05cm);
    \draw (0,.8) -- (0,1.2);
    \roundNbox{unshaded}{(-.5,0)}{.3}{0}{0}{$g$}
    \roundNbox{unshaded}{(.5,0)}{.3}{.2}{.2}{$j_c(f)$}
    \node at (-.65,-.5) {\scriptsize{$\mathbf{c}$}};
    \node at (-.7,.5) {\scriptsize{$\bfM$}};
    \node at (.65,-.5) {\scriptsize{$\overline{\mathbf{c}}$}};
    \node at (.7,.5) {\scriptsize{$\bfM$}};
    \node at (.2,1) {\scriptsize{$\bfM$}};
\end{tikzpicture}
=
\bfM(\ev_c^*)[\mu_{\overline{c},c}^\bfM(j_c^\bfM(f)\otimes g)].
$$
(Recall that we are using the \emph{right} $L^2(\bfM)$, and in general this will not agree with the left $L^2(\bfM)$ unless $\bfM$ is tracial.)
We now get a $*$-representation of $|\bfM|^{\circ}_{\bfH}=|\bfH^\circ \otimes \bfM|$ on $|\bfH \otimes L^2(\bfM)|$ from Definition \ref{defn:RepresentationsOfRealizations}, which is bounded by Theorem \ref{thm:BoundedRepresentationOfRealization}.

\begin{lem}
\label{lem:TauInnerProduct}
For $x,y\in |\bfM|^{\circ}_{\bfH}$, $\langle y | x\rangle_{|\bfH\otimes L^2(\bfM)|} =|\tau|_{\bfH}^\circ(y^{*} x)$.
\end{lem}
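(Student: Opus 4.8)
The plan is to unwind both sides of the claimed identity $\langle y \mid x\rangle_{|\bfH\otimes L^2(\bfM)|} = |\tau|_\bfH^\circ(y^*x)$ to a common graphical expression, using the fact that the inner product on the Hilbert space realization (formula \eqref{eq:InnerProductForAll}) and the multiplication and $*$-structure on the algebraic realization (formula \eqref{eq:GraphicalMultiplicationAndStar}) both have explicit diagrammatic descriptions. First I would reduce to the case where $x = \eta\otimes f \in \bfH(a)^\circ\otimes\bfM(a)$ and $y = \zeta\otimes g\in \bfH(b)^\circ\otimes\bfM(b)$ for $a,b\in\Irr(\cC)$, since both sides are sesquilinear in the appropriate sense, and the decomposition of Notation \ref{nota:Sweedler} lets us work block by block.

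Next I would compute the right-hand side. By \eqref{eq:GraphicalMultiplicationAndStar}, $y^*x$ is a sum over $c\in\Irr(\cC)$ and $\alpha\in\Isom(c,\overline b\otimes a)$ of terms of the form $\bfH^\circ(\alpha^*)[\mu^{\bfH^\circ}_{\overline b,a}(j_b^{\bfH^\circ}(\zeta)\otimes\eta)]\otimes\bfM(\alpha)[\mu^\bfM_{\overline b,a}(j_b^\bfM(g)\otimes f)]$ lying in $\bfH(c)^\circ\otimes\bfM(c)$. Applying $|\tau|_\bfH^\circ$ kills all terms except the $c = 1_\cC$ contribution, which is nonzero only when $a\cong b$; on that component $\bfH^\circ(\ev_?)$ of the first tensor factor produces, via Lemma \ref{lem:HPreservesEvaluation} (more precisely, the version computing $\langle\zeta\mid\eta\rangle_N^{\bfH(a)}$ against a coevaluation), an element of $N\Omega$ whose trace is exactly $\tau(\langle\text{stuff}\rangle)$, paired against $\bfM(\ev_a^*)$ of $j_a^\bfM(g)\otimes f$, which is precisely $\langle g\mid f\rangle_a$. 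Assembling, the right-hand side becomes a single closed diagram: the $\bfH$-strand closed up by an evaluation and the canonical trace on $N$, tensored with the closed $\bfM$-diagram defining $\langle g\mid f\rangle_a$. Since $\tau$ is tracial on $\bfH^\circ$ by the Proposition following Lemma \ref{lem:HPreservesEvaluation}, and $\bfH\cong L^2(\bfH^\circ)_\tau$, the $\bfH$-part of this diagram is literally $\langle \eta\mid\zeta\rangle_{\bfH(a)}$ normalized by $d_a$ in the convention of \eqref{eq:InnerProductForSimples}.

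For the left-hand side I would apply the definition of the action $\blacktriangleright$ (Definition \ref{defn:RepresentationsOfRealizations}) with $\bfA = \bfH^\circ$, $\bfB = \bfM$, acting on the cyclic vector $\Omega\otimes i_\bfM\in\bfH(1_\cC)\otimes_{1_\cC} L^2(\bfM)(1_\cC)$, so that $(\eta\otimes f)\blacktriangleright(\Omega\otimes i_\bfM) = (\pi^{\bfH^\circ}_a(\eta)\Omega)\otimes(\pi^{L^2(\bfM)}_a(f)i_\bfM)$; here the first factor is just $\eta$ itself (as a bounded vector in $\bfH(a)$ under the identification $\bfH\cong L^2(\bfH^\circ)_\tau$) and the second is $f\in L^2(\bfM)(a)$. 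Then $\langle y\mid x\rangle_{|\bfH\otimes L^2(\bfM)|} = \langle (\zeta\otimes g)\blacktriangleright(\Omega\otimes i_\bfM),\ (\eta\otimes f)\blacktriangleright(\Omega\otimes i_\bfM)\rangle$ — wait, more carefully, $|\tau|_\bfH^\circ(y^*x)$ should be recognized as the vector state at $\Omega\otimes i_\bfM$ evaluated on $y^*x$, i.e. $\langle (y^*x)\blacktriangleright(\Omega\otimes i_\bfM),\ \Omega\otimes i_\bfM\rangle$, which by Theorem \ref{thm:BoundedRepresentationOfRealization} (the $\blacktriangleright$ action is a genuine algebra $*$-representation, so $y^*x$ acts as $(y\blacktriangleright-)^*(x\blacktriangleright-)$ after passing to the algebraic quotient) equals $\langle x\blacktriangleright(\Omega\otimes i_\bfM),\ y\blacktriangleright(\Omega\otimes i_\bfM)\rangle$ by Lemma \ref{lem:StarRepresentation}. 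Using \eqref{eq:InnerProductForSimples} this is $\tfrac1{d_a}\langle\eta,\zeta\rangle_{\bfH(a)}\langle f,g\rangle_{L^2(\bfM)(a)}$ (when $a\cong b$; zero otherwise), matching the right-hand side.

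The main obstacle I expect is bookkeeping of normalization constants — the factors of $d_a$ and $d_c$ that enter \eqref{eq:InnerProductForSimples} versus the renormalized inner products on the $\Isom$ spaces used in the realization multiplication — and making sure the identification $\bfH\cong L^2(\bfH^\circ)_\tau$ is applied consistently so that $\pi^{\bfH^\circ}_a(\eta)\Omega$ really is $\eta$ with the correct inner product. Once the graphical form of both sides is written down, the equality should be a direct diagram comparison using sphericality in $\cC$ (as in Lemma \ref{lem:GraphicalInnerProduct}) and the traciality of $\tau$ on $\bfH^\circ$ established in \eqref{eq:BiinvolutiveRepresentationTracial}; no new ideas are needed beyond carefully chasing the two definitions to the same picture.
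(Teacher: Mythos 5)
Your proposal is correct, and its second paragraph is essentially the paper's own proof: the paper writes $x,y$ in Sweedler components over $\Irr(\cC)$, computes the $1_\cC$-component of $y^*x$ using that $\Hom_{N-N}(L^2(N),\bfH(\overline a)\boxtimes_N\bfH(a))$ is spanned by the isometry $\tfrac{1}{\sqrt{d_a}}\,\bfH(\ev_a^*)$ (whence the $\tfrac{1}{d_a}$), identifies the two tensor factors as $\langle y^a_{(1)}|x^a_{(1)}\rangle_N^{\bfH(a)}$ and $\langle y^a_{(2)}|x^a_{(2)}\rangle_a^{\bfM(a)}$ via Lemma \ref{lem:HPreservesEvaluation} and \eqref{eq:EvAndCoev}, applies $\tau$, and matches \eqref{eq:InnerProductForSimples}. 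Your third paragraph is a genuinely different (and shorter) route that the paper does not take: observe that $|\tau|_\bfH^\circ(z)=\langle z\blacktriangleright(\Omega\otimes i_\bfM),\Omega\otimes i_\bfM\rangle$ by inspecting the $1_\cC$-component, then use multiplicativity of $\blacktriangleright$ on the quotient (Definition \ref{defn:RepresentationsOfRealizations}) and Lemma \ref{lem:StarRepresentation} to get $|\tau|_\bfH^\circ(y^*x)=\langle x\blacktriangleright\Omega,y\blacktriangleright\Omega\rangle=\langle y|x\rangle$. This is not circular, since Lemma \ref{lem:StarRepresentation} and Theorem \ref{thm:BoundedRepresentationOfRealization} are established in Section \ref{sec:Realizations} for arbitrary bounded $*$-representations and are already invoked in the text immediately preceding the lemma; it buys you a proof with no normalization bookkeeping at all, at the cost of hiding the explicit formula for $(y^*x)^{1_\cC}$, which the paper reuses implicitly elsewhere. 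Either paragraph alone suffices; as written you prove the statement twice.
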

\begin{proof}
Following Notation \ref{nota:Sweedler}, let $x=\sum_{a\in\Irr(\cC)} x^{a}_{(1)}\otimes x^{a}_{(2)}$ and $y=\sum_{b\in\Irr(\cC)} y^{b}_{(1)}\otimes y^{b}_{(2)}$.
Since $\bfH$ is full, for all $a,b\in\Irr(\cC)$, 
$\Hom_{\Bim(N)}(L^2(N),\bfH(a)\boxtimes_N \bfH(b))$ is at most one dimensional, and it is non-zero precisely when $a\cong \overline{b}$.
As in Definition \ref{defn:AlgebraRealization}, for each $a\in \Irr(\cC)$, let $a'\in \Irr(\cC)$ such that $a'\cong \overline{a}$, and pick a unitary isomorphism $\gamma_a: a'\to \overline{a}$.
For all $a\in\Irr(\cC)$, $\Hom_{\Bim(A)}(L^{2}(N), \bfH(a')\boxtimes_N \bfH(a))$ is spanned by the isometry $\frac{1}{\sqrt{d_a}} \bfH((\gamma_a^*\boxtimes \id_a)\circ\ev^*_{a})$.
We now compute that
\begin{align*}
(y^*x)^{1_\cC}
&=
\sum_{a\in\Irr(\cC)}
\frac{1}{d_a}
\bfH^\circ(\ev_a\circ (\gamma_a\otimes \id_a))[\mu^{\bfH^\circ}_{a',a}(\bfH^\circ(\gamma_a^*)[j_{a}^{\bfH^\circ}(y_{(1)}^{a})] \otimes x_{(1)}^a)]
\\&\hspace{2cm}\otimes 
\bfM((\gamma_a^*\otimes \id_a)\circ\ev^*_{a})[\mu_{a',a}(\bfM(\gamma_a)[j^\bfM_{a}(y_{(1)}^{a})]\otimes x_{(2)}^a)]
\\&=
\sum_{a\in\Irr(\cC)}
\frac{1}{d_a}
\bfH^\circ(\ev_a)[\mu^{\bfH^\circ}_{\overline{a},a}(j_{a}^{\bfH^\circ}(y_{(1)}^{a}) \otimes x_{(1)}^a)]
\otimes
\bfM(\ev^*_{a})[\mu_{\overline{a},a}(j^\bfM_{a}(y_{(1)}^{a})\otimes x_{(2)}^a)]
\\&=
\sum_{a\in \Irr(\cC)} 
\frac{1}{d_a}
\langle y_{(1)}^a | x_{(1)}^a\rangle_N^{\bfH(a)}
\langle y_{(2)}^a | x_{(2)}^a\rangle_a^{\bfM(a)}
\end{align*}
where the last equality used Lemma \ref{lem:HPreservesEvaluation} and \eqref{eq:EvAndCoev}.
Now applying $|\tau|_\bfH^\circ$ to both sides, we obtain
\begin{align*}
|\tau|_\bfH^\circ (y^*x)
&=
\sum_{a\in \Irr(\cC)} 
\frac{1}{d_a}
\tau(\langle y_{(1)}^a | x_{(1)}^a\rangle_N^{\bfH(a)})
\langle y_{(2)}^a | x_{(2)}^a\rangle_a^{\bfM(a)}
\\&=
\sum_{a\in \Irr(\cC)} 
\frac{1}{d_a}
\langle y_{(1)}^a | x_{(1)}^a\rangle_{\bfH(a)}
\langle y_{(2)}^a | x_{(2)}^a\rangle_{L^2(\bfM)(a)}
=
\langle y|x\rangle_{|\bfH\otimes L^2(\bfM)|}.
\qedhere
\end{align*}
\end{proof}

This lemma has the following immediate corollaries.

\begin{cor}
\label{cor:TauPositivity}
For all $x\in |\bfM|_{\bfH}^\circ$, $0\le |\tau|_{\bfF}^\circ(x^{*}x)$.
Moreover, $|\tau|_{\bfH}^\circ(x^{*}x)=0$ implies $x=0$.
\end{cor}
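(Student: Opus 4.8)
The statement is an immediate consequence of Lemma \ref{lem:TauInnerProduct}, so the plan is simply to unpack that lemma applied with $y = x$. First I would observe that $|\tau|_\bfH^\circ(x^*x) = \langle x \mid x\rangle_{|\bfH\otimes L^2(\bfM)|}$ directly by Lemma \ref{lem:TauInnerProduct}. Since $|\bfH\otimes L^2(\bfM)|$ is a genuine Hilbert space (its inner product is the positive-definite inner product built in \eqref{eq:InnerProductForSimples}/\eqref{eq:InnerProductForAll} on the realization, where each $\bfM(c) = L^2(\bfM)(c)$ is a finite-dimensional Hilbert space because $\bfM$ is connected, and each $\bfH(c)$ is a Hilbert space), the quantity $\langle x \mid x\rangle_{|\bfH\otimes L^2(\bfM)|}$ is $\geq 0$, giving the first assertion $0 \leq |\tau|_\bfH^\circ(x^*x)$.

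For the faithfulness (``moreover'') part, suppose $|\tau|_\bfH^\circ(x^*x) = 0$. By the displayed identity this says $\langle x \mid x\rangle_{|\bfH\otimes L^2(\bfM)|} = 0$, i.e. $x = 0$ as an element of the Hilbert space $|\bfH\otimes L^2(\bfM)|$. But the map $|\bfM|_\bfH^\circ = |\bfH^\circ\otimes\bfM| \to |\bfH\otimes L^2(\bfM)|$ sending $x$ to its image is injective: going summand-by-summand over $c\in\Irr(\cC)$, on $\bfH(c)^\circ\otimes\bfM(c)$ it is the inclusion $\bfH(c)^\circ \hookrightarrow \bfH(c)$ (bounded vectors are dense, in particular the inclusion is injective) tensored with the identity on the finite-dimensional $\bfM(c)$, and by Lemma \ref{lem:TauInnerProduct} the inner product on the image restricted to this summand is $\tfrac{1}{d_c}\langle\cdot\mid\cdot\rangle_{\bfH(c)}\otimes\langle\cdot\mid\cdot\rangle_{L^2(\bfM)(c)}$, which is a genuine (positive-definite) inner product since $\bfM(c)=L^2(\bfM)(c)$ carries the right inner product making it a Hilbert space. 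Hence $x=0$ in $|\bfM|_\bfH^\circ$ as well.

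There is no real obstacle here; the only point requiring a moment's care is that each $\bfM(c)$ is finite-dimensional with a positive-definite right inner product, which is exactly the content recorded after \eqref{eq:A(1)-InnerProduct} for connected W*-algebra objects (using $\bfM(1_\cC)\cong\bbC$ and Proposition \ref{prop:DimensionBound}), so the sesquilinear form $|\tau|_\bfH^\circ(y^*x)$ is not merely positive semi-definite but in fact faithful. This is precisely what is needed to form the GNS-type completion in the next step (the ``von Neumann realization'' $|\bfM|_\bfH$), so I would state the corollary in exactly this two-part form.
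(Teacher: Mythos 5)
Your proposal is correct and follows essentially the same route as the paper: the authors likewise reduce everything to Lemma \ref{lem:TauInnerProduct} and then observe that $|\bfM|_\bfH^\circ$ sits inside $|\bfH\otimes L^2(\bfM)|$ as a direct sum of dense subspaces of Hilbert spaces, so there are no non-trivial length-zero vectors. Your summand-by-summand justification of injectivity and definiteness just spells out that same observation in slightly more detail.
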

\begin{proof}
This follows from the previous lemma, and the fact that our pre-Hilbert space $|\bfM|_\bfH^\circ\Omega$ is a direct sum of dense subspaces of Hilbert spaces.
Thus there are no non-trivial length zero vectors in $|\bfM|_\bfH^\circ$.
\end{proof}

Theorem \ref{thm:BoundedRepresentationOfRealization} and Lemma \ref{lem:TauInnerProduct} immediate imply the following corollary.

\begin{cor}
\label{cor:Bounded}
For all $x,y\in |\bfM|^{\circ}_{\bfH}$, $|\tau|_{\bfH}^\circ(y^{*} x^{*} x y)\le C_x \cdot |\tau|_{\bfH}^\circ(y^{*} y)$ for some constant $C_x \geq 0$.
\end{cor}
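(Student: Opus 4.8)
The plan is to deduce Corollary \ref{cor:Bounded} as a direct consequence of the boundedness of the realized $*$-representation (Theorem \ref{thm:BoundedRepresentationOfRealization}) together with the identification of the trace-inner-product with the Hilbert space inner product on $|\bfH\otimes L^2(\bfM)|$ (Lemma \ref{lem:TauInnerProduct}). The key observation is that for fixed $x\in |\bfM|_\bfH^\circ$, left multiplication by $x$ on the pre-Hilbert space $|\bfM|_\bfH^\circ$ with inner product $|\tau|_\bfH^\circ(\,\cdot^*\,\cdot\,)$ is exactly the operator $\lambda_x$ coming from the $*$-representation $\blacktriangleright$ of $|\bfM|_\bfH^\circ\cong |\bfH^\circ\otimes\bfM|$ on $|\bfH\otimes L^2(\bfM)|$ described in Definition \ref{defn:RepresentationsOfRealizations}, under the unitary identification of Lemma \ref{lem:TauInnerProduct}.

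First I would record that Lemma \ref{lem:TauInnerProduct} gives, for all $y,z\in |\bfM|_\bfH^\circ$, the identity $|\tau|_\bfH^\circ(y^*z)=\langle y\mid z\rangle_{|\bfH\otimes L^2(\bfM)|}$; in particular $|\bfM|_\bfH^\circ$ with its $|\tau|_\bfH^\circ$-inner product is (a dense subspace of) the Hilbert space $|\bfH\otimes L^2(\bfM)|$, using Corollary \ref{cor:TauPositivity} for positivity and faithfulness. Next I would check that under this identification the action of $x\in|\bfM|_\bfH^\circ$ by left multiplication agrees with $x\blacktriangleright(-)$: since $|\bfM|_\bfH^\circ$ is associative, for $x,y\in|\bfM|_\bfH^\circ$ we have $x\cdot y = x\blacktriangleright y$ when $y$ is viewed as a vector in $|\bfH\otimes L^2(\bfM)|$ via $\pi^{\bfH^\circ},\pi^\bfM$ — this is precisely the compatibility noted at the end of Definition \ref{defn:RepresentationsOfRealizations} (the representation of the realization restricts on the ``diagonal'' to its own multiplication). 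Then Theorem \ref{thm:BoundedRepresentationOfRealization} says $x\mapsto \lambda_x$ is a bounded $*$-representation, so $\lambda_x$ is a bounded operator with norm $C_x:=\|\lambda_x\|^2_{B(|\bfH\otimes L^2(\bfM)|)}<\infty$ (using the explicit estimate \eqref{eq:FinalUpperBound} summed over the finitely many $\Irr(\cC)$-components of $x$, if an explicit constant is wanted). Finally, for any $y\in|\bfM|_\bfH^\circ$,
\begin{align*}
|\tau|_\bfH^\circ(y^*x^*xy)
&=\langle xy\mid xy\rangle_{|\bfH\otimes L^2(\bfM)|}
=\|\lambda_x y\|^2
\le \|\lambda_x\|^2\,\|y\|^2
=C_x\cdot |\tau|_\bfH^\circ(y^*y),
\end{align*}
which is the claim, with $C_x\ge 0$ independent of $y$.

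The only real subtlety — and the step I expect to require the most care to state cleanly — is the identification of left multiplication in $|\bfM|_\bfH^\circ$ with the representation operator $\lambda_x=x\blacktriangleright(-)$. One must be careful that $|\bfM|_\bfH^\circ=|\bfH^\circ\otimes\bfM|$ is the ``summed over $\Irr(\cC)$'' realization, whereas Definition \ref{defn:RepresentationsOfRealizations} first describes the action of the larger non-associative algebra $\|\bfH^\circ\otimes\bfM\|$ and then passes to the quotient; the point is exactly that passing to $|\,\cdot\,|$ on both the algebra side and the Hilbert-space side makes the diagonal action coincide with multiplication, which is the content of the displayed associativity identity $(f_1\otimes g_1)\blacktriangleright[(f_2\otimes g_2)\blacktriangleright(\eta\otimes\xi)]=((f_1\otimes g_1)\cdot(f_2\otimes g_2))\blacktriangleright(\eta\otimes\xi)$ in Definition \ref{defn:RepresentationsOfRealizations} together with Lemma \ref{lem:TauInnerProduct}. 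Everything else is formal: boundedness of $\blacktriangleright$ is Theorem \ref{thm:BoundedRepresentationOfRealization}, and positivity/faithfulness of $|\tau|_\bfH^\circ$ is Corollary \ref{cor:TauPositivity}.
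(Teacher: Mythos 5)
Your proposal is correct and is exactly the argument the paper intends: the paper derives this corollary in one line from Theorem \ref{thm:BoundedRepresentationOfRealization} and Lemma \ref{lem:TauInnerProduct}, and your write-up simply fills in the details of that derivation, including the one genuinely non-trivial point (that left multiplication in $|\bfM|_\bfH^\circ$ agrees with the operator $x\blacktriangleright(-)$ on the cyclic vector's orbit, via the multiplicativity identity in Definition \ref{defn:RepresentationsOfRealizations}). Nothing further is needed.
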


We now perform the GNS construction to obtain the Hilbert space completion of $|\bfM|^\circ_\bfH$ with respect to $\|\cdot\|_2$, which we denote by $L^{2}|\bfM|_{\bfH}$.
By Corollaries \ref{cor:TauPositivity} and \ref{cor:Bounded}, the left regular action of $|\bfM|_{\bfH}^\circ$ on itself extends to a faithful $*$-homomorphism
$|\bfM|_{\bfH}^\circ\rightarrow B(L^{2}|\bfM|_{\bfH})$.

\begin{prop}
\label{prop:L2MH decomposition}
We have an $|\bfM|_\bfH^\circ-N$ bimodule isomorphism $L^2|\bfM|_\bfH \cong |\bfH \otimes L^2(\bfM)|$.
In particular, $L^{2}|\bfM|_{\bfH}\cong \bigoplus_{a\in \Irr(\cC)} \bfH(a)\otimes_a \bfM(a)$ as an $N-N$ bimodule.
\end{prop}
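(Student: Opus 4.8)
The plan is to identify the GNS Hilbert space $L^2|\bfM|_\bfH$ with the Hilbert space realization $|\bfH\otimes L^2(\bfM)|$ from Section~\ref{sec:Realizations} by exhibiting a single explicit map and checking it is a unitary which intertwines the relevant module structures. First I would define the linear map $U:|\bfM|_\bfH^\circ\to |\bfH\otimes L^2(\bfM)|$ to be the obvious one: on the $a$-summand $\bfH(a)^\circ\otimes\bfM(a)$, send $\eta\otimes f$ to the class of $\eta\otimes f$ in $\bfH(a)\otimes_a L^2(\bfM)(a)$, using that $\bfH(a)^\circ$ is dense in $\bfH(a)=\bfH(a)$ (as Hilbert spaces) and that $\bfM(a)=L^2(\bfM)(a)$ as vector spaces by Definition~\ref{defn:RightL2}. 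Since the underlying vector space of $|\bfM|_\bfH^\circ$ is $\bigoplus_{a\in\Irr(\cC)}\bfH(a)^\circ\otimes\bfM(a)$ and the target is $\bigoplus_{a\in\Irr(\cC)}\bfH(a)\otimes_a\bfM(a)$, this is well-defined on the algebraic level with dense image.

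The key computational input is Lemma~\ref{lem:TauInnerProduct}, which states precisely that $\langle y\,|\,x\rangle_{|\bfH\otimes L^2(\bfM)|}=|\tau|_\bfH^\circ(y^*x)$ for all $x,y\in|\bfM|_\bfH^\circ$. By definition of the GNS inner product on $|\bfM|_\bfH^\circ$ (the completion with respect to $\|\cdot\|_2$ coming from $|\tau|_\bfH^\circ$), the right-hand side is exactly $\langle y|x\rangle_{L^2|\bfM|_\bfH}$. Hence $U$ is isometric on the dense subspace $|\bfM|_\bfH^\circ$, so it extends to an isometry of Hilbert spaces; since its image is dense, the extension is a unitary $U:L^2|\bfM|_\bfH\to|\bfH\otimes L^2(\bfM)|$. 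This already gives the abstract Hilbert space isomorphism, and in particular the final identification $L^2|\bfM|_\bfH\cong\bigoplus_{a\in\Irr(\cC)}\bfH(a)\otimes_a\bfM(a)$ as Hilbert spaces.

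It remains to promote $U$ to an $|\bfM|_\bfH^\circ\text{-}N$ bimodule isomorphism. For the left $|\bfM|_\bfH^\circ$-action: the left regular action on $L^2|\bfM|_\bfH$ is (by construction of the GNS representation) left multiplication in the $*$-algebra $|\bfM|_\bfH^\circ=|\bfH^\circ\otimes\bfM|$; the left action on $|\bfH\otimes L^2(\bfM)|$ is the representation $\blacktriangleright$ of Definition~\ref{defn:RepresentationsOfRealizations} built from the $*$-algebra representations $\pi:\bfH^\circ\Rightarrow\bfB(\bfH)$ (Proposition~\ref{prop:BoundedVectorsGivesAnAlgebra} and the surrounding discussion) and $\pi:\bfM\Rightarrow\bfB(L^2(\bfM))$ (Section~\ref{sec:ModularTheory}). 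That $U$ intertwines these follows from the fact, recorded at the end of Definition~\ref{defn:RepresentationsOfRealizations}, that $\blacktriangleright$ implements the algebra multiplication of $|\bfH^\circ\otimes\bfM|$ on the quotient $\|\bfH\otimes L^2(\bfM)\|/I$, together with the compatibility of $\pi^{\bfH^\circ}$ and $\pi^\bfM$ with the graphical multiplication~\eqref{eq:GraphicalMultiplicationAndStar}; this is a diagrammatic check on simple tensors. For the right $N$-action: under the canonical inclusion $N\cong\bfH^\circ(1_\cC)\otimes\bfM(1_\cC)\subset|\bfM|_\bfH^\circ$, right multiplication by $n\Omega\otimes i_\bfM$ on the $a$-summand acts through $\bfH^\circ$'s multiplication $\bfH(a)^\circ\otimes N\to\bfH(a)^\circ$ in the first tensor factor only (the $\bfM$-factor is untouched since $\bfM(a)\otimes\bfM(1_\cC)\to\bfM(a)$ is the unit isomorphism), which is precisely the bounded right $N$-action on $\bfH(a)$, hence on $\bfH(a)\otimes_a L^2(\bfM)(a)$; so $U$ is right $N$-linear as well.

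\textbf{Main obstacle.} The genuinely nontrivial content is entirely absorbed into Lemma~\ref{lem:TauInnerProduct} (and behind it Theorem~\ref{thm:BoundedRepresentationOfRealization}, which guarantees the left action is by bounded operators so that the GNS completion makes sense); once that lemma is in hand, the proof above is essentially bookkeeping. The one place requiring genuine care is verifying that $U$ intertwines the two \emph{left} actions, because one must match the naive algebra multiplication of $|\bfH^\circ\otimes\bfM|$ (involving the sum over isometry spaces and the $I=H$ relation, via the isomorphism $\|\cdot\|/I\cong|\cdot|$) against the $\blacktriangleright$-action of Definition~\ref{defn:RepresentationsOfRealizations}; this is the step where I would be most careful to cite the ``$\blacktriangleright$ implements multiplication on $\|\bfH\otimes\bfK\|/I$'' remark rather than re-deriving it.
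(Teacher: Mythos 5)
Your proposal is correct and follows essentially the same route as the paper: the paper's proof likewise identifies $|\bfM|_\bfH^\circ$ as a dense subspace of $|\bfH\otimes L^2(\bfM)|$ (with Lemma \ref{lem:TauInnerProduct} supplying the agreement of inner products) and then notes that this identification intertwines the left $|\bfM|_\bfH^\circ$- and right $N$-actions, using $N=\bfH(1_\cC)^\circ\otimes\bfM(1_\cC)$. Your write-up simply makes explicit the bookkeeping that the paper leaves as "straightforward."
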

\begin{proof}
This follows immediately from identifying $|\bfM|_\bfH^\circ= \bigoplus_{c\in \Irr(\cC)} \bfH(c)^\circ \otimes \bfM(c)$ as a dense subspace of $|\bfH\otimes L^2(\bfM)| = \bigoplus_{c\in\Irr(\cC)} \bfH(c) \otimes_c L^2(\bfM)(c)$.
It is straightforward to show this identification intertwines the left $|\bfM|_\bfM^\circ$ and right $N$-actions, as $N=\bfH(1_\cC)^\circ \otimes \bfM(1_\cC)$.
\end{proof}

\begin{defn}
The \emph{von Neumann realization} of $\bfM$ with respect to $\bfH$ is the von Neumann algebra $|\bfM|_\bfH = (|\bfM|_\bfH^\circ)'' \subseteq B(L^2|\bfM|_\bfH)$.
\end{defn}

\subsection{The discrete inclusion of the von Neumann realization}
\label{sec:vonNeumannRealization}

We continue the use of Notation \ref{nota:RepresentationAndConnectedAlgebra}.
Recall $|\bfM|_\bfH$ is the von Neumann algebra realization of $\bfM$ in the representation $\bfH$.  The following is the main theorem of this section.  

\begin{thm}
\label{thm:Quasiregular}
The inclusion $N\subseteq |\bfM|_\bfH$ is an irreducible discrete inclusion of factors.
\end{thm}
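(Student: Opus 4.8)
The plan is to verify the three assertions---factoriality, irreducibility, and discreteness---in turn, using the structural results of the previous sections together with the decomposition of $L^2|\bfM|_\bfH$ from Proposition \ref{prop:L2MH decomposition}. By Proposition \ref{prop:L2MH decomposition}, we already have the $N-N$ bimodule decomposition
\[
L^2|\bfM|_\bfH \cong \bigoplus_{a\in\Irr(\cC)} \bfH(a)\otimes_a \bfM(a),
\]
and since $\bfM$ is connected, $\bfM(1_\cC)\cong\bbC$, so the $a=1_\cC$ summand is exactly $\bfH(1_\cC)\cong L^2(N)$, appearing with multiplicity one. Because each $\bfM(a)$ is finite dimensional (Proposition \ref{prop:DimensionBound}) and each $\bfH(a)\in\spbfBim(N)$ is bifinite, every summand $\bfH(a)\otimes_a\bfM(a)$ is a finite direct sum of bifinite $N-N$ bimodules, and these are pairwise non-isomorphic irreducibles grouped by their isotypic components since $\bfH$ is fully faithful and the $a$ range over $\Irr(\cC)$. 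This is precisely condition (2) in the definition of a discrete inclusion, \emph{provided} we know $|\bfM|_\bfH$ is a factor and the relevant conditional expectation exists; so the real content is factoriality and irreducibility.

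For irreducibility, I would compute $N'\cap|\bfM|_\bfH$. The commutant is contained in $N' \cap B(L^2|\bfM|_\bfH)$, and an element commuting with $N$ on both sides must be supported, up to the right $N$-action, within the $N-N$-central vectors of $L^2|\bfM|_\bfH$; by the decomposition above these live only in the $a=1_\cC$ summand $L^2(N)$, whose $N-N$-central vectors are the scalars times $\Omega$. Concretely: if $z\in N'\cap|\bfM|_\bfH$, then $z\Omega\in L^2|\bfM|_\bfH$ is fixed by the $N-N$ bimodule structure (since $z$ commutes with left and right $N$), hence $z\Omega\in\bbC\Omega$, and by separating the vector $\Omega$ for the left $|\bfM|_\bfH^\circ$-action (Corollary \ref{cor:TauPositivity}) we get $z\in\bbC$. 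Thus $N'\cap|\bfM|_\bfH=\bbC$.

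Factoriality then follows: $N\subseteq|\bfM|_\bfH$ is irreducible, and $|\bfM|_\bfH$ certainly contains the $\text{II}_1$ factor $N$, so the center $Z(|\bfM|_\bfH)\subseteq N'\cap|\bfM|_\bfH = \bbC$. Next I need a faithful normal conditional expectation $E: |\bfM|_\bfH\to N$ so that the ambient Notation \ref{nota:Discrete} setup applies with $\phi=\tau\circ E$; the natural candidate is the compression to the $a=1_\cC$ summand, i.e. $E(x)=x^{1_\cC}$ in the Sweedler notation of Notation \ref{nota:Sweedler}, which one checks is $N-N$ bilinear, unital, and $\tau$-preserving, hence extends to a normal conditional expectation onto $N=|\bfM|_\bfH\cap B(L^2|\bfM|_\bfH)$ (normality and faithfulness coming from the fact that $\Omega$ is cyclic and separating and $|\tau|_\bfH^\circ$ restricts to $\tau$ on $N$). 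With $E$ in hand, the state $\phi=\tau\circ E$ has GNS space $L^2|\bfM|_\bfH$ by construction, and the $N-N$ decomposition above verifies condition (2) of the definition of discreteness. Finally, $N'\cap M=\bbC$ gives irreducibility, and since the $a=1_\cC$ summand appears with multiplicity one we also confirm $n_0=1$ consistent with Corollary \ref{cor:IrreducibleHasOneL2N}.

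The main obstacle I anticipate is the careful bookkeeping that identifies the $N-N$-bilinear maps on $L^2|\bfM|_\bfH$ with the categorical data---specifically, showing that the space of $N-N$-central vectors in $L^2|\bfM|_\bfH$ is exactly $\bbC\Omega$ requires knowing that for $a\in\Irr(\cC)$ with $a\not\cong 1_\cC$, the bimodule $\bfH(a)\otimes_a\bfM(a)$ contains no copy of $L^2(N)$, which is where full faithfulness of $\bfH$ and the irreducibility of the $\bfH(a)$'s genuinely enter (the summands for distinct $a$ are built from the pairwise non-isomorphic $\bfH(a)$, none of which is $L^2(N)$ unless $a\cong 1_\cC$). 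A secondary technical point is verifying that the compression map $x\mapsto x^{1_\cC}$ is genuinely the restriction of a normal conditional expectation on all of $|\bfM|_\bfH$ and not merely on the dense $*$-subalgebra $|\bfM|_\bfH^\circ$; this should follow from the standard fact that a $\tau$-preserving $*$-preserving idempotent on a dense subalgebra, bounded in $\|\cdot\|_2$, extends uniquely, combined with $N$ being the fixed-point algebra of the obvious $N$-bimodule structure.
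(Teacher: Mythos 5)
Your overall strategy matches the paper's: discreteness is read off from the decomposition of Proposition \ref{prop:L2MH decomposition}, and irreducibility comes from showing that $z\in N'\cap|\bfM|_\bfH$ forces $z\Omega$ to define an element of $\Hom_{N-N}(L^2(N),L^2|\bfM|_\bfH)\cong\bfM(1_\cC)\cong\bbC$. (The paper phrases this via the injective map $\psi\mapsto(\psi\boxtimes\id)\circ e_N^*$ on $\End_{N-|\bfM|_\bfH}(L^2|\bfM|_\bfH)$ rather than via $N$-central vectors, but it is the same mechanism, and both hinge on connectedness in exactly the way you use it.)

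There is, however, a genuine gap: you never prove that $\Omega$ is separating for $|\bfM|_\bfH$ — equivalently, that $E_N$ (or the normal state $|\tau|_\bfH$) is faithful on the von Neumann completion — and your justification is circular. You derive faithfulness of $E$ ``from the fact that $\Omega$ is cyclic and separating,'' while simultaneously using the separating property to pass from $z\Omega=\lambda\Omega$ to $z=\lambda$ and to identify $L^2|\bfM|_\bfH$ with the GNS space $L^2(|\bfM|_\bfH,\phi)$. Corollary \ref{cor:TauPositivity} only gives definiteness of $|\tau|_\bfH^\circ$ on the algebraic realization $|\bfM|_\bfH^\circ$; faithfulness of a normal state on a weakly dense $*$-subalgebra does not in general pass to the weak closure. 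This is precisely where the paper does its real work before the theorem: it expands $x\Omega$, for $x\in|\bfM|_\bfH$, against an orthogonal Pimsner--Popa basis $\{\beta_{c,i}\}$ of each summand, computes $\langle E_N(x^*x)n\Omega,n\Omega\rangle=\sum_{c,i}\tau(n^*n_{c,i}^*n_{c,i}n)$ with $n_{c,i}=E_N(\beta_{c,i}^*x)$, and concludes that $E_N(x^*x)=0$ forces $x\Omega=0$. Without this (or an equivalent) argument, neither the irreducibility conclusion nor the identification of $L^2|\bfM|_\bfH$ with $L^2(M,\phi)$ — and hence condition (2) of discreteness for the actual GNS space, rather than for the auxiliary Hilbert space you constructed — is established. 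A secondary, easily repaired looseness: the conditional expectation is best defined on all of $|\bfM|_\bfH$ at once by $e_Nxe_N=E_N(x)e_N$, which makes normality automatic, rather than by extending the compression from the dense $*$-subalgebra.
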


Let $e_N\in B(L^2|\bfM|_\bfH)$ be the orthogonal projection with range $\bfH(1_\cC)\otimes \bfM(1_\cC) \cong L^2(N)$.

\begin{lem}
For all $x\in|\bfM|_\bfH$, $e_Nxe_N = E_N(x)e_N$ for a unique $E_N(x)\in N$.
The assignment $x\mapsto E_N(x)$ is a normal conditional expectation $|\bfM|_\bfH \to N$.
\end{lem}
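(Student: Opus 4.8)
The plan is to verify directly that the map $x \mapsto e_N x e_N$, viewed inside $e_N B(L^2|\bfM|_\bfH) e_N \cong B(L^2(N))$, lands in $N$ and defines a normal conditional expectation. First I would recall that by Proposition \ref{prop:L2MH decomposition} we have $L^2|\bfM|_\bfH \cong \bigoplus_{a\in\Irr(\cC)} \bfH(a)\otimes_a \bfM(a)$ as an $N-N$ bimodule, with $e_N$ the projection onto the $a = 1_\cC$ summand $\bfH(1_\cC)\otimes \bfM(1_\cC) \cong L^2(N)$. Since the right $N$-action preserves this decomposition and $|\bfM|_\bfH$ commutes with the right $N$-action, $e_N$ is a right-$N$-module map, so $e_N x e_N \in \End_{-N}(L^2(N))$. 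The commutant of the right $N$-action on $L^2(N)$ is the left action of $N$, hence $e_N x e_N = E_N(x) e_N$ for a unique $E_N(x) \in N$; uniqueness is immediate since $e_N$ restricts to the identity on $L^2(N)$. One should also check $E_N(x)$ is well-defined for $x \in |\bfM|_\bfH$ (not merely $|\bfM|_\bfH^\circ$), which is automatic since the formula only uses boundedness of $x$ on $L^2|\bfM|_\bfH$.

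Next I would check the conditional expectation axioms. Normality: $x \mapsto e_N x e_N$ is normal as a map $B(L^2|\bfM|_\bfH) \to B(L^2|\bfM|_\bfH)$ (it is a compression by a fixed projection, hence continuous for the weak-$*$/$\sigma$-weak topology), and the identification $e_N B(L^2|\bfM|_\bfH) e_N \cong N$ carried out above is a normal $*$-isomorphism onto its image restricted to $|\bfM|_\bfH$, so $E_N$ is normal. Unitality: $E_N(1) = 1$ because $e_N \cdot 1 \cdot e_N = e_N$. Positivity and complete positivity: compression by a projection is completely positive, and $n \mapsto n e_N$ is a $*$-homomorphism, so $E_N$ is ucp. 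The bimodule property $E_N(n_1 x n_2) = n_1 E_N(x) n_2$ for $n_1, n_2 \in N$ follows since $N$ acts on the left commuting with $e_N$ (as $e_N$ is the projection onto an $N-N$ subbimodule): $e_N n_1 x n_2 e_N = n_1 (e_N x e_N) n_2 = n_1 E_N(x) n_2 e_N$. Faithfulness is not asserted in the statement, so I would not need to address it here, though it follows from the fact that $L^2(N) = e_N L^2|\bfM|_\bfH$ is cyclic and separating considerations, or simply from the explicit trace $|\tau|_\bfH^\circ$ and Lemma \ref{lem:TauInnerProduct}.

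The only genuinely substantive point is the identification of $e_N x e_N$ with left multiplication by an element of $N$; everything else is formal. The key input is that $e_N$ is the orthogonal projection onto an $N-N$ sub-bimodule isomorphic to $L^2(N)$ (the $1_\cC$-isotypic part), together with the standard fact that $\End_{-N}(L^2(N))$, the commutant of the right $N$-action, is exactly the left regular representation of $N$. I expect no real obstacle: the argument is the standard Jones-basic-construction-style verification, adapted to the bimodule decomposition provided by Proposition \ref{prop:L2MH decomposition}, and the main thing to be careful about is keeping straight which actions (the left $|\bfM|_\bfH$-action, the right $N$-action, and the "left $N$" sitting inside $|\bfM|_\bfH^\circ$ via $n \mapsto n\Omega \otimes i_\bfM$) preserve which summands.
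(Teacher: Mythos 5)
Your proposal is correct and follows essentially the same route as the paper: identify $e_N x e_N$ as a right-$N$-linear operator on $L^2(N)$, use that the commutant of the right $N$-action on $L^2(N)$ is the left regular representation of $N$, and then observe that normality and complete positivity are automatic because the map is implemented by compression with $e_N$. The extra details you supply (why $e_N$ commutes with the right action, unitality, the bimodule property) are all consistent with the paper's terser argument.
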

\begin{proof}
For $x\in |\bfM|_\bfF$, the operator $e_Nxe_N\in B(L^2|\bfM|_\bfH)$ can be identified with an operator on $L^2(N)$ which is easily seen to commute with the right $N$-action.
Thus $e_Nxe_N$ is equal to some operator $E_N(x)\in N$ times $e_N$.
Now the formula $x\mapsto E_N(x)$ is easily seen to define a normal conditional expectation, since it is implemented by $e_N$.
(In fact, viewing $e_N$ as a map $L^2|\bfM|_\bfH \to L^2(N)$, we have that $E_N(x) = e_N x e_N^*$, which is known to be completely positive c.f.~Stinespring dilation.)
\end{proof}

It is now easy to see that on $|\bfM|_\bfH^\circ$, $\tau\circ E_N = |\tau|_\bfH^\circ$.
Indeed, if $\Lambda\subset \Irr(\cC)$ is a finite subset such that $1_\cC\in\Lambda$, then
$$
E_N\left(\sum_{a\in \Lambda} x^{a}_{(1)}\otimes x^{a}_{(2)}\right)=x^{1_\cC}\otimes i_\bfM.
$$
Thus setting $|\tau|_\bfH=\tau\circ E_N$, we have that $|\tau|_\bfH$ is equal to the vector state on $|\bfM|_\bfF$ corresponding to $\Omega \otimes i_\bfM \in \bfH(1_\cC)\otimes L^2(\bfM)(1_\cC)\subset L^2|\bfM)|_\bfH$, and is thus a normal state.

\begin{prop}
The conditional expectation $E_N$ is faithful, so $|\tau|_\bfH$ is faithful.
\end{prop}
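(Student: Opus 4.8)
The plan is to show that the vector $\Omega \otimes i_\bfM$ is separating for $|\bfM|_\bfH$ acting on $L^2|\bfM|_\bfH$; since $|\tau|_\bfH$ is the vector state associated to this vector, separatingness of the vector is precisely faithfulness of $|\tau|_\bfH$, and faithfulness of $E_N$ then follows because $\tau$ is faithful on $N$ and $|\tau|_\bfH = \tau\circ E_N$ (if $E_N(x^*x)=0$ then $|\tau|_\bfH(x^*x)=0$, so $x=0$).

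To show $\Omega\otimes i_\bfM$ is separating, I would show it is cyclic for the commutant, or equivalently cyclic for the right $N$-action combined with... actually the cleanest route is: the vector $\Omega\otimes i_\bfM$ is cyclic for $|\bfM|_\bfH^\circ$ by construction (it generates the dense subspace $|\bfM|_\bfH^\circ\Omega = \bigoplus_c \bfH(c)^\circ\otimes\bfM(c)$, since for any $\eta\otimes f \in \bfH(c)^\circ\otimes\bfM(c)$ we have $(\eta\otimes f)\blacktriangleright(\Omega\otimes i_\bfM) = \eta\otimes f$ up to the canonical identifications). A cyclic vector for a von Neumann algebra is separating for its commutant; so $\Omega\otimes i_\bfM$ is separating for $|\bfM|_\bfH' = (|\bfM|_\bfH^\circ)'$. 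To get that it is separating for $|\bfM|_\bfH$ itself, I would exhibit enough of the commutant to conclude cyclicity of $\Omega\otimes i_\bfM$ for $|\bfM|_\bfH'$. The natural candidate is the right $N$-action together with a "right $\bfM$-action"; by Proposition \ref{prop:L2MH decomposition}, $L^2|\bfM|_\bfH \cong |\bfH\otimes L^2(\bfM)|$ as a bimodule, and the realization construction is symmetric enough that there should be a commuting copy of a realized algebra acting on the right. Concretely, using that $\bfH^\circ$ is itself a W*-algebra object (Proposition \ref{prop:BoundedVectorsGivesAnAlgebra}) with the tracial state $\tau$, and that $L^2(\bfH^\circ)_\tau \cong \bfH$, the Hilbert space $|\bfH\otimes L^2(\bfM)|$ carries the realized representation of $|\bfH^\circ\otimes\bfM|$ on the left and should carry a commuting realized representation involving $L^2(\bfM)$-multipliers on the right, whose cyclic span of $\Omega\otimes i_\bfM$ recovers the same dense subspace.

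Alternatively — and this is probably the shortest argument to write out — I would argue directly: suppose $x\in|\bfM|_\bfH$ with $|\tau|_\bfH(x^*x)=0$. Since $|\tau|_\bfH$ is the vector state for $\Omega\otimes i_\bfM$, this means $x(\Omega\otimes i_\bfM)=0$ in $L^2|\bfM|_\bfH$. Now $x$ commutes with the right $N$-action, and $\Omega\otimes i_\bfM$ generates $L^2(N) = \bfH(1_\cC)\otimes\bfM(1_\cC)$ under the right $N$-action; moreover I claim $\Omega\otimes i_\bfM$ is cyclic for the von Neumann algebra generated by $|\bfM|_\bfH$ and the right $N$-action is not enough, but the right $N$-action together with $|\bfM|_\bfH$ does generate a dense subspace (namely $|\bfM|_\bfH^\circ\Omega$). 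Since $|\bfM|_\bfH^\circ \ni y \mapsto y(\Omega\otimes i_\bfM)$ has dense range and $x$ commutes with nothing helpful here directly... The cleanest correct statement: $x(\Omega\otimes i_\bfM)=0$ and, for $y\in|\bfM|_\bfH^\circ$, $xy(\Omega\otimes i_\bfM)=?$. This does not immediately vanish unless $x$ commutes with $y$. So I do need the commutant. Therefore the main step — and the main obstacle — is constructing the commuting right action so that $\Omega\otimes i_\bfM$ is cyclic for $|\bfM|_\bfH'$; I expect this to follow from the symmetry of the realization construction in the two tensor factors (exchanging the roles of $\bfH^\circ\in\Vec(\cC^{\op})$ and $\bfM\in\Vec(\cC)$) and from the identification $\bfH \cong L^2(\bfH^\circ)_\tau$ established just before Section \ref{sec:DiscreteAndQuasi-regular}, which makes the right-hand realized representation a genuine $*$-representation whose cyclic vector $\Omega\otimes i_\bfM$ generates all of $\bigoplus_c\bfH(c)^\circ\otimes\bfM(c)$. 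Once $\Omega\otimes i_\bfM$ is both cyclic and separating, faithfulness of $|\tau|_\bfH$, and hence of $E_N$, is immediate.
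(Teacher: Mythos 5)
Your reduction is sound as far as it goes: since $|\tau|_\bfH$ is the vector state of $\Omega\otimes i_\bfM$ and $\tau$ is faithful on $N$, faithfulness of $E_N$ is indeed equivalent to $\Omega\otimes i_\bfM$ being separating for $|\bfM|_\bfH$, and you have correctly located the easy half of the argument ($E_N(x^*x)=0$ means $e_Nx^*xe_N=0$, so $x$ vanishes on $L^2(N)$ and in particular $x\Omega=0$). The problem is that the step you yourself call ``the main obstacle'' --- producing a bounded commuting right action whose orbit of $\Omega\otimes i_\bfM$ is dense, so that the vector is cyclic for $|\bfM|_\bfH'$ --- is never carried out; you only say you ``expect'' it to follow from the symmetry of the realization construction. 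That expectation is not free: the estimate needed, $\|xy\Omega\|_2\leq C_y\|x\Omega\|_2$ for $y\in|\bfM|_\bfH^\circ$, is not the mirror image of Theorem \ref{thm:BoundedRepresentationOfRealization}, because $|\tau|_\bfH$ is not a trace when $\bfM$ is not tracial; a genuinely right-handed version of the Schur-product argument, or control of the modular operators $\Delta_a$ (bounded with bounded inverse on each $\bfM(a)$ by Proposition \ref{prop:DimensionBound}), has to enter somewhere. Since the entire content of the proposition is deferred to this unperformed construction, what you have is a plan rather than a proof.

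For comparison, the paper argues computationally: it chooses an orthogonal right Pimsner--Popa basis $\{\beta_{c,i}\}\subset\bfH(c)^\circ\otimes\bfM(c)$ for each bifinite summand of $L^2|\bfM|_\bfH$, writes $x\Omega=\sum_{c,i}\beta_{c,i}\,n_{c,i}\Omega$ with $n_{c,i}=E_N(\beta_{c,i}^*x)$, and computes $\langle E_N(x^*x)n\Omega,n\Omega\rangle=\|xn\Omega\|^2=\sum_{c,i}\tau(n^*n_{c,i}^*n_{c,i}n)$, so that $E_N(x^*x)=0$ kills every coefficient and hence $x\Omega=0$. Note that the paper's final sentence then also appeals to ``$\Omega$ is separating for $|\bfM|_\bfH$'' to pass from $x\Omega=0$ to $x=0$, so your diagnosis of where the real difficulty sits is accurate; but a self-contained argument still has to justify that appeal --- for instance by exhibiting the commuting right realized action, or by showing that $x\Omega\mapsto x^*\Omega$ is preclosed using the summandwise boundedness and invertibility of $\id_{\bfH(a)}\otimes\Delta_a$ --- and that is exactly the piece missing from your proposal.
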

\begin{proof}
For all $c\in\Irr(\cC)$, let $\cB_c = \{\beta_{c,i}\}\subset \bfH^\circ(c)\otimes \bfM(c)$ be an orthogonal right $N$ basis for the $N-N$ bimodule summand $\bfH(c)\otimes \bfM(c)$ of $L^2|\bfM|_\bfH$.
This means we can express $1_{B(L^2|\bfM|_\bfH)}$ as a sum of orthogonal projections $\sum_{c\in\Irr(\cC)} \sum_{\beta_{c,i}\in \cB_c} L_{\beta_{c,i}}L_{\beta_{c,i}}^*$, where the convergence is in the strong operator topology.
Moreover, these elements satisfy
$$
\langle \beta_{c,i}|\beta_{c',i'}\rangle_N=L_{\beta_{c,i}}^*L_{\beta_{c',i'}} = E_N(\beta_{c,i}^* \beta_{c',i'}) = \delta_{(c,i)=(c',i')}p_{c,i},
$$
which is a projection in $N$, non-zero exactly when $c=c'$ and $i=i'$.

Using this basis, we may express every
$x\Omega\in|\bfM|_\bfH\Omega$ uniquely as an infinite sum
\begin{equation}
\label{eq:L2Convergence}
x\Omega
=
\sum_{c\in\Irr(\cC)} \sum_{\beta_{c,i}\in \cB_c} L_{\beta_{c,i}}L_{\beta_{c,i}}^*(x\Omega)
=
\sum_{c\in\Irr(\cC)}\sum_{\beta_{c,i}\in \cB_c} \beta_{c,i} n_{c,i}\Omega
\end{equation}
where $n_{c,i}=p_{c,i}n_{c,i}\in N$.
Indeed, we calculate $n_{c,i} = E_N(\beta_{c,i}^* x)$ for all $c,i$, since $x\Omega = \sum_{c\in\Irr(\cC)}\sum_{\beta_{c,i}\in \cB_c} \beta_{c,i} E_N(\beta_{c,i}^* x)\Omega$.

We claim that $E_N(x^*x) = 0$ implies $x=0$.
To see this, we calculate that for all $n\in N$,
$$
\langle E_N(x^*x)n\Omega, n\Omega\rangle_{L^2(N)}
=
\tau_N(n^*E_N(x^*x)n)
=
\phi(n^*x^*xn)
=
\langle xn\Omega , xn\Omega\rangle_{L^2(M,\phi)}.
$$
Now using the boundedness of the right $N$-action, the above inner product is equal to
\begin{align*}
\sum_{c,c'\in\Irr(\cC)}
\sum_{\substack{\beta_{c,i}\in\cB_c
\\
\beta_{c',i'}\in\cB_{c'}}}
\langle \beta_{c,i}n_{c,i}n\Omega, \beta_{c',i'}n_{c',i'n\Omega} \rangle_{L^2(M,\phi)}
&=
\sum_{c,c'\in\Irr(\cC)}
\sum_{\substack{\beta_{c,i}\in\cB_c
\\
\beta_{c',i'}\in\cB_{c'}}}
\phi(n^*n_{c',i'}^*\beta_{c',i'}^*\beta_{c,i}n_{c,i}n)
\\&=
\sum_{c,c'\in\Irr(\cC)}
\sum_{\substack{\beta_{c,i}\in\cB_c
\\
\beta_{c',i'}\in\cB_{c'}}}
\tau(n^*n_{c',i'}^*E_N(\beta_{c',i'}^*\beta_{c,i})n_{c,i}n)
\\&=
\sum_{c\in\Irr(\cC)}\sum_{\beta_{c,i}\in\cB_c}
\tau(n^*n_{c,i}^*n_{c,i}n).
\end{align*}
Thus if $E_N(x^*x)=0$, then for all $n\in N$, every $n_{c,i}n$ is zero.
Since this holds for all $n\in N$, we have every $n_{c,i}$ is zero.
Thus $x\Omega =0$, and so $x = 0$ as $\Omega$ is separating for $|\bfM|_\bfH$. 
The result follows.
\end{proof}

\begin{cor}
\label{cor:IsoToGNS}
The $|\bfM|_\bfH-N$ bimodule $L^2|\bfM|_\bfH$ is isomorphic to the GNS Hilbert space $L^2(|\bfM|_\bfH, |\tau|_\bfH)$.
\end{cor}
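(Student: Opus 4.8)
The plan is to observe that $L^2|\bfM|_\bfH$ is already a cyclic GNS-type representation of $|\bfM|_\bfH$ with cyclic and separating vector $\Omega\otimes i_\bfM$ and associated vector state $|\tau|_\bfH$, so that the isomorphism is just uniqueness of the GNS construction, together with a short check on the right $N$-action.

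First I would record that $\Omega\otimes i_\bfM\in \bfH(1_\cC)\otimes L^2(\bfM)(1_\cC)\subseteq L^2|\bfM|_\bfH$ is cyclic for the left action of $|\bfM|_\bfH$: by construction $L^2|\bfM|_\bfH$ is the $\|\cdot\|_2$-completion of $|\bfM|_\bfH^\circ$, so $|\bfM|_\bfH(\Omega\otimes i_\bfM)\supseteq |\bfM|_\bfH^\circ(\Omega\otimes i_\bfM)=|\bfM|_\bfH^\circ$ is dense. Next, as noted just before the preceding proposition, $|\tau|_\bfH$ equals the vector state $x\mapsto \langle x(\Omega\otimes i_\bfM),\Omega\otimes i_\bfM\rangle$ on $|\bfM|_\bfH$; since the preceding proposition shows $|\tau|_\bfH$ is faithful, $\Omega\otimes i_\bfM$ is also separating. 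Hence the assignment $x(\Omega\otimes i_\bfM)\mapsto x\Omega_{|\tau|_\bfH}$ for $x\in|\bfM|_\bfH$ is a well-defined isometry with dense domain and dense range, and therefore extends to a unitary $U\colon L^2|\bfM|_\bfH\to L^2(|\bfM|_\bfH,|\tau|_\bfH)$ which intertwines the left $|\bfM|_\bfH$-actions and sends $\Omega\otimes i_\bfM$ to the GNS vector $\Omega_{|\tau|_\bfH}$ --- this is precisely the uniqueness of the GNS representation.

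It then remains to check that $U$ is right $N$-linear. The right $N$-action on $L^2|\bfM|_\bfH$ is the bounded extension of right multiplication by $N=\bfH(1_\cC)^\circ\otimes\bfM(1_\cC)\subseteq|\bfM|_\bfH^\circ$ (cf.\ Proposition \ref{prop:L2MH decomposition}); on the other side, since $|\tau|_\bfH=\tau\circ E_N$ with $\tau$ a trace, Takesaki's theorem \cite{MR0303307} puts $N$ in the centralizer of $|\tau|_\bfH$, so by \eqref{eq:RightAction} the right $N$-action on $L^2(|\bfM|_\bfH,|\tau|_\bfH)$ is also right multiplication. Then for $x\in|\bfM|_\bfH$ and $n\in N$,
$$
U\big((x(\Omega\otimes i_\bfM))\vartriangleleft n\big)=U\big((xn)(\Omega\otimes i_\bfM)\big)=(xn)\Omega_{|\tau|_\bfH}=\big(x\Omega_{|\tau|_\bfH}\big)\vartriangleleft n=U\big(x(\Omega\otimes i_\bfM)\big)\vartriangleleft n,
$$
so $U$ is the desired $|\bfM|_\bfH-N$ bimodule isomorphism. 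I do not expect a real obstacle here: the only point needing a little care is this last compatibility with the right $N$-action, everything else being a direct application of the facts already established (cyclicity from the construction, separability from faithfulness of $E_N$, and the identification of the vector state with $|\tau|_\bfH$).
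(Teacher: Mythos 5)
Your proof is correct and follows essentially the same route as the paper: both arguments identify the two Hilbert spaces via the fact that $|\tau|_\bfH$ is the vector state of $\Omega\otimes i_\bfM$, so that $|\bfM|_\bfH^\circ$ sits densely in each with the same inner product, and then match the left $|\bfM|_\bfH$- and right $N$-actions using \eqref{eq:RightAction}. The only (cosmetic) difference is that you package the density step as GNS uniqueness applied to all of $|\bfM|_\bfH\Omega$, whereas the paper gets density of the image of $|\bfM|_\bfH^\circ$ in the GNS space directly from the $L^2$-convergent coefficient expansion \eqref{eq:L2Convergence}.
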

\begin{proof}
From \eqref{eq:L2Convergence}, it follows that the $N$-coefficients of $x\in|\bfM|_\bfH$ define an $L^2$-convergent sequence.
Thus the image of $|\bfM|_\bfH^\circ$ in $L^2(|\bfM|_\bfH, |\tau|_\bfH)$ is dense in $\|\cdot \|_2$.
Since $|\tau|_\bfH$ restricted to $|\bfM|_\bfH^\circ$ is equal to $|\tau|_\bfH^\circ$, we see that the two Hilbert spaces may be identified.
It is also easy to see that the left $|\bfM|_\bfH$ actions agree.
The right $N$-actions agree by \eqref{eq:RightAction}.
\end{proof}

\begin{proof}[Proof of Theorem \ref{thm:Quasiregular}]
We already know that $E_N: |\bfM|_\bfH \to N$ is a faithful, normal conditional expectation.
By Corollary \ref{cor:IsoToGNS}, we have that $L^2|\bfM|_\bfH$ carries a right $|\bfM|_\bfH$-action from the Tomita-Takesaki theory, which is the commutant of the left $|\bfM|_\bfH$-action.

Consider the canonical map
\begin{equation}
\label{eq:InjectRelativeCommutant}
N'\cap |\bfM|_\bfH
\cong
\End_{N-|\bfM|_\bfH}(L^2|\bfM|_\bfH)
\longrightarrow
\Hom_{N-N}(L^2(N), L^2|\bfM|_\bfH)
\end{equation}
given by mapping $\psi\in \End_{N-|\bfM|_\bfH}(L^2|\bfM|_\bfH)$ to the composite
$$
L^2(N) \xrightarrow{e_N^*} L^2|\bfM|_\bfH 
\cong 
L^2|\bfM|_\bfH\boxtimes_{|\bfM|_\bfH} L^2|\bfM|_\bfH  
\xrightarrow{\psi\boxtimes \id} 
L^2|\bfM|_\bfH\boxtimes_{|\bfM|_\bfH} L^2|\bfM|_\bfH 
\cong 
L^2|\bfM|_\bfH.
$$
Notice that this map is injective by the faithfulness of $E_N$.
Indeed, if $(\psi\boxtimes \id)e_N^* = 0$, then $e_N (\psi^*\boxtimes \id)(\psi\boxtimes \id)e_N^* = E_N(\psi^*\psi\boxtimes \id)e_N=0$.
(Notice that $\psi^*\psi \boxtimes \id \in |\bfM|_\bfH$ as it commutes with the right $|\bfM|_\bfH$ action.)
Thus $E_N(\psi^*\psi\boxtimes \id)=0$ as $N$ is a factor and $e_N \in N'\cap B(L^2|\bfM|_\bfH)$. 
But then $\psi^*\psi\boxtimes \id = 0$ since $E_N$ is faithful, and fusion with $\id$ is known to be injective (for example, see \cite[Lem.~2.13]{MR3663592}).

Now by Proposition \ref{prop:L2MH decomposition}, we have that
$L^2|\bfM|_\bfH\cong \bigoplus_{c\in \Irr(\cC)} \bfH(c)\otimes \bfM(c)$.
Since $\bfM$ is connected, we know that $\bfM(1_\cC) \cong \bbC$, and thus
$\Hom_{N-N}(L^2(N), L^2|\bfM|_\bfH) \cong \bbC$.
Thus \eqref{eq:InjectRelativeCommutant} is an injective map $N'\cap |\bfM|_\bfH \to \bbC$, and we conclude $N\subseteq |\bfM|_\bfH$ is an irreducible inclusion of factors.
Finally, the $N-N$ bimodule decomposition of $L^2|\bfM|_\bfH$ is as desired by Proposition \ref{prop:L2MH decomposition} and Corollary \ref{cor:IsoToGNS}.
\end{proof}

\begin{rem}
Notice that we cannot use Theorem \ref{thm:BifiniteFrobeniusReciprocity} to prove Theorem \ref{thm:Quasiregular}, since the former theorem requires that we already know that the inclusion $N\subseteq |\bfM|_\bfH$ is irreducible.
\end{rem}

\subsection{Modular theory for von Neumann realizations}
\label{sec:MoreModularTheory}

By Theorem \ref{thm:Quasiregular}, we know that $(N\subseteq |\bfM|_\bfH , E_N)$ is an irreducible discrete inclusion of factors with $N$ type ${\rm II}_1$.
Hence by Corollary \ref{cor:IrreducibleInclusionIIorIII}, $|\bfM|_\bfH$ is either type ${\rm II}_1$ or type ${\rm III}$ depending on whether $|\tau|_{\bfH}$ is tracial.

\begin{prop}
\label{prop:TracialRealization}
The canonical state $|\tau|_{\bfH}$ is tracial on $|\bfM|_{\bfH}$ if and only if $\bfM$ is tracial.
\end{prop}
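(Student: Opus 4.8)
The plan is to relate the modular operator $\Delta_{|\tau|_\bfH}$ on $L^2|\bfM|_\bfH$ to the modular operator $\Delta$ of the connected W*-algebra object $\bfM$ discussed in Section \ref{sec:ModularTheory}, using the explicit $N-N$ bimodule decomposition $L^2|\bfM|_\bfH \cong \bigoplus_{a\in\Irr(\cC)}\bfH(a)\otimes_a \bfM(a)$ from Proposition \ref{prop:L2MH decomposition} together with the identification with the GNS space from Corollary \ref{cor:IsoToGNS}. The key observation is that on the dense subspace $|\bfM|_\bfH^\circ$, the Tomita operator $S_{|\tau|_\bfH}$ is just the $*$-operation of the $*$-algebra $|\bfM|_\bfH^\circ = |\bfH^\circ \otimes \bfM|$, whose graphical formula is given in \eqref{eq:GraphicalMultiplicationAndStar} as $j^{\bfH^\circ}\otimes j^\bfM$. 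Since $\bfH^\circ$ is tracial (by the Proposition following Lemma \ref{lem:HPreservesEvaluation}), the ``$\bfH$-half'' of this involution contributes trivially to the modular operator, and $\Delta_{|\tau|_\bfH}$ should act on the $a$-summand $\bfH(a)\otimes_a \bfM(a)$ as $\id_{\bfH(a)}\otimes \Delta_a$, where $\Delta_a = S_a^*S_a$ is the modular operator of $\bfM$ at $a$ from \eqref{eq:ModularOperator}.

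Concretely, first I would compute $\langle x\mid y\rangle_{|\bfH\otimes L^2(\bfM)|}$ and $\langle x^*\mid y^*\rangle_{|\bfH\otimes L^2(\bfM)|}$ for $x,y\in |\bfM|_\bfH^\circ$ supported on a single simple $a\in\Irr(\cC)$, using Lemma \ref{lem:TauInnerProduct} to rewrite these as $|\tau|_\bfH^\circ(y^*x)$ and $|\tau|_\bfH^\circ(yx^*)$ respectively. Expanding via the same computation as in the proof of Lemma \ref{lem:TauInnerProduct}, $\langle x\mid y\rangle$ factors as $\frac{1}{d_a}\langle x_{(1)}^a\mid y_{(1)}^a\rangle_{\bfH(a)}\langle x_{(2)}^a\mid y_{(2)}^a\rangle_{L^2(\bfM)(a)}$, while the computation for $x^*,y^*$ (which live in the $\overline{a}$-summand, or rather $a'\cong\overline{a}$) produces instead the \emph{left} inner products: using \eqref{eq:BiinvolutiveRepresentationTracial} the $\bfH$-factor is unchanged up to the swap $\eta\leftrightarrow\overline\eta$ since $\tau$ is tracial on $\bfH^\circ$, whereas the $\bfM$-factor becomes ${}_a\langle\,\cdot\,,\,\cdot\,\rangle^{\bfM(a)}$, which by \eqref{eq:ModularOperator} equals $\langle\,\cdot\mid\Delta_a\,\cdot\,\rangle_a$. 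This identifies $\Delta_{|\tau|_\bfH}$ on the $a$-summand with $\id_{\bfH(a)}\otimes\Delta_a$.

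With this identification in hand, both directions follow. If $\bfM$ is tracial, then $\Delta_a = \id$ for all $a$ by the Corollary in Section \ref{sec:ModularTheory}, hence $\Delta_{|\tau|_\bfH}=\id$ on a dense subspace, hence everywhere, so $|\tau|_\bfH$ is a trace. Conversely, if $|\tau|_\bfH$ is tracial then $\Delta_{|\tau|_\bfH}=\id$, so for each $a\in\Irr(\cC)$ we get $\Delta_a = \id$ on $\bfM(a)$ (here one uses that $\bfH(a)\neq 0$, so the tensor factor $\id_{\bfH(a)}\otimes\Delta_a = \id$ forces $\Delta_a=\id$); since $\Delta$ is natural and every $c\in\cC$ is a summand of a direct sum of simples, $\Delta_c=\id$ for all $c\in\cC$, i.e.\ $\bfM$ is tracial.

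The main obstacle I anticipate is purely bookkeeping: carefully tracking the unitaries $\gamma_a: a'\to\overline a$ and the involutive structure isomorphisms $\chi_c$ through the graphical manipulation of \eqref{eq:GraphicalMultiplicationAndStar}, and checking that the ``$\bfH$-half'' genuinely contributes only the identity to the modular operator — that is, that the traciality of $\tau$ on $\bfH^\circ$ (equation \eqref{eq:BiinvolutiveRepresentationTracial}) is exactly what is needed. Once the formula $\Delta_{|\tau|_\bfH}|_{\bfH(a)\otimes_a\bfM(a)} = \id\otimes\Delta_a$ is established, the rest is immediate.
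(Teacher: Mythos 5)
Your proposal is correct and is essentially the paper's argument: the paper's proof of Proposition \ref{prop:TracialRealization} performs exactly your computation on elementary tensors in a single simple summand, comparing $|\tau|_\bfH((\overline{\xi}\otimes j_a(f_1))\cdot(\xi\otimes f_2))$ with the product in the opposite order, which reduces (via traciality of $\bfH^\circ$, i.e.\ \eqref{eq:BiinvolutiveRepresentationTracial}, and \eqref{eq:ModularOperator}) to comparing the right and left inner products on $\bfM(a)$; the converse direction likewise uses orthogonality of distinct simple summands plus normality. Your reformulation via the identity $\Delta_{|\tau|_\bfH}|_{\bfH(a)\otimes_a L^2(\bfM)(a)}=\id_{\bfH(a)}\otimes\Delta_a$ is only a repackaging of the same computation, and is precisely the formula the paper records in the proof of Proposition \ref{prop:ModularInvariant}.
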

\begin{proof}
Suppose the canonical state $|\tau|_{\bfH}$ on $|\bfM|_{\bfH}$ is tracial.  
For $a\in \Irr(\cC)$, 
let $\xi_1\otimes f_1, \xi_2\otimes f_2 \in \bfH(a)^\circ \otimes \bfM(a)$.
By \eqref{eq:ModularOperator} and Lemma \ref{lem:TauInnerProduct}, we calculate
\begin{align*}
|\tau|_\bfH(
(\overline{\xi_1} \otimes j_a(f_1))
\cdot 
(\xi_2 \otimes f_2)
)
&=
d_a^{-1} \langle \xi_1, \xi_2\rangle_{\bfH(a)}\langle f_1 |f_2\rangle_a 
\\
|\tau|_\bfH(
(\xi_2 \otimes f_2)
\cdot
(\overline{\xi}_1 \otimes j_a(f_1))
)
&=
d_a^{-1} \langle \overline{\xi}_2, \overline{\xi}_1\rangle_{\bfH(\overline{a})} \langle j_a(f_1) |j_a(f_2)\rangle_{\overline{a}}
=
d_a^{-1} \langle \xi_1, \xi_2\rangle_{\bfH(a)}  \langle f_2,f_1\rangle_{a}.
\end{align*}
Thus picking $\xi_1=\xi_2$ to be non-zero, we see that $|\tau|_\bfH$ tracial implies $\bfM$ is tracial.
Conversely, if $\bfM$ is tracial, then $|\tau|_\bfH^\circ$ is tracial on $|\bfM|_\bfH^\circ$, since if $a, b\in\Irr(\cC)$ are distinct, the $\bfH(a)\otimes_a L^2(\bfM)(a)$ and $\bfH(b)\otimes_b L^2(\bfM)(b)$ summands of $L^2|\bfM|_\bfH$ are orthogonal.
By normality, $|\tau|_\bfH$ is tracial on $|\bfM|_\bfH$.
\end{proof}

\begin{cor}
\label{cor:AlreadyFactor}
If $\bfM$ is compact, then $|\bfM|_\bfH^\circ$ is a ${\rm II}_1$ factor before taking the completion.
\end{cor}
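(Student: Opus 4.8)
The plan is to use the finite support of a compact algebra object to show that the GNS bimodule $L^2|\bfM|_\bfH$ is itself a \emph{bifinite} $N-N$ bimodule, and then to invoke Proposition \ref{prop:MultiplicationMapBounded}(2) to conclude that $|\bfM|_\bfH$ contains no elements beyond those already visible in the algebraic realization $|\bfM|_\bfH^\circ$.

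First I would record that if $\bfM$ is compact, i.e.\ $\bfM\in\cC^{\natural}\subset\Vec(\cC)$, then $\bfM(c)\neq 0$ for only finitely many $c\in\Irr(\cC)$, and each $\bfM(c)$ is finite dimensional. By Proposition \ref{prop:L2MH decomposition} and Corollary \ref{cor:IsoToGNS}, the GNS Hilbert space satisfies $L^2(|\bfM|_\bfH,|\tau|_\bfH)\cong L^2|\bfM|_\bfH\cong\bigoplus_{c\in\Irr(\cC)}\bfH(c)\otimes_c\bfM(c)$, which is therefore a \emph{finite} direct sum of the bimodules $\bfH(c)\otimes_c\bfM(c)\cong\bfH(c)^{\oplus\dim(\bfM(c))}$. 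Since each $\bfH(c)\in\spbfBim(N)$ is bifinite and a finite direct sum of bifinite bimodules is bifinite, $L^2|\bfM|_\bfH$ is a bifinite $N-N$ bimodule.

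Next, by Theorem \ref{thm:Quasiregular} the inclusion $N\subseteq|\bfM|_\bfH$ is irreducible, so Proposition \ref{prop:MultiplicationMapBounded}(2) applies to $K:=L^2|\bfM|_\bfH\subseteq L^2(|\bfM|_\bfH,|\tau|_\bfH)$ and yields $K^\circ=K\cap|\bfM|_\bfH\Omega=|\bfM|_\bfH\Omega$. On the other hand, computing the $N$-bounded vectors of the finite direct sum $\bigoplus_{c\in\Irr(\cC)}\bfH(c)\otimes_c\bfM(c)$ componentwise gives $K^\circ=\bigoplus_{c\in\Irr(\cC)}\bfH(c)^\circ\otimes\bfM(c)$, which under the identification of Proposition \ref{prop:L2MH decomposition} is exactly the image of $|\bfM|_\bfH^\circ$ in $L^2|\bfM|_\bfH$. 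Hence $|\bfM|_\bfH\Omega=|\bfM|_\bfH^\circ\Omega$, and since $|\tau|_\bfH$ is faithful (so $\Omega$ is separating for $|\bfM|_\bfH$) I conclude $|\bfM|_\bfH^\circ=|\bfM|_\bfH$. To finish, note that $|\bfM|_\bfH$ is a factor containing the ${\rm II}_1$ factor $N$ by Theorem \ref{thm:Quasiregular}, hence infinite dimensional, and $|\tau|_\bfH$ is a faithful normal \emph{tracial} state by Proposition \ref{prop:Compact} together with Proposition \ref{prop:TracialRealization}; so $|\bfM|_\bfH^\circ=|\bfM|_\bfH$ is a type ${\rm II}_1$ factor.

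The only real content is the first step: recognizing that compactness of $\bfM$ forces $L^2|\bfM|_\bfH$ to be a genuine bifinite $N-N$ bimodule, after which everything reduces to results already established. The single point to watch is that $N$-boundedness of a vector in a direct sum of bimodules can be detected componentwise only because the sum is \emph{finite} --- indeed the failure of this in the non-compact case is precisely what makes the completion $|\bfM|_\bfH\supsetneq|\bfM|_\bfH^\circ$ necessary.
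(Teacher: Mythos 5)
Your proof is correct and follows essentially the same route as the paper: compactness makes $L^2|\bfM|_\bfH$ a bifinite $N-N$ bimodule, so $|\bfM|_\bfH$ coincides with its $N$-bounded vectors, which are computed componentwise to be $|\bfM|_\bfH^\circ$, and traciality (Propositions \ref{prop:Compact} and \ref{prop:TracialRealization}) gives type ${\rm II}_1$. The only difference is that you derive the identification of bounded vectors with $|\bfM|_\bfH\Omega$ from the paper's own Proposition \ref{prop:MultiplicationMapBounded}(2) (via irreducibility from Theorem \ref{thm:Quasiregular}) rather than citing the classical bifinite-bimodule fact, which is a perfectly valid and slightly more self-contained substitution.
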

\begin{proof}
By Proposition \ref{prop:Compact}, $\bfM$ is tracial, and thus $|\bfM|_\bfH$ is a ${\rm II}_1$ factor by Proposition \ref{prop:TracialRealization}.
Moreover, $L^2|\bfM|_\bfH$ is compact, and thus a bifinite $N-N$ bimodule.
Thus $|\bfM|_\bfH$ is equal to the $L^2|\bfM|_\bfH^\circ$, the set of $N$-bounded vectors in $L^2|\bfM|_\bfH$ by \cite{MR1049618}.
Finally, by \cite{MR2501843} and Proposition \ref{prop:L2MH decomposition}, $L^{2}|\bfM|_{\bfH}^\circ\cong \bigoplus_{a\in \Irr(\cC)} (\bfH(a)\otimes_a \bfM(a))^\circ  =  \bigoplus_{a\in \Irr(\cC)} \bfH(a)^\circ\otimes \bfM(a)=|\bfM|_\bfH^\circ$.
\end{proof}

We will see in Section \ref{sec:QuantumGroups} that $|\tau|_{\bfH}$ fails to be a trace in general, e.g., there are examples coming from non Kac-type discrete quantum groups.
We now relate modular the spectral spectra $S(\bfM)$ and $S(|\bfM|_\bfH)$.

\begin{prop}\label{prop:ModularInvariant}
$S(|\bfM|_{\bfH})=S(\bfM)$.
\end{prop}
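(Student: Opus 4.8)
The plan is to compute Connes' modular spectrum $S(|\bfM|_\bfH)$ directly from the modular operator of the canonical state $|\tau|_\bfH$ on $|\bfM|_\bfH$, and to identify it with $S(\bfM) = \overline{\bigcup_{a\in\cC}\Spec(\Delta_a)}$. The starting point is that $(N\subseteq |\bfM|_\bfH, E_N)$ is an extremal irreducible discrete inclusion with $N$ type ${\rm II}_1$ (Theorem \ref{thm:Quasiregular}), and $N$ is contained in the centralizer of $|\tau|_\bfH$ (since $|\tau|_\bfH = \tau\circ E_N$ as in Notation \ref{nota:Discrete}). Hence $N\subseteq (|\bfM|_\bfH)^{|\tau|_\bfH}$ is irreducible, so the centralizer is a factor, and by \cite[Cor.~3.2.7(a)]{MR0341115} we get $S(|\bfM|_\bfH) = \Spec(\Delta_{|\tau|_\bfH})$. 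So it suffices to compute the spectrum of the modular operator $\Delta_{|\tau|_\bfH}$ acting on $L^2(|\bfM|_\bfH, |\tau|_\bfH) \cong |\bfH\otimes L^2(\bfM)|$.

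First I would identify $\Delta_{|\tau|_\bfH}$ explicitly on the dense subspace $|\bfM|_\bfH^\circ = \bigoplus_{a\in\Irr(\cC)} \bfH(a)^\circ\otimes \bfM(a)$. The closure of $x\Omega\mapsto x^*\Omega$ is $S_{|\tau|_\bfH} = J_{|\tau|_\bfH}\Delta_{|\tau|_\bfH}^{1/2}$, and on the summand $\bfH(a)^\circ\otimes \bfM(a)$ the $*$-operation of $|\bfM|_\bfH^\circ$ is given by \eqref{eq:GraphicalMultiplicationAndStar}, i.e.\ $(\xi\otimes f)^* = \bfH^\circ(\gamma_a^*)[\overline\xi]\otimes \bfM(\gamma_a)[j_a^\bfM(f)]$, which moves one into the $\bar a$ summand. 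Since $\bfH^\circ$ is \emph{tracial} (the Proposition before Section \ref{sec:DiscreteAndQuasi-regular}, via \eqref{eq:BiinvolutiveRepresentationTracial}), the $\bfH$-factor contributes nothing to the modular operator: the adjoint-squared $S_a^*S_a$ computed from the left/right inner products on $\bfH(a)^\circ$ is the identity, and all the asymmetry comes from the $\bfM$-factor. Using the formula \eqref{eq:ModularOperator} relating ${}_a\langle\cdot,\cdot\rangle$ and $\langle\cdot|\cdot\rangle_a$ on $\bfM(a)$ via $\Delta_a$, together with Lemma \ref{lem:TauInnerProduct} which expresses $|\tau|_\bfH(y^*x)$ as $\sum_a d_a^{-1}\langle y_{(1)}^a | x_{(1)}^a\rangle_{\bfH(a)}\langle y_{(2)}^a|x_{(2)}^a\rangle_{L^2(\bfM)(a)}$, I would check that $\Delta_{|\tau|_\bfH}$ preserves each summand $\bfH(a)\otimes_a L^2(\bfM)(a)$ (up to the natural identification of the $\bar a$ summand, so really the summand $a$ is preserved once one composes $S$ with itself twice) and acts there as $\id_{\bfH(a)}\otimes \Delta_a$, exactly as in Corollary \ref{cor:DeltaBounded}. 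Concretely: $S_{|\tau|_\bfH}^2$ restricted to the $a$-summand is $\id\otimes \Delta_{\overline a}^{-1} = \id\otimes\Delta_a$ after using $S_a^{-1}=S_{\overline a}$ and $\Delta_a^{-1}=\Delta_{\overline a}$ as in the proof of Proposition \ref{prop:DimensionBound}.

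Granting that $\Delta_{|\tau|_\bfH} = \bigoplus_{a\in\Irr(\cC)} (\id_{\bfH(a)}\otimes\Delta_a)$ on $L^2(|\bfM|_\bfH)\cong\bigoplus_a \bfH(a)\otimes_a L^2(\bfM)(a)$, the spectrum is immediately the closure of $\bigcup_{a\in\Irr(\cC)}\Spec(\Delta_a)$ in $[0,\infty)$ (each $\bfH(a)\neq 0$, so no eigenvalue of $\Delta_a$ is lost), which is exactly $S(\bfM)$ by definition (the Lemma after the definition of $S(\bfM)$ shows restricting to $\Irr(\cC)$ suffices). Combining with $S(|\bfM|_\bfH)=\Spec(\Delta_{|\tau|_\bfH})$ from the first paragraph gives $S(|\bfM|_\bfH)=S(\bfM)$.

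The main obstacle I anticipate is the bookkeeping in the first computational step: carefully tracking how the two distinct graphical calculi (for $\bfH^\circ\in\Vec(\cC^{\op})$ and $\bfM\in\Vec(\cC)$) interact through the realization's $*$-structure, and verifying that the $\bfH$-contribution to $S_{|\tau|_\bfH}^*S_{|\tau|_\bfH}$ is trivial precisely because $\bfH^\circ$ is tracial while the $\bfM$-contribution reproduces $\Delta_a$. One must also be mildly careful that $\Delta_{|\tau|_\bfH}$ is not bounded as a natural transformation (it isn't, in general — same caveat as before Proposition \ref{prop:DimensionBound}), but it is a diagonalizable densely-defined positive operator with the stated direct-sum decomposition, which is all that is needed to read off the spectrum. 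Everything else — factoriality of the centralizer, the appeal to \cite[Cor.~3.2.7(a)]{MR0341115}, and the reduction to $\Irr(\cC)$ — is routine given the earlier results.
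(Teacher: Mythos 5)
Your proof is correct and follows essentially the same route as the paper: identify $S(|\bfM|_\bfH)$ with $\Spec(\Delta_{|\tau|_\bfH})$ via \cite[Cor.~3.2.7(a)]{MR0341115} (the centralizer contains $N$, hence is a factor by irreducibility), compute $\Delta_{|\tau|_\bfH}|_{\bfH(a)\otimes_a L^2(\bfM)(a)} = \id_{\bfH(a)}\otimes\Delta_a$ using that $\bfH^\circ$ is tracial, and read off the spectrum from the decomposition of Proposition \ref{prop:L2MH decomposition} together with diagonalizability. One cosmetic slip: in your ``concretely'' aside you write $S_{|\tau|_\bfH}^2$ where you mean $S_{|\tau|_\bfH}^*S_{|\tau|_\bfH}$ (the former is the identity on its domain), and $\Delta_a^{-1}$ equals $S_{\overline a}S_{\overline a}^*$ rather than $\Delta_{\overline a}=S_{\overline a}^*S_{\overline a}$ (these agree only up to conjugation, which suffices for spectra), but neither point affects the argument.
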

\begin{proof}
Since $(N\subseteq |\bfM|_\bfH, E_N)$ is an irreducible discrete inclusion, $\Delta_{|\tau|_{\bfH}}$ is diagonalizable by Corollary \ref{cor:QuasiPeriodic}.  
In fact, a straightfoward computation shows that $\Delta_{|\tau|_{\bfH}}|_{ \bfH(a)\otimes_a L^{2}(\bfM)(a)}=\id_{\bfH(a)}\otimes \Delta_{a}$, where $\Delta_{a}$ is the modular operator for $\bfM$ from Section \ref{sec:ModularTheory}.
By Proposition \ref{prop:L2MH decomposition}, $L^2|\bfM|_\bfH\cong \bigoplus_{a\in \Irr(\cC)} \bfH(a) \otimes_a L^{2}(\bfM)(a)$, and thus the set of eigenvalues of $\Delta_{|\tau|_{\bfH}}$ is equal to the union of the sets of eigenvalues of the operators $\Delta_{a}\otimes \id_{\bfH(a)}$ for $a\in\Irr(\cC)$. 
Since $\Delta_{|\tau|_{\bfH}}$ is diagonalizable, $\Spec(\Delta_{|\tau|_{\bfH}})$ is the closure of the set of eigenvalues, which is $S(\bfM)$ by definition. 
By \cite[Cor.~3.2.7(a)]{MR0341115}, we have $S(|\bfM|_{\bfH})=\Spec(\Delta_{|\tau|_{\bfH}})=S(\bfM).$ 
\end{proof}

\subsection{The realization functor}

We now extend $|\cdot |_\bfH$ to a functor.
We continue the use of Notation \ref{nota:RepresentationAndConnectedAlgebra}.
As we will need to have multiple connected W*-algebra objects in $\Vec(\cC)$  simultaneously, we will denote these by $\bfA$ and $\bfB$ instead of $\bfM$.

\begin{defn} 
We define the following two categories.
\begin{itemize}
\item
Let $\ConAlg$ be the category whose objects are connected W*-algebra objects in $\Vec(\cC)$ and whose morphims are categorical ucp morphisms.
Recall that a $*$-natural transformation $\theta: \bfA \Rightarrow \bfB$ is called cp if for all $c\in \cC$, $\theta_{\overline{c}\otimes c} : \bfA(\overline{c}\otimes c) \to \bfB(\overline{c}\otimes c)$ maps positive operators to positive operators, and it is called ucp if moreover $\theta_{1_\cC} (i_\bfA) = i_\bfB$.
\item
Let $\DisInc$ be the category whose objects are irreducible discrete inclusions $(N\subseteq M, E)$, where $N$ is our fixed ${\rm II}_1$ factor with canonical trace $\tau$, and whose morphims are $N-N$ bilinear ucp maps which preserve the canonical state.
This means that a morphism $\psi \in \DisInc((N\subseteq M, E_N^M), (N\subseteq P, E_N^P))$ is an $N-N$ bilinear ucp map $\psi:M \to N$ satisfying 
$$
\phi_P \circ \psi=\tau \circ E^P_N \circ \psi = \tau \circ E^M_N = \phi_M.
$$
Let $\DisInc_\bfH$ be the full subcategory consisting of those $(N\subseteq M, E)$ such that $L^2(M,\phi)$ is supported on $\bfH(\cC)$, i.e., $L^2(M,\phi)\in \Hilb(\bfH(\cC))$.
\end{itemize}
\end{defn}

Given an $\bfA \in \ConAlg$, we have already shown how to produce an irreducible, discrete inclusion $N\subseteq |\bfA|_{\bfH}$.  
We now decribe how to extend $|\cdot|_\bfH$ to a functor $\ConAlg\rightarrow \DisInc_\bfH$.

Suppose $\bfA$, $\bfB$ are connected W*-algebra objects.
Let $\theta:\bfA\Rightarrow\bfB$ be a categorical ucp morphism, and note that we may think of $\theta$ as a ucp morphism $\bfA\Rightarrow \bfB(L^2(\bfB))$.
By the Stinespring Dilation Theorem in $\cC$ \cite[Thm.~4.28]{MR3687214}, there is a $\bfK\in \Hilb(\cC)$, a $*$-representation $\pi^\bfA:\bfA\Rightarrow\bfB(\bfK)$, and an isometry $v:L^{2}(\bfB)\Rightarrow \bfK$ such that $\theta = \Ad(v)\circ \pi$.

Now, by Theorem \ref{thm:BoundedRepresentationOfRealization}, we get a bounded $*$-representation of the algebraic realization $|\bfA|_\bfH^\circ$ on the Hilbert space realization $|\bfH \otimes \bfK|$, which we denote by $|\pi|_\bfH$. 

\begin{lem}
\label{lem:RealizedIsometry}
The map $|v|_\bfH:L^{2}|\bfB|_{\bfH}\rightarrow |\bfH \otimes \bfK|$ defined by $|v|_\bfH(\eta\otimes \xi)=\eta\otimes (v\circ \xi)$ uniquely extends to an isometry satisfying $|v^{*}|_{\bfH}=|v|^{*}_{\bfH}$, where $|v^{*}|_{\bfH}$ is defined analogously.
\end{lem}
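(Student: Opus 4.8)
The plan is to work at the level of the algebraic direct sums $\|\bfH\otimes L^2(\bfB)\|$ and $\|\bfH\otimes\bfK\|$, define $|v|_\bfH$ first there, show it is compatible with the ideal $I$, and then check it is isometric using the explicit inner product formula \eqref{eq:InnerProductForAll}. First I would note that on an elementary tensor $\eta\otimes\xi\in\bfH(a)\otimes_a L^2(\bfB)(a)$, the assignment $\eta\otimes\xi\mapsto\eta\otimes(v\circ\xi)$ makes sense because $v:L^2(\bfB)\Rightarrow\bfK$ is a bounded (indeed isometric) natural transformation in $\Hilb(\cC)$, so $v\circ\xi\in\Hom_{\Hilb(\cC)}(\mathbf{a},\bfK)\cong\bfK(a)$; the map is linear in $\xi$ and hence well-defined on the algebraic direct sum $\|\bfH\otimes L^2(\bfB)\|$. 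Since $v$ is a natural transformation, for $\psi\in\cC(a,b)$ we have $\bfK(\psi)(v\circ\xi) = v\circ(L^2(\bfB)(\psi)(\xi))$, so the generators $\bfH(\psi)(\eta)\otimes\xi - \eta\otimes L^2(\bfB)(\psi)(\xi)$ of the ideal $I$ are sent to the corresponding generators of $I$ in $\|\bfH\otimes\bfK\|$. Hence $|v|_\bfH$ descends to the quotients, and by Proposition \ref{prop:L2MH decomposition} its domain is (a dense subspace of) $L^2|\bfB|_\bfH\cong|\bfH\otimes L^2(\bfB)|$.

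Next I would verify isometry. Using \eqref{eq:InnerProductForAll} (or equivalently the graphical formula of Lemma \ref{lem:GraphicalInnerProduct}), for $\eta_1\otimes\xi_1\in\bfH(a)\otimes_a L^2(\bfB)(a)$ and $\eta_2\otimes\xi_2\in\bfH(b)\otimes_b L^2(\bfB)(b)$ we have
\[
\langle |v|_\bfH(\eta_1\otimes\xi_1), |v|_\bfH(\eta_2\otimes\xi_2)\rangle
=
\tr_\cC\big((\xi_2^*\circ v^*\circ v\circ\xi_1)\circ(\eta_2^*\circ\eta_1)^{\op}\big).
\]
Since $v$ is an isometry, $v^*\circ v=\id_{L^2(\bfB)}$ as a natural transformation, so $\xi_2^*\circ v^*\circ v\circ\xi_1 = \xi_2^*\circ\xi_1$, and the right-hand side equals $\langle\eta_1\otimes\xi_1,\eta_2\otimes\xi_2\rangle_{|\bfH\otimes L^2(\bfB)|}$. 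Thus $|v|_\bfH$ preserves the inner product on the dense algebraic subspace, so it is bounded with norm $1$ on that subspace and extends uniquely to an isometry $L^2|\bfB|_\bfH\to|\bfH\otimes\bfK|$; uniqueness of the extension is automatic by density. Defining $|v^*|_\bfH:|\bfH\otimes\bfK|\to L^2|\bfB|_\bfH$ analogously by $\eta\otimes\zeta\mapsto\eta\otimes(v^*\circ\zeta)$ — which is well-defined by the same naturality argument applied to $v^*$ — one checks on elementary tensors that
\[
\langle |v|_\bfH(\eta_1\otimes\xi_1), \eta_2\otimes\zeta_2\rangle
=
\tr_\cC\big((\zeta_2^*\circ v\circ\xi_1)\circ(\eta_2^*\circ\eta_1)^{\op}\big)
=
\langle \eta_1\otimes\xi_1, |v^*|_\bfH(\eta_2\otimes\zeta_2)\rangle,
\]
using $(\zeta_2^*\circ v)^* = v^*\circ\zeta_2$, so $|v^*|_\bfH=|v|_\bfH^*$ after passing to the completions.

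The main obstacle I anticipate is purely bookkeeping rather than conceptual: one must be careful that $v$, a priori a morphism in $\Hilb(\cC)$, interacts correctly with the opposite-category conventions implicit in the realization (elements of $\bfH(a)$ are morphisms in $\Hilb(\cC^{\op})$ read top-to-bottom, while $v$ and $\xi$ live in $\Hilb(\cC)$ read bottom-to-top), so the identities $\bfK(\psi)(v\circ\xi)=v\circ L^2(\bfB)(\psi)(\xi)$ and $v^*\circ v=\id$ need to be applied on the correct side of the barrier membrane. Once the diagrammatic conventions of Section \ref{sec:Realizations} are respected, every step above reduces to naturality of $v$ and the isometry relation $v^*v=\id$, so I would present the argument concisely, pushing the routine diagram-chasing into a single appeal to Lemma \ref{lem:GraphicalInnerProduct}.
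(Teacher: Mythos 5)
Your proposal is correct and follows essentially the same route as the paper: both verify isometry by applying the inner product formula \eqref{eq:InnerProductForAll} together with $v^*\circ v=\id_{L^2(\bfB)}$, and both establish $|v|_\bfH^*=|v^*|_\bfH$ by the same direct pairing computation. The only addition is your explicit check that $|v|_\bfH$ preserves the ideal $I$ (via naturality of $v$), a well-definedness point the paper leaves implicit; it is a harmless and correct refinement.
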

\begin{proof}
For finitely many $\eta_i\otimes \xi_i \in \bfH(a_i)\otimes L^2(\bfB)(a_i)$, by \eqref{eq:InnerProductForAll},
\begin{align*}
\left\||v|_\bfH\left(\sum_i\eta_i\otimes \xi_i\right)\right\|^2_{|\bfH\otimes \bfK|} 
&=
\left\|\sum_i \eta_i\otimes v(\xi_i)\right\|^2_{|\bfH\otimes \bfK|} 
=
\sum_{i,j}
\tr_\cC( 
\underbrace{(\xi_j^*\circ v^*\circ v\circ \xi_i)}_{\in\cC(a_i,a_j)}
\circ 
\underbrace{(\eta_j^*\circ \eta_i)^{\op}}_{\in \cC(a_j,a_i)}
)
\\&=
\sum_{i,j}
\tr_\cC( 
(\xi_j^*\circ \xi_i)
\circ 
(\eta_j^*\circ \eta_i)^{\op}
)
=
\left\|\sum_i\eta_i\otimes \xi_i\right\|^2_{|\bfH\otimes \bfK|},
\end{align*}
and thus $|v|_\bfH$ uniquely extends to an isometry.
We calculate for all $\eta_1\otimes \xi_1 \in \bfH(a)\otimes L^2(\bfB)(a)$ and $\eta_2\otimes \xi_2 \in \bfH(b)\otimes \bfK(b)$,
$$
\langle |v|_\bfH(\eta_1 \otimes \xi_1), \eta_2\otimes \xi_2\rangle_{|\bfH \otimes \bfK|}
=
\tr_\cC( 
(\xi_2^*\circ v\circ \xi_1)
\circ 
(\eta_2^*\circ \eta_1)^{\op}
)
=
\langle \eta_1 \otimes \xi_1,|v^*|_\bfH( \eta_2\otimes \xi_2)\rangle_{|\bfH \otimes L^2(\bfB)|},
$$
and thus $|v|_\bfH^* = |v^*|_\bfH$.
\end{proof}

\begin{defn}
\label{defn:RealizationOfUCPMap}
Similar to our definition of $|v|_\bfH$, we can define $|\theta|_\bfH: |\bfA|_\bfH^\circ\rightarrow |\bfB|_\bfH^\circ$ by $|\theta|_\bfH(\eta\otimes f)=\eta \otimes \theta_{a}(f)$ for $\eta\otimes f\in \bfH(a)^{\circ} \otimes \bfA(a)$.
\end{defn}

\begin{prop}
\label{prop:AlgebraicStinespring}
As maps $|\bfA|_\bfH^\circ \to |\bfB|_\bfH^\circ$, we have
$|\theta|_\bfH=\Ad(|v|_\bfH)\circ |\pi|_{\bfH}$.
\end{prop}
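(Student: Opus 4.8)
The plan is to verify the identity $|\theta|_\bfH = \Ad(|v|_\bfH)\circ |\pi|_\bfH$ by evaluating both sides on a spanning set of elementary tensors $\eta\otimes f \in \bfH(a)^\circ\otimes \bfA(a)$ inside $|\bfA|_\bfH^\circ = \bigoplus_{a\in\Irr(\cC)}\bfH(a)^\circ\otimes\bfA(a)$, using the graphical calculus for realizations developed in Section~\ref{sec:Realizations}. Recall that $|\bfA|_\bfH^\circ$ acts on $L^2|\bfB|_\bfH\cong|\bfH\otimes L^2(\bfB)|$ (by Proposition~\ref{prop:L2MH decomposition}), and the Stinespring data gives $\theta_a = v^*\circ\pi^\bfA_a(\,\cdot\,)\circ v$ where $v:L^2(\bfB)\Rightarrow\bfK$ is the isometry and $\pi^\bfA:\bfA\Rightarrow\bfB(\bfK)$ the $*$-representation. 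Since $\theta$ is a categorical ucp morphism into $\bfB(L^2(\bfB))$, we also have the left multiplication action of $|\bfB|_\bfH^\circ$ on $L^2|\bfB|_\bfH = |\bfH\otimes L^2(\bfB)|$, coming from the representation $\pi^\bfB:\bfB\Rightarrow\bfB(L^2(\bfB))$ as in Lemma~\ref{lem:TauInnerProduct}.

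First I would fix an elementary tensor $\eta\otimes f\in\bfH(a)^\circ\otimes\bfA(a)$ and a vector $\zeta\otimes\xi\in\bfH(b)\otimes L^2(\bfB)(b)$ in $L^2|\bfB|_\bfH$, and compute the action of $|\theta|_\bfH(\eta\otimes f) = \eta\otimes\theta_a(f)$ on $\zeta\otimes\xi$ via Definition~\ref{defn:RepresentationsOfRealizations}. This produces a graphical diagram in which the coupon $\pi^\bfB_a(\theta_a(f))$ appears, stacked against $\zeta$ on the $\bfH$-side and $\xi$ on the $L^2(\bfB)$-side, with the $\cC$-permeable membrane separating them. The key algebraic input is that $\pi^\bfB$ is a $*$-algebra representation of the W*-algebra object $\bfB$ on $L^2(\bfB)$, so that $\pi^\bfB_a(\theta_a(f)) = \pi^\bfB_a(v^*\pi^\bfA_a(f)v)$; since $v:L^2(\bfB)\Rightarrow\bfK$ is a morphism in $\Hilb(\cC)$ (i.e.\ a bounded natural transformation), the operator $\pi^\bfB_a(\theta_a(f))$ equals $v^*\circ\pi^\bfA_a(f)\circ v$ as an endomorphism of the relevant object in $\Hilb(\cC)$ — this is where I would invoke that $\bfB(L^2(\bfB))$ is the W*-algebra object generated by $L^2(\bfB)$, so evaluating $\pi^\bfB$ at $\theta_a(f)$ literally unwinds to the composite $v^*\pi^\bfA_a(f)v$.

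Next I would recognize the resulting diagram as exactly $\Ad(|v|_\bfH)\circ|\pi|_\bfH$ applied to $\eta\otimes f$ and then to $\zeta\otimes\xi$: by Lemma~\ref{lem:RealizedIsometry}, $|v^*|_\bfH = |v|_\bfH^*$ and $|v|_\bfH(\zeta\otimes\xi) = \zeta\otimes(v\circ\xi)$, so $\Ad(|v|_\bfH)(|\pi|_\bfH(\eta\otimes f))$ acting on $\zeta\otimes\xi$ is $|v|_\bfH^*$ applied to $(\eta\otimes f)\blacktriangleright(\zeta\otimes v\xi)$, which by Definition~\ref{defn:RepresentationsOfRealizations} again produces a diagram with the coupon $\pi^\bfA_a(f)$ sandwiched between $v$ (from pushing $\xi$ forward) below and $v^*$ (from $|v|_\bfH^*$) above. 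Comparing the two diagrams, they agree precisely because $\pi^\bfB_a(\theta_a(f)) = v^*\pi^\bfA_a(f)v$; everything else — the $\bfH$-side couplings, the membrane, the target object $\bfH(a\otimes b)\otimes_{a\otimes b}L^2(\bfB)(a\otimes b)$ versus $\bfH(a\otimes b)\otimes_{a\otimes b}\bfK(a\otimes b)$ followed by $|v^*|_\bfH$ — matches term by term. Since elementary tensors span $|\bfA|_\bfH^\circ$ and such vectors $\zeta\otimes\xi$ span a dense subspace of $L^2|\bfB|_\bfH$, and both $|\theta|_\bfH$ and $\Ad(|v|_\bfH)\circ|\pi|_\bfH$ land in $|\bfB|_\bfH^\circ$ (the latter because conjugating the $\bfK$-representation by the isometry $v$ onto $L^2(\bfB)$ recovers precisely the $L^2(\bfB)$-data), equality as maps $|\bfA|_\bfH^\circ\to|\bfB|_\bfH^\circ$ follows.

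The main obstacle I anticipate is purely bookkeeping: correctly tracking which category each string-diagram fragment lives in ($\Hilb(\cC)$ for the $\bfB$/$\bfK$-side versus $\Hilb(\cC^{\op})$ for the $\bfH$-side, read top-to-bottom), making sure the isometry $v$ and its adjoint are inserted on the correct strand and with the correct orientation, and confirming that $|\theta|_\bfH$ as defined on elementary tensors in Definition~\ref{defn:RealizationOfUCPMap} is well-defined on the quotient $\|\bfH\otimes\bfA\|/I$ — but this last point follows from naturality of $\theta$ together with Lemma~\ref{lem:I maps to zero}, exactly as for the multiplication map, so it reduces to a routine check. No deep new idea is needed; the content is that Stinespring dilation commutes with the realization construction, which is forced by functoriality of $|\bfH\otimes-|$ and the fact that realization of the isometry $v$ is again an isometry.
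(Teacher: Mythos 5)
Your proposal is correct and follows essentially the same route as the paper: evaluate both sides on elementary tensors acting on $|\bfH\otimes L^2(\bfB)|$, insert the Stinespring identity $\pi^\bfB_a(\theta_a(f))=v^*\circ\pi^\bfA_a(f)\circ v$ into the diagram from Definition~\ref{defn:RepresentationsOfRealizations}, and identify the result via Lemma~\ref{lem:RealizedIsometry}. Your worry about well-definedness on the quotient $\|\bfH^\circ\otimes\bfA\|/I$ is moot since Definition~\ref{defn:RealizationOfUCPMap} already works with the sum over $\Irr(\cC)$, but it is harmless.
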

\begin{proof}
Recall that for all $g\in \bfA(a)$, we identify $\theta_a(g) \in \bfB(L^2(\bfB))(a)$ by
$$
\begin{tikzpicture}[baseline=-.1cm]
	\draw (0,-.8) -- (0,.8);
	\draw (-.6,-.8) -- (-.6,0);
	\roundNbox{unshaded}{(0,0)}{.3}{.6}{0}{$\theta_a(g)$}
	\node at (.5,-.6) {\scriptsize{$L^2(\bfB)$}};
	\node at (-.75,-.6) {\scriptsize{$\mathbf{a}$}};
	\node at (.5,.6) {\scriptsize{$L^2(\bfB)$}};
\end{tikzpicture}
=
\begin{tikzpicture}[baseline=-.1cm]
	\draw (0,-1.5) -- (0,1.5);
	\draw (-.6,-1.5) -- (-.6,0);
	\roundNbox{unshaded}{(0,.95)}{.25}{0}{0}{$v^*$}
	\roundNbox{unshaded}{(0,0)}{.3}{.6}{0}{$\pi^\bfA_a(g)$}
	\roundNbox{unshaded}{(0,-.95)}{.25}{0}{0}{$v$}
	\node at (.5,-1.4) {\scriptsize{$L^2(\bfB)$}};
	\node at (-.75,-.95) {\scriptsize{$\mathbf{a}$}};
	\node at (.15,-.5) {\scriptsize{$\bfK$}};
	\node at (.15,.5) {\scriptsize{$\bfK$}};
	\node at (.5,1.4) {\scriptsize{$L^2(\bfB)$}};
\end{tikzpicture}
\,.
$$
Thus we calculate that for $f \otimes g\in \bfH^\circ(a) \otimes \bfA(a)$ and $\eta\otimes \xi\in \bfH(b) \otimes L^2(\bfB)(b)$, we have
$$
|\theta|_\bfH(f \otimes g) \blacktriangleright (\eta\otimes \xi)
=
\begin{tikzpicture}[baseline=-.1cm]
    \draw[red] (-1,0) -- (.5,0);
    \draw (-.7,-1.6) -- (-.7,1.6);
    \draw (0,-2.3) -- (0,2.3);
    \roundNbox{unshaded}{(0,1.6)}{.3}{.7}{0}{$\theta_a(g)$}
    \roundNbox{unshaded}{(0,.6)}{.3}{0}{0}{$\xi$}
    \roundNbox{unshaded}{(0,-.6)}{.3}{0}{0}{$\eta$}
    \roundNbox{unshaded}{(0,-1.6)}{.3}{.7}{0}{$\pi^{\bfH^\circ}_a(f)$}
    \node at (.5,2.1) {\scriptsize{$L^2(\bfB)$}};
    \node at (.5,1.1) {\scriptsize{$L^2(\bfB)$}};
    \node at (-.85,.6) {\scriptsize{$\mathbf{a}$}};
    \node at (-.85,-.6) {\scriptsize{$\mathbf{a}$}};
    \node at (.15,.15) {\scriptsize{$\mathbf{b}$}};
    \node at (.15,-.15) {\scriptsize{$\mathbf{b}$}};
    \node at (.2,-1.1) {\scriptsize{$\bfH$}};
    \node at (.2,-2.1) {\scriptsize{$\bfH$}};
\end{tikzpicture}
=
\begin{tikzpicture}[baseline=.9cm]
    \draw[red] (-1,0) -- (.5,0);
    \draw (-.7,-1.6) -- (-.7,2.6);
    \draw (0,-2.3) -- (0,4.3);
    \roundNbox{unshaded}{(0,3.6)}{.3}{0}{0}{$v^*$}
    \roundNbox{unshaded}{(0,2.6)}{.3}{.7}{0}{$\pi^\bfA_a(g)$}
    \roundNbox{unshaded}{(0,1.6)}{.3}{0}{0}{$v$}
    \roundNbox{unshaded}{(0,.6)}{.3}{0}{0}{$\xi$}
    \roundNbox{unshaded}{(0,-.6)}{.3}{0}{0}{$\eta$}
    \roundNbox{unshaded}{(0,-1.6)}{.3}{.7}{0}{$\pi^{\bfH^\circ}_a(f)$}
    \node at (.5,4.1) {\scriptsize{$L^2(\bfB)$}};
    \node at (.2,3.1) {\scriptsize{$\bfK$}};
    \node at (.2,2.1) {\scriptsize{$\bfK$}};
    \node at (.5,1.1) {\scriptsize{$L^2(\bfB)$}};
    \node at (-.85,.6) {\scriptsize{$\mathbf{a}$}};
    \node at (-.85,-.6) {\scriptsize{$\mathbf{a}$}};
    \node at (.15,.15) {\scriptsize{$\mathbf{b}$}};
    \node at (.15,-.15) {\scriptsize{$\mathbf{b}$}};
    \node at (.2,-1.1) {\scriptsize{$\bfH$}};
    \node at (.2,-2.1) {\scriptsize{$\bfH$}};
\end{tikzpicture}
\,.
$$
The right hand side above is equal to 
$\Ad(|v|_\bfH)[ |\pi|_\bfH)(f \otimes g)]\blacktriangleright (\eta\otimes \xi)$ 
by Lemma \ref{lem:RealizedIsometry}.
\end{proof}

This proposition has the following two nice corollaries.

\begin{cor}
\label{cor:PositivityOfTheta}
The map $|\theta|_\bfH:|\bfA|^{\circ}_{\bfH}\rightarrow |\bfB|_\bfH\subset B(L^{2}|\bfB|_{\bfH})$ uniquely extends to a ucp map on the universal realized \emph{C*}-algebra \emph{C*}$|\bfH^\circ \otimes \bfA|$
from Corollary \ref{cor:UniversalRepresentation}, which we will again denote by $|\theta|_\bfH$.
\end{cor}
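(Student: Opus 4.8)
The plan is to package Proposition~\ref{prop:AlgebraicStinespring} together with the universal property of $\text{C*}|\bfH^\circ\otimes\bfA|$ from Corollary~\ref{cor:UniversalRepresentation}. Recall that $\theta$ was dilated as $\theta=\Ad(v)\circ\pi^\bfA$ with $\pi^\bfA:\bfA\Rightarrow\bfB(\bfK)$ a $*$-representation and $v:L^2(\bfB)\Rightarrow\bfK$ an isometry; that $|\pi|_\bfH$ is the bounded unital $*$-representation of $|\bfH^\circ\otimes\bfA|=|\bfA|_\bfH^\circ$ on $|\bfH\otimes\bfK|$ furnished by Theorem~\ref{thm:BoundedRepresentationOfRealization}; that $|v|_\bfH:L^2|\bfB|_\bfH\to|\bfH\otimes\bfK|$ is the isometry of Lemma~\ref{lem:RealizedIsometry}; and that Proposition~\ref{prop:AlgebraicStinespring} identifies $|\theta|_\bfH=\Ad(|v|_\bfH)\circ|\pi|_\bfH$ on $|\bfA|_\bfH^\circ$.

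First I would apply Corollary~\ref{cor:UniversalRepresentation} to the realized Hilbert space representation $|\pi|_\bfH$ of $|\bfH^\circ\otimes\bfA|$ on $|\bfH\otimes\bfK|$: it extends to a bounded unital $*$-representation $\widetilde{\pi}$ of $\text{C*}|\bfH^\circ\otimes\bfA|$ on $|\bfH\otimes\bfK|$. Next, since $|v|_\bfH$ is an isometry we have $|v|_\bfH^*|v|_\bfH=\id_{L^2|\bfB|_\bfH}$, so $\Ad(|v|_\bfH)=|v|_\bfH^*(\,\cdot\,)|v|_\bfH$ is a well-defined ucp map $B(|\bfH\otimes\bfK|)\to B(L^2|\bfB|_\bfH)$. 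The composite $\Ad(|v|_\bfH)\circ\widetilde{\pi}:\text{C*}|\bfH^\circ\otimes\bfA|\to B(L^2|\bfB|_\bfH)$ is then ucp, being a unital $*$-homomorphism followed by compression by an isometry; I would take this composite as the asserted extension.

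It then remains to verify that it extends $|\theta|_\bfH$ and does so uniquely. On the dense $*$-subalgebra $|\bfA|_\bfH^\circ\subset\text{C*}|\bfH^\circ\otimes\bfA|$ one has $\Ad(|v|_\bfH)\circ\widetilde{\pi}=\Ad(|v|_\bfH)\circ|\pi|_\bfH=|\theta|_\bfH$ by Proposition~\ref{prop:AlgebraicStinespring}; since a ucp map is norm contractive and $|\bfA|_\bfH^\circ$ is dense in the universal norm, this determines the extension uniquely. Moreover the image lands in $|\bfB|_\bfH$, as the latter is norm-closed in $B(L^2|\bfB|_\bfH)$ and already contains $|\theta|_\bfH(|\bfA|_\bfH^\circ)$, hence contains the norm-closure of that image, which is the range of the extension. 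In particular the extension is independent of the chosen Stinespring dilation.

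I do not anticipate a serious obstacle here: the content is essentially bookkeeping with results already in hand. The two points I would be careful to record explicitly are (i) that $\Ad(|v|_\bfH)$ is genuinely defined on all of $B(|\bfH\otimes\bfK|)$ with values in $B(L^2|\bfB|_\bfH)$, which is immediate from the isometry identity $|v|_\bfH^*|v|_\bfH=\id$ of Lemma~\ref{lem:RealizedIsometry}; and (ii) that $|\bfA|_\bfH^\circ$ is dense in $\text{C*}|\bfH^\circ\otimes\bfA|$ in the universal norm, so that the bounded ucp extension is automatic from contractivity and its uniqueness follows from density.
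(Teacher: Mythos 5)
Your proposal is correct and follows essentially the same route as the paper: extend $|\pi|_\bfH$ to $\text{C*}|\bfH^\circ\otimes\bfA|$ via Corollary \ref{cor:UniversalRepresentation}, compose with the ucp map $\Ad(|v|_\bfH)$, and identify the result with $|\theta|_\bfH$ on the dense subalgebra via Proposition \ref{prop:AlgebraicStinespring}. Your added remarks on uniqueness by norm-density and on the range landing in the norm-closed algebra $|\bfB|_\bfH$ are correct and only make explicit what the paper leaves implicit.
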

\begin{proof}
The Stinespring representation $|\pi|_\bfH: |\bfA|_\bfH^\circ \to B(|\bfH\otimes\bfK|)$ extends to the universal realized C*-algebra C*$|\bfH^\circ \otimes \bfA|$ by Corollary \ref{cor:UniversalRepresentation}, and thus so does the bounded ucp map $\Ad(|v|_\bfH)\circ |\pi|_{\bfH} : |\bfA|_\bfH^\circ \to B(L^2|\bfB|_\bfH)$.
\end{proof}

\begin{rem}
Since $|\theta|_\bfH$ is ucp on C*$|\bfH^\circ \otimes \bfA|$, it satisfies Kadison's inequality on $|\bfA|_\bfH^{\circ}$.
That is, for all $x\in|\bfA|_\bfH^{\circ}$, 
$|\theta|_\bfH(x^{*})|\theta|_\bfH(x)\leq |\theta|_\bfH(x^{*}x)$ in $|\bfB|_\bfH\subset B(L^2|\bfB|_\bfH)$ \cite[Cor.~IV.3.8]{MR1873025}.
\end{rem}


\begin{lem}
\label{lem:StatePreserving}
The map $|\theta|_\bfH$ preserves the canonical states of $|\bfA|^\circ_\bfH$ and $|\bfB|_\bfH^\circ$.
\end{lem}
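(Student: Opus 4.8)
The plan is to show that $|\tau|_\bfB^\circ \circ |\theta|_\bfH = |\tau|_\bfA^\circ$ as linear functionals on $|\bfA|_\bfH^\circ$. Since both sides are linear, it suffices to check the identity on each summand $\bfH(c)^\circ \otimes \bfA(c)$ for $c\in\Irr(\cC)$. First I would observe that for $c \neq 1_\cC$, the $c$-component $\eta\otimes f \in \bfH(c)^\circ\otimes \bfA(c)$ is mapped by $|\theta|_\bfH$ to $\eta\otimes \theta_c(f) \in \bfH(c)^\circ \otimes \bfB(c)$ (Definition \ref{defn:RealizationOfUCPMap}), which again lives in the $c$-component. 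Because the canonical state $|\tau|_\bfB^\circ$ only reads off the $1_\cC$-component (Sweedler notation, Notation \ref{nota:Sweedler}), it vanishes on the $c$-component for $c\neq 1_\cC$. Hence both $|\tau|_\bfB^\circ(|\theta|_\bfH(\eta\otimes f))$ and $|\tau|_\bfA^\circ(\eta\otimes f)$ are zero for $c \neq 1_\cC$.

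It remains to check the $c = 1_\cC$ summand. Here $\bfA(1_\cC) \cong \bbC = \bbC i_\bfA$ and $\bfB(1_\cC)\cong \bbC = \bbC i_\bfB$ since $\bfA, \bfB$ are connected. An element of $\bfH(1_\cC)^\circ \otimes \bfA(1_\cC)$ is of the form $n\Omega \otimes i_\bfA$ for $n\in N$, and $|\tau|_\bfA^\circ(n\Omega\otimes i_\bfA) = \tau(n)$ by definition of $|\tau|_\bfA^\circ$. Now $|\theta|_\bfH(n\Omega\otimes i_\bfA) = n\Omega \otimes \theta_{1_\cC}(i_\bfA)$. Since $\theta$ is a categorical \emph{ucp} morphism, $\theta_{1_\cC}(i_\bfA) = i_\bfB$ by the unitality condition in the definition of $\DisInc$/ucp morphisms. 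Therefore $|\theta|_\bfH(n\Omega\otimes i_\bfA) = n\Omega\otimes i_\bfB$, and $|\tau|_\bfB^\circ(n\Omega\otimes i_\bfB) = \tau(n) = |\tau|_\bfA^\circ(n\Omega\otimes i_\bfA)$, as desired.

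The verification is essentially bookkeeping: the only substantive ingredient is that $|\theta|_\bfH$ respects the internal grading by $\Irr(\cC)$ (so non-trivial isotypic components stay non-trivial) together with the normalization $\theta_{1_\cC}(i_\bfA) = i_\bfB$. I do not anticipate a real obstacle here; the one point to be slightly careful about is to use the \emph{algebraic} realizations and the functionals $|\tau|_\bfH^\circ$ rather than their normal extensions to $|\bfA|_\bfH$, $|\bfB|_\bfH$, since the statement is phrased for the algebraic realizations $|\bfA|_\bfH^\circ$, $|\bfB|_\bfH^\circ$. (If one wanted the statement for the von Neumann realizations with the states $|\tau|_\bfH = \tau\circ E_N$, it would follow from this together with normality of $|\theta|_\bfH$ and $\|\cdot\|_2$-density of $|\bfA|_\bfH^\circ$ in $|\bfA|_\bfH$, via Corollary \ref{cor:IsoToGNS}.)
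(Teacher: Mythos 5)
Your proof is correct and follows essentially the same route as the paper's: both arguments reduce to the observation that $|\theta|_\bfH$ preserves the $\Irr(\cC)$-grading, that the canonical state only sees the $1_\cC$-component, and that connectedness plus the unitality condition $\theta_{1_\cC}(i_\bfA)=i_\bfB$ force the states to agree there. The paper's version is just a more compressed writeup of the same componentwise check.
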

\begin{proof}
Since $\bfA(1_\cC) \cong\bbC \cong \bfB(1_\cC)$ and since $\theta(i_\bfA) = i_\bfB$, 
for all $f\in \bfA(1_\cC)$, $\phi_\bfA(f) = \phi_\bfB(\theta_{1_\cC}(f))$.
Thus for $x=\sum_{a\in\Irr(\cC)} x^a_{(1)}\otimes x^a_{(2)} \in \bfA(a)\otimes \bfH^\circ(a) \subset |\bfA|_\bfH^\circ$,
we have $|\tau|_\bfH^\circ(x)=\tau(x^{1_\cC})=(|\tau|_\bfH^\circ\circ |\theta|_\bfH)(x)$.
\end{proof}

We may now refer to the canonical states of $|\bfA|^\circ_\bfH$ and $|\bfB|_\bfH^\circ$ both simultaneously as $|\tau|_\bfH^\circ$ and no confusion can arise.
Applying $|\tau|_\bfH^\circ$ to both sides of Kadison's inequality together with Lemma \ref{lem:StatePreserving} yields the following.

\begin{cor}
\label{cor:UCPGivesContraction}
The map $|\theta|_\bfH$ extends to a contraction $L^2|\theta|_\bfH: L^{2}|\bfA|_{\bfH}\rightarrow L^{2}|\bfB|_{\bfH}$ satisfying $L^2|\theta|_\bfH(x\Omega) = |\theta|_\bfH(x)\Omega$ for all $x\in |\bfA|_{\bfH}^\circ$.
\end{cor}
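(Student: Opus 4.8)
The plan is to show that $|\theta|_\bfH$ is $\|\cdot\|_2$-contractive on $|\bfA|_\bfH^\circ$, which immediately gives the bounded extension $L^2|\theta|_\bfH$ to the Hilbert space completions by density of $|\bfA|_\bfH^\circ\Omega$ in $L^2|\bfA|_\bfH$ (Corollary \ref{cor:IsoToGNS}). The contractivity is a formal consequence of Kadison's inequality together with state-preservation. Concretely, for $x\in|\bfA|_\bfH^\circ$, the preceding remark (Kadison's inequality, valid since $|\theta|_\bfH$ is ucp on $\text{C*}|\bfH^\circ\otimes\bfA|$ by Corollary \ref{cor:PositivityOfTheta}) gives $|\theta|_\bfH(x)^*|\theta|_\bfH(x)\leq |\theta|_\bfH(x^*x)$ in $|\bfB|_\bfH$. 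Applying the faithful normal state $|\tau|_\bfH$ (equivalently its restriction $|\tau|_\bfH^\circ$ to $|\bfB|_\bfH^\circ$, which contains $|\theta|_\bfH(x^*x)=|\theta|_\bfH(x)^*|\theta|_\bfH(x)$ since $x^*x\in|\bfA|_\bfH^\circ$) and using Lemma \ref{lem:StatePreserving}, we get
$$
\||\theta|_\bfH(x)\Omega\|_2^2
=
|\tau|_\bfH^\circ(|\theta|_\bfH(x)^*|\theta|_\bfH(x))
\leq
|\tau|_\bfH^\circ(|\theta|_\bfH(x^*x))
=
|\tau|_\bfH^\circ(x^*x)
=
\|x\Omega\|_2^2.
$$

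Thus the densely defined map $x\Omega\mapsto |\theta|_\bfH(x)\Omega$ on $|\bfA|_\bfH^\circ\Omega\subset L^2|\bfA|_\bfH$ is a well-defined linear contraction (well-defined because $x\Omega=0$ implies $x=0$ by faithfulness of $|\tau|_\bfH$, hence $|\theta|_\bfH(x)=0$), and it therefore extends uniquely to a contraction $L^2|\theta|_\bfH:L^2|\bfA|_\bfH\to L^2|\bfB|_\bfH$ by the usual BLT argument. The defining identity $L^2|\theta|_\bfH(x\Omega)=|\theta|_\bfH(x)\Omega$ holds on the dense subspace $|\bfA|_\bfH^\circ\Omega$ by construction, which is all that is claimed.

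I do not anticipate a serious obstacle here; the only point requiring a moment of care is making sure we may legitimately apply $|\tau|_\bfH^\circ$ to $|\theta|_\bfH(x^*x)$, i.e., that this element lies in the domain $|\bfB|_\bfH^\circ$ where $|\tau|_\bfH^\circ$ is literally defined. This is fine because $|\theta|_\bfH$ by Definition \ref{defn:RealizationOfUCPMap} maps $|\bfA|_\bfH^\circ$ into $|\bfB|_\bfH^\circ$, and $x^*x\in|\bfA|_\bfH^\circ$ since $|\bfA|_\bfH^\circ$ is a $*$-algebra; alternatively one simply notes $|\tau|_\bfH$ is the vector state at $\Omega\otimes i_\bfB$ and evaluates directly. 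Everything else is the standard ucp/GNS contraction argument.
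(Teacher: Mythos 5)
Your argument is correct and is essentially identical to the paper's proof: both apply Kadison's inequality (valid by Corollary \ref{cor:PositivityOfTheta} and the subsequent remark) followed by the state-preservation of Lemma \ref{lem:StatePreserving} to get $\||\theta|_\bfH(x)\Omega\|_2^2 \leq |\tau|_\bfH^\circ(|\theta|_\bfH(x^*x)) = |\tau|_\bfH^\circ(x^*x) = \|x\Omega\|_2^2$, then extend by density. Your added remarks on well-definedness and on why $|\theta|_\bfH(x^*x)$ lies in the domain of $|\tau|_\bfH^\circ$ are correct and only make explicit what the paper leaves implicit.
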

\begin{proof}
For all $x\in |\bfA|_\bfF^\circ$, we have
\begin{align*}
\|L^2|\theta|_\bfH(x\Omega)\|^2_{L^2|\bfB|_\bfH}
&=
\|\,|\theta|_\bfH(x)\Omega\,\|^2_{L^2|\bfB|_\bfH}
=
|\tau|_\bfH^\circ(|\theta|_\bfH(x^*)\theta|_\bfH(x))
\\&\leq
|\tau|_\bfH^\circ(|\theta|_\bfH(x^*x))
=
|\tau|_\bfH^\circ(x^*x)
=
\|x\Omega\|^2_{L^2|\bfA|_\bfH}.
\qedhere
\end{align*}
\end{proof}

Using Proposition \ref{prop:ExtendUCPMaps} and Corollary \ref{cor:UCPGivesContraction}, we prove the following result.

\begin{prop}
\label{prop:ExtendUCP}
Let $\theta:\bfA\Rightarrow \bfB$ be a categorical ucp morphism.
Then $|\theta|_\bfH:|\bfA|_\bfH^\circ\rightarrow |\bfB|_\bfH^{\circ}$ extends uniquely to a normal ucp map
$|\theta|_{\bfH}:|\bfA|_{\bfH}\rightarrow |\bfB|_{\bfH}$ which is $N-N$ bilinear.
\end{prop}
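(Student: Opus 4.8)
The plan is to deduce the proposition from the general extension result Proposition~\ref{prop:ExtendUCPMaps}, applied to the $*$-algebra $A=|\bfA|_\bfH^\circ=|\bfH^\circ\otimes\bfA|$ together with its universal realized C*-algebra $\text{C*}|\bfH^\circ\otimes\bfA|$ and the class $\sC$ of bounded realized Hilbert space representations (Corollary~\ref{cor:UniversalRepresentation}). For $\pi_1$ I would take the GNS representation of $A$ on $H_1:=L^2|\bfA|_\bfH$, which lies in $\sC$ since $L^2|\bfA|_\bfH\cong|\bfH\otimes L^2(\bfA)|$ by Proposition~\ref{prop:L2MH decomposition} and satisfies $\pi_1(A)''=|\bfA|_\bfH$. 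Running $\theta$ through the Stinespring Dilation Theorem in $\cC$ as in the discussion before Proposition~\ref{prop:AlgebraicStinespring} produces $\bfK\in\Hilb(\cC)$, a $*$-representation $\pi^\bfA:\bfA\Rightarrow\bfB(\bfK)$, and an isometry $v:L^2(\bfB)\Rightarrow\bfK$ with $\theta=\Ad(v)\circ\pi^\bfA$; realizing everything gives $H_2:=|\bfH\otimes\bfK|$, $\pi_2:=|\pi|_\bfH\in\sC$, $K:=L^2|\bfB|_\bfH$, and $T:=\Ad(|v|_\bfH):B(H_2)\to B(K)$, where $|v|_\bfH$ is the realized isometry of Lemma~\ref{lem:RealizedIsometry}. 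Then $T$ is a normal ucp map (compression by an isometry) and $T\circ\pi_2=|\theta|_\bfH$ on $A$ by Proposition~\ref{prop:AlgebraicStinespring}, so Proposition~\ref{prop:ExtendUCPMaps} applies once its matrix-coefficient hypothesis is checked.

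For that hypothesis I would take the dense subspace $D:=|\bfB|_\bfH^\circ\Omega\subset L^2|\bfB|_\bfH$ and verify that for $\xi=b\Omega$ with $b\in|\bfB|_\bfH^\circ$ the functional $a\mapsto\omega_{b\Omega}(|\theta|_\bfH(a))$ on $A$ is a finite sum of matrix coefficients of $\pi_1$. By Lemma~\ref{lem:TauInnerProduct} this functional equals $a\mapsto|\tau|_\bfH^\circ(b^*\,|\theta|_\bfH(a)\,b)$. Since $b$ is a fixed finite element and $|\tau|_\bfH^\circ$ only records the $1_\cC$-component, the realization product formula of Definition~\ref{defn:AlgebraRealization} shows this functional vanishes on all but finitely many summands $\bfH^\circ(c)\otimes\bfA(c)$ of $|\bfA|_\bfH^\circ$; and on each such summand, because $\bfA(c)$ is finite dimensional (Proposition~\ref{prop:DimensionBound}) and $|\theta|_\bfH$ acts there as $\id_{\bfH^\circ(c)}\otimes\theta_c$, unwinding the graphical formulas for the realized action and for $|\tau|_\bfH^\circ$ (Lemma~\ref{lem:GraphicalInnerProduct} and Lemma~\ref{lem:TauInnerProduct}) lets me write $\omega_{b\Omega}(|\theta|_\bfH(a))=\sum_i|\tau|_\bfH^\circ(y_i^*a)=\sum_i\langle\pi_1(a)\Omega,y_i\Omega\rangle$ for finitely many $y_i\in|\bfA|_\bfH^\circ$. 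Linearity and the finitely many summands of a general $\xi$ then cover all of $D$.

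Proposition~\ref{prop:ExtendUCPMaps} then gives a unique normal ucp extension $|\theta|_\bfH:|\bfA|_\bfH=\pi_1(A)''\to B(L^2|\bfB|_\bfH)$; since it is normal and agrees on the $\sigma$-weakly dense subalgebra $|\bfA|_\bfH^\circ$ with the original map, whose range lies in $|\bfB|_\bfH$, the extension maps $|\bfA|_\bfH$ into $|\bfB|_\bfH$. Finally, $N$-$N$ bilinearity follows from Remarks~\ref{rems:ExtendUCPMaps}(1): $N\subset|\bfA|_\bfH^\circ$ is a $*$-subalgebra, $K=L^2|\bfB|_\bfH$ is an $N$-$N$ bimodule, $|\theta|_\bfH$ is $N$-$N$ bilinear on $A$ by Definition~\ref{defn:RealizationOfUCPMap} (left and right multiplication by $N$ only touch the $\bfH^\circ$-leg and the fusion channels, with which the natural transformation $\theta$ commutes), and $T=\Ad(|v|_\bfH)$ is $N$-$N$ bilinear since $|v|_\bfH$ intertwines the $N$-actions by its componentwise definition.

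I expect the matrix-coefficient verification of the second paragraph to be the main obstacle: although $|\theta|_\bfH$ is manufactured from the possibly non-normal representation $|\pi|_\bfH$ on $|\bfH\otimes\bfK|$, one must see concretely that each vector functional $\omega_{b\Omega}\circ|\theta|_\bfH$ is assembled from only finitely many matrix coefficients of the GNS representation on $L^2|\bfA|_\bfH$, and the two ingredients that make this work — finiteness of the support over $\Irr(\cC)$ and finite dimensionality of the spaces $\bfA(c)$ — have to be extracted from the graphical calculus for the realization.
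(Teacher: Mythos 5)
Your overall skeleton is exactly the paper's: feed the Stinespring data $\theta=\Ad(v)\circ\pi^\bfA$ through the realization, take $A=|\bfA|_\bfH^\circ$, $\pi_1$ the GNS representation on $L^2|\bfA|_\bfH$, $\pi_2=|\pi|_\bfH$ on $|\bfH\otimes\bfK|$, $K=L^2|\bfB|_\bfH$, $T=\Ad(|v|_\bfH)$, apply Proposition \ref{prop:ExtendUCPMaps}, and get $N$-$N$ bilinearity from Remarks \ref{rems:ExtendUCPMaps}(1). The divergence, and the gap, is in the one step you correctly flag as the crux: the verification of the matrix-coefficient hypothesis. You take $D=|\bfB|_\bfH^\circ\Omega$ and assert that $\omega_{b\Omega}(|\theta|_\bfH(a))=|\tau|_\bfH^\circ(b^*\,|\theta|_\bfH(a)\,b)=\sum_i|\tau|_\bfH^\circ(y_i^*a)$ with finitely many $y_i\in|\bfA|_\bfH^\circ$. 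Finiteness of the support over $\Irr(\cC)$ and finite dimensionality of $\bfA(c)$ do take care of the second tensor leg, but they do nothing for the first leg: $\bfH(c)^\circ$ is infinite dimensional, and an element of $\bfH(c)^\circ$ acts on $L^2|\bfB|_\bfH$ with operator norm controlled by $\|L_\eta\|$, not by $\|\eta\|_{\bfH(c)}$. So the functional $\eta\otimes f\mapsto\langle(\eta\otimes\theta_c(f))\,b\Omega,\,b\Omega\rangle$ is not \emph{a priori} continuous for the Hilbert-space norm on the summand $\bfH(c)\otimes_c L^2(\bfA)(c)$, and without that continuity it need not be a matrix coefficient $\langle\pi_1(\cdot)\eta_i,\zeta_i\rangle$ at all. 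The identity can in fact be salvaged (one unwinds \eqref{eq:InnerProductForAll} to see the $\eta$-dependence factors as $\eta\mapsto\langle\eta, R_\xi^*(\,\cdot\,)\rangle_{\bfH(c)}$ through the finite-dimensional space $\cC(d,c\otimes d)$, using that $b$ has bounded first leg), but neither Lemma \ref{lem:GraphicalInnerProduct} nor Lemma \ref{lem:TauInnerProduct} delivers this, and the non-commutation of $b$ with $|\theta|_\bfH(a)$ means you cannot simply cite the product formula. As written, the main obstacle you identify is named but not overcome.

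The paper sidesteps all of this with a different dense subspace: $D=|\bfB|_\bfH'\Omega$, dense by Tomita--Takesaki theory. For $\xi\in|\bfB|_\bfH'$ one has
$$
\omega_{\xi\Omega}(|\theta|_\bfH(x))
=\langle \xi\,|\theta|_\bfH(x)\Omega,\xi\Omega\rangle
=\langle L^2|\theta|_\bfH(x\Omega),\xi^*\xi\Omega\rangle
=\langle \pi_1(x)\Omega,(L^2|\theta|_\bfH)^*(\xi^*\xi\Omega)\rangle,
$$
using that $\xi$ commutes with $|\theta|_\bfH(x)\in|\bfB|_\bfH$ and that $L^2|\theta|_\bfH$ is a contraction (Corollary \ref{cor:UCPGivesContraction}), hence has a bounded adjoint. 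This exhibits each vector state on $D$ as a \emph{single} matrix coefficient of $\pi_1$ with no graphical computation and no continuity issue. If you want to keep your choice of $D$, you must supply the $L^2$-continuity argument on each isotypic summand; otherwise, switch to the commutant.
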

\begin{proof}
Recall that $|\bfB|^{\prime}_{\bfH}\Omega=J|\bfB|_\bfH\Omega$ is dense in $L^{2}|\bfB|_{\bfH}$ by Tomita-Takesaki theory.
Now for any $x\in |\bfA|_{\bfH}^\circ$ and $\xi\in |\bfB|^{\prime}_{\bfF}$, by Corollary \ref{cor:UCPGivesContraction}, we have
\begin{align*}
\omega_{\xi\Omega}(|\theta|_\bfH(x))
&=
\langle |\theta|_\bfH(x) \xi \Omega , \eta \Omega\rangle_{L^{2}|\bfB|_{\bfF}}
=
\langle \xi |\theta|_\bfH(x) \Omega , \eta \Omega\rangle_{L^{2}|\bfB|_{\bfF}}
\\&=
\langle L^2|\theta|_\bfH(x\Omega) , \xi^{*}\eta \Omega\rangle_{L^{2}|\bfB|_{\bfF}}
=
\langle x\Omega , L^2|\theta|_\bfH^{*}(\xi^{*}\eta \Omega)\rangle_{L^{2}|\bfA|_{\bfF}}.
\end{align*}
We may thus extend $|\theta|_\bfH$ to $|\bfA|_\bfH$ by Proposition \ref{prop:ExtendUCPMaps}, where 
$A=|\bfA|_\bfH^\circ$,
$(\pi_1, H_1)$ is the standard representation on $L^2|\bfA|_\bfH$,
$(\pi_2, H_2)$ is the representation $(|\pi|_\bfH, |\bfH\otimes \bfK|)$,
$K = L^2|\bfB|_\bfH$,
$T : B(H_2) \to B(K)$ is $\Ad(|v|_\bfH)$, 
and $D = |\bfM|_\bfH'\Omega \subset L^2|\bfB|_\bfH$.
Finally, $|\theta|_\bfH$ is $N-N$ bilinear by (1) of Remarks \ref{rems:ExtendUCPMaps}.
\end{proof}

\begin{rem}
By combining (1) and (2) of Remarks \ref{rems:ExtendUCPMaps}, we see that if $\theta: \bfA \Rightarrow \bfB$ is a unital $*$-algebra morphism, then $|\theta|_\bfH$ is an $N-N$ bilinear unital $*$-algebra homomorphism.
\end{rem}

\begin{thm} 
We have $|\cdot|_\bfH:\ConAlg\rightarrow \DisInc_\bfH$ is a functor.
\end{thm}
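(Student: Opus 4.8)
\begin{proof}
The plan is to check the three requirements for $|\cdot|_\bfH$ to be a functor: that it sends objects of $\ConAlg$ into $\DisInc_\bfH$, that it sends morphisms into the appropriate hom-sets of $\DisInc_\bfH$, and that it preserves identities and composition.

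\emph{Objects.} Given $\bfA\in \ConAlg$, Theorem \ref{thm:Quasiregular} shows that $(N\subseteq |\bfA|_\bfH, E_N)$ is an irreducible discrete inclusion, with canonical state $|\tau|_\bfH = \tau\circ E_N$. To see that it lies in the full subcategory $\DisInc_\bfH$, we apply Corollary \ref{cor:IsoToGNS} and Proposition \ref{prop:L2MH decomposition}, which identify $L^2(|\bfA|_\bfH, |\tau|_\bfH)\cong L^2|\bfA|_\bfH \cong \bigoplus_{a\in\Irr(\cC)}\bfH(a)\otimes_a \bfA(a)$ as $N-N$ bimodules. Since each $\bfA(a)$ is a finite dimensional Hilbert space (Proposition \ref{prop:DimensionBound}), each summand $\bfH(a)\otimes_a\bfA(a)$ is a finite direct sum of copies of $\bfH(a)\in\bfH(\cC)$, so $L^2(|\bfA|_\bfH,|\tau|_\bfH)\in \Hilb(\bfH(\cC))$, as required.

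\emph{Morphisms.} Let $\theta:\bfA\Rightarrow\bfB$ be a categorical ucp morphism. Proposition \ref{prop:ExtendUCP} produces a normal, $N-N$ bilinear ucp map $|\theta|_\bfH: |\bfA|_\bfH\to |\bfB|_\bfH$, extending the map of Definition \ref{defn:RealizationOfUCPMap}. It remains to verify the state-preservation condition defining morphisms in $\DisInc$, namely $|\tau|_\bfH\circ |\theta|_\bfH = |\tau|_\bfH$. Lemma \ref{lem:StatePreserving} gives this identity on the $*$-subalgebra $|\bfA|_\bfH^\circ$, which is $\sigma$-weakly dense in $|\bfA|_\bfH$ by the bicommutant theorem; since both $|\tau|_\bfH$ and $|\theta|_\bfH$ are normal, hence $\sigma$-weakly continuous, the equality extends to all of $|\bfA|_\bfH$. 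Thus $|\theta|_\bfH$ is a morphism in $\DisInc_\bfH$ from $(N\subseteq |\bfA|_\bfH, E_N)$ to $(N\subseteq |\bfB|_\bfH, E_N)$.

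\emph{Functoriality.} On the generating subalgebra $|\bfA|_\bfH^\circ$, Definition \ref{defn:RealizationOfUCPMap} gives $|\id_\bfA|_\bfH(\eta\otimes f)=\eta\otimes(\id_\bfA)_a(f)=\eta\otimes f$, so $|\id_\bfA|_\bfH$ restricts to the identity there and hence equals $\id_{|\bfA|_\bfH}$ by the uniqueness of the normal extension in Proposition \ref{prop:ExtendUCP}. For composition, given categorical ucp morphisms $\theta_1:\bfA\Rightarrow\bfB$ and $\theta_2:\bfB\Rightarrow\bfC$, the composite $\theta_2\circ\theta_1$ is again a categorical ucp morphism, and for $\eta\otimes f\in \bfH(a)^\circ\otimes\bfA(a)$ we compute $(|\theta_2|_\bfH\circ|\theta_1|_\bfH)(\eta\otimes f)=|\theta_2|_\bfH(\eta\otimes(\theta_1)_a(f))=\eta\otimes(\theta_2)_a((\theta_1)_a(f))=\eta\otimes(\theta_2\circ\theta_1)_a(f)=|\theta_2\circ\theta_1|_\bfH(\eta\otimes f)$. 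The composite of normal maps is normal, so $|\theta_2|_\bfH\circ|\theta_1|_\bfH$ is a normal map agreeing on the $\sigma$-weakly dense subalgebra $|\bfA|_\bfH^\circ$ with $|\theta_2\circ\theta_1|_\bfH$; uniqueness of the normal extension forces $|\theta_2|_\bfH\circ|\theta_1|_\bfH=|\theta_2\circ\theta_1|_\bfH$ on all of $|\bfA|_\bfH$. This completes the verification that $|\cdot|_\bfH:\ConAlg\to\DisInc_\bfH$ is a functor.
\end{proof}

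\noindent\textbf{Remark on the approach.} Nearly all of the work was done in the preceding subsections; the only points requiring genuine care are (i) that the support condition $L^2(M,\phi)\in\Hilb(\bfH(\cC))$ holds, which follows from the explicit $N-N$ bimodule decomposition of $L^2|\bfA|_\bfH$ together with the finite-dimensionality of the fibers $\bfA(a)$, and (ii) that state-preservation and the functoriality identities, which are immediate on the dense subalgebra $|\bfA|_\bfH^\circ$, propagate to $|\bfA|_\bfH$ --- this is exactly where the uniqueness clause of Proposition \ref{prop:ExtendUCP} and normality of all maps involved are used. Everything else is bookkeeping.
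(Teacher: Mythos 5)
Your proof is correct and follows essentially the same route as the paper's: objects are handled by Theorem \ref{thm:Quasiregular}, morphisms by Lemma \ref{lem:StatePreserving} and Proposition \ref{prop:ExtendUCP}, and functoriality by checking identities and composition on the strongly* dense algebraic realization and invoking the uniqueness clause of Proposition \ref{prop:ExtendUCP}. You additionally spell out two details the paper leaves implicit --- the verification that $L^2|\bfA|_\bfH$ is supported on $\bfH(\cC)$ via Proposition \ref{prop:L2MH decomposition}, and the propagation of state preservation from $|\bfA|_\bfH^\circ$ to $|\bfA|_\bfH$ by normality --- both of which are correct.
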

\begin{proof}
Given $\bfA \in \ConAlg$, the von Neumann algebra $|\bfA|_\bfH \in \DisInc_\bfH$ by Theorem \ref{thm:Quasiregular}.
Given a categorical ucp morphism $\theta \in \ConAlg(\bfA,\bfB)$, $|\theta|_\bfH \in \DisInc_\bfH(|\bfA|_\bfH, |\bfB|_\bfH)$ by Lemma \ref{lem:StatePreserving} and Proposition \ref{prop:ExtendUCP}.

It remains to show that $|\id_\bfA|_\bfH =\id_{|\bfA|_\bfH}$ and that $|\cdot|_\bfH$ preserves composition.
The first statement is obvious.
For the second statement, note that if $\theta_1\in \ConAlg(\bfA, \bfB)$ and $\theta_2\in \ConAlg(\bfB, \bfC)$, then $|\theta_2\circ \theta_1|_{\bfH}^\circ=|\theta_2|_\bfH^\circ \circ |\theta_1|_\bfH^\circ$ on the algebraic realization $|\bfA|_\bfF^\circ$, which is strongly* dense in the von Neumann realization.
We are then finished by the uniqueness statement from Proposition \ref{prop:ExtendUCP}.
\end{proof}

\subsection{From discrete subfactors to connected algebras}

We now construct a functor in the other direction.
We continue the use of Notation \ref{nota:RepresentationAndConnectedAlgebra}.
For an irreducible discrete inclusion $(N, \subseteq M, E)$, we let $\phi = \tau \circ E$ be the canonical state on $M$ as in Notation \ref{nota:Discrete}.

\begin{defn}
\label{defn:UnderlyingAlgebraObject}
Suppose $(N\subseteq M, E)\in \DisInc_\bfH$. 
The \emph{underlying algebra object} of $(N\subseteq M, E)$ is the object $\alg{M}\in  \Vec(\cC)$ given by $\alg{M}(c) := \Hom_{N-N}(\bfH(c), L^2(M,\phi))$.
\end{defn}

\begin{lem}
\label{lem:ConnectedW*Algebra}
The algebra object $\alg{M}\in \Vec(\cC)$ is the connected \emph{W*}-algebra object corresponding to the cyclic left $\cC$-module \emph{W*}-subcategory $\cM$ of $\Bim(N,M)$ (the $N-M$ bimodules) generated by the basepoint $L^2(M,\phi)$.
\end{lem}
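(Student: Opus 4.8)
The plan is to identify $\alg{M}$ with an internal hom in a cyclic $\cC$-module W*-category and then invoke the equivalence of categories from \cite[Thm.~3.8 and Thm.~3.20]{MR3687214}. First I would set up the cyclic left $\cC$-module W*-category $\cM \subseteq \Bim(N,M)$: its objects are $\bfH(c)\boxtimes_N L^2(M,\phi)$ for $c\in\cC$, with the $\cC$-action $c\vartriangleright(-) = \bfH(c)\boxtimes_N(-)$ transported through the tensorator $\mu^\bfH$, and the basepoint is $m := L^2(M,\phi)\cong \bfH(1_\cC)\boxtimes_N L^2(M,\phi)$. This is indeed a W*-category since $\Bim(N,M)$ is, and it is cyclic with the prescribed basepoint by construction. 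The internal hom description of the associated algebra object then reads $\bfA(c) = \cM(c\vartriangleright m, m) = \Hom_{N-M}(\bfH(c)\boxtimes_N L^2(M,\phi), L^2(M,\phi))$.

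The crucial bridge is Theorem \ref{thm:BifiniteFrobeniusReciprocity} (Bifinite Frobenius Reciprocity): for each $c\in\cC$, since $\bfH(c)$ is a bifinite $N-N$ bimodule, the maps $\Phi,\Psi$ of \eqref{eq:FrobeniusEasy} and \eqref{eq:FrobeniusHard} give a natural isomorphism
\[
\Hom_{N-M}(\bfH(c)\boxtimes_N L^2(M,\phi), L^2(M,\phi)) \cong \Hom_{N-N}(\bfH(c), L^2(M,\phi)) = \alg{M}(c).
\]
So the second step is to check that this family of isomorphisms is natural in $c$ and compatible with the lax monoidal (i.e.\ algebra) structure: the multiplication on $\alg{M}$ as an algebra object in $\Vec(\cC)$ — defined via composition of internal homs in $\cM$ — must match the multiplication coming from the $N-N$ bilinear identification. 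This is precisely the content asserted after the statement of Theorem \ref{thm:BifiniteFrobeniusReciprocity} and in the definition of the underlying connected W*-algebra object; concretely, one verifies that $\Phi$ intertwines composition in $\Bim(N,M)$ with the $\boxtimes_N$-concatenation of intertwiners into $L^2(M,\phi)$, using that $\mu_K$ is associative and that $\bfH$ is a (bi-involutive) tensor functor. The $*$-structure compatibility ($j^{\alg{M}}_c$ given by precomposing with the involutive unitaries $\chi_c:\bfH(\overline c)\to\overline{\bfH(c)}$ and postcomposing with $J_\phi$) follows from Lemma \ref{lem:ConjugateBimodule} together with \eqref{eq:NonStandard}.

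Given all this, $\alg{M}$ is, via Frobenius reciprocity, the internal-hom algebra object of the cyclic $\cC$-module W*-category $\cM$, hence by \cite[Thm.~3.20]{MR3687214} it is a W*-algebra object; connectedness is immediate from Corollary \ref{cor:IrreducibleHasOneL2N}, which gives $\alg{M}(1_\cC) = \Hom_{N-N}(L^2(N), L^2(M,\phi)) = N'\cap M = \bbC$. I expect the main obstacle to be the second step: carefully checking that the Frobenius reciprocity isomorphism is an isomorphism of algebra objects, not merely of objects in $\Vec(\cC)$ — i.e.\ that the ``hard'' direction $\Psi$, built from the bounded multiplication maps $\mu_K$ of Proposition \ref{prop:MultiplicationMapBounded}, is multiplicative with respect to the internal composition. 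The danger is that the module associator of $\cM$ and the tensorator $\mu^\bfH$ interact nontrivially; one should probably reduce to the strict case or track the coherence isomorphisms explicitly, and use Remark \ref{rem:IdentifyQN} (the evaluation homomorphism $\bigoplus_i \bfH(K_i)^\circ\otimes\alg{M}(K_i)\to Q$ being a $*$-algebra isomorphism onto the quasi-normalizer) to pin down the multiplication on the nose.
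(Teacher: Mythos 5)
Your proposal is correct and follows essentially the same route as the paper: Frobenius reciprocity (Theorem \ref{thm:BifiniteFrobeniusReciprocity}) identifies $\alg{M}(c)$ with the internal hom $\Hom_{N-M}(\bfH(c)\boxtimes_N L^2(M,\phi),L^2(M,\phi))$ of the cyclic $\cC$-module W*-subcategory of $\Bim(N,M)$ generated by $L^2(M,\phi)$, and connectedness is exactly $N'\cap M=\bbC$. The compatibility check you flag as the main obstacle does not actually arise here, since Definition \ref{defn:UnderlyingAlgebraObject} only specifies $\alg{M}$ as an object of $\Vec(\cC)$ and the algebra and $*$-structures are \emph{defined} by transport along the Frobenius reciprocity isomorphism; the explicit formulas you are worried about matching are only computed afterwards, in Lemmas \ref{lem:InverseFunctorMultiplication} and \ref{lem:InverseFunctorStar}.
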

\begin{proof}
Notice that since $c\in \cC$ is dualizable, so is $\bfH(c)$, and thus $\alg{M}(c)$ is finite dimensional by Proposition \ref{prop:MultiplicationMapBounded}.
To define the $*$-structure and multiplication, we note that by Theorem \ref{thm:BifiniteFrobeniusReciprocity}, we have a natural isomorphism
\begin{equation}
\label{eq:EssSurjFrobRecip}
\alg{M}(c)
=
\Hom_{N-N}(\bfH(c), L^2(M,\phi))
\cong
\Hom_{N-M}(\bfH(c)\boxtimes_N L^2(M,\phi), L^2(M,\phi)).
\end{equation}
This means that $\alg{M}\in \Vec(\cC)$ is exactly the W*-algebra object corresponding to the cyclic left $\cC$-module W*-subcategory $\cM$ of $\Bim(N,M)$ generated by the basepoint $L^2(M,\phi)$.
It is easy to see the algebra $\alg{M}$ is connected by applying \eqref{eq:EssSurjFrobRecip} above with $\bfH(1_\cC)=L^2(N)$.
Indeed, since $N\subseteq M$ is irreducible,
\begin{equation*}
\alg{M}(1_\cC)
=
\Hom_{N-N}(L^2(N), L^2(M,\phi))
\cong
\Hom_{N-M}(L^2(M,\phi), L^2(M,\phi))
\cong 
N'\cap M
=
\bbC.
\qedhere
\end{equation*}
\end{proof}

We may now use the proof of \cite[Thm.~3.20]{MR3687214} to determine the multiplication, unit, and $*$-structure for $\alg{M}$. 

\begin{lem}
\label{lem:InverseFunctorMultiplication}
The multiplication map
$\mu^{\alg{M}}_{a,b} : \alg{M}(a)\otimes \alg{M}(b) \to \alg{M}(a\otimes b)$ for $a,b\in\cC$
is given by 
$$
\alg{M}(a)\otimes \alg{M}(b)
\ni
f\otimes g 
\mapsto 
\mu_M\circ (f\boxtimes g) \circ (\mu_{a,b}^{\bfH})^{-1} 
\in 
\alg{M}(a\otimes b)=
\Hom_{N-N}(\bfH(a\otimes b), L^2(M,\phi))
$$
where $\mu_M$ is the multiplication map on $M$.
\end{lem}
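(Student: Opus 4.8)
The plan is to directly invoke the equivalence between algebra objects in $\Vec(\cC)$ and cyclic $\cC$-module categories, tracing through the proof of \cite[Thm.~3.20]{MR3687214} as the lemma statement suggests. First I would recall that by Lemma \ref{lem:ConnectedW*Algebra}, $\alg{M}$ is the W*-algebra object corresponding to the cyclic $\cC$-module W*-category $\cM \subseteq \Bim(N,M)$ generated by the basepoint $m := L^2(M,\phi)$. Under the correspondence of \cite[Thm.~3.8 and Thm.~3.20]{MR3687214}, the algebra structure on $\bfA := \alg{M}$ is recovered via $\bfA(c) = \cM(c\otimes m, m) = \Hom_{N-M}(\bfH(c)\boxtimes_N L^2(M,\phi), L^2(M,\phi))$, and the multiplication $\mu^{\bfA}_{a,b}\colon \bfA(a)\otimes\bfA(b) \to \bfA(a\otimes b)$ is the composite $\cM(a\otimes m, m)\otimes \cM(b\otimes m, m) \to \cM(a\otimes b\otimes m, m)$ given by $F\otimes G \mapsto F\circ (\id_a \otimes G)$, using the module associativity to identify $a\otimes(b\otimes m)$ with $(a\otimes b)\otimes m$. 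Concretely, if $F \in \Hom_{N-M}(\bfH(a)\boxtimes_N L^2(M,\phi), L^2(M,\phi))$ and $G \in \Hom_{N-M}(\bfH(b)\boxtimes_N L^2(M,\phi), L^2(M,\phi))$, then $\mu^{\bfA}_{a,b}(F\otimes G) = F\circ(\id_{\bfH(a)}\boxtimes_N G)$, where the module action of $\bfH(a)$ on the bimodule map $G$ is just tensoring on the left over $N$.

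The second step is to transport this formula across the Frobenius reciprocity isomorphism \eqref{eq:EssSurjFrobRecip} of Theorem \ref{thm:BifiniteFrobeniusReciprocity}, which sends $g\in \Hom_{N-N}(\bfH(b), L^2(M,\phi))$ to $\Psi(g) = \mu_{g(\bfH(b))}\circ(g\boxtimes \id)$, i.e.\ post-composition of $g$ with the bounded multiplication map $\mu_M\colon K\boxtimes_N L^2(M,\phi)\to L^2(M,\phi)$ from Proposition \ref{prop:MultiplicationMapBounded}, and whose inverse $\Phi$ restricts a bimodule map to $K\boxtimes_N L^2(N) \cong K$ via $e_N^*$. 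So I would compute $\Phi(\mu^{\bfA}_{a,b}(\Psi(f)\otimes\Psi(g)))$ for $f\in\Hom_{N-N}(\bfH(a), L^2(M,\phi))$, $g\in\Hom_{N-N}(\bfH(b), L^2(M,\phi))$. Unwinding: $\Psi(f) = \mu_M\circ(f\boxtimes\id)$ and $\Psi(g) = \mu_M\circ(g\boxtimes\id)$; then $\mu^{\bfA}_{a,b}(\Psi(f)\otimes\Psi(g)) = \Psi(f)\circ(\id_{\bfH(a)}\boxtimes_N\Psi(g))$; restricting along $\id_{\bfH(a\otimes b)}\boxtimes e_N^*$ and using the module associativity isomorphism $\mu^{\bfH}_{a,b}$ together with associativity of the multiplication map $\mu_M$ on $M$ (so that $\mu_M\circ(\id_{\bfH(a)}\boxtimes_N\mu_M) = \mu_M\circ(\mu^{\bfH}_{a,b}\boxtimes_N\id)$ on the relevant subspaces, which follows from associativity of $M$ on bounded vectors) collapses the two multiplications into a single $\mu_M\circ(f\boxtimes g)\circ(\mu^{\bfH}_{a,b})^{-1}$. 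I would check this identity first on $N$-bounded vectors of $\bfH(a\otimes b)$, where both sides are genuine algebra multiplications in $M$, and then invoke density and boundedness (Proposition \ref{prop:MultiplicationMapBounded}) to conclude equality as $N-N$ bimodule maps.

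The main obstacle I anticipate is bookkeeping rather than conceptual: carefully matching the module-associativity constraint for $\cM$ as a $\cC$-module category (which involves the tensorator $\mu^{\bfH}$, since the module action of $c\in\cC$ on $K\in\Bim(N,M)$ is $\bfH(c)\boxtimes_N K$) against the associativity of the multiplication map $\mu_M$, and being scrupulous about on which side of the Connes fusion $\boxtimes_N$ one is acting. A secondary subtlety is confirming that the multiplication maps $\mu_{g(\bfH(b))}$ from Proposition \ref{prop:MultiplicationMapBounded}, which depend a priori on the image sub-bimodule, are compatible under iteration — but this is exactly the content of Remark \ref{rem:MultiplyWithEntire} and the fact that $\mu_M$ on bounded (or entire) vectors agrees with honest multiplication in $M$, so iterated multiplications associate. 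Once the computation is set up on bounded vectors, the verification that $\Phi\circ\mu^{\bfA}_{a,b}\circ(\Psi\otimes\Psi) = (f\otimes g\mapsto \mu_M\circ(f\boxtimes g)\circ(\mu^{\bfH}_{a,b})^{-1})$ is a short diagram chase, and naturality in $a,b$ is automatic from naturality of $\Phi,\Psi$ and of the module associator.
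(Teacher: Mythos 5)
Your proposal is correct and follows essentially the same route as the paper: identify $\alg{M}(c)$ with $\Hom_{N-M}(\bfH(c)\boxtimes_N L^2(M,\phi), L^2(M,\phi))$ via the Frobenius reciprocity of Theorem \ref{thm:BifiniteFrobeniusReciprocity}, note that the internal-hom multiplication is composition $F\circ(\id\boxtimes G)$ in the module category, and evaluate on $N$-bounded vectors (where $f(\eta), g(\zeta)\in M\Omega$ by Proposition \ref{prop:MultiplicationMapBounded}) to collapse the iterated products into $\mu_M\circ(f\boxtimes g)\circ(\mu^{\bfH}_{a,b})^{-1}$. The paper's proof is a compressed version of exactly this computation.
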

\begin{proof}
The expression is well-defined by Proposition \ref{prop:MultiplicationMapBounded}, as $\bfH(a),\bfH(b)$ are bifinite.
Notice that for $f\in \alg{M}(c)$, the corresponding map on the right hand side of \eqref{eq:EssSurjFrobRecip} is given on $\bfH(c)^\circ \boxtimes_N L^2(M,\phi)$ by $\eta \boxtimes \xi \mapsto f(\eta)\xi$ by Theorem \ref{thm:BifiniteFrobeniusReciprocity}.
Now $f(\eta)$ is an $N$-bounded vector in a bifinite $N-N$ sub-bimodule of $L^2(M,\phi)$, and thus $f(\eta) \in M\Omega$ by Proposition \ref{prop:MultiplicationMapBounded}.
Thus for $f\otimes g\in \alg{M}(a)\otimes \alg{M}(b)$, the composite of the corresponding morphisms on the right hand side of \eqref{eq:EssSurjFrobRecip} in the module category $\cM$ is given on $\bfH(a\otimes b)^\circ \boxtimes_N L^2(M,\phi)$ by 
$$
\mu^{\bfH}_{a,b}(\eta\boxtimes \zeta)\boxtimes \xi 
\xrightarrow{(\mu^\bfH_{a,b})^{-1}} 
\eta \boxtimes \zeta \boxtimes \xi 
\xrightarrow{}
f(\eta)g(\zeta)\xi.
$$
Thus the map $\mu^{\alg{M}}_{a,b}(f\otimes g)\in \alg{M}(a\otimes b)$ is given by $\mu^{\bfH}_{a,b}(\eta \boxtimes \zeta) \mapsto f(\eta)g(\zeta)$ as claimed.
\end{proof}

In order to define the $*$-structure, for $f\in \alg{M}(c)=\Hom_{N-N}(\bfH(c), L^2(M,\phi))$, we define $S_\phi f \in \alg{M}(\overline{c}) \cong\Hom_{N-N}(J_\phi \bfH(c), L^2(M,\phi))$ as the extension of the map $J_\phi \zeta \mapsto f(\zeta)^*$, where we suppress the isomorphism $\bfH(\overline{c})\cong J_\phi \bfH(c)$.

\begin{lem}
\label{lem:InverseFunctorStar}
The $*$-structure $j$ for $\alg{M}$ is given as follows.
For $f\in \alg{M}(c)$, $j_c(f)\in \alg{M}(\overline{c})$ is the map $S_\phi f$.
\end{lem}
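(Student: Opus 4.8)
The plan is to follow the proof of \cite[Thm.~3.20]{MR3687214}, just as was done for Lemma \ref{lem:InverseFunctorMultiplication}, and read off the $*$-structure from the Frobenius reciprocity isomorphism \eqref{eq:EssSurjFrobRecip}. Recall that the $*$-structure on a W*-algebra object arising from a cyclic $\cC$-module dagger category $(\cM, m)$ is constructed via the conjugation functor: for $f \in \alg{M}(c) = \cM(\bfH(c)\boxtimes_N m, m)$ one uses the dagger structure on $\cM$ together with the evaluation/coevaluation of $\cC$ (equivalently of $\bfH(\cC)$) to produce $j_c(f) \in \cM(\bfH(\overline c)\boxtimes_N m, m)$. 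So the first step is to unwind this abstract recipe in the present concrete setting, where $\cM \subseteq \Bim(N, M)$ with basepoint $m = L^2(M,\phi)$, using that $\bfH$ preserves standard solutions \eqref{eq:NonStandard} so that the tracial ev/coev on $\bfH(\cC)$ agree with the categorical ones.

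First I would identify, under \eqref{eq:EssSurjFrobRecip}, the morphism in $\Hom_{N-M}(\bfH(c)\boxtimes_N L^2(M,\phi), L^2(M,\phi))$ corresponding to $f \in \Hom_{N-N}(\bfH(c), L^2(M,\phi))$: as observed in the proof of Lemma \ref{lem:InverseFunctorMultiplication}, it is the map $\eta \boxtimes \xi \mapsto f(\eta)\xi$ on bounded vectors, legitimate because $f(\eta) \in M\Omega$ by Proposition \ref{prop:MultiplicationMapBounded}. Next I would write down the categorical $*$ of this morphism using the dagger and conjugates: the $*$-structure takes the $N-M$ bilinear map $F: \bfH(c)\boxtimes_N L^2(M,\phi) \to L^2(M,\phi)$ to the composite
\[
\bfH(\overline c)\boxtimes_N L^2(M,\phi)
\xrightarrow{\id \boxtimes F^\dagger}
\bfH(\overline c)\boxtimes_N \bfH(c)\boxtimes_N L^2(M,\phi)
\xrightarrow{\ev_{\bfH(c)}\boxtimes \id}
L^2(M,\phi),
\]
where $F^\dagger = F^*$ is the $M$-adjoint and $\ev_{\bfH(c)}$ is the tracial evaluation from \eqref{eq:EvAndCoev}. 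Then I would compute the effect of this composite on a bounded vector $J_\phi\zeta \boxtimes \xi$ (with $\zeta \in \bfH(c)^\circ$, $\xi \in M\Omega$), using the description of $\ev_{\bfH(c)}$ via right $N$-valued inner products and the formula $(m\Omega)\vartriangleleft n = mn\Omega$ from \eqref{eq:RightAction}, together with the fact that $F^*(\xi) = \sum_\beta \beta \boxtimes \langle \beta|\xi\rangle_N^{\text{-style adjoint}}$-type expansion; the upshot should be exactly $J_\phi\zeta \boxtimes \xi \mapsto f(\zeta)^*\xi$ (modulo the identification $\bfH(\overline c) \cong J_\phi\bfH(c)$ of Lemma \ref{lem:ConjugateBimodule}). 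Tracing back through \eqref{eq:EssSurjFrobRecip} this says $j_c(f)$ is the extension of $J_\phi\zeta \mapsto f(\zeta)^*$, i.e.\ $j_c(f) = S_\phi f$ as claimed.

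The main obstacle I anticipate is bookkeeping with the two competing conventions — the top-to-bottom diagrammatics for $\Vec(\cC^{\op})$ versus bottom-to-top for $\Vec(\cC)$ — and making sure the adjoint $F^* = F^\dagger$ taken in $\Bim(N,M)$ (an $M$-linear adjoint, hence involving $J_\phi$) matches the conjugate-linear $j$ rather than its inverse; this is exactly the kind of place where a spurious $\Delta_\phi$ or its inverse could sneak in. The key checks are that $S_\phi$ is well-defined on $\alg{M}(\overline c)$ (which follows since $\bfH(\overline c)^\circ = \bfH(\overline c)\cap M\Omega$ is algebraic by Proposition \ref{prop:MultiplicationMapBounded}, so $f(\zeta)^* \in M\Omega$ and $J_\phi\zeta \mapsto f(\zeta)^*$ extends boundedly), that $S_\phi$ is conjugate-natural in $c$, and that $S_{\overline c}\circ S_c = \id_{\alg{M}(c)}$ — all routine given $\bfH$ preserves standard solutions. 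Once the formula for $j$ is pinned down, compatibility with the multiplication $\mu^{\alg{M}}$ from Lemma \ref{lem:InverseFunctorMultiplication} and with the conjugation on $\Vec(\cC)$ is automatic from \cite[Thm.~3.20]{MR3687214}, so I would simply invoke that rather than re-verify the $*$-algebra axioms by hand.
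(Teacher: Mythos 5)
Your proposal is correct and follows essentially the same route as the paper: both unwind the $*$-structure of \cite[Thm.~3.20]{MR3687214} through the Frobenius reciprocity isomorphism \eqref{eq:EssSurjFrobRecip}, form the composite $(\ev_{\bfH(c)}\boxtimes \id)\circ(\id\boxtimes F^\dagger)$ with $F^\dagger = (f^*\boxtimes\id)\circ\mu^*_{\bfH(c)}$, and evaluate on bounded vectors via a Pimsner--Popa basis and the $N$-valued inner products to land on $J_\phi\zeta\mapsto f(\zeta)^*$. The only cosmetic difference is that the paper moves the adjoints across an inner product $\langle\,\cdot\,,\xi\rangle$ and expands $\ev_{\bfH(c)}^*$ via an ${}_N\bfH(c)$-basis, rather than expanding $F^*$ directly, but this is the same computation.
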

\begin{proof}
As in the proof of \cite[Thm.~3.20]{MR3687214}, we take the adjoint morphism in the module category $\cM$ and apply the evaluation map $\bfH(\ev_c) = \ev_{\bfH(c)}$ as in \eqref{eq:NonStandard}.
For $\eta, \xi \in L^2(M,\phi)$, $\zeta\in \bfH(c)^\circ$, and $\{\alpha\}\subset \bfH(c)^\circ$ an ${}_N \bfH(c)$-basis, we calculate the inner product
$$
\langle 
[
(\ev_{\bfH(c)}\boxtimes_N \id_{L^2(M,\phi)})
\circ
(\id_{\bfH(\overline{c})}\boxtimes_{N} f^{*}\boxtimes_{N} \id_{L^2(M,\phi)})
\circ 
(\id_{\bfH(\overline{c})})\boxtimes_{N} \mu^{*}_{\bfH(\overline{c})})
]
(J_{\phi}\zeta\boxtimes_{N}\eta)
, 
\xi\rangle_{L^{2}(M,\phi)} 
$$
which is again well-defined as $\bfH(c)$ is bifinite.
Taking adjoints, and using that $\ev_{\bfH(c)}^*:L^2(N) \to J_\phi\bfH(c)\boxtimes_N \bfH(c)$ is given by $n\Omega \mapsto n\sum_\alpha J_\phi\alpha \boxtimes \alpha$ by \eqref{eq:EvAndCoevBar},  
the above inner product is equal to
\begin{align*}
\sum_i
\langle J_{\phi}\zeta\boxtimes_{N}\eta,
J_{\phi}\alpha_i\boxtimes_{N}f(\alpha_i)\xi
\rangle
&=
\sum_i
\langle \langle J_{\phi}\alpha_i|J_{\phi}\zeta\rangle_{N}\eta,
f(\alpha_i)\xi
\rangle
=
\sum_i
\langle \eta,
\langle J_{\phi}\alpha_i|J_{\phi}\zeta\rangle_{N}^*f(\alpha_i)\xi
\rangle
\\&=
\sum_i
\langle \eta,
\langle J_{\phi}\zeta|J_{\phi}\alpha_i\rangle_{N}f(\alpha_i)\xi
\rangle
=
\sum_i
\langle \eta,
f(\langle J_{\phi}\zeta|J_{\phi}\alpha_i\rangle_{N}\alpha_i)\xi
\rangle
\\&=
\sum_i
\langle \eta,
f({}_N\langle \zeta,\alpha_i\rangle\alpha_i)\xi
\rangle
=
\langle \eta,
f(\zeta)\xi
\rangle
\\&=
\langle f(\zeta)^*\eta,
\xi
\rangle.
\end{align*}
Thus under the isomorphism \eqref{eq:EssSurjFrobRecip}, the $*$-structure is as claimed.
\end{proof}

We now extend $\alg{\cdot}$ to a functor.
We begin with the following lemma.

\begin{lem}
\label{lem:L2UCP}
Consider two irreducible discrete inclusions $(N\subseteq M, E_N^M), (N\subseteq P, E_N^P)$.
Suppose $\psi : M \to P$ is an $N-N$ bilinear ucp map satisfying $\phi_P\circ \psi = \phi_M$.
Then $\psi$ extends to a contractive $N-N$ bilinear map $L^2\psi : L^2(M,\phi_M) \to L^2(P, \phi_P)$.
\end{lem}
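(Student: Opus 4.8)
The statement is the standard fact that a state-preserving ucp map extends to the $L^2$-level as a contraction. The plan is to imitate the proof of Corollary \ref{cor:UCPGivesContraction}: use Kadison's inequality for the ucp map $\psi$ together with the state-preservation hypothesis to control $\|\psi(x)\Omega\|_2$ by $\|x\Omega\|_2$, then extend by density and check $N$-$N$ bilinearity.

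\textbf{Key steps.} First I would recall that since $\psi: M \to P$ is ucp between unital C*-algebras (von Neumann algebras), it satisfies Kadison's inequality $\psi(x)^*\psi(x) \leq \psi(x^*x)$ for all $x \in M$ (see \cite[Cor.~IV.3.8]{MR1873025}). Second, I would apply $\phi_P$ to both sides of this inequality: for $x \in M$,
\[
\|\psi(x)\Omega_P\|_{L^2(P,\phi_P)}^2
=
\phi_P(\psi(x)^*\psi(x))
\leq
\phi_P(\psi(x^*x))
=
\phi_M(x^*x)
=
\|x\Omega_M\|_{L^2(M,\phi_M)}^2,
\]
using $\phi_P\circ\psi = \phi_M$ in the penultimate equality. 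Third, since $M\Omega_M$ is dense in $L^2(M,\phi_M)$ and $\psi$ descends to a well-defined map $M\Omega_M \to P\Omega_P$ (well-defined because if $x\Omega_M = 0$ then $\phi_M(x^*x)=0$, so by the inequality $\|\psi(x)\Omega_P\|_2 = 0$), this inequality shows the induced map is a contraction on a dense subspace, hence extends uniquely to a contraction $L^2\psi : L^2(M,\phi_M) \to L^2(P,\phi_P)$ with $L^2\psi(x\Omega_M) = \psi(x)\Omega_P$. Fourth, for $N$-$N$ bilinearity: for $n_1, n_2 \in N$ and $x \in M$, using that $N$ lies in the centralizers $M^{\phi_M}$ and $P^{\phi_P}$ (so the left and right actions of $N$ on $L^2$ agree with multiplication by $n_i\Omega$, cf.~\eqref{eq:RightAction}) and that $\psi$ is $N$-$N$ bilinear, we get
\[
L^2\psi(n_1 \vartriangleright x\Omega_M \vartriangleleft n_2)
=
L^2\psi(n_1 x n_2 \Omega_M)
=
\psi(n_1 x n_2)\Omega_P
=
n_1 \psi(x) n_2 \Omega_P
=
n_1 \vartriangleright L^2\psi(x\Omega_M) \vartriangleleft n_2,
\]
and bilinearity on all of $L^2(M,\phi_M)$ follows by density and continuity of the $N$-actions.

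\textbf{Main obstacle.} This is essentially a routine argument, so there is no serious obstacle; the only point requiring a little care is the well-definedness of $L^2\psi$ on $M\Omega_M$ (handled by the Kadison inequality estimate above, which shows $\psi$ kills $2$-norm-null elements) and correctly tracking that the right $N$-action on both $L^2$ spaces is implemented by right multiplication because $N \subseteq M^{\phi_M}$ and $N \subseteq P^{\phi_P}$, which is where \eqref{eq:RightAction} is invoked.
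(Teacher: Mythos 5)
Your proposal is correct and is exactly the argument the paper has in mind: the paper's proof consists of the single remark that the lemma is "an easy application of Kadison's inequality," and your write-up supplies precisely that computation (apply $\phi_P$ to $\psi(x)^*\psi(x)\leq\psi(x^*x)$, use $\phi_P\circ\psi=\phi_M$, extend by density, and check bilinearity via \eqref{eq:RightAction}). Nothing further is needed.
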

\begin{proof}
The proof is an easy application of Kadison's inequality, which follows from Stinespring dilation \cite[Cor.~6.1.2]{PetersonNotes}.
\end{proof}

\begin{defn}
Consider two irreducible discrete inclusions $(N\subseteq M, E_N^M), (N\subseteq P, E_N^P)$.
Suppose $\psi : M \to P$ is an $N-N$ bilinear ucp map satisfying $\phi_P\circ \psi = \phi_M$.
We define $\alg{\psi} : \alg{M} \Rightarrow \alg{P}$ by $\alg{\psi}_c : \alg{M}(c) \to \alg{P}(c)$ is given by
$$
\Hom_{N-N}(\bfH(c), L^2(M,\phi_M)) 
\ni
f
\mapsto
L^2\psi \circ f
\in
\Hom_{N-N}(\bfH(c), L^2(P,\phi_P)).
$$
It is straightforward to calculate that $L^2\id_{(N\subseteq M, E)} = \id_{\alg{M}}$ and if we have morphisms $\psi_1: (N\subseteq M_1 , E^{M_1}_N) \to (N\subseteq M_2 , E^{M_2}_N))$ and $\psi_2: (N\subseteq M_2 , E^{M_2}_N) \to (N\subseteq M_3 , E^{M_3}_N))$, then $L^2(\psi_2\circ \psi_1) = L^2\psi_2 \circ L^2\psi_1$.
This means that $\alg{\cdot}$ is a functor, provided we show $\alg{\psi}$ is a categorical ucp morphism.
\end{defn}

The following lemma is similar to \cite[Lem.~3.7]{MR3406647}.

\begin{lem}
\label{lem:SufficientForUCP}
Suppose $\bfA, \bfB \in \Vec(\cC)$ are two \emph{W*}-algebra objects, and let $\cM_\bfA,\cM_\bfB$ be the corresponding cyclic $\cC$-module \emph{W*}-categories as in \cite[Thm.~3.24]{MR3687214}.
Suppose $\theta: \cM_\bfA \to \cM_\bfB$ is a multiplier.
Then $\theta$ is cp if and only if for all $c\in \cC$ and $f\in \bfA(c)$, the following morphism in $\cM_\bfB(c_\bfB, c_\bfB)$ is positive:
$$
\theta\left(
\begin{tikzpicture}[baseline=-.1cm]
    \draw (.5,.5) arc (90:-90:.5cm);
    \draw (1,0) -- (1.5,0);
    \draw (-.5,.5) arc (-90:-180:.5cm);
    \draw (-.5,-.5) arc (90:180:.5cm);
    \filldraw (1,0) circle (.05cm);
    \roundNbox{unshaded}{(0,.5)}{.3}{.2}{.2}{$j_c(f)$}
    \roundNbox{unshaded}{(0,-.5)}{.3}{.2}{.2}{$f$}
    \node at (1.3,.2) {\scriptsize{$\bfA$}};
    \node at (.7,-.7) {\scriptsize{$\bfA$}};
    \node at (.7,.7) {\scriptsize{$\bfA$}};
    \node at (-1.2,-.9) {\scriptsize{$\mathbf{c}$}};
    \node at (-1.2,.9) {\scriptsize{$\mathbf{c}$}};
\end{tikzpicture}
\right).
$$
\end{lem}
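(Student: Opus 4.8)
The plan is to prove both directions of the equivalence. The forward direction is trivial: if $\theta$ is cp in the sense of the definition in $\ConAlg$ (i.e., $\theta_{\overline{c}\otimes c}$ maps positive operators to positive operators for all $c$), then since the displayed morphism is built from $f$ and $j_c(f)$ via the multiplication $\mu^\bfA$ and the evaluation/coevaluation cups, it is exactly the image under the isomorphism $\bfA(\overline{c}\otimes c)\cong \cM_\bfA(c_\bfA, c_\bfA)$ of a manifestly positive element of $\bfA(\overline{c}\otimes c)$, namely $j_c(f)\cdot f$ composed appropriately; applying $\theta$ preserves positivity by hypothesis. So the content is the converse.

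For the converse, suppose the displayed positivity condition holds for all $c$ and all $f\in\bfA(c)$. I want to show $\theta_{\overline{c}\otimes c}$ maps positive operators in $\bfA(\overline{c}\otimes c)$ to positive operators in $\bfB(\overline{c}\otimes c)$. First I would recall from \cite[Thm.~3.24]{MR3687214} and the surrounding theory that a positive element of $\bfA(\overline{c}\otimes c)$, viewed as a morphism in $\End_{\cM_\bfA}(c_\bfA)$, is a positive operator in a finite-dimensional (or at worst W*) algebra, hence a finite sum of elements of the form $g^*\cdot g$. But the subtlety is that an arbitrary positive morphism $T\in\End_{\cM_\bfA}(c_\bfA)$ factors as $v^*v$ where $v: c_\bfA \to d_\bfA$ for some object $d = \bigoplus$ of objects of $\cC$, i.e., $v\in\cM_\bfA(c_\bfA, d_\bfA)\cong\bigoplus_i \bfA(\overline{c}\otimes d_i)$ — so the decomposition naturally involves intertwiners into other free modules, not just endomorphisms indexed by $c$. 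I would then use that $\theta$ is a \emph{multiplier} of module categories (hence behaves well under the module action of $\cC$ and under the relevant composition structure) to reduce: the key point is that $\theta$ being a multiplier means it is determined by and compatible with precomposition/postcomposition by morphisms coming from $\cC$. The displayed condition, ranging over \emph{all} $c\in\cC$ and all $f\in\bfA(c)$, is exactly enough to handle all the ``diagonal'' pieces $g^*g$ with $g$ an intertwiner $c_\bfA \to e_\bfA$ after absorbing the object $e$ into the running index; one rewrites $g^*g$ in the form appearing in the display by bending the $e$-strand around using a cup, which is the standard Frobenius manipulation, and this is where the evaluation/coevaluation in the picture comes from.

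Concretely, the key steps in order: (1) state the trivial forward implication. (2) Recall that cp for a multiplier $\theta$ is equivalent to: for every object $c$ and every $v\in\cM_\bfA(c_\bfA, \mathbf{1}_\bfA\otimes\text{stuff})$... — more cleanly, reduce positivity of $\theta_{\overline c\otimes c}(T)$ for $T\geq 0$ to the case $T = v^*v$ with $v\in \cM_\bfA(c_\bfA, d_\bfA)$, $d\in\cC$ arbitrary. (3) Using the module structure, rewrite $v$ as an element of $\bfA(\overline c\otimes d)$, set $f$ to be the corresponding element of $\bfA(\overline{c}\otimes d)$ regarded with $\overline{c}\otimes d$ playing the role of ``$c$'' in the hypothesis; then $v^*v$, after the graphical manipulation (bending the $d$-strand), becomes precisely the element inside $\theta(\cdots)$ in the displayed formula with $c$ replaced by $\overline c\otimes d$ — up to postcomposition/precomposition with morphisms from $\cC$ (the inclusion/projection of $d$-summands and the cups), which $\theta$ respects because it is a $\cC$-module multiplier and natural. (4) Conclude $\theta_{\overline c\otimes c}(v^*v)\geq 0$, hence $\theta$ is cp. I expect the main obstacle to be step (3): carefully checking that a general positive morphism in $\End_{\cM_\bfA}(c_\bfA)$, after the Frobenius bending, lands exactly in the family described by the displayed diagram (i.e., that ranging over all $c$ and all $f\in\bfA(c)$ genuinely exhausts all ``$v^*v$'' up to the structure $\theta$ automatically preserves), and keeping the bookkeeping of which ambient category (module category $\cM_\bfA$ vs. $\cM_\bfB$) each morphism lives in straight. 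This should follow the pattern of \cite[Lem.~3.7]{MR3406647}, which the statement cites, so I would model the argument on that proof and just adapt the graphical calculus to the current $\Vec(\cC)$/module-category conventions.
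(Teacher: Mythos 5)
Your proposal is correct and follows essentially the same route as the paper: the reverse direction is handled by writing a positive element as $v^{*}\circ v$, Frobenius-bending the extra strand so that $v^{*}\circ v$ becomes a (completely positive) partial trace applied to the diagram in the hypothesis, and using that the multiplier $\theta$ commutes with morphisms coming from $\cC$. The only simplification you miss is that since $\End_{\cM_\bfA}(d_\bfA)$ is itself a C*-algebra you may take $v=T^{1/2}\in\End_{\cM_\bfA}(d_\bfA)$, so the ``subtlety'' about factoring through other free modules never arises --- the paper simply sets $c=\overline{d}\otimes d$ and applies the hypothesis to $f\in\cM_\bfA(d_\bfA,d_\bfA)\cong\bfA(\overline{d}\otimes d)$, then caps off the $\overline{d}$-strand.
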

\begin{proof}
The forward direction is trivial, so we need only prove the reverse direction.
Suppose we have a multiplier $\theta: \cM_\bfA \to \cM_\bfB$.
We must show that for all $d\in \cC$ and $f\in \cM_\bfA(d_\bfA, d_\bfA)$, $\theta(f^*\circ f)\geq 0$.
Setting $c= \overline{d}\otimes d$, we may write
$$
\theta(f^*\circ f)
=
\theta\left(
\begin{tikzpicture}[baseline=-.1cm]
    \draw (.5,.5) arc (90:-90:.5cm);
    \draw (1,0) -- (1.5,0);
    \draw (-.5,.65) arc (-90:-180:.35cm);
    \draw (-.5,-.65) arc (90:180:.35cm);
    \draw (-.5,-.35) arc (270:90:.35cm);
    \filldraw (1,0) circle (.05cm);
    \roundNbox{unshaded}{(0,.5)}{.3}{.2}{.2}{$j_c(f)$}
    \roundNbox{unshaded}{(0,-.5)}{.3}{.2}{.2}{$f$}
    \node at (1.3,.2) {\scriptsize{$\bfA$}};
    \node at (.7,-.7) {\scriptsize{$\bfA$}};
    \node at (.7,.7) {\scriptsize{$\bfA$}};
    \node at (-1,-.9) {\scriptsize{$\mathbf{c}$}};
    \node at (-1,.9) {\scriptsize{$\mathbf{c}$}};
    \node at (-1,0) {\scriptsize{$\mathbf{c}$}};
\end{tikzpicture}
\right)
=
\theta\left(
\begin{tikzpicture}[baseline=-.1cm]
    \draw (.5,.5) arc (90:-90:.5cm);
    \draw (1,0) -- (1.5,0);
    \draw (-.5,.35) arc (-90:-180:.65cm);
    \draw (-.5,.65) arc (-90:-180:.35cm) -- (-.85, 1.2);
    \draw (-.5,-.65) arc (90:180:.35cm) -- (-.85,-1.2);
    \draw (-.5,-.35) arc (90:180:.65cm);
    \draw (-1.15,1) arc (0:180:.15cm) -- (-1.45, -1) arc (-180:0:.15cm);
    \filldraw (1,0) circle (.05cm);
    \roundNbox{unshaded}{(0,.5)}{.3}{.2}{.2}{$j_c(f)$}
    \roundNbox{unshaded}{(0,-.5)}{.3}{.2}{.2}{$f$}
    \node at (1.3,.2) {\scriptsize{$\bfA$}};
    \node at (.7,-.7) {\scriptsize{$\bfA$}};
    \node at (.7,.7) {\scriptsize{$\bfA$}};
    \node at (-.7,-1.1) {\scriptsize{$\mathbf{c}$}};
    \node at (-.7,1.1) {\scriptsize{$\mathbf{c}$}};
    \node at (-1.2,-.5) {\scriptsize{$\overline{\mathbf{c}}$}};
    \node at (-1.2,.5) {\scriptsize{$\overline{\mathbf{c}}$}};
    \node at (-1.6,0) {\scriptsize{$\mathbf{c}$}};
\end{tikzpicture}
\right)
=
(E_{\overline{c}} \circ \theta)
\left(
\begin{tikzpicture}[baseline=-.1cm]
    \draw (.5,.5) arc (90:-90:.5cm);
    \draw (1,0) -- (1.5,0);
    \draw (-.5,.5) arc (-90:-180:.5cm);
    \draw (-.5,-.5) arc (90:180:.5cm);
    \filldraw (1,0) circle (.05cm);
    \roundNbox{unshaded}{(0,.5)}{.3}{.2}{.2}{$j_c(f)$}
    \roundNbox{unshaded}{(0,-.5)}{.3}{.2}{.2}{$f$}
    \node at (1.3,.2) {\scriptsize{$\bfA$}};
    \node at (.7,-.7) {\scriptsize{$\bfA$}};
    \node at (.7,.7) {\scriptsize{$\bfA$}};
    \node at (-1.4,-.9) {\scriptsize{$\overline{\mathbf{c}}\otimes \mathbf{c}$}};
    \node at (-1.4,.9) {\scriptsize{$\overline{\mathbf{c}}\otimes \mathbf{c}$}};
\end{tikzpicture}
\right)
$$
where $E_{\overline{c}} : \cM_\bfB((\overline{c}\otimes c)_\bfB,(\overline{c}\otimes c)_\bfB) \to \cM_\bfB(c_\bfB,c_\bfB)$ is the partial trace which caps the $\overline{\mathbf{c}}$ string on the left.
Finally, we see $\theta(f^*\circ f)\geq 0$ by first applying our positivity hypothesis and then applying the completely positive map $E_{\overline{c}}$.
\end{proof}

\begin{prop}
Suppose $\psi : M \to P$ is an $N-N$ bilinear ucp map such that $\phi_P \circ \psi = \phi_M$.
Then $\alg{\psi}: \alg{M} \Rightarrow \alg{P}$ is a categorical ucp morphism, and thus $\alg{\cdot}$ is a functor.
\end{prop}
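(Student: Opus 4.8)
The plan is to check, in order, that $\alg{\psi}$ is a well-defined $*$-natural transformation $\alg{M}\Rightarrow\alg{P}$, that it is unital, and that it is completely positive; once each $\alg{\psi}$ is a categorical ucp morphism, functoriality of $\alg{\cdot}$ follows from the already-recorded identities $\alg{\id}=\id$ and $\alg{\psi_2\circ\psi_1}=\alg{\psi_2}\circ\alg{\psi_1}$.

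\emph{$*$-naturality and unitality.} Each $\alg{\psi}_c$ is post-composition with the contraction $L^2\psi$ from Lemma~\ref{lem:L2UCP}, hence well defined and bounded; naturality is automatic because the structure maps $\alg{M}(\varphi)$ for $\varphi\in\cC(a,b)$ are \emph{pre}-composition with $\bfH(\varphi)$. For the $*$-structure, recall from Lemma~\ref{lem:InverseFunctorStar} that $j^{\alg{M}}_c(f)$ is the extension of $f(\zeta)\mapsto f(\zeta)^*$ (i.e.\ application of $S_{\phi_M}$), and similarly for $\alg{P}$. A ucp map is $*$-linear and $L^2\psi(m\Omega_M)=\psi(m)\Omega_P$, so $L^2\psi$ intertwines $S_{\phi_M}$ and $S_{\phi_P}$ on $M\Omega_M$; since $f$ sends the bounded vectors $\bfH(c)^\circ$ into $M\Omega_M$ by Proposition~\ref{prop:MultiplicationMapBounded}, this yields $\alg{\psi}_{\overline c}\circ j^{\alg{M}}_c=j^{\alg{P}}_c\circ\alg{\psi}_c$. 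Unitality: under the identification $\alg{M}(1_\cC)\cong\bbC$ from irreducibility, $i_{\alg{M}}$ is the canonical inclusion $L^2(N)\hookrightarrow L^2(M,\phi_M)$, and $L^2\psi\circ i_{\alg{M}}$ is the canonical inclusion $L^2(N)\hookrightarrow L^2(P,\phi_P)$ because $\psi|_N=\id_N$; hence $\alg{\psi}_{1_\cC}(i_{\alg{M}})=i_{\alg{P}}$.

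\emph{Complete positivity.} Here I would invoke Lemma~\ref{lem:SufficientForUCP} for the multiplier $\theta\colon\cM_{\alg{M}}\to\cM_{\alg{P}}$ corresponding to $\alg{\psi}$ under \cite[Thm.~3.24]{MR3687214}, where $\cM_{\alg{M}},\cM_{\alg{P}}$ are the cyclic $\cC$-module W*-subcategories of $\Bim(N,M)$, $\Bim(N,P)$ generated by $L^2(M,\phi_M)$, $L^2(P,\phi_P)$ (Lemma~\ref{lem:ConnectedW*Algebra}). Fix $c\in\cC$ and $f\in\alg{M}(c)$, and write $f(\zeta)=m_\zeta\Omega_M\in M\Omega_M$ for $\zeta\in\bfH(c)^\circ$ (Proposition~\ref{prop:MultiplicationMapBounded}). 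The element of $\cM_{\alg{M}}(c_{\alg{M}},c_{\alg{M}})$ appearing in Lemma~\ref{lem:SufficientForUCP} is, via Frobenius reciprocity $\cM_{\alg{M}}(c_{\alg{M}},c_{\alg{M}})\cong\alg{M}(\overline c\otimes c)$, the product $\mu^{\alg{M}}_{\overline c,c}(j^{\alg{M}}_c(f)\otimes f)$, which by Lemmas~\ref{lem:InverseFunctorMultiplication} and~\ref{lem:InverseFunctorStar} is the map $\overline\eta\boxtimes\xi\mapsto m_\eta^*m_\xi\Omega_M$ (after the canonical identification $\bfH(\overline c\otimes c)\cong\overline{\bfH(c)}\boxtimes_N\bfH(c)$). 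Since $\theta$ is compatible with the $\cC$-module structure, it commutes with Frobenius reciprocity and restricts on $\alg{M}(\overline c\otimes c)$ to $\alg{\psi}_{\overline c\otimes c}$, i.e.\ post-composition with $L^2\psi$; thus $\theta$ of the relevant element corresponds to $\overline\eta\boxtimes\xi\mapsto\psi(m_\eta^*m_\xi)\Omega_P\in\alg{P}(\overline c\otimes c)$.

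\emph{Positivity, and the main obstacle.} Choosing a finite orthogonal $\bfH(c)_N$-basis $\{\beta\}\subset\bfH(c)^\circ$ (finite because $\bfH(c)$ is dualizable), with the projections $p_\beta=\langle\beta|\beta\rangle_N\in N$, Theorem~\ref{thm:BifiniteFrobeniusReciprocity} together with \eqref{eq:RightAction} identifies the above element of $\alg{P}(\overline c\otimes c)=\cM_{\alg{P}}(c_{\alg{P}},c_{\alg{P}})$ with the operator on $\bfH(c)\boxtimes_N L^2(P,\phi_P)\cong\bigoplus_\beta p_\beta L^2(P,\phi_P)$ whose $(\beta,\beta')$-entry is $\psi(m_\beta^*m_{\beta'})$. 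Using \eqref{eq:RightAction} one gets $m_\beta=m_\beta p_\beta$, so by $N$-$N$ bilinearity $\psi(m_\beta^*m_{\beta'})=p_\beta\psi(m_\beta^*m_{\beta'})p_{\beta'}$ maps the $\beta'$-summand into the $\beta$-summand, and the resulting matrix of operators is $(\id_n\otimes\psi)(v^*v)$ for the row $v=(m_\beta)_\beta\in M_{1\times n}(M)$; complete positivity of $\psi$ makes it positive. By Lemma~\ref{lem:SufficientForUCP}, $\theta$ — equivalently $\alg{\psi}$ — is completely positive, so $\alg{\psi}$ is a categorical ucp morphism and $\alg{\cdot}$ is a functor. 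The hard part is the bookkeeping of the third paragraph: transporting the diagram of Lemma~\ref{lem:SufficientForUCP} through the module-category and Frobenius-reciprocity identifications to the explicit block operator on $\bfH(c)\boxtimes_N L^2(P,\phi_P)$; once that identification is made, the positivity is an immediate consequence of $\psi$ being completely positive, without needing a Stinespring dilation of $\psi$ at all (though one could equivalently write the operator as $A^*A$ using such a dilation).
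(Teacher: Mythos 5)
Your proof is correct and follows essentially the same route as the paper: both reduce, via Lemma \ref{lem:SufficientForUCP} and the Frobenius reciprocity identification $\alg{P}(\overline{c}\otimes c)\cong\End_{N-P}(\bfH(c)\boxtimes_N L^2(P,\phi_P))$, to the positivity of the matrix $\bigl(\psi(f(\eta_j)^*f(\eta_i))\bigr)_{i,j}$, which is immediate from complete positivity of $\psi$. The only differences are cosmetic --- the paper verifies positivity by computing the quadratic form $\langle T\zeta,\zeta\rangle$ on a dense subspace of elementary tensors (using a Pimsner--Popa basis only as an intermediate device) rather than exhibiting $T$ as an explicit block matrix over an orthogonal basis, and it leaves the $*$-naturality and unitality checks implicit.
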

\begin{proof}
We use Lemma \ref{lem:SufficientForUCP} for $\alg{\psi}$.
Let $c\in \cC$ and $f\in \alg{M}(c)\cong \Hom_{N-N}(\bfH(c), L^2(M,\phi_M))$.
We must show that
$L^2\psi \circ \mu^{\alg{M}}_{\overline{c}, c} (j_c^{\alg{M}}(f)\otimes f)$ is positive in $\bfB(\overline{c}\otimes c)$.
Now we use the natural isomorphism $\alg{P}(\overline{c}\otimes c) \cong \End_{N-P}(\bfH(c) \boxtimes_N L^2(P, \phi_P))$ from Theorem \ref{thm:BifiniteFrobeniusReciprocity}, where our map in question corresponds to the map
\begin{equation}
\label{eq:PositiveMap}
\begin{split}
(\id_{\bfH(c)}\boxtimes \mu_P)
&\circ
(\id_{\bfH(c)}\boxtimes L^2\psi \boxtimes\id_{L^2(P, \phi_P)})
\circ
(\id_{\bfH(c)}\boxtimes \mu_{c,c}^{\alg{M}} \boxtimes\id_{L^2(P, \phi_P)})
\\&\circ
(\id_{\bfH(c)} \boxtimes j_c(f) \boxtimes f \boxtimes \id_{L^2(P, \phi_P)} )
\circ
(\coev_{\bfH(c)} \boxtimes \id_{\bfH(c)}\boxtimes \id_{L^2(P, \phi_P)} ),
\end{split}
\end{equation}
which is well-defined by Proposition \ref{prop:MultiplicationMapBounded} since $(L^2\psi \circ \mu^{\alg{M}}_{\overline{c}, c})(\bfH(\overline{c})\boxtimes_N \bfH(c))$ is bifinite.
Denote the map \eqref{eq:PositiveMap} by $T\in \End_{N-P}(\bfH(c) \boxtimes_N L^2(P, \phi_P))$, and consider a finite sum $\sum_i \eta_i \boxtimes \xi_i \in  \bfH(c)^\circ\boxtimes_N L^2(P,\phi_P)$.
Picking an $\bfH(c)_N$ basis $\{\beta\} \in \bfH(c)^\circ $, we calculate using Lemmas \ref{lem:InverseFunctorMultiplication}, \ref{lem:InverseFunctorStar}, \ref{lem:L2UCP} and the fact that 
$\coev_{\bfH(c)}:L^2(N) \to \bfH(c)\boxtimes_N J_\phi\bfH(c)$ is given by $n\Omega \mapsto n\sum_\beta  \beta\boxtimes J_\phi\beta$ 
by \eqref{eq:EvAndCoev} that
\begin{align*}
\left\langle T \left(\sum_i\eta_i \boxtimes \xi_i\right)\right.,&\left. \sum_j\eta_j \boxtimes \xi _j\right\rangle_{\bfH(c) \boxtimes_N L^2(P, \phi_P)}
\\&=
\sum_{i,j}\sum_\beta
\langle \beta \boxtimes \psi(f(\beta)^* f(\eta_i)) \xi_i , \eta_j \boxtimes \xi_j\rangle_{\bfH(c) \boxtimes_N L^2(P, \phi_P)}
\\&=
\sum_{i,j}\sum_\beta
\langle \langle \eta_j|\beta\rangle_N \psi(f(\beta)^* f(\eta_i)) \xi_i , \xi_j\rangle_{L^2(P, \phi_P)}
\\&=
\sum_{i,j}\sum_\beta
\langle \psi(\langle \eta_j|\beta\rangle_N f(\beta)^* f(\eta_i)) \xi_i , \xi_j\rangle_{L^2(P, \phi_P)}
\\&=
\sum_{i,j}\sum_\beta
\langle \psi( f(\beta \langle \beta|\eta_j\rangle_N)^* f(\eta_i)) \xi_i , \xi_j\rangle_{L^2(P, \phi_P)}
\\&=
\sum_{i,j}
\langle \psi( f(\eta_j)^* f(\eta_i)) \xi_i , \xi_j\rangle_{L^2(P, \phi_P)}.
\end{align*}
The final expression is easily seen to be positive since $\psi$ is completely positive.
It follows that $T\geq 0$ since $\bfH(c)^\circ \boxtimes_N L^2(P,\phi_P)$ is dense in $\bfH(c)\boxtimes_N L^2(P, \phi)$.
\end{proof}

\subsection{Equivalence of categories}
\label{sec:EquivalenceOfCategories}

Finally, we prove our main theorem.

\begin{thm}
\label{thm:Equivalence}
The realization functor $|\cdot|_\bfH:  \ConAlg \to \DisInc_\bfH$ and the underlying algebra functor $\alg{\cdot}: \DisInc_\bfH \to \ConAlg$ witness an equivalence of categories $\ConAlg \cong \DisInc_\bfH$.
\end{thm}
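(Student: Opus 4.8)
The plan is to establish the equivalence by showing that the two functors $|\cdot|_\bfH$ and $\alg{\cdot}$ are mutually inverse up to natural isomorphism. I would prove this in two halves: first a natural isomorphism $\alg{|\bfA|_\bfH} \cong \bfA$ of connected W*-algebra objects, and second a natural isomorphism $|\alg{M}|_\bfH \cong M$ of irreducible discrete inclusions (compatible with the conditional expectations and $N-N$ structure).

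For the first half, fix $\bfA \in \ConAlg$ and write $M = |\bfA|_\bfH$. By Theorem \ref{thm:Quasiregular}, $(N \subseteq M, E_N)$ is an irreducible discrete inclusion, and by Proposition \ref{prop:L2MH decomposition} we have the $N-N$ bimodule decomposition $L^2|\bfA|_\bfH \cong \bigoplus_{a \in \Irr(\cC)} \bfH(a) \otimes_a \bfA(a)$, which shows $L^2(M,|\tau|_\bfH)$ is supported on $\bfH(\cC)$, so $M \in \DisInc_\bfH$. Now, by Definition \ref{defn:UnderlyingAlgebraObject} and Theorem \ref{thm:BifiniteFrobeniusReciprocity},
$$
\alg{M}(c) = \Hom_{N-N}(\bfH(c), L^2(M,|\tau|_\bfH)) \cong \Hom_{N-N}\Bigl(\bfH(c), \bigoplus_{a\in\Irr(\cC)} \bfH(a)\otimes_a \bfA(a)\Bigr) \cong \bfA(c),
$$
using fullness of $\bfH$ and Schur's lemma (only the $a \cong c$ summand contributes, and $\Hom_{N-N}(\bfH(c),\bfH(c)) = \bbC$). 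I would then check that this identification is natural in $c$, and that it intertwines the multiplication, unit, and $*$-structure: for the multiplication one uses Lemma \ref{lem:InverseFunctorMultiplication} (which expresses $\mu^{\alg{M}}$ via the multiplication $\mu_M$ of $M$) together with the graphical description \eqref{eq:GraphicalMultiplicationAndStar} of the product on $|\bfH^\circ \otimes \bfA|$; for the $*$-structure one compares Lemma \ref{lem:InverseFunctorStar} ($j_c = S_\phi$) with the formula for the $*$-structure on the realization. Finally one checks naturality in $\bfA$: given $\theta: \bfA \Rightarrow \bfB$, the diagram relating $\alg{|\theta|_\bfH}$ and $\theta$ commutes, which follows because $|\theta|_\bfH$ acts as $\eta \otimes f \mapsto \eta \otimes \theta_a(f)$ on the algebraic realization (Definition \ref{defn:RealizationOfUCPMap}) and $L^2|\theta|_\bfH$ restricts accordingly.

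For the second half, fix $(N \subseteq M, E) \in \DisInc_\bfH$ with canonical state $\phi = \tau \circ E$, and form $\alg{M}$. By Remark \ref{rem:IdentifyQN}, the evaluation homomorphism \eqref{eq:EvaluationHomomorphism} gives a $*$-algebra isomorphism from $\bigoplus_i K_i^\circ \otimes \Hom_{N-N}(K_i, L^2(M,\phi))$ onto $Q^\circ = \cQ\cN_M(N)$, and since $(N\subseteq M,E)$ is discrete, $Q = M$ (Proposition \ref{prop:QuasiregularIffDiscrete}). Writing each $K_i = \bfH(a_i)$ for $a_i \in \Irr(\cC)$ via the support hypothesis, the left-hand side is exactly $|\alg{M}|_\bfH^\circ = |\bfH^\circ \otimes \alg{M}|$, and the claim at the end of Remark \ref{rem:IdentifyQN} (that \eqref{eq:EvaluationHomomorphism} is a $*$-algebra isomorphism onto $Q^\circ$, with the left-hand side carrying the algebraic realization $*$-structure) gives a $*$-isomorphism $|\alg{M}|_\bfH^\circ \cong Q^\circ \subseteq M$ fixing $N$. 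One then checks this map sends $|\tau|_\bfH^\circ$ to $\phi|_{Q^\circ}$ (both are determined by the $N$-valued coefficient at $1_\cC$), so it extends to a unitary $L^2|\alg{M}|_\bfH \cong L^2(M,\phi)$ intertwining the left actions; taking bicommutants yields $|\alg{M}|_\bfH \cong M$, and this isomorphism carries $E_N$ to $E$ because both are implemented by the Jones projection onto $L^2(N)$. Naturality of this isomorphism in $(N\subseteq M,E)$ — i.e. compatibility with state-preserving $N-N$ bilinear ucp maps $\psi$ — follows since $|\alg{\psi}|_\bfH$ and $\psi$ agree on the dense algebraic realization by construction (Definition \ref{defn:RealizationOfUCPMap} applied to $\alg{\psi}$, whose components are $f \mapsto L^2\psi \circ f$), and then by the uniqueness in Proposition \ref{prop:ExtendUCP} they agree on all of $M$.

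The main obstacle I anticipate is verifying that the $*$-algebra structure transported across the identification $\alg{|\bfA|_\bfH} \cong \bfA$ (and dually, that the realization $*$-structure on $|\bfH^\circ \otimes \alg{M}|$ matches the one making \eqref{eq:EvaluationHomomorphism} a $*$-homomorphism) really does coincide with the original structure on $\bfA$, rather than merely being abstractly isomorphic. This requires carefully tracking the unitaries $\gamma_a: a' \to \overline{a}$ and the tensorators $\mu^\bfH$, $\mu^{\bfH^\circ}$ through the graphical calculus of Section \ref{sec:Realizations}, and checking that the normalization factors $d_a^{-1}$ in \eqref{eq:InnerProductForSimples} and in the computation of $(y^*x)^{1_\cC}$ in Lemma \ref{lem:TauInnerProduct} are consistent with the $\bfA(1_\cC)$-valued inner products \eqref{eq:A(1)-InnerProduct} defining $L^2(\bfA)$. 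Everything else — naturality, compatibility with conditional expectations, functoriality of the isomorphisms — is a routine diagram chase once these structural identifications are pinned down.
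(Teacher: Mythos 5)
Your proposal is correct and follows essentially the same route as the paper: the isomorphism $|\alg{M}|_\bfH \cong M$ is exactly the evaluation homomorphism of Remark \ref{rem:IdentifyQN} combined with quasi-regularity ($Q=M$), and the isomorphism $\bfA \cong \alg{|\bfA|_\bfH}$ is exactly the chain of identifications through the decomposition $L^2|\bfA|_\bfH \cong \bigoplus_a \bfH(a)\otimes_a\bfA(a)$ and fullness of $\bfH$, with the multiplication and $*$-structure checks carried out graphically as you anticipate. The obstacle you flag — verifying that the transported $*$-algebra structure coincides with the original one rather than being merely abstractly isomorphic — is precisely the content of the paper's Proposition \ref{prop:StarAlgebraNaturalIso}, so you have correctly identified where the real work lies.
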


First, we will construct a natural isomorphism $\delta:|\alg{\cdot}|_\bfH \Rightarrow \id$.
Let $(N\subseteq M, E)\in \DisInc_\bfH$, and let $\bfA = \alg{M} \in \ConAlg$.
First, we apply Lemmas \ref{lem:InverseFunctorMultiplication} and \ref{lem:InverseFunctorStar} to recall the multiplication and $*$-structure of
$$
|\bfA|_\bfH^\circ
= 
\bigoplus_{c\in \Irr(\cC)} \bfH(c)^\circ \otimes_\bbC \bfA(c)
=
\bigoplus_{c\in \Irr(\cC)} \bfH(c)^\circ \otimes_\bbC \Hom_{N-N}(\bfH(c), L^2(M,\phi)).
$$ 
For $\xi_1\otimes f_1\in \bfH(a)^\circ\otimes\bfA(a)$ and $\xi_2\otimes f_2\in \bfH(b)^\circ\otimes\bfA(b)$, the product is given by 
\begin{align*}
(\xi_1\otimes f_1) \cdot (\xi_2\otimes f_2) 
&=
\sum_{\alpha \in \Isom(c, a\otimes b)}
\bfH^\circ(\alpha^*)[\mu_{a,b}^\bfH(\xi_1\boxtimes \xi_2)] 
\otimes 
\bfA(\alpha)[\mu^\bfA_{a,b}(f_1\otimes g_2)]
\\&=
\sum_{\alpha \in \Isom(c, a\otimes b)}
\bfH^\circ(\alpha^*)\circ[\mu_{a,b}^\bfH(\xi_1\boxtimes \xi_2)] 
\otimes 
[\mu_M\circ(f_1\boxtimes f_2)\circ (\mu^\bfH_{a,b})^{-1}]\circ \bfH(\alpha).
\end{align*}
For $(\xi\otimes f)\in \bfH(c)^\circ \otimes \bfA(c)$, the adjoint is given by
$$
(\xi\otimes f)^*
=
j_c(\xi) \otimes j_c(f)
=
J_\phi\xi \otimes S_\phi f,
$$
where $S_\phi f \in \bfA(\overline{c}) =\Hom_{N-N}( J_\phi \bfH(c) , L^2(M,\phi))$ is the map given by $J_\phi \zeta\mapsto S_\phi f(\zeta) = f(\zeta)^*$.

\begin{defn}
We define an $N-N$ bilinear map $\delta=\delta_{M}:|\bfA|_\bfH^\circ \to L^2(M,\phi)$ by 
$$
\bfH(c)^\circ \otimes \bfA(c)\ni\xi\otimes f 
\mapsto 
f(\xi)\in L^2(M,\phi),
$$
i.e., $\delta$ is the evaluation homomorphism \eqref{eq:EvaluationHomomorphism}.
Since $\xi\in \bfH(c)^\circ$, $f(\xi)\in M\Omega$, and thus $\delta$ is actually a map $|\bfA|_\bfH^\circ \to M$.
\end{defn}

\begin{lem}
\label{lem:EvaluationHomomorphismAlgebraMap}
The map $\delta:|\bfA|_\bfH^\circ \to M$ is a unital $*$-homomorphism.
\end{lem}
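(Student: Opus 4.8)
The plan is to verify directly from the formulas for the multiplication, unit, and $*$-structure on $|\bfA|_\bfH^\circ$ that $\delta$ respects all three. The key observation that makes everything work is the one noted in Remark \ref{rem:IdentifyQN}: for $\xi \in \bfH(c)^\circ$ and $f \in \bfA(c) = \Hom_{N-N}(\bfH(c), L^2(M,\phi))$, the vector $f(\xi)$ is an $N$-bounded vector lying in a bifinite $N-N$ sub-bimodule of $L^2(M,\phi)$, hence $f(\xi) \in M\Omega$ by Proposition \ref{prop:MultiplicationMapBounded}. So $\delta$ genuinely lands in $M$ (identified with $M\Omega$), and the multiplication $\mu_M$ appearing in Lemma \ref{lem:InverseFunctorMultiplication} is the honest product in $M$.

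First I would check $\delta$ is unital: the unit of $|\bfA|_\bfH^\circ$ is $\Omega \otimes i_\bfA$, where under the canonical identification $N \cong \bfH(1_\cC)^\circ \otimes \bfA(1_\cC)$ the element $i_\bfA \in \bfA(1_\cC) = \Hom_{N-N}(L^2(N), L^2(M,\phi))$ is the inclusion $n\Omega \mapsto n\Omega$. Thus $\delta(\Omega \otimes i_\bfA) = i_\bfA(\Omega) = \Omega = 1_M\Omega$. Next, multiplicativity: given $\xi_1 \otimes f_1 \in \bfH(a)^\circ \otimes \bfA(a)$ and $\xi_2 \otimes f_2 \in \bfH(b)^\circ \otimes \bfA(b)$, I would apply $\delta$ to the product formula from Lemma \ref{lem:InverseFunctorMultiplication} recalled above and compute
\[
\delta\big((\xi_1\otimes f_1)\cdot(\xi_2\otimes f_2)\big)
=
\sum_{c\in\Irr(\cC)}\sum_{\alpha\in\Isom(c,a\otimes b)}
\big([\mu_M\circ(f_1\boxtimes f_2)\circ(\mu^\bfH_{a,b})^{-1}]\circ\bfH(\alpha)\big)\big(\bfH^\circ(\alpha^*)[\mu^\bfH_{a,b}(\xi_1\boxtimes\xi_2)]\big).
\]
Now $\bfH(\alpha)\circ\bfH(\alpha^*) = \bfH(\alpha\circ\alpha^*)$, and summing over a maximal family of isometries in each $\Isom(c,a\otimes b)$ and over all $c\in\Irr(\cC)$ gives $\sum_{c,\alpha}\bfH(\alpha\alpha^*) = \bfH(\id_{a\otimes b}) = \id_{\bfH(a\otimes b)}$ by the fusion relation \eqref{eq:FusionRelation}. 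Hence the sum collapses to $[\mu_M\circ(f_1\boxtimes f_2)\circ(\mu^\bfH_{a,b})^{-1}]\circ\mu^\bfH_{a,b}(\xi_1\boxtimes\xi_2) = \mu_M(f_1(\xi_1)\boxtimes f_2(\xi_2)) = f_1(\xi_1)f_2(\xi_2) = \delta(\xi_1\otimes f_1)\,\delta(\xi_2\otimes f_2)$, where the second-to-last equality is the identification of the relative tensor product multiplication $\mu_M$ on $M\Omega \boxtimes_N M\Omega$ with the product in $M$ (this is essentially the bounded multiplication map of Proposition \ref{prop:MultiplicationMapBounded}, applied to bounded vectors).

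Finally, the $*$-structure: using Lemma \ref{lem:InverseFunctorStar}, $(\xi\otimes f)^* = J_\phi\xi \otimes S_\phi f$ with $S_\phi f(J_\phi\zeta) = f(\zeta)^*$, so $\delta((\xi\otimes f)^*) = (S_\phi f)(J_\phi\xi) = f(\xi)^* = \delta(\xi\otimes f)^*$, where $f(\xi)^*$ makes sense because $f(\xi) \in M$. That $\delta$ is $N-N$ bilinear is immediate from the formula, since the left and right $N$-actions on the $\bfH(c)^\circ\otimes\bfA(c)$ summand are carried by $\bfH(c)^\circ$ and $f$ is $N-N$ bilinear. The main obstacle I anticipate is bookkeeping rather than conceptual: one must be careful that the coend/isometry sum in the product formula really does telescope via \eqref{eq:FusionRelation} independently of the choices of $\Isom(c,a\otimes b)$ (this independence is guaranteed by the discussion at the start of Section \ref{sec:Realizations}), and that the identification of $\mu_M$ with the genuine product of $M$ on bounded vectors is invoked correctly — which is exactly what Lemma \ref{lem:InverseFunctorMultiplication} and Proposition \ref{prop:MultiplicationMapBounded} were set up to provide.
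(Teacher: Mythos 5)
Your proposal is correct and follows essentially the same route as the paper: apply $\delta$ to the product formula from Lemma \ref{lem:InverseFunctorMultiplication}, collapse the sum over isometries via the fusion relation \eqref{eq:FusionRelation} to get $f_1(\xi_1)f_2(\xi_2)$, and handle the unit and $*$-structure directly from Lemma \ref{lem:InverseFunctorStar}. Your explicit inclusion of the outer sum over $c\in\Irr(\cC)$ and the remark that the telescoping is independent of the choice of $\Isom(c,a\otimes b)$ is, if anything, slightly more careful bookkeeping than the paper's displayed computation.
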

\begin{proof}
For all $\xi_1\otimes f_1\in \bfH(a)^\circ\otimes\bfA(a)$ and $\xi_2\otimes f_2\in \bfH(b)^\circ\otimes\bfA(b)$, we have
\begin{align*}
\delta[(\xi_1\otimes f_1)&\cdot(\xi_2\otimes f_2)]
\\&=
\delta\left(
\sum_{\alpha \in \Isom(c, a\otimes b)}
\bfH^\circ(\alpha^*)\circ[\mu_{a,b}^\bfH(\xi_1\boxtimes \xi_2)] 
\otimes 
[\mu_M\circ(f_1\boxtimes f_2)\circ (\mu^\bfH_{a,b})^{-1}]\circ \bfH(\alpha)
\right)
\\&=
\sum_{\alpha \in \Isom(c, a\otimes b)}
[\mu_M\circ(f_1\boxtimes f_2)\circ (\mu^\bfH_{a,b})^{-1}]\circ \bfH(\alpha)
\circ
\bfH^\circ(\alpha^*)\circ[\mu_{a,b}^\bfH(\xi_1\boxtimes \xi_2)] 
\\&=
\mu_M\circ(f_1\boxtimes f_2)\circ (\mu^\bfH_{a,b})^{-1}
\circ
\mu_{a,b}^\bfH(\xi_1\boxtimes \xi_2)
\\&=
f_1(\xi_1)f_2(\xi_2).
\end{align*}
For $\xi\otimes f \in \bfH(c)^\circ \otimes \bfA(c)$, we have
$$
\delta[(\xi\otimes f)^*]
=
\delta(J_\phi \xi \otimes S_\phi f)
=
S_\phi f(J_\phi \xi)
=
f(\xi)^*.
$$
Finally, $\delta(\Omega_{L^2(N)} \otimes i_\bfA) = \Omega_{L^2(M,\phi)}$.
\end{proof}

\begin{lem}
\label{lem:*-isoPreservesStates}
The map $\delta: |\bfA|_\bfH^\circ \to M$ preserves the canonical state, i.e., $\phi\circ \delta = |\tau|_\bfH^\circ$.
\end{lem}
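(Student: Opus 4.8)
The plan is to split the computation over $\Irr(\cC)$ and to show that only the $1_\cC$-summand contributes to either side. By linearity of both $\phi\circ\delta$ and $|\tau|_\bfH^\circ$, and by the definition of $|\tau|_\bfH^\circ$ through the Sweedler notation of Notation \ref{nota:Sweedler}, it suffices to evaluate $\phi(\delta(\xi\otimes f))$ on an elementary tensor $\xi\otimes f\in \bfH(c)^\circ\otimes\bfA(c)$ with $c\in\Irr(\cC)$ and compare with $\tau$ of the $1_\cC$-component. Since $\delta(\xi\otimes f)=f(\xi)$ lies in $M\Omega$ (as $\xi\in\bfH(c)^\circ$, by Proposition \ref{prop:MultiplicationMapBounded}) and $\phi(m)=\langle m\Omega,\Omega\rangle_{L^2(M,\phi)}$ for $m\in M$, this amounts to computing $\phi(\delta(\xi\otimes f))=\langle f(\xi),\Omega\rangle_{L^2(M,\phi)}$.

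First I would treat the case $c\neq 1_\cC$ and show $\langle f(\xi),\Omega\rangle_{L^2(M,\phi)}=0$. The Jones projection $e_N$ onto $\overline{N\Omega}\cong L^2(N)\subseteq L^2(M,\phi)$ is $N-N$ bilinear: it commutes with the left $N$-action, and by \eqref{eq:RightAction} it preserves $N\Omega$, hence commutes with the right $N$-action. Therefore $e_N\circ f\in\Hom_{N-N}(\bfH(c),L^2(N))$, and since $\bfH$ is full with $\bfH(1_\cC)=L^2(N)$, this space is isomorphic to $\cC(c,1_\cC)=0$. Thus $e_N f=0$, so $f(\xi)=(1-e_N)f(\xi)$ is orthogonal to $\Omega\in L^2(N)$, and $\phi(\delta(\xi\otimes f))=0$. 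On the other side, the $1_\cC$-component of $\xi\otimes f$ is zero when $c\neq 1_\cC$, so $|\tau|_\bfH^\circ$ also vanishes on it; the two sides match.

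For $c=1_\cC$, the discussion preceding Notation \ref{nota:Sweedler} identifies the $1_\cC$-summand $\bfH^\circ(1_\cC)\otimes\bfA(1_\cC)$ with $N$ via $n\mapsto n\Omega\otimes i_\bfA$ (using connectedness of $\bfA$, Lemma \ref{lem:ConnectedW*Algebra}), and $n\Omega\otimes i_\bfA$ is exactly the $1_\cC$-component $x^{1_\cC}\Omega\otimes i_\bfA$ of Notation \ref{nota:Sweedler}. Then $\delta(n\Omega\otimes i_\bfA)=i_\bfA(n\Omega)$, which by $N-N$ bilinearity of $i_\bfA$ together with $\delta(\Omega\otimes i_\bfA)=\Omega$ from the proof of Lemma \ref{lem:EvaluationHomomorphismAlgebraMap} equals $n\Omega$; that is, $\delta(n\Omega\otimes i_\bfA)$ is the element $n\in N\subseteq M$, so $\phi(\delta(n\Omega\otimes i_\bfA))=\tau(E(n))=\tau(n)=|\tau|_\bfH^\circ(n\Omega\otimes i_\bfA)$. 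Summing the contributions over $\Irr(\cC)$ yields $\phi\circ\delta=|\tau|_\bfH^\circ$. The only load-bearing point is the vanishing when $c\neq 1_\cC$ — precisely, that $e_N$ is $N-N$ bilinear and that fullness of $\bfH$ forces $\Hom_{N-N}(\bfH(c),L^2(N))=0$ — and I do not anticipate a genuine difficulty, the remainder being bookkeeping with the Sweedler decomposition.
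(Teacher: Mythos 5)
Your proof is correct and follows essentially the same route as the paper: the paper's (very terse) proof likewise reduces to the observation that $\phi\circ\delta$ only sees the $1_\cC$-component, i.e., $\phi(f(\xi))=\langle f(\xi),\Omega\rangle=0$ for $c\neq 1_\cC$ simple. You have merely made explicit the justification (that $e_N$ is $N$--$N$ bilinear and $\Hom_{N-N}(\bfH(c),L^2(N))\cong\cC(c,1_\cC)=0$ by fullness) which the paper leaves implicit.
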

\begin{proof}
For all $x=\sum_{c\in\Irr(\cC)} \xi_{(1)}^c \otimes f_{(2)}^c \in |\bfA|_\bfH^\circ$, we have
\begin{align*}
(\phi\circ\delta)\left(
x
\right)
&=
\phi\left(
\sum_{c\in\Irr(\cC)} f_{(2)}^c(\xi_{(1)}^c)
\right)
=\phi\left(
i_{L^2(N)}\xi^{1_\cC}
\right)
=
\tau(\xi^{1_\cC})
=
|\tau|_\bfH^\circ\left(
x
\right).
\end{align*}
Above, $\xi^{1_\cC}\in N\Omega$ and $i_{L^2(N)}\in \bfA(1_\cC)=\Hom_{N-N}(L^2(N), L^2(M,\phi))$ is the canonical inclusion induced by $\Omega_{L^2(N)}\mapsto \Omega_{L^2(M,\phi)}$.
\end{proof}

\begin{prop}
The map $\delta$ gives a natural isomorphism $|\alg{\cdot}|_\bfH \Rightarrow \id$.
\end{prop}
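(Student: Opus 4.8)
The plan is to show that for each $(N\subseteq M,E)\in\DisInc_\bfH$ the map $\delta_M$ extends to an isomorphism in $\DisInc_\bfH$, and that these maps are natural in $M$. The first thing I would record is that $\delta_M$ is precisely the evaluation homomorphism \eqref{eq:EvaluationHomomorphism}. Indeed, since $(N\subseteq M,E)\in\DisInc_\bfH$ we have $L^2(M,\phi)\in\Hilb(\bfH(\cC))$, so every irreducible bifinite $N-N$ sub-bimodule of $L^2(M,\phi)$ is isomorphic to $\bfH(c)$ for a unique $c\in\Irr(\cC)$ (using that $\bfH$ is fully faithful), while $\alg{M}(c)=\Hom_{N-N}(\bfH(c),L^2(M,\phi))$ vanishes for the remaining $c$. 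Thus $|\alg{M}|_\bfH^\circ=\bigoplus_{c\in\Irr(\cC)}\bfH(c)^\circ\otimes\alg{M}(c)$ is canonically the domain of \eqref{eq:EvaluationHomomorphism}, and under this identification $\delta_M$ is that homomorphism. By Remark \ref{rem:IdentifyQN} it is a linear bijection onto $Q^\circ=\cQ\cN_M(N)$, and by Lemma \ref{lem:EvaluationHomomorphismAlgebraMap} it is a unital $*$-homomorphism; so $\delta_M\colon|\alg{M}|_\bfH^\circ\to\cQ\cN_M(N)$ is a unital $*$-algebra isomorphism. Since the inclusion is discrete, $Q=M$ by Proposition \ref{prop:QuasiregularIffDiscrete}, so $\cQ\cN_M(N)$ is weak operator dense in $M$.

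Next I would promote $\delta_M$ to a normal $*$-isomorphism of von Neumann algebras. By Lemma \ref{lem:*-isoPreservesStates}, $\delta_M$ intertwines the canonical states, which are the restrictions of the faithful normal states $|\tau|_\bfH$ on $|\alg{M}|_\bfH$ and $\phi$ on $M$; using Corollary \ref{cor:IsoToGNS} to identify $L^2|\alg{M}|_\bfH$ with the GNS space $L^2(|\alg{M}|_\bfH,|\tau|_\bfH)$, the map $x\Omega\mapsto\delta_M(x)\Omega$ extends to a unitary $U\colon L^2|\alg{M}|_\bfH\to L^2(M,\phi)$ carrying the cyclic separating vector to the cyclic separating vector. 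A short computation shows $U$ conjugates left multiplication by $x\in|\alg{M}|_\bfH^\circ$ into left multiplication by $\delta_M(x)$; taking weak operator closures then shows $\Ad(U)$ carries $|\alg{M}|_\bfH=(|\alg{M}|_\bfH^\circ)''$ onto $\cQ\cN_M(N)''=M$, so $\delta_M$ extends to a normal $*$-isomorphism $|\alg{M}|_\bfH\to M$. This extension is the identity on $N\cong\bfH(1_\cC)^\circ\otimes\alg{M}(1_\cC)$ (the generator of $\alg{M}(1_\cC)$ being the canonical inclusion $L^2(N)\hookrightarrow L^2(M,\phi)$, as in the proof of Lemma \ref{lem:*-isoPreservesStates}), hence $N-N$ bilinear, and it preserves the canonical states by normality and density. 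Therefore $\delta_M$ and $\delta_M^{-1}$ are morphisms in $\DisInc_\bfH$, i.e.\ $\delta_M$ is an isomorphism there.

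For naturality, given a morphism $\psi\colon M\to P$ in $\DisInc_\bfH$ I would check the square $\psi\circ\delta_M=\delta_P\circ|\alg{\psi}|_\bfH$ on the weak operator dense subalgebra $|\alg{M}|_\bfH^\circ$. For $\xi\otimes f\in\bfH(c)^\circ\otimes\alg{M}(c)$ one has, by Definition \ref{defn:RealizationOfUCPMap}, the definition of $\alg{\psi}$ on morphisms, and the fact that $f(\xi)\in M\Omega$ by Proposition \ref{prop:MultiplicationMapBounded} (so that $L^2\psi$ restricted to $f(\xi)$ agrees with $\psi$),
$$
(\delta_P\circ|\alg{\psi}|_\bfH)(\xi\otimes f)
=\delta_P\bigl(\xi\otimes(L^2\psi\circ f)\bigr)
=(L^2\psi\circ f)(\xi)
=\psi\bigl(f(\xi)\bigr)
=(\psi\circ\delta_M)(\xi\otimes f).
$$
Since $\psi\circ\delta_M$ and $\delta_P\circ|\alg{\psi}|_\bfH$ are both normal and agree on a weak operator dense $*$-subalgebra, they are equal, giving the naturality square.

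I expect the main obstacle to be the bookkeeping of the middle step: lining up the standard/GNS form identifications so that conjugation by $U$ genuinely implements the extension of the algebraic $\delta_M$, and observing that the extended $\delta_M$ then automatically intertwines the conditional expectations (this follows from $\tau\circ E_N^M\circ\delta_M=\phi\circ\delta_M=|\tau|_\bfH=\tau\circ E_N^{|\alg{M}|_\bfH}$ together with faithfulness of $\tau$ and $N$-bimodularity). All the real content is already packaged into Remark \ref{rem:IdentifyQN}, Lemma \ref{lem:EvaluationHomomorphismAlgebraMap}, and Lemma \ref{lem:*-isoPreservesStates}.
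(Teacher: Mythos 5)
Your proposal is correct and follows essentially the same route as the paper: identify $\delta_M$ on the algebraic realization with the evaluation homomorphism onto $\cQ\cN_M(N)$ (Remark \ref{rem:IdentifyQN}, Lemma \ref{lem:EvaluationHomomorphismAlgebraMap}), use state preservation (Lemma \ref{lem:*-isoPreservesStates}) to extend to a unitary on the $L^2$ level implementing a normal injective extension, conclude surjectivity from quasi-regularity (Proposition \ref{prop:QuasiregularIffDiscrete}), and verify naturality by the same computation on the strongly*-dense subalgebra. The only difference is cosmetic ordering — you establish the algebraic isomorphism onto $Q^\circ$ before the GNS extension, whereas the paper extends first and identifies the image second.
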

\begin{proof}
By Lemma \ref{lem:*-isoPreservesStates}, the map $\delta: |\bfA|_\bfH^\circ \to M$ extends to an $N-N$ bilinear unitary isomorphism $L^2\delta:L^2|\bfA|_\bfH \to L^2(M,\phi)$, and $\delta$ is implemented by $L^2\delta$.
Thus $\delta$ extends uniquely to an injective unital $*$-homomorphism $|\bfA|_\bfH \to M$.

To show $\delta$ is onto, we use (1) of Proposition \ref{prop:DenseSubalgebraOfDiscrete}.
Let $Q^\circ =\cQ\cN_M(N)$ be the quasi-normalizer.
It is easy to see that $Q^\circ = \delta(|\bfA|_\bfH^\circ)$.
But $(N\subseteq M, E)$ is quasi-regular by Proposition \ref{prop:QuasiregularIffDiscrete}, so $Q = (Q^\circ)''=M$.
By normality of $\delta$, we have $M = \delta(|\bfA|_\bfH)$.

Finally, to prove naturality, suppose $\psi \in \DisInc_\bfH( (N\subseteq M, E^M_N), (N\subseteq P, E^P_N))$.
For all $\xi\otimes f \in \bfH^\circ(c) \otimes \alg{M}(c)$, applying Definition \ref{defn:RealizationOfUCPMap}, we have $|\alg{\psi}|_\bfH(\xi\otimes f) = \xi\otimes \alg{\psi}(f) = \xi \otimes (\psi\circ f)$.
Thus
$$
\xymatrix{
\xi \otimes f  \ar@{|->}[r]^{\delta_M} \ar@{|->}[d]^{|\alg{\psi}|_\bfH} & f(\xi)\ar@{|->}[d]^{\psi}
\\
\xi \otimes(\psi \circ f)  \ar@{|->}[r]^{\delta_P} & \psi(f(\xi))
}
$$
on $|\bfA|_\bfH^\circ$.
Since the desired maps commute on a strongly*-dense unital $*$-subalgebra, we are finished.
\end{proof}

Finally, we define a natural isomorphism $\kappa:\id \Rightarrow \alg{|\cdot|_\bfH}$.

\begin{defn}
For $\bfA\in \ConAlg$, define $\kappa^\bfA :  \bfA \Rightarrow\alg{|\bfA|_\bfH}$ to be the following natural isomorphism.
For $c\in \cC$, we define $\kappa^\bfA_c : \bfA(c)\to \alg{|\bfA|_\bfH}$ as the composite of the following natural isomorphisms:
\begin{align*}
\bfA(c)
&\cong
\bigoplus_{a\in\Irr(\cC)}
\cC(c, a)\otimes_a \bfA(a)
&&\text{(\cite[Def.~2.31]{MR3687214})}
\\&\cong
\bigoplus_{a\in\Irr(\cC)}
\Hom_{N-N}(\bfH(c),  \bfH(a) ) \otimes_a \bfA(a)
&&\text{($\bfH$ fully faithful)}
\\&\cong
\Hom_{N-N}(\bfH(c), \bigoplus_{a\in\Irr(\cC)} \bfH(a) \otimes_a \bfA(a))
&&\text{(Prop.~\ref{prop:L2MH decomposition})}
\\&\cong
\Hom_{N-N}(\bfH(c), L^2|\bfA|_\bfH)
\\&=:
\alg{|\bfA|_\bfH}(c).
&&
\text{(Def.~\ref{defn:UnderlyingAlgebraObject})}
\end{align*}
\end{defn}

In order to show $\kappa^\bfA$ is a $*$-algebra natural isomorphism, we provide another description of the above natural isomorphism.
Using the graphical calculus  \eqref{eq:PermeableMembrane} for $|\bfA|_\bfH^\circ$, we see that $\kappa^\bfA_c : \bfA(c) \to \alg{|\bfA|_\bfH}$ is given by
\begin{equation}
\label{eq:KappaInDiagrams}
\bfA(c)
\ni
f
\mapsto
\left(\bfH(c)^\circ \ni\eta 
\mapsto 
\begin{tikzpicture}[baseline=-.1cm]
    \draw[red] (-.5,0) -- (.5,0);
    \draw (0,-1.2) -- (0,1.2);
    \roundNbox{unshaded}{(0,.6)}{.3}{0}{0}{$f$}
    \roundNbox{unshaded}{(0,-.6)}{.3}{0}{0}{$\eta$}
    \node at (.2,1.05) {\scriptsize{$\bfA$}};
    \node at (.15,.15) {\scriptsize{$\mathbf{c}$}};
    \node at (.15,-.15) {\scriptsize{$\mathbf{c}$}};
    \node at (.3,-1.05) {\scriptsize{$\bfH^\circ$}};
\end{tikzpicture}
\in |\bfA|_\bfH^\circ 
\right).
\end{equation}

\begin{prop}
\label{prop:StarAlgebraNaturalIso}
The natural isomorphism $\kappa^\bfA$ is a $*$-algebra natural isomorphism.
\end{prop}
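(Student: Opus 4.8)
The plan is to verify the three requisite properties of a $*$-algebra natural transformation for $\kappa^\bfA$ using the diagrammatic description \eqref{eq:KappaInDiagrams}, which packages the composite of natural isomorphisms into a single clean formula. Naturality in $c$ (that $\kappa^\bfA$ is a natural transformation of functors $\cC^{\op}\to\Vec$) is immediate from \eqref{eq:KappaInDiagrams}: precomposing $f$ with $\bfA(\psi)$ for $\psi\in\cC(b,c)$ corresponds graphically to inserting a $\psi$-coupon on the $\mathbf c$ strand, and since the $\cC$-permeable membrane lets morphisms of $\cC$ pass through, this is the same as precomposing the resulting element of $\alg{|\bfA|_\bfH}$ with $\bfH(\psi)$. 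So the only real content is checking that $\kappa^\bfA$ is unital and multiplicative and intertwines the $*$-structures.

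First I would check unitality: the unit $i_\bfA\in\bfA(1_\cC)$ maps under \eqref{eq:KappaInDiagrams} to the map $\bfH(1_\cC)^\circ = N\Omega \ni \eta \mapsto \eta\otimes i_\bfA$, which under the canonical identification $N\cong \bfH^\circ(1_\cC)\otimes\bfM(1_\cC)\subseteq |\bfA|_\bfH^\circ$ from the discussion preceding Notation \ref{nota:Sweedler} is exactly the inclusion $L^2(N)\hookrightarrow L^2|\bfA|_\bfH$ induced by $\Omega_{L^2(N)}\mapsto\Omega$; this is precisely the unit $i_{\alg{|\bfA|_\bfH}}$ of the underlying algebra object by Lemma \ref{lem:ConnectedW*Algebra} and the identification in Definition \ref{defn:UnderlyingAlgebraObject}. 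Next, for multiplicativity, I would take $f\in\bfA(a)$, $g\in\bfA(b)$ and compute $\mu^{\alg{|\bfA|_\bfH}}_{a,b}(\kappa^\bfA_a(f)\otimes\kappa^\bfA_b(g))$ using Lemma \ref{lem:InverseFunctorMultiplication}, which says this multiplication is $\mu_{|\bfA|_\bfH}\circ(\kappa^\bfA_a(f)\boxtimes\kappa^\bfA_b(g))\circ(\mu^\bfH_{a,b})^{-1}$. Applying this to a bounded vector $\mu^\bfH_{a,b}(\eta\boxtimes\xi)$ gives, via \eqref{eq:KappaInDiagrams}, the product in $|\bfA|_\bfH^\circ$ of $(\eta\otimes f)$ and $(\xi\otimes g)$; by the multiplication formula in $|\bfA|_\bfH^\circ=|\bfH^\circ\otimes\bfA|$ recalled at the start of Section \ref{sec:EquivalenceOfCategories} (sum over $\alpha\in\Isom(c,a\otimes b)$ of $\bfH^\circ(\alpha^*)[\mu^\bfH_{a,b}(\eta\boxtimes\xi)]\otimes \bfA(\alpha)[\mu^\bfA_{a,b}(f\otimes g)]$), this is exactly $\kappa^\bfA_{a\otimes b}(\mu^\bfA_{a,b}(f\otimes g))$ evaluated on $\mu^\bfH_{a,b}(\eta\boxtimes\xi)$, after recognizing the sum over $\alpha$ as implementing the isomorphism $L^2|\bfA|_\bfH\cong\bigoplus_a\bfH(a)\otimes_a\bfA(a)$. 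Finally, for the $*$-structure I would use Lemma \ref{lem:InverseFunctorStar}: $j^{\alg{|\bfA|_\bfH}}_c(\kappa^\bfA_c(f)) = S_{|\tau|_\bfH}\circ\kappa^\bfA_c(f)$, the extension of $J_{|\tau|_\bfH}\eta\mapsto (\kappa^\bfA_c(f)(\eta))^*$; one computes $(\eta\otimes f)^* = J_{|\tau|_\bfH}\eta\otimes j^\bfA_c(f)$ in $|\bfA|_\bfH^\circ$ directly from the $*$-formula on $|\bfH^\circ\otimes\bfA|$, using that $j^{\bfH^\circ}$ is $\eta\mapsto\overline\eta$ and $\overline{\bfH(c)}\cong\bfH(\overline c)$, so this matches $\kappa^\bfA_{\overline c}(j^\bfA_c(f))$.

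The main obstacle I expect is the multiplicativity step: it requires carefully matching the isometry-sum formula for multiplication in the algebraic realization $|\bfH^\circ\otimes\bfA|$ against the $\boxtimes_N$-composition formula for $\alg{|\bfA|_\bfH}$ from Lemma \ref{lem:InverseFunctorMultiplication}, and in particular tracking how the decomposition isomorphism of Proposition \ref{prop:L2MH decomposition} interacts with the Connes fusion $\bfH(c)\boxtimes_N L^2|\bfA|_\bfH$ — i.e., verifying that evaluating $\kappa^\bfA_a(f)\boxtimes\kappa^\bfA_b(g)$ on a bounded vector really does reproduce the membrane-diagram product, including that no spurious scalar factors (like $d_a^{-1}$ from the normalized inner products) appear. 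I would handle this by working entirely on the dense subspace of bounded vectors $\bfH(c)^\circ$, where all the $\boxtimes_N$'s become honest algebraic tensor products over $N$ and the multiplication map $\mu_{|\bfA|_\bfH}$ is just the algebra multiplication on $|\bfA|_\bfH^\circ$, reducing everything to the already-established formulas \eqref{eq:GraphicalMultiplicationAndStar} and Lemmas \ref{lem:InverseFunctorMultiplication}–\ref{lem:InverseFunctorStar}. The remaining bookkeeping — naturality of $\kappa$ in $\bfA$ with respect to ucp morphisms — follows since both $\kappa$ and $|\cdot|_\bfH$, $\alg{\cdot}$ are defined ``on bounded vectors'' by the same formula $\eta\otimes f\mapsto f(\eta)$, so it commutes on the strongly$*$-dense algebraic realization and hence everywhere.

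\begin{proof}
Naturality of $\kappa^\bfA$ as a transformation of functors $\cC^{\op}\to\Vec$ is immediate from \eqref{eq:KappaInDiagrams}, since morphisms of $\cC$ pass through the $\cC$-permeable membrane. We check $\kappa^\bfA$ is unital, multiplicative, and $*$-preserving.

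\emph{Unital.} By \eqref{eq:KappaInDiagrams}, $\kappa^\bfA_{1_\cC}(i_\bfA)$ is the map $\bfH(1_\cC)^\circ=N\Omega\ni\eta\mapsto\eta\otimes i_\bfA$. Under the identification $N\cong\bfH^\circ(1_\cC)\otimes\bfM(1_\cC)\subseteq|\bfA|_\bfH^\circ$ and Proposition \ref{prop:L2MH decomposition}, this is the inclusion $L^2(N)\hookrightarrow L^2|\bfA|_\bfH$ with $\Omega_{L^2(N)}\mapsto\Omega$, which is the unit $i_{\alg{|\bfA|_\bfH}}$.

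\emph{Multiplicative.} Let $f\in\bfA(a)$, $g\in\bfA(b)$. By Lemma \ref{lem:InverseFunctorMultiplication}, $\mu^{\alg{|\bfA|_\bfH}}_{a,b}(\kappa^\bfA_a(f)\otimes\kappa^\bfA_b(g)) = \mu_{|\bfA|_\bfH}\circ(\kappa^\bfA_a(f)\boxtimes\kappa^\bfA_b(g))\circ(\mu^\bfH_{a,b})^{-1}$. Evaluating on a bounded vector $\mu^\bfH_{a,b}(\eta\boxtimes\xi)$ with $\eta\in\bfH(a)^\circ$, $\xi\in\bfH(b)^\circ$, the right-hand side equals the product $(\eta\otimes f)\cdot(\xi\otimes g)$ computed in $|\bfA|_\bfH^\circ$. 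By the multiplication formula on $|\bfH^\circ\otimes\bfA|$ recalled above, this product is
$$
\sum_{c\in\Irr(\cC)}\sum_{\alpha\in\Isom(c,a\otimes b)}
\bfH^\circ(\alpha^*)[\mu^{\bfH^\circ}_{a,b}(\eta\otimes\xi)]\otimes\bfA(\alpha)[\mu^\bfA_{a,b}(f\otimes g)],
$$
which, under \eqref{eq:KappaInDiagrams} and the decomposition $L^2|\bfA|_\bfH\cong\bigoplus_c\bfH(c)\otimes_c\bfA(c)$, is precisely $\kappa^\bfA_{a\otimes b}(\mu^\bfA_{a,b}(f\otimes g))$ applied to $\mu^\bfH_{a,b}(\eta\boxtimes\xi)$. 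Since bounded vectors are dense, $\kappa^\bfA$ is multiplicative.

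\emph{$*$-preserving.} Let $f\in\bfA(c)$. By Lemma \ref{lem:InverseFunctorStar}, $j^{\alg{|\bfA|_\bfH}}_c(\kappa^\bfA_c(f))=S_{|\tau|_\bfH}\circ\kappa^\bfA_c(f)$ is the extension of $J_{|\tau|_\bfH}\eta\mapsto(\kappa^\bfA_c(f)(\eta))^*=(\eta\otimes f)^*$. By the $*$-formula on $|\bfH^\circ\otimes\bfA|$, $(\eta\otimes f)^* = j^{\bfH^\circ}_c(\eta)\otimes j^\bfA_c(f)=\overline\eta\otimes j^\bfA_c(f)$, and under $\bfH(\overline c)^\circ\cong\overline{\bfH(c)^\circ}$ (with $\overline\eta\leftrightarrow J_{|\tau|_\bfH}\eta$), this is $\kappa^\bfA_{\overline c}(j^\bfA_c(f))$ evaluated on $J_{|\tau|_\bfH}\eta$. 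Hence $j^{\alg{|\bfA|_\bfH}}_c\circ\kappa^\bfA_c=\kappa^\bfA_{\overline c}\circ j^\bfA_c$.

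Thus $\kappa^\bfA$ is a $*$-algebra natural isomorphism.
\end{proof}
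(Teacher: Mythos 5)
Your proof is correct and follows essentially the same route as the paper: both reduce the multiplicativity and $*$-compatibility checks to Lemmas \ref{lem:InverseFunctorMultiplication} and \ref{lem:InverseFunctorStar} together with the description \eqref{eq:KappaInDiagrams} of $\kappa^\bfA$, evaluating on bounded vectors. The only cosmetic difference is that you spell out the isometry-sum formula for the product in $|\bfH^\circ\otimes\bfA|$ where the paper draws the equivalent membrane diagram \eqref{eq:GraphicalMultiplicationAndStar}; these agree under the identification $\|\bfH^\circ\otimes\bfA\|/I\cong|\bfH^\circ\otimes\bfA|$, so your identification of the sum over $\Isom(c,a\otimes b)$ with the image of $\bfH(a\otimes b)^\circ\otimes\bfA(a\otimes b)$ in the realization is exactly the right justification.
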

\begin{proof}
We use the graphical calculus \eqref{eq:GraphicalMultiplicationAndStar} for multiplication and the $*$-structure of $|\bfA|_\bfH^\circ$.
For $f\otimes g \in \bfA(a)\otimes \bfA(b)$, we have that
$\kappa^\bfA_{a\otimes b} (\mu_{a,b}^\bfA (f\otimes g))$ is the map
\begin{equation}
\label{eq:KappaOfMu}
\bfH(a)^\circ \boxtimes_N \bfH(b)^\circ
\ni
\eta\boxtimes \xi 
\mapsto
\begin{tikzpicture}[baseline=-.1cm]
    \draw[red] (-.5,0) -- (1.5,0);
    \draw (0,-.6) -- (0,.6);
    \draw (1,-.6) -- (1,.6);
    \draw (0,.9) arc (180:0:.5cm);
    \draw (0,-.9) arc (-180:0:.5cm);
    \filldraw (.5,1.4) circle (.05cm);
    \filldraw (.5,-1.4) circle (.05cm);
    \draw (.5,1.4) -- (.5,1.8);
    \draw (.5,-1.4) -- (.5,-1.8);
    \roundNbox{unshaded}{(0,.6)}{.3}{0}{0}{$f$}
    \roundNbox{unshaded}{(0,-.6)}{.3}{0}{0}{$\eta$}
    \roundNbox{unshaded}{(1,.6)}{.3}{0}{0}{$g$}
    \roundNbox{unshaded}{(1,-.6)}{.3}{0}{0}{$\xi$}
    \node at (.7,1.6) {\scriptsize{$\bfA$}};
    \node at (-.2,1.1) {\scriptsize{$\bfA$}};
    \node at (1.2,1.1) {\scriptsize{$\bfA$}};
    \node at (-.15,.15) {\scriptsize{$\mathbf{a}$}};
    \node at (-.15,-.15) {\scriptsize{$\mathbf{a}$}};
    \node at (1.15,.15) {\scriptsize{$\mathbf{b}$}};
    \node at (1.15,-.15) {\scriptsize{$\mathbf{b}$}};
    \node at (-.2,-1.1) {\scriptsize{$\bfH^\circ$}};
    \node at (1.2,-1.1) {\scriptsize{$\bfH^\circ$}};
    \node at (.8,-1.6) {\scriptsize{$\bfH^\circ$}};
\end{tikzpicture}
\in |\bfA|_\bfH^\circ.
\end{equation}
Now suppressing the tensorator for $\bfH$ and $\bfH^\circ$, by Lemma \ref{lem:InverseFunctorMultiplication}, we have that
$\mu^{\alg{|\bfA|_\bfH}}_{a,b} (\kappa^\bfA_a(f)\otimes \kappa^\bfA_b(g))$ 
is the map
$$
\bfH(a)^\circ \boxtimes_N \bfH(b)^\circ 
\ni
\eta \boxtimes \xi
\mapsto
\kappa^\bfA_a(f)(\eta)\kappa^\bfA_b(g)(\xi)
\in |\bfA|_\bfH^\circ,
$$
which is exactly equal to the right hand side of \eqref{eq:KappaOfMu} by \eqref{eq:KappaInDiagrams}.

Suppressing the isomorphism $\overline{\bfH(a)}\cong \bfH(\overline{a})$, for $f\in \bfA(a)$, we have that $\kappa_{\overline{a}}^\bfA(j_a^\bfA(f))$ is the map
\begin{equation}
\label{eq:KappaOfJ}
\overline{\bfH(a)}
\ni
\overline{\eta}
\mapsto
\begin{tikzpicture}[baseline=-.1cm]
    \draw[red] (-.5,0) -- (.5,0);
    \draw (0,-1.3) -- (0,1.3);
    \roundNbox{unshaded}{(0,.6)}{.3}{.3}{.3}{$j_a^\bfA(f)$}
    \roundNbox{unshaded}{(0,-.6)}{.3}{0}{0}{$\overline{\eta}$}
    \node at (.2,1.1) {\scriptsize{$\bfA$}};
    \node at (.15,.15) {\scriptsize{$\overline{\mathbf{a}}$}};
    \node at (.15,-.15) {\scriptsize{$\overline{\mathbf{a}}$}};
    \node at (.3,-1.1) {\scriptsize{$\bfH^\circ$}};
\end{tikzpicture}
=
\left(
\begin{tikzpicture}[baseline=-.1cm]
    \draw[red] (-.5,0) -- (.5,0);
    \draw (0,-1.3) -- (0,1.3);
    \roundNbox{unshaded}{(0,.6)}{.3}{0}{0}{$f$}
    \roundNbox{unshaded}{(0,-.6)}{.3}{0}{0}{$\eta$}
    \node at (.2,1.1) {\scriptsize{$\bfA$}};
    \node at (.15,.15) {\scriptsize{$\mathbf{a}$}};
    \node at (.15,-.15) {\scriptsize{$\mathbf{a}$}};
    \node at (.3,-1.1) {\scriptsize{$\bfH^\circ$}};
\end{tikzpicture}
\right)^*
\in |\bfA|_\bfH^\circ.
\end{equation}
Now suppressing the tensorator for $\bfH^\circ$, 
along with the isomorphisms $J_\phi \bfH(a)\cong \bfH(\overline{a})$,
by Lemma \ref{lem:InverseFunctorStar}, $j_a^{\alg{|\bfA|_\bfH}}\kappa_a^\bfA(f)$ is the map
$$
J_\phi \bfH(a)^\circ
\ni
J_\phi\eta
\mapsto
(S_\phi \kappa_a^\bfA(f))(\eta)
=
\kappa_a^\bfA(f)(\eta)^*\in |\bfA|_\bfH^\circ,
$$
which is exactly the right hand side of \eqref{eq:KappaOfJ} by \eqref{eq:KappaInDiagrams}.

Finally, it is straightforward to calculate that $\kappa^\bfA_{1_\cC} (i_\bfA) = i_{\alg{|\bfA|_\bfH}}$.
\end{proof}

\begin{prop}
The map $\kappa$ gives a natural isomorphism $\id \Rightarrow \alg{|\cdot|_\bfH}$.
\end{prop}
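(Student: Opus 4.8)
The plan is to check that the family $\kappa = (\kappa^\bfA)_{\bfA \in \ConAlg}$ is natural in $\bfA$; the fact that each $\kappa^\bfA$ is a $*$-algebra isomorphism (hence an isomorphism in $\ConAlg$, since cp and ucp conditions are automatically matched by a $*$-algebra isomorphism) is already established in Proposition \ref{prop:StarAlgebraNaturalIso}. So the only thing left is the naturality square: for a categorical ucp morphism $\theta : \bfA \Rightarrow \bfB$, we must show
$$
\alg{|\theta|_\bfH} \circ \kappa^\bfA = \kappa^\bfB \circ \theta
$$
as natural transformations $\bfA \Rightarrow \alg{|\bfB|_\bfH}$.

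First I would unwind both composites componentwise at an object $c \in \cC$ using the diagrammatic description \eqref{eq:KappaInDiagrams} of $\kappa$. Via that formula, $\kappa^\bfA_c(f)$ is the map $\bfH(c)^\circ \ni \eta \mapsto \eta \otimes f \in |\bfA|_\bfH^\circ$ (in the membrane graphical calculus). Applying $\alg{|\theta|_\bfH}$ means post-composing with $L^2|\theta|_\bfH$; since $|\theta|_\bfH$ is the extension of the algebraic map $|\theta|_\bfH^\circ(\eta \otimes f) = \eta \otimes \theta_a(f)$ from Definition \ref{defn:RealizationOfUCPMap}, and $L^2|\theta|_\bfH$ implements it on the dense subspace $|\bfA|_\bfH^\circ\Omega$ by Corollary \ref{cor:UCPGivesContraction}, we get that $(\alg{|\theta|_\bfH} \circ \kappa^\bfA_c)(f)$ is the map $\eta \mapsto \eta \otimes \theta_c(f)$. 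On the other side, $(\kappa^\bfB_c \circ \theta_c)(f) = \kappa^\bfB_c(\theta_c(f))$ is the map $\eta \mapsto \eta \otimes \theta_c(f)$ by \eqref{eq:KappaInDiagrams} again. These agree on the dense subspace $\bfH(c)^\circ$ of bounded vectors, hence agree as elements of $\alg{|\bfB|_\bfH}(c) = \Hom_{N-N}(\bfH(c), L^2|\bfB|_\bfH)$.

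Having constructed both natural isomorphisms $\delta : |\alg{\cdot}|_\bfH \Rightarrow \id_{\DisInc_\bfH}$ and $\kappa : \id_{\ConAlg} \Rightarrow \alg{|\cdot|_\bfH}$, I would conclude by noting that this establishes Theorem \ref{thm:Equivalence}: the functors $|\cdot|_\bfH$ and $\alg{\cdot}$ are quasi-inverse, so $\ConAlg \cong \DisInc_\bfH$. (One does not even need to verify the triangle identities for an equivalence of categories, only that each composite functor is naturally isomorphic to the identity.)

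The main obstacle I expect is bookkeeping rather than substance: one must be careful that the chain of natural isomorphisms defining $\kappa^\bfA_c$ — decomposition of $\bfA(c)$ over $\Irr(\cC)$ via \cite[Def.~2.31]{MR3687214}, fullness of $\bfH$, Proposition \ref{prop:L2MH decomposition}, and Definition \ref{defn:UnderlyingAlgebraObject} — is genuinely compatible with the simpler closed-form description \eqref{eq:KappaInDiagrams}, and that post-composition with $L^2|\theta|_\bfH$ really does correspond to applying $\theta_c$ inside that description. Both of these compatibilities were already set up in the discussion preceding \eqref{eq:KappaInDiagrams} and in Corollary \ref{cor:UCPGivesContraction}, so the naturality check reduces to comparing two maps on bounded vectors, which is a one-line diagram manipulation. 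No estimate or positivity argument is needed here, since all the analytic content (boundedness, ucp extension, state preservation) has been discharged in the preceding subsections.
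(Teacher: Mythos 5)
Your proposal is correct and follows essentially the same route as the paper: reduce to the naturality square for a ucp morphism $\theta:\bfA\Rightarrow\bfB$, use the closed-form description \eqref{eq:KappaInDiagrams} of $\kappa$ together with the fact that $\alg{|\theta|_\bfH}$ acts by post-composition with $L^2|\theta|_\bfH$ (which on the dense subspace $\bfH(c)^\circ$ sends $\eta\otimes f$ to $\eta\otimes\theta_c(f)$), and invoke Proposition \ref{prop:StarAlgebraNaturalIso} for the isomorphism claim. No gaps.
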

\begin{proof}
It remains to show naturality of $\kappa$.
It will then follow from Proposition \ref{prop:StarAlgebraNaturalIso} that $\kappa$ is an natural isomorphism.
Suppose $\theta: \bfA \Rightarrow \bfB$ is a categorical ucp morphism.
We have $\alg{|\theta|_\bfH}(\kappa_c^\bfA(f)) = L^2|\theta|_\bfH\circ \kappa_c^\bfA(f)$ on $\bfH(c)^\circ$, which by \eqref{eq:KappaInDiagrams} is the map
$$
\bfH(c)^\circ
\ni
\eta
\mapsto
L^2|\theta|_\bfH
\left(
\begin{tikzpicture}[baseline=-.1cm]
    \draw[red] (-.5,0) -- (.5,0);
    \draw (0,-1.3) -- (0,1.3);
    \roundNbox{unshaded}{(0,.6)}{.3}{0}{0}{$f$}
    \roundNbox{unshaded}{(0,-.6)}{.3}{0}{0}{$\eta$}
    \node at (.2,1.1) {\scriptsize{$\bfA$}};
    \node at (.15,.15) {\scriptsize{$\mathbf{a}$}};
    \node at (.15,-.15) {\scriptsize{$\mathbf{a}$}};
    \node at (.3,-1.1) {\scriptsize{$\bfH^\circ$}};
\end{tikzpicture}
\right)
=
\begin{tikzpicture}[baseline=.5cm]
    \draw[red] (-.5,0) -- (.5,0);
    \draw (0,-1.3) -- (0,2.3);
    \roundNbox{unshaded}{(0,1.6)}{.3}{0}{0}{$\theta$}
    \roundNbox{unshaded}{(0,.6)}{.3}{0}{0}{$f$}
    \roundNbox{unshaded}{(0,-.6)}{.3}{0}{0}{$\eta$}
    \node at (.2,2.1) {\scriptsize{$\bfB$}};
    \node at (.2,1.1) {\scriptsize{$\bfA$}};
    \node at (.15,.15) {\scriptsize{$\mathbf{a}$}};
    \node at (.15,-.15) {\scriptsize{$\mathbf{a}$}};
    \node at (.3,-1.1) {\scriptsize{$\bfH^\circ$}};
\end{tikzpicture}
\in |\bfB|_\bfH^\circ.
$$
This map is easily seen to be equal to the map $\kappa^\bfB_c(\theta(f))$.
\end{proof}

\section{Examples}

We now apply our main theorem to several classes of examples.
We begin with discrete groups and planar algebras, which give type ${\rm II}_1$ tracial examples.
We then discuss examples coming from discrete quantum groups and Temperley-Lieb-Jones module categories which give type ${\rm III}$ examples.

\subsection{Discrete groups}

A rigid C*-tensor category is called \emph{pointed} if $d_c=1$ for all $c\in\Irr(\cC)$.
It is straightforward to show that such a $\cC$ is equivalent to $\fdHilb(\Gamma, \omega)$, the rigid C*-tensor category of finite dimensional $\Gamma$-graded Hilbert spaces, where $\omega\in Z^3(\Gamma, U(1))$ is a normalized 3-cocycle, which is determined up to a coboundary.
Moreover, $\Vec(\cC) = \Vec(\Gamma, \omega)$, the category of $\Gamma$-graded vector spaces with 3-cocycle $\omega$.

\begin{prop}
Connected \emph{W*}-algebra objects $\bfA\in \Vec(\Gamma,\omega)$ are classified up to $*$-algebra isomorphism by
a subgroup $\Lambda \subseteq \Gamma$ such that $\omega|_\Lambda$ is cohomologically trivial and
a normalized 2-cocycle $[\mu] \in H^2(\Lambda, U(1))$ such that $\mu(e,g) = \mu(g,e)=1$ for all $g\in\Lambda$.
\end{prop}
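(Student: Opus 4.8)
The plan is to recognise a connected W*-algebra object $\bfA\in\Vec(\Gamma,\omega)=\Vec(\cC)$, where $\cC=\fdHilb(\Gamma,\omega)$, as a cocycle-twisted group algebra object supported on a subgroup. Since $\cC$ is pointed, $d_g=1$ for all $g\in\Gamma$, so Proposition~\ref{prop:DimensionBound} gives $\dim\bfA(g)\le d_g^2=1$, and the trace bounds in its proof give $\Tr(\Delta_g)\le 1$ and $\Tr(\Delta_g^{-1})\le 1$ on the at-most-one-dimensional space $\bfA(g)$, forcing $\Delta_g=\id$; thus $\bfA$ is automatically tracial. Put $\Lambda:=\{g\in\Gamma\mid\bfA(g)\ne 0\}$. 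First I would show $\Lambda$ is a subgroup. We have $e\in\Lambda$ since $\bfA$ is connected. For $g\in\Lambda$ and $0\ne f\in\bfA(g)$, the positive number ${}_g\langle f,f\rangle$ is, by \eqref{eq:A(1)-InnerProduct}, a nonzero scalar multiple (coming from $\coev_g$) of the multiplication $m_{g,\overline{g}}(f\otimes j_g(f))\in\bfA(1_\cC)=\bbC$, which is impossible unless $\bfA(\overline{g})\ne 0$; since $\overline{g}\cong g^{-1}$ this gives $g^{-1}\in\Lambda$. For closure under products, suppose $g,h\in\Lambda$ but $\bfA(gh)=0$; pick $0\ne f\in\bfA(g)$, $0\ne f'\in\bfA(h)$, so that $p:=m_{g,h}(f\otimes f')=0$. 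Using associativity of $m$, the $*$-axiom $j_{gh}(p)=m_{\overline{h},\overline{g}}(j_h(f')\otimes j_g(f))$, and the fact that $m_{h,\overline{h}}(f'\otimes j_h(f'))$ is a nonzero scalar (a positive multiple of ${}_h\langle f',f'\rangle$), one computes ${}_{gh}\langle p,p\rangle$ to be a nonzero positive multiple of ${}_g\langle f,f\rangle>0$, contradicting $p=0$. Hence $\Lambda\le\Gamma$.

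Next I would extract the cocycle. Choose nonzero $v_g\in\bfA(g)$ for $g\in\Lambda$ with $v_e=i_\bfA$; since $\pi_g(v_g)^*\pi_g(v_g)=\pi_{1_\cC}(j_g(v_g)\cdot v_g)$ is a positive scalar multiple of $\id_{L^2(\bfA)}$, we may rescale each $v_g$ so that $\pi_g(v_g)$ is a unitary (a one-sided rescaling suffices, because an isometry $V$ with $VV^*$ scalar is automatically unitary). Define $\mu(g,h)\in U(1)$ by $m_{g,h}(v_g\otimes v_h)=\mu(g,h)v_{gh}$ (the value has modulus $1$ because $\pi_{gh}(m_{g,h}(v_g\otimes v_h))=\pi_g(v_g)\pi_h(v_h)$ is unitary). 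Writing out the associativity constraint for the lax monoidal functor $\bfA$ against the associator of $\cC$, which on these simples is multiplication by $\omega(g,h,k)$, gives
\[
\mu(g,h)\,\mu(gh,k)=\omega(g,h,k)\,\mu(h,k)\,\mu(g,hk),
\]
i.e.\ $\partial\mu=\omega|_\Lambda$ in $C^3(\Lambda,U(1))$ (with the convention appropriate to $\cC$); in particular $\omega|_\Lambda$ is cohomologically trivial, and unitality of $\bfA$ with $v_e=i_\bfA$ gives $\mu(e,g)=\mu(g,e)=1$. A different normalized choice of the $v_g$ replaces $\mu$ by $\mu\cdot\partial\gamma$ for $\gamma\in C^1(\Lambda,U(1))$ with $\gamma(e)=1$, so once a fixed trivialization $\omega|_\Lambda=\partial\beta$ with $\beta$ a normalized $U(1)$-valued $2$-cochain has been chosen, the class $[\mu\,\beta^{-1}]\in H^2(\Lambda,U(1))$ is a well-defined invariant of $\bfA$.

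Conversely, given a subgroup $\Lambda\le\Gamma$ with $\omega|_\Lambda=\partial\beta$ and a normalized $2$-cocycle $\mu\in Z^2(\Lambda,U(1))$, I would build $\bfA$ by setting $\bfA(g)=\bbC$ for $g\in\Lambda$ and $\bfA(g)=0$ otherwise, with laxitor $\mu_{g,h}:=\mu(g,h)\beta(g,h)$ (so that $\partial(\mu\beta)=\omega|_\Lambda$ makes $\bfA$ an algebra object in $\Vec(\Gamma,\omega)$), unit $i_\bfA\in\bfA(e)$, and $*$-structure $j_g$ given by complex conjugation on $\bbC$ times a phase chosen so the involutive, unital, and monoidal $*$-axioms hold. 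One checks directly from the explicit formulas that $\bfA$ is connected and tracial, that the inner products \eqref{eq:A(1)-InnerProduct} are positive definite, and that the induced dagger structure makes the associated cyclic $\cC$-module category a C*-category; hence $\bfA$ is a C*-algebra object, and a connected C*-algebra object is W*.

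Finally, for the classification up to $*$-algebra isomorphism: a $*$-algebra isomorphism $\theta:\bfA\Rightarrow\bfB$ is in particular a natural isomorphism $\cC^{\op}\to\Vec$, hence $\bfA(g)\ne 0\iff\bfB(g)\ne 0$, so $\Lambda_\bfA=\Lambda_\bfB$; writing $\theta_g(v_g)=\gamma(g)w_g$ on the normalized generators, $\theta_{1_\cC}(i_\bfA)=i_\bfB$ forces $\gamma(e)=1$, compatibility of $\theta$ with multiplication forces $\mu_\bfB=\mu_\bfA\cdot\partial\gamma$, and this together with $|\mu_\bfA|=|\mu_\bfB|=1$ forces $|\gamma|\equiv 1$, so that $[\mu_\bfB\beta^{-1}]=[\mu_\bfA\beta^{-1}]$; conversely any normalized $U(1)$-valued $1$-cochain $\gamma$ yields such a $\theta$. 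Combining the four steps gives the asserted bijection. I expect the main obstacle to be the closure of $\Lambda$ under products --- the diagrammatic use of associativity and the $*$-axioms to evaluate ${}_{gh}\langle p,p\rangle$ --- together with, throughout, the bookkeeping of unitors and of the isomorphisms $\overline{g}\cong g^{-1}$, whose associated $U(1)$ phases must be absorbed into the cochain $\mu$ without affecting its cohomology class.
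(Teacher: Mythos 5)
Your overall route is the same as the paper's: reduce to twisted group algebras supported on a subgroup $\Lambda\leq\Gamma$, and use the dimension bound $\dim\bfA(g)\leq d_g^2=1$ together with $\Tr(\Delta_g),\Tr(\Delta_g^{-1})\leq 1$ to force $\Delta=\id$, traciality, and $U(1)$-valued structure constants. You spend most of your effort proving from scratch the part the paper simply cites as well known (that algebra objects in $\Vec(\Gamma,\omega)$ correspond to pairs $(\Lambda,\mu)$ with $\partial\mu=\omega|_\Lambda$); your subgroup argument via positive-definiteness of the inner products is fine, and your extraction of the $U(1)$-valued cocycle matches the paper's "the multiplication is isometric, hence unitary."

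The genuine gap is in your treatment of the $*$-structure, which is precisely where the paper does its actual work. In the converse direction you posit "a phase chosen so the involutive, unital, and monoidal $*$-axioms hold" without showing such a phase exists, and nowhere do you show it is \emph{unique}. Both points matter: if the twisted group algebra admitted several inequivalent compatible $*$-structures (or none), the pair $(\Lambda,[\mu])$ would not be a complete invariant, and your final step — "conversely any normalized $U(1)$-valued $1$-cochain $\gamma$ yields such a $\theta$" — would fail, since $\theta$ must also intertwine $j^\bfA$ and $j^\bfB$. The paper closes this by writing $j\in C^1(\Lambda,U(1))$, observing that positive-definiteness of the right inner product forces $j(g)\mu(g^{-1},g)>0$, hence $=1$, so $j(g)=\overline{\mu(g^{-1},g)}$ is determined by $\mu$; and conversely that this formula automatically satisfies the compatibility relation \eqref{eq:MuJcompatible} via the cocycle identities $d\mu(h^{-1}g^{-1},g,h)=1=d\mu(h^{-1},g^{-1},g)$. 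You need some version of this argument. Separately, a minor slip: $|\mu_\bfA|=|\mu_\bfB|=1$ and $\mu_\bfB=\mu_\bfA\cdot\partial\gamma$ only give that $|\gamma|$ is a homomorphism $\Lambda\to\bbR_{>0}$, not that $|\gamma|\equiv 1$; your conclusion survives because $\partial\gamma=\partial(\gamma/|\gamma|)$, so one may replace $\gamma$ by its phase without changing $[\mu_\bfB\beta^{-1}]$, but as stated the deduction is wrong.
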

\begin{proof}
It is well known that algebra objects in $\Vec(\Gamma, \omega)$ are classified up to algebra isomorphism by such subgroups $\Lambda$ and 2-cocycles $\mu\in Z^2(\Lambda, \bbC^\times)$, and the associated algebra is the twisted group algebra of $\Lambda$ by $\mu$.
(This is similar to \cite[Ex.~2.1]{MR1976233}.
The laxitor of the algebra is twisted by a 2-cochain, and associativity implies $\omega|_\Lambda$ is cohomologically trivial.)
Cohomologous 2-cocycles give isomorphic algebras.
Hence, it suffices to prove that $\bfA$ being a W*-algebra object implies $\mu$ takes values in $U(1)$, and that each twisted group algebra has a unique $*$-algebra structure under which it becomes a W*-algebra object.

First, suppose we have such a $*$-structure on the twisted group algebra under which it is a W*-algebra object.
By Proposition \ref{prop:DimensionBound}, $\dim(\bfA(g))=1$ for all $g\in \Gamma$.
Moreover, by the proof of Proposition \ref{prop:DimensionBound}, we see that for all $g\in\Gamma$, $1=d_g^{-1} \leq \Tr(\Delta_g) \leq d_g=1$, and thus $\Delta=\id$ and $\bfA$ is tracial.
It is easy to show that the multiplication $\mu: \bfA(g)\otimes \bfA(h) \to \bfA(gh)$ is isometric and thus unitary.
This implies $\mu$ takes values in $U(1)$.
The $*$-structure is determined by a 1-cochain $j\in C^1(\Lambda, U(1))$ such that for all $f\in \bfA(a)\cong \bbC$ and $g\in\Lambda$, $j_g(f) = j(g)\overline{f}$.
(The 1-cochain $j$ takes values in $U(1)$ since $\bfA$ is tracial and $\Delta=\id$.)
The $*$-algebra axioms tell us that 
$j(e)=1$, 
$j(g)=j(g^{-1})$, and 
\begin{equation}
\label{eq:MuJcompatible}
j(gh)\overline{\mu(g,h)} = j(g)j(h)\mu(h^{-1},g^{-1}) \text{ for all }g,h\in \Lambda
\end{equation}
(compare with \cite[Cor.~5.10]{1507.04794}).
Since $\bfA$ is a W*-algebra object, we must have that the right inner product is positive definite, which implies $j(g) \mu(g^{-1},g)>0$ for all $g\in \Lambda$.
But $j(g)\mu(g^{-1},g) \in U(1)$ and thus equals 1.
This means $j(g) = \overline{\mu(g^{-1},g)}$, so $j$ is completely determined by $\mu$.

Now it is straightforward (although a bit tedious) to show the condition  $j(g) = \overline{\mu(g^{-1},g)}$ automatically implies \eqref{eq:MuJcompatible} by using the fact that $d\mu(h^{-1}g^{-1},g,h)=1 = d\mu(h^{-1},g^{-1},g)$.
Hence if we define  $j(g) = \overline{\mu(g^{-1},g)}$, we get a $*$-algebra structure on the twisted group algebra.
It remains to show that this $*$-structure makes the twisted group algebra into a W*-algebra object.
It suffices to show that for all $c\in\fdHilb(\Gamma)$, the $*$-algebra $\bfA(\overline{c}\otimes c)$ is a (finite dimensional) C*-algebra.
To do so one shows that the right $\bfA(1_\cC)=\bbC$ valued inner product is a faithful state.
We omit this routine computation.
\end{proof}

\subsection{Planar algebras}
\label{sec:PlanarAlgebras}

Popa's celebrated reconstruction theorem \cite{MR1334479} shows that every standard $\lambda$-lattice arrises as the standard invariant of a finite index subfactor.
It was shown in \cite{MR2051399} that such a subfactor $N\subset$ exists where $N\cong M\cong L\bbF_\infty$.
In \cite{MR2732052}, Guionnet-Jones-Shlyakhtenko (GJS) gave a diagrammatic re-proof of Popa's reconstruction theorem using Jones' planar algebras \cite{math.QA/9909027}, which are equivalent to Popa's standard $\lambda$-lattices.
We now show that the factor constructed by GJS provides an example of a tracial connected W*-algebra object in a rigid C*-tensor category.

We refer the reader to \cite{MR3405915,MR3624399} for the definition of an unshaded, unoriented $*$-planar algebra.
For convenience and simplicity, we will work with unshaded, unoriented planar algebras.
We will always draw elements of $\cP_n$ as $n$-boxes whose distinguished boundary marking $\star$ is on the \emph{top left corner}.

\begin{defn}
A $*$-planar algebra $\cP_\bullet$ is called a \emph{factor planar algebra} if it satisfies the following axioms:
\begin{itemize}
\item
(finite dimensional) $\dim(\cP_n)<\infty$ for all $n\geq 0$.
\item
(connected) $\cP_0 \cong \bbC$ via the map that sends the empty diagram to $1_\cC$.
\item
(involutive)
For every tangle $T$ with $r$ input disks, if $T^*$ is the reflected tangle, then $T^*(\xi_1^*, \cdots, \xi_r^*) = T(\xi_1, \dots, \xi_r)^*$.
\item
(positive) for all $n\geq 0$, the following map $\langle \cdot, \cdot\rangle : \cP_n \times \cP_n \to \cP_0 \cong \bbC$ is a positive definite inner product:
$$
\langle x, y\rangle
=
\begin{tikzpicture}[baseline=-.1cm]
	\draw (0,0) -- (1,0);
	\roundNbox{unshaded}{(0,0)}{.3}{0}{0}{$x$}
	\roundNbox{unshaded}{(1,0)}{.3}{0}{0}{$y^*$}
	\node at (.5,.15) {\scriptsize{$n$}};
\end{tikzpicture}
\,.
$$
\item
(spherical) for all $x\in \cP_{2n}$, 
$$
\tr(x) = 
\begin{tikzpicture}[baseline=-.1cm]
	\draw (0,.3) arc (180:0:.3cm) -- (.6,-.3) arc (0:-180:.3cm);
	\roundNbox{unshaded}{(0,0)}{.3}{0}{0}{$x$}
	\node at (-.15,-.5) {\scriptsize{$n$}};
	\node at (-.15,.5) {\scriptsize{$n$}};
\end{tikzpicture}
=
\begin{tikzpicture}[baseline=-.1cm]
	\draw (0,.3) arc (0:180:.3cm) -- (-.6,-.3) arc (-180:0:.3cm);
	\roundNbox{unshaded}{(0,0)}{.3}{0}{0}{$x$}
	\node at (.15,-.5) {\scriptsize{$n$}};
	\node at (.15,.5) {\scriptsize{$n$}};
\end{tikzpicture}
\,.
$$ 
\end{itemize}
\end{defn}

Let $\cP_\bullet$ be a fixed factor planar algebra, and let $\cT\cL\cJ_\bullet$ be its canonical Temperley-Lieb-Jones planar sub-algebra.
Let $\cT\cL\cJ_\bullet \subseteq \cQ_\bullet \subseteq \cP_\bullet$ be any intermediate factor planar subalgebra.
When the loop parameter $\delta>1$, GJS showed how to construct a factor ${\rm II}_1$ factor from $\cQ_\bullet$.
Using Walker's filtered/orthogonal approach \cite{MR2645882}, we form a graded algebra by $\Gr(\cQ_\bullet) = \bigoplus_{n\geq 0} \cQ_n$, together with a filtered multiplication.
Drawing $\cQ_n$ as $n$-boxes whose string emanate \emph{downward} (this is opposite to the usual convention!), the Bacher-Walker product is given by 
$$
(x \in\cP_m) \star (y\in \cP_n) 
=
\sum_{j=0}^{\min\{m,n\}}
\begin{tikzpicture}[baseline=-.4cm]
	\draw (-.15,-.3) -- (-.15,-.8);
	\draw (1.15,-.3) -- (1.15,-.8);
	\draw (.15,-.3) arc (-180:0:.35cm);
	\roundNbox{unshaded}{(0,0)}{.3}{0}{0}{$x$}
	\roundNbox{unshaded}{(1,0)}{.3}{0}{0}{$y$}
	\node at (.5,-.4) {\scriptsize{$j$}};
	\node at (-.6,-.6) {\scriptsize{$m-j$}};
	\node at (1.6,-.6) {\scriptsize{$n-j$}};
\end{tikzpicture}.
$$
There is an obvious $*$-structure, and the unit is given by the empty diagram.
The canonical faithful trace on $\Gr(Q_\bullet)$ is given by $\tr(x) = \delta_{x\in \cQ_0} x$, since $\cQ_0 = \cT\cL\cJ_0 = \bbC$.
This trace turns out to be positive, and the left regular action of $\Gr(Q_\bullet)$ is bounded.
We let $M(\cQ_\bullet)$ be the von Neumann algebra generated by $\Gr(Q_\bullet)$ in the GNS representation of $\tr$.

In \cite{MR2645882}, they showed each $M(\cQ_\bullet)$ is a factor by showing that the von Neumann $A=\{\cap\}''$ is a MASA.
This means $A'\cap M(\cP_\bullet) = A$, and $N:=M(\cT\cL\cJ_\bullet) \subseteq M(P_\bullet) =: M$ is an irreducible ${\rm II}_1$ subfactor.
We now show that this irreducible inclusion is discrete by showing it is a realization of a connected W*-algebra object.

Let $\cC(\cQ_\bullet)=\Proj(\cQ_\bullet)$ be the rigid C*-tensor category of projections of $\cQ_\bullet$ \cite{MR2559686,MR3405915}.
Recall that $\cC(\cQ_\bullet)$ is the idempotent completion of the additive envelope of the category whose objects are orthogonal projections in $\cQ_\bullet$ and whose morphism space from $p\in\cQ_{2m}$ to $q\in \cQ_{2n}$ is given by $q \cQ_{m+n} p$.
Composition is given by vertical concatenation of diagrams.
The tensor product is given by horizontal juxtaposition:
$$
(p \in \cP_{2n} \otimes (q\in \cP_{2n}) 
=
\begin{tikzpicture}[baseline=-.1cm]
	\draw (0,-.6) -- (0,.6);
	\draw (1,-.6) -- (1,.6);
	\roundNbox{unshaded}{(0,0)}{.3}{0}{0}{$p$}
	\roundNbox{unshaded}{(1,0)}{.3}{0}{0}{$q$}
	\node at (.2,-.5) {\scriptsize{$m$}};
	\node at (.2,.5) {\scriptsize{$m$}};
	\node at (1.15,-.5) {\scriptsize{$n$}};
	\node at (1.15,.5) {\scriptsize{$n$}};
\end{tikzpicture}\,.
$$
The dagger structure is given by the adjoint in $\cQ_\bullet$, and the conjugate is given by $\overline{x} = (x^*)^{\vee}$, where $(\cdot)^{\vee}$ is rotation by $\pi$.

Let $\cC = \cC(\Proj(\cT\cL\cJ_\bullet))$ and $\cD = \cC(\Proj(\cP_\bullet))$.
The inclusion $\cT\cL\cJ_\bullet \subset \cP_\bullet$ gives natural \emph{non-full} dimension preserving inclusion of rigid C*-tensor categories $\iota:\cC \hookrightarrow \cD$.
Thus $\cD$ becomes a left $\cC$-module category via $\iota$.
We get a connected W*-algebra object $\bfA \in \Vec(\cC)$ by setting
$$
\bfA(p \in \cP_{2m}) 
=
\cD(p,1_\cC)
:= 
\cP_mp=
\set{
\,\,
\begin{tikzpicture}[baseline=.2cm]
	\draw (0,-.6) -- (0,1);
	\roundNbox{unshaded}{(0,0)}{.3}{0}{0}{$p$}
	\roundNbox{unshaded}{(0,1)}{.3}{0}{0}{$x$}
	\node at (.2,-.5) {\scriptsize{$m$}};
	\node at (.2,.5) {\scriptsize{$m$}};
\end{tikzpicture}
\,\,
}{\,x\in \cP_{m}}.
$$
Clearly $\bfA(1_\cC) = \cP_0 = \bbC$, and $\dim(\bfA(p))<\infty$ for all $p\in \cC$.
The unit map is the empty diagram, and the $*$-structure and multiplication are given as follows: 
$$
j_p\left(
\begin{tikzpicture}[baseline=.2cm]
	\draw (0,-.6) -- (0,1);
	\roundNbox{unshaded}{(0,0)}{.3}{0}{0}{$p$}
	\roundNbox{unshaded}{(0,1)}{.3}{0}{0}{$x$}
	\node at (.2,-.5) {\scriptsize{$m$}};
	\node at (.2,.5) {\scriptsize{$m$}};
\end{tikzpicture}
\right)
=
\begin{tikzpicture}[baseline=.2cm]
	\draw (0,-.6) -- (0,1);
	\roundNbox{unshaded}{(0,0)}{.3}{0}{0}{$\overline{p}$}
	\roundNbox{unshaded}{(0,1)}{.3}{0}{0}{$\overline{x}$}
	\node at (.2,-.5) {\scriptsize{$m$}};
	\node at (.2,.5) {\scriptsize{$m$}};
\end{tikzpicture}
\qquad\qquad
\mu_{p,q}\left(
\begin{tikzpicture}[baseline=.2cm]
	\draw (0,-.6) -- (0,1);
	\roundNbox{unshaded}{(0,0)}{.3}{0}{0}{$p$}
	\roundNbox{unshaded}{(0,1)}{.3}{0}{0}{$x$}
	\node at (.2,-.5) {\scriptsize{$m$}};
	\node at (.2,.5) {\scriptsize{$m$}};
\end{tikzpicture}
\otimes
\begin{tikzpicture}[baseline=.2cm]
	\draw (0,-.6) -- (0,1);
	\roundNbox{unshaded}{(0,0)}{.3}{0}{0}{$q$}
	\roundNbox{unshaded}{(0,1)}{.3}{0}{0}{$y$}
	\node at (.2,-.5) {\scriptsize{$n$}};
	\node at (.2,.5) {\scriptsize{$n$}};
\end{tikzpicture}
\right)
=
\begin{tikzpicture}[baseline=.2cm]
	\draw (0,-.6) -- (0,1);
	\draw (1,-.6) -- (1,1);
	\roundNbox{unshaded}{(0,0)}{.3}{0}{0}{$p$}
	\roundNbox{unshaded}{(0,1)}{.3}{0}{0}{$x$}
	\roundNbox{unshaded}{(1,0)}{.3}{0}{0}{$q$}
	\roundNbox{unshaded}{(1,1)}{.3}{0}{0}{$y$}
	\node at (.2,-.5) {\scriptsize{$m$}};
	\node at (.2,.5) {\scriptsize{$m$}};
	\node at (1.2,-.5) {\scriptsize{$n$}};
	\node at (1.2,.5) {\scriptsize{$n$}};
\end{tikzpicture}
\,.
$$
To show $\bfA$ is a W*-algebra object, we note that $\bfA$ is exactly the $*$-algebra corresponding to the cyclic $\cC$-module W*-category $(\cD, 1_\cC)$.
Moreover, $\bfA$ is tracial, since for all $x,y\in \bfA(p\in \cP_{2m})=\cP_mp$,
$$
{}_p\langle x, y\rangle
=
\begin{tikzpicture}[baseline=.2cm]
	\draw (0,0) -- (0,1);
	\draw (1,0) -- (1,1);
	\draw (0,-.3) arc (-180:0:.5cm);
	\roundNbox{unshaded}{(0,0)}{.3}{0}{0}{$p$}
	\roundNbox{unshaded}{(0,1)}{.3}{0}{0}{$x$}
	\roundNbox{unshaded}{(1,0)}{.3}{0}{0}{$\overline{p}$}
	\roundNbox{unshaded}{(1,1)}{.3}{0}{0}{$\overline{y}$}
	\node at (-.2,-.5) {\scriptsize{$m$}};
	\node at (-.2,.5) {\scriptsize{$m$}};
	\node at (1.2,-.5) {\scriptsize{$m$}};
	\node at (1.2,.5) {\scriptsize{$m$}};
\end{tikzpicture}
=
\begin{tikzpicture}[baseline=.2cm, xscale=-1]
	\draw (0,0) -- (0,1);
	\draw (1,0) -- (1,1);
	\draw (0,-.3) arc (-180:0:.5cm);
	\roundNbox{unshaded}{(0,0)}{.3}{0}{0}{$p$}
	\roundNbox{unshaded}{(0,1)}{.3}{0}{0}{$x$}
	\roundNbox{unshaded}{(1,0)}{.3}{0}{0}{$\overline{p}$}
	\roundNbox{unshaded}{(1,1)}{.3}{0}{0}{$\overline{y}$}
	\node at (-.2,-.5) {\scriptsize{$m$}};
	\node at (-.2,.5) {\scriptsize{$m$}};
	\node at (1.2,-.5) {\scriptsize{$m$}};
	\node at (1.2,.5) {\scriptsize{$m$}};
\end{tikzpicture}
=
\langle y|x\rangle_p.
$$

We get a fully-faithful bi-involutive representation $\bfH:\cC \to \spbfBim(N)$ by defining for $p\in \cP_{2m}$ the bimodule $\bfH(p)$ to be the closure in $\|\cdot\|_2$ of the following inner product space:
$$
\bigoplus_{n\geq 0} p\cT\cL\cJ_{n+m}
=
\bigoplus_{n\geq 0}
\set{
\,\,
\begin{tikzpicture}[baseline=.3cm]
	\draw (0,-.5) -- (0,1.3);
	\roundNbox{unshaded}{(0,0)}{.25}{0}{0}{$s$}
	\roundNbox{unshaded}{(0,.8)}{.25}{0}{0}{$p$}
	\node at (.2,-.4) {\scriptsize{$n$}};
	\node at (.2,.4) {\scriptsize{$m$}};
	\node at (.2,1.2) {\scriptsize{$m$}};
\end{tikzpicture}
\,\,
}{\,s\in \cT\cL\cJ_{n}}
\qquad
\qquad
\left\langle 
\,\,
\begin{tikzpicture}[baseline=.3cm]
	\draw (0,-.5) -- (0,1.3);
	\roundNbox{unshaded}{(0,0)}{.25}{0}{0}{$s$}
	\roundNbox{unshaded}{(0,.8)}{.25}{0}{0}{$p$}
	\node at (.2,-.4) {\scriptsize{$n$}};
	\node at (.2,.4) {\scriptsize{$m$}};
	\node at (.2,1.2) {\scriptsize{$m$}};
\end{tikzpicture}
, 
\begin{tikzpicture}[baseline=.3cm]
	\draw (0,-.5) -- (0,1.3);
	\roundNbox{unshaded}{(0,0)}{.25}{0}{0}{$t$}
	\roundNbox{unshaded}{(0,.8)}{.25}{0}{0}{$p$}
	\node at (.2,-.4) {\scriptsize{$k$}};
	\node at (.2,.4) {\scriptsize{$m$}};
	\node at (.2,1.2) {\scriptsize{$m$}};
\end{tikzpicture}
\right\rangle 
=
\begin{tikzpicture}[baseline=.3cm]
	\draw (0,-.5) -- (0,.8);
	\draw (.8,-.5) -- (.8,.8);
	\draw (0,1.05) arc (180:0:.4cm);
	\roundNbox{unshaded}{(0,0)}{.25}{0}{0}{$s$}
	\roundNbox{unshaded}{(0,.8)}{.25}{0}{0}{$p$}
	\roundNbox{unshaded}{(.8,0)}{.25}{0}{0}{$\overline{t}$}
	\roundNbox{unshaded}{(.8,.8)}{.25}{0}{0}{$\overline{p}$}
	\node at (.2,-.4) {\scriptsize{$n$}};
	\node at (.2,.4) {\scriptsize{$m$}};
	\node at (-.2,1.2) {\scriptsize{$m$}};
	\node at (1,-.4) {\scriptsize{$k$}};
	\node at (1,.4) {\scriptsize{$m$}};
\end{tikzpicture}
\,.
$$
We see that $\bfH(p)$ naturally forms an $N-N$ bimodule where the action is given by the Bacher-Walker product, where all the terms $i, j, (k-i), (\ell-j), (n-i-j)$ must be non-negative:
$$
(x \in\cT\cL\cJ_k) 
\vartriangleright 
\begin{tikzpicture}[baseline=.3cm]
	\draw (0,-.5) -- (0,1.3);
	\roundNbox{unshaded}{(0,0)}{.25}{0}{0}{$s$}
	\roundNbox{unshaded}{(0,.8)}{.25}{0}{0}{$p$}
	\node at (.2,-.4) {\scriptsize{$n$}};
	\node at (.2,.4) {\scriptsize{$m$}};
	\node at (.2,1.2) {\scriptsize{$m$}};
\end{tikzpicture}
\vartriangleleft
(y \in \cT\cL\cJ_\ell)
:=
\sum_{i,j}
\begin{tikzpicture}[baseline=-.1cm]
	\draw (-.15,-.3) -- (-.15,-.8);
	\draw (1,1.4) -- (1,-.8);
	\draw (.15,-.3) arc (-180:0:.35cm);
	\draw (1.15,-.3) arc (-180:0:.35cm);
	\draw (2.15,-.3) -- (2.15,-.8);
	\roundNbox{unshaded}{(0,0)}{.3}{0}{0}{$x$}
	\roundNbox{unshaded}{(1,0)}{.3}{0}{0}{$s$}
	\roundNbox{unshaded}{(1,.9)}{.25}{0}{0}{$p$}
	\roundNbox{unshaded}{(2,0)}{.3}{0}{0}{$y$}
	\node at (.5,-.4) {\scriptsize{$i$}};
	\node at (1.5,-.4) {\scriptsize{$j$}};
	\node at (-.15,-1) {\scriptsize{$k-i$}};
	\node at (1,-1) {\scriptsize{$n-i-j$}};
	\node at (2.15,-1) {\scriptsize{$\ell-j$}};
	\node at (1.2,.5) {\scriptsize{$m$}};
	\node at (1.2,1.3) {\scriptsize{$m$}};
\end{tikzpicture} 
.
$$

Finally, we show that the von Neumann realization $|\bfA|_\bfH$ is isomorphic to $M$.
The evaluation map to $\Gr(\cP_\bullet)$ from the following subset of the algebraic realization $|\bfA|_\bfH^\circ $ is onto:
\begin{align*}
\bigoplus_{a\in\Irr(\cC)} \bfH(a)^\circ \otimes \bfA(a) 
\supset
\bigoplus_{m\geq 0} \bigoplus_{n\geq 0}\jw{m}\cT\cL\cJ_{n+m} \otimes \cP_m \jw{m}
&\to 
\bigoplus_{m\geq 0}\cP_m
=
\Gr(\cP_\bullet)
\\
\begin{tikzpicture}[baseline=.3cm]
	\draw (0,-.5) -- (0,1.3);
	\roundNbox{unshaded}{(0,0)}{.25}{0}{0}{$s$}
	\roundNbox{unshaded}{(0,.8)}{.25}{.2}{.2}{$\jw{m}$}
	\node at (.2,-.4) {\scriptsize{$n$}};
	\node at (.2,.4) {\scriptsize{$m$}};
	\node at (.2,1.2) {\scriptsize{$m$}};
\end{tikzpicture}
\otimes
\begin{tikzpicture}[baseline=.2cm]
	\draw (0,-.6) -- (0,1);
	\roundNbox{unshaded}{(0,0)}{.3}{.2}{.2}{$\jw{m}$}
	\roundNbox{unshaded}{(0,1)}{.3}{0}{0}{$x$}
	\node at (.2,-.5) {\scriptsize{$m$}};
	\node at (.2,.5) {\scriptsize{$m$}};
\end{tikzpicture}
&\mapsto
\begin{tikzpicture}[baseline=.2cm]
	\draw (0,-1.6) -- (0,1);
	\roundNbox{unshaded}{(0,-1)}{.3}{0}{0}{$s$}
	\roundNbox{unshaded}{(0,0)}{.3}{.2}{.2}{$\jw{m}$}
	\roundNbox{unshaded}{(0,1)}{.3}{0}{0}{$x$}
	\node at (.2,-.5) {\scriptsize{$m$}};
	\node at (.2,-1.5) {\scriptsize{$n$}};
	\node at (.2,.5) {\scriptsize{$m$}};
\end{tikzpicture}
\,.
\end{align*}
This map clearly preserves the canonical states, and the subset on the left hand side is dense in $L^2|\bfA|_\bfH$.
Since $\bfA$ is tracial, $|\bfA|_\bfH$ is a ${\rm II}_1$ factor by Proposition \ref{prop:TracialRealization}.
We conclude that the evaluation map extends to an $N-N$ bilinear unitary intertwiner $L^2|\bfA|_\bfH \to L^2(M)$ which takes the left action of $|\bfA|_\bfH$ to the left action of $M$.

\subsection{Discrete quantum groups}
\label{sec:QuantumGroups}

Let $\bbG$ be a discrete quantum group.
Following \cite{MR3204665}, by Tannaka-Krein duality, we view $\bbG$ as a pair $(\bfF,\cC)$, where $\cC$ is a rigid C*-tensor category and $\bfF:\cC\rightarrow \Hilb$ is a dagger tensor functor, with tensorator isomorphisms $\mu_{a,b}:\bfF(a)\otimes \bfF(b)\rightarrow \bfF(a\otimes b)$.
Here we think of $\cC$ as $\Rep(\bbG)$, and $\bfF$ is the forgetful functor.  
In this situation, $\fdHilb$ is a semi-simple $\cC$-module W*-category.  

Choosing our basepoint $m=\bbC$ and applying the equivalence of categories between cyclic $\cC$-module W*-categories and W*-algebra objects \cite[Thm.~3.24]{MR3687214}, we get a connected W*-algebra object $\bfG\in\Vec(\cC)$ called the \textit{quantum group algebra object}, discussed in \cite[Ex.~5.35]{MR3687214}.  
We compute that the multiplication and $*$-structure of $\bfG$ are given as follows.
For $f\in \bfG(a)=\fdHilb(\bfF(a), \bbC)$ and $g\in \bfG(b)=\fdHilb(\bfF(b), \bbC)$,
\begin{align*}
\mu_{a,b}^\bfG(f\otimes g) 
&= 
(f\otimes g) \circ \mu_{a,b}^\bfF \in \fdHilb(\bfF(a\otimes b), \bbC) = \bfG(a\otimes b),
\\
j_a(f) 
&=
\bfF(\ev_a) \circ \mu_{\overline{a},a}^\bfF \circ (f^* \otimes \id_{\bfF(a)}) \in \fdHilb(\bfF(\overline{a}), \bbC) = \bfG(\overline{a}).
\end{align*}
By \eqref{eq:InnerProductsInHilbC}, we have ${}_a\langle f,g\rangle = f\circ g^* \in \fdHilb(\bbC, \bbC) = \bfG(1_\cC) = \bbC$.

\begin{defn} 
The discrete quantum group $\bbG=(\cC, \bfF)$ is called \textit{Kac-type} if $\bfF$ is dimension preserving.
\end{defn}

\begin{prop}
The discrete quantum group $\bbG$ is Kac-type if and only if $\bbG$ is tracial.
\end{prop}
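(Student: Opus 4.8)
The plan is to relate the modular operator $\Delta_a$ of the quantum group algebra object $\bfG$ from Section \ref{sec:ModularTheory} to the failure of $\bfF$ to preserve dimensions, and then quote Proposition \ref{prop:TracialRealization} (or rather the \emph{intrinsic} characterization of traciality: by the Corollary just before Proposition \ref{prop:DimensionBound}, $\bfG$ is tracial iff $\Delta$ is the identity natural transformation). So it suffices to show that $\bfF$ is dimension preserving iff $\Delta_a = \id$ for all $a \in \cC$. Recall that $\bfG(a) = \fdHilb(\bfF(a),\bbC) \cong \overline{\bfF(a)}$, and the two inner products on $\bfG(a)$ are ${}_a\langle f,g\rangle$ and $\langle f|g\rangle_a$, computed via the formulas displayed just before Definition \ref{defn:RightL2}: ${}_a\langle f,g\rangle = \bfG(\ev_a^*)[\mu^{\bfG}_{\overline a,a}(j_a(f)\otimes g)]$ uses the coevaluation-side cap, while $\langle f|g\rangle_a$ uses the evaluation-side cap.

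First I would compute both inner products explicitly using the formulas for $\mu^\bfG$ and $j_a$ given in the excerpt. Since $\bfF$ is a dagger tensor functor but $\ev_a, \coev_a$ are the \emph{standard} (balanced) solutions in $\cC$, their images $\bfF(\ev_a), \bfF(\coev_a)$ need not be standard solutions in $\fdHilb$; rather $\bfF$ carries the standard duality data of $\cC$ to duality data for $\bfF(a), \bfF(\overline a)$ in $\fdHilb$ whose associated left/right dimensions are $\dim_{\rm left} = \Tr(\rho_a)$ and $\dim_{\rm right} = \Tr(\rho_a^{-1})$ for a positive invertible $\rho_a \in B(\bfF(a))$ (the ``$\bbG$-grading operator''), with $\Tr(\rho_a) = \Tr(\rho_a^{-1}) = d_a$ precisely when $\bfF$ is dimension-preserving, and in general $\rho_a = \id$ for all $a$ iff $\bbG$ is Kac. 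The upshot of the computation I expect: on $\bfG(a) \cong \overline{\bfF(a)}$, the left inner product ${}_a\langle\,\cdot\,,\,\cdot\,\rangle$ is $\langle \overline\eta,\overline\xi\rangle \mapsto \langle \rho_a \xi, \eta\rangle_{\bfF(a)}$ (up to a fixed scalar independent of the vectors) while the right inner product $\langle\,\cdot\,|\,\cdot\,\rangle_a$ is $\langle \overline\eta,\overline\xi\rangle \mapsto \langle \rho_a^{-1}\eta, \xi\rangle_{\bfF(a)}$; comparing these identifies $\Delta_a$ with (conjugation by) $\rho_a^{2}$ or $\rho_a$ up to normalization. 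Hence $\Delta_a = \id$ for all $a$ iff $\rho_a = \id$ for all $a$ iff $\bbG$ is Kac-type.

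The main obstacle is bookkeeping: getting the normalizations and the direction of the twist right. Two subtleties need care. (i) Because we use the \emph{right} $L^2(\bfG)$ (Definition \ref{defn:RightL2}), the inner products are normalized against $\bfG(1_\cC) \cong \bbC$ via the canonical state, and one must track the $d_a$-type factors coming from the normalized tensor product in $\Hilb(\cC)$; I expect these to cancel so that $\Delta_a$ depends only on $\rho_a$. (ii) One must confirm that the relevant positive operator $\rho_a$ implementing the discrepancy between $\bfF$ applied to standard solutions and standard solutions in $\fdHilb$ is exactly the grading operator whose triviality characterizes the Kac property — this is standard in the compact/discrete quantum group literature (it is the operator often denoted $F_a$ or $Q_a$ coming from Woronowicz's theory), and the statement ``$\bbG$ Kac $\iff$ all $\rho_a = \id$'' can be taken as the definition or as a known equivalent. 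Once (i) and (ii) are settled, the proof is a one-line invocation of the Corollary characterizing traciality of $\bfG$. I would also remark that combined with Proposition \ref{prop:ModularInvariant} and Corollary \ref{cor:IrreducibleInclusionIIorIII}, this immediately yields the stated Corollary that $|\bfG|_\bfH$ is type ${\rm III}$ exactly when $\bbG$ is not Kac.
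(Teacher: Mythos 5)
Your overall strategy --- reduce the statement to ``$\Delta_a=\id$ for all $a$ iff $\bfF$ is dimension preserving'' via the corollary characterizing traciality --- is the same reduction the paper makes, but the route you propose for that equivalence has a gap that is really a circularity. You plan to identify $\Delta_a$ with the Woronowicz density matrix $\rho_a$ and then quote ``$\bbG$ is Kac iff all $\rho_a=\id$'' as known or definitional. The paper, however, \emph{defines} Kac-type as ``$\bfF$ is dimension preserving,'' so you may not take triviality of the $\rho_a$ as the definition; and the standard proof that $\Tr(\rho_a)=\Tr(\rho_a^{-1})=d_a=\dim\bfF(a)$ for all $a$ forces $\rho_a=\id$ is precisely the argument you would be black-boxing, namely the inequality $\lambda+\lambda^{-1}\ge 2$ applied to the eigenvalues. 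So your subtlety (ii) is not a citation-sized issue: it is the entire content of the nontrivial direction.

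The paper's proof sidesteps the explicit identification of $\Delta_a$ altogether. Running the computation from the proof of Proposition \ref{prop:DimensionBound} with the formulas for $\mu^\bfG$ and $j$, and using that $\bfF$ is a dagger tensor functor applied to the standard $\ev_a$, one gets $\Tr(\Delta_a)=\bfF(\ev_a\circ\ev_a^*)=d_a$ and, via $\Delta_a^{-1}=S_{\overline{a}}S_{\overline{a}}^*$, also $\Tr(\Delta_a^{-1})=d_{\overline{a}}=d_a$ --- for \emph{every} $\bbG$, Kac or not. Tracial $\Rightarrow$ Kac is then immediate: $\Delta_a=\id$ gives $\dim\bfF(a)=\dim\bfG(a)=\Tr(\Delta_a)=d_a$. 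Conversely, if $\dim\bfG(a)=n=d_a$, the eigenvalues $\lambda_i$ of $\Delta_a$ satisfy $\sum_i(\lambda_i+\lambda_i^{-1})=2n$, and $\lambda+\lambda^{-1}\ge 2$ forces every $\lambda_i=1$. To repair your argument, replace the appeal to the literature by this two-trace computation; your normalization worries in (i) then evaporate, since only $\Tr(\Delta_a^{\pm 1})$ are needed, not the operator itself. (Note also that your tentative guess $\Delta_a\sim\rho_a^{2}$ cannot be right: $\Tr(\rho_a^{2})\neq d_a$ in general, e.g.\ for the fundamental representation of $SU_q(2)$, whereas the trace identity pins $\Delta_a$ down as conjugate to $\rho_a$ itself.)
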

\begin{proof}
The proof is similar to the proof of Proposition \ref{prop:DimensionBound}.
Choose an orthonormal basis $\{e_i\}$ for $\bfG(a)$ with respect to the \emph{left} inner product ${}_a\langle \cdot,\cdot\rangle$, and we calculate
\begin{align*}
\Tr(\Delta_a)
&=
\sum 
{}_a\langle \Delta_a e_i, e_i\rangle
=
\sum 
{}_a\langle S_a e_i, S_a e_i\rangle
\\&=
\sum
\bfF(\ev_a) \circ \mu_{\overline{a},a} \circ (\id_{\bfF(\overline{a})} \otimes (e_i^*\circ e_i)) \circ \mu_{\overline{a},a}^* \bfF(\ev_a^*)
\\&=
\bfF(\ev_a\circ \ev_a^*)
=
d_a.
\end{align*}
Similarly, by the same computation, we have 
$$
\Tr(\Delta_a^{-1}) 
= 
\Tr(S_{\overline{a}} S_{\overline{a}}^*) 
= 
\Tr(S_{\overline{a}}^*S_{\overline{a}} )
=
\Tr(\Delta_{\overline{a}})
=
d_{\overline{a}}
=
d_a.
$$
Thus if $\bfG$ is tracial, we have $d_a = \Tr(\Delta_a) = \dim(\bfG(a)) = \dim(\bfF(a))$, and $\bbG$ is Kac-type.

Conversely, suppose $\bbG$ is Kac-type so that 
$$
\dim(\bfG(a)) 
= 
\dim(\bfF(a))
=
d_a 
=
\Tr(\Delta_a) 
=
\Tr(\Delta_a^{-1}).
$$
Set $n = \dim(\bfG(a))$, and let $\{\lambda_{i}\}^{n}_{i=1}$ denote the set of eigenvalues of $\Delta_{a}$.
Then $\sum^{n}_{i=1} \lambda_{i}=n= \sum^{n}_{i=1} \lambda^{-1}_{i}$, and thus
$$ 
\sum^{n}_{i=1} \lambda_{i} +\lambda^{-1}_{i}
=
2n.
$$
But notice that $\lambda_{i} +\lambda^{-1}_{i}\ge 2$, and thus $\lambda_{i}=1$ for all $i$.
We conclude $\Delta_a=\id$, and thus $\bfG$ is tracial.
\end{proof}

Thus for every non Kac-type discrete quantum group, we can pick a fully faithful bi-involutive representation $\bfH : \cC \to \spbfBim(L\bbF_\infty)$, and we get an extremal irreducible discrete inclusion $(L\bbF_\infty\subseteq |\bfG|_\bfH, E)$ where $|\bfG|_\bfH$ is type ${\rm III}$ by Corollary \ref{cor:IrreducibleInclusionIIorIII} and Proposition \ref{prop:TracialRealization}.
We now list some examples of discrete quantum groups together with their modular spectra.

\begin{exs}
\mbox{}
\begin{enumerate}[(1)]
\item
Associated to a compact Lie group $G$ and a real number $q\ne0$, one can construct the Drinfeld-Jimbo quantum groups $U_{q}(G)$, which carry the structure of a Hopf $*$-algebra.  
In this setting, the finite dimensional type ${\rm I}$ $*$-representations of this Hopf algebra form a rigid C*-tensor category, which has a natural forgetful functor to Hilbert spaces. 
For a detailed exposition, see  \cite{MR3204665}.  
In this situation, the modular spectrum of the corresponding algebra object is easily seen to be the closure of the subgroup of $\bbR^{+}$ generated by the eigenvalues of $\pi_{\omega_i}(K_{2\rho})$, which can be read off from the Cartan matrix.  
Here, the $\omega_{i}$ are fundamental weights, the $\pi_{\omega_i}$ are the corresponding irreducible representations, and $\rho$ is the sum of fundamental weights (see \cite[\S2.4]{MR3204665} for further details).

\item
The free unitary quantum groups $A_{u}(F)$, \cite{MR1316765} are constructed from a matrix $F\in GL_{n}(\bbC)$ satisfying $\Tr(F^{*}F)=\Tr(F^{*}F^{-1})$. 
There is a fundamental $n$-dimensional generating representation $\rho$, and by construction, $\Delta_\rho=F^{*}F$.  
Thus the modular spectrum of the associated algebra object is the closure of the subgroup of $\bbR^{+}$ generated by the eigenvalues of $F^{*}F$.

\item
The free orthogonal quantum groups $A_{o}(F)$, \cite{MR1316765} are also constructed from a matrix $F\in GL_{n}(\bbC)$ such that $\Tr(F^{*}F)=\Tr(F^{*}F^{-1})$, but with the additional condition that $F\overline{F}=\pm1$, where $\overline{F}$ is the matrix obtained by taking the complex conjugate of each entry.  
Again $F^{*}F=\Delta_\rho$, and we get the same modular spectrum as the previous example.
\end{enumerate}
\end{exs}

\subsection{Temperley-Lieb-Jones module categories}
\label{sec:TLJmodules}

First we review the results of \cite{MR3420332}, which classify semi-simple module W*-categories for $\Rep(SU_{q}(2))$, $q\ne 0$, in terms of fair and balanced $\delta$-graphs.
Using the equivalence of categories \cite[Thm.~3.24]{MR3687214}, we get a classification of connected W*-algebra objects.
We focus on $\Rep(SU_{-q}(2))$ for $q>0$, which corresponds to unshaded $\cT\cL\cJ(\delta)$ where $\delta=q+q^{-1}$.
Reinterpreting \cite{MR3420332} into our language, we give an explicit description of the connected algebra object in terms of loops on the graph.
Then any fully faithful bi-involutive embedding of $\cT\cL\cJ(\delta)\to \spbfBim(N)$ for $N$ a ${\rm II}_{1}$ factor, along with a fair and balanced graph and a chosen basepoint, will produce an extremal irreducible discrete subfactor.  
An easily checkable condition on the graph determines when the factor is type ${\rm II}_{1}$ or ${\rm III}_{\lambda}$.

\begin{defn}
For $\delta>0$, a \textit{fair and balanced} $\delta$-\emph{graph} is a locally finite oriented graph $\Gamma=(V, E)$, together with
\begin{itemize}
\item
a \emph{weight function} $W:E\rightarrow (0,\infty)$ satisfying for each vertex $v\in V$, $\sum_{s(e)=v} W(e)=\delta$, and
\item
an involution $e \mapsto \overline{e}$ which interchanges source and target satisfying $W(e)W(\overline{e})=1$ for all $e\in E$.
\end{itemize}
We extend the weight function $W$ and the involution $\overline{\,\cdot\,}$ to loops $\ell=[e_1,\dots, e_n]$ on $\Gamma$ by 
$W(\ell):=W(e_{1})\dots W(e_n)$ and $\overline{\ell}:=[\overline{e}_{n},\dots, \overline{e}_{1}]$.  
\end{defn}

Graphs of the above form classify module categories for $\Rep(SU_{-q}(2))$, $q>0$, where $q+q^{-1}=\delta$, and the simple objects are classified by the vertices.  
(With the additional restriction that the number of self-loops of $\Gamma$ is even for each vertex, these also classify module categories for $q<0$.)
Technically speaking, the actual involution is not part of the data; the existence of an involution is sufficient to construct a module category, and two different involutions for the same weight function give equivalent module W*-categories.
For convenience, we fix a chosen involution.

Now, given such a graph $\Gamma$ and a vertex $v\in V$, we explicitly construct the corresponding connected W*-algebra object.
To define a functor $\bfA_{v}: \cT\cL\cJ(\delta)^{\op}\rightarrow \Vec$, it suffices to assign a vector space to each object $\rho^{\otimes n}$, where $\rho=\jw{1}$ is the strand, 
and to give the action of cups and caps.
Denoting the cups and caps by
$$
\cup_i =
\begin{tikzpicture}[baseline=-.1cm]
	\draw (0,-.4) -- (0,.4);
	\node at (.3,0) {\scriptsize{$\cdots$}};
	\draw (.6,-.4) -- (.6,.4);
	\draw (.8,.4) arc (-180:0:.2cm);
	\draw (1.4,-.4) -- (1.4,.4);
	\node at (1.7,0) {\scriptsize{$\cdots$}};
	\draw (2,-.4) -- (2,.4);
	\node at (0,.6) {\scriptsize{$1$}};
	\node at (0,-.6) {\scriptsize{$1$}};
	\node at (.8,.6) {\scriptsize{$i$}};
	\node at (1.4,-.6) {\scriptsize{$i$}};
	\node at (2,-.6) {\scriptsize{$n$}};
	\node at (2,.6) {\scriptsize{$n+2$}};
\end{tikzpicture}
\qquad\qquad
\cap_i =
\begin{tikzpicture}[baseline=-.1cm, yscale =-1]
	\draw (0,-.4) -- (0,.4);
	\node at (.3,0) {\scriptsize{$\cdots$}};
	\draw (.6,-.4) -- (.6,.4);
	\draw (.8,.4) arc (-180:0:.2cm);
	\draw (1.4,-.4) -- (1.4,.4);
	\node at (1.7,0) {\scriptsize{$\cdots$}};
	\draw (2,-.4) -- (2,.4);
	\node at (0,.6) {\scriptsize{$1$}};
	\node at (0,-.6) {\scriptsize{$1$}};
	\node at (.8,.6) {\scriptsize{$i$}};
	\node at (1.4,-.6) {\scriptsize{$i$}};
	\node at (2,-.6) {\scriptsize{$n-2$}};
	\node at (2,.6) {\scriptsize{$n$}};
\end{tikzpicture}
\,,
$$
we must give maps 
$\bfA_v(\cup_{i}): \bfA_{v}(\rho^{\otimes n})\rightarrow \bfA_{v}(\rho^{\otimes n-2})$ 
and
$\bfA_v(\cap_{i}): \bfA_{v}(\rho^{\otimes n})\rightarrow \bfA_{v}(\rho^{\otimes n+2})$
which satisfy the Temperley-Lieb-Jones relations.

\begin{defn}
We define $\bfA_{v}(\rho^{\otimes n})$ to be the complex vector space with basis consisting of loops starting and ending at $v$. For a loop $\ell=[e_1, e_2, \dots e_n]$, we define 
\begin{align*}
\bfA_v(\cup_{i})(\ell)
&=
\delta_{e_{i}=\overline{e}_{i+1}}W(e_{i})^{\frac{1}{2}} [e_{1},\dots, e_{i-1},e_{i+2},\dots e_{n}]
\\
\bfA_v(\cap_{i})(\ell)
&=
\sum_{s(e)=t(e_i)}W(e)^{\frac{1}{2}} [e_{1},\dots, e_{i}, e, \overline{e}, e_{i+1},\dots e_{n}].
\end{align*}
\end{defn}

It is easy to check that these maps satisfy the Temperley-Lieb-Jones relations, and thus we get an object $\bfA_v\in \Vec(\cT\cL\cJ(\delta))$.  
The algebra map $\mu: \bfA_{v}(\rho^{\otimes n})\otimes \bfA_{v}(\rho^{\otimes m})\rightarrow \bfA_{v}(\rho^{\otimes n+m})$ is given on the basis by concatenating loops.  
Thus $\bfA_v$ is a connected algebra by definition, since only the empty loop has length zero.
The $*$-structure is the conjugate linear map defined by $\ell\mapsto W(\overline{\ell})^{1/2}\overline{\ell}$.
The fact that these are W*-algebra objects simply follows from the fact that they arise from module W*-categories via \cite{MR3420332}, but it is also easy to check directly.  

\begin{prop} 
\label{prop:SpectralInvariantOfGraphAlgebra}
The closure of
$\set{W(\ell)}{\ell \text{ is a loop starting at }v}$ 
in $[0,\infty)$ is equal to the spectral invariant $S(\bfA_{v})$.
Thus $\bfA_v$ is tracial if and only if $W(\ell)=1$ for all loops $\ell$.
\end{prop}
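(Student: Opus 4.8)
The plan is to combine the reduction lemma for the modular spectrum with an explicit diagonalization of the operators $\Delta_{\rho^{\otimes n}}$ in the loop basis. Since the simple objects of $\cT\cL\cJ(\delta)\simeq\Rep(SU_{-q}(2))$ are the Jones--Wenzl objects $\jw{k}$ with $\jw{k}\prec \rho^{\otimes k}$, the family $\Lambda=\set{\rho^{\otimes n}}{n\geq 0}$ meets every simple object, so by the lemma used to compute $S(\bfA)$ (the one immediately following the definition of the modular spectrum), $S(\bfA_v)$ is the closure in $[0,\infty)$ of $\bigcup_{n\geq 0}\Spec(\Delta_{\rho^{\otimes n}})$. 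It thus suffices to identify $\Spec(\Delta_{\rho^{\otimes n}})$ with $\set{W(\ell)}{\ell\text{ a loop at }v\text{ of length }n}$.

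The heart of the argument is a direct computation of $\Delta_{\rho^{\otimes n}}$ on $L^2(\bfA_v)(\rho^{\otimes n})$. By definition $\Delta_{\rho^{\otimes n}}=S_{\rho^{\otimes n}}^*S_{\rho^{\otimes n}}$, where $S_{\rho^{\otimes n}}(f)=j_{\rho^{\otimes n}}(f)$ and the adjoint is with respect to the right $\bfA_v(1_\cC)\cong\bbC$-valued inner product $\langle\,\cdot\,|\,\cdot\,\rangle_{\rho^{\otimes n}}$ of \eqref{eq:A(1)-InnerProduct}; here we use self-duality in $\cT\cL\cJ(\delta)$ and identify $\bfA_v(\overline{\rho^{\otimes n}})$ with loops at $v$ via loop reversal, under which $S_{\rho^{\otimes n}}[\ell]=W(\overline\ell)^{1/2}\,\overline{\ell}=W(\ell)^{-1/2}\,\overline\ell$. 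Using the explicit cup/cap formulas, the fact that multiplication is concatenation of loops, and the formulas \eqref{eq:A(1)-InnerProduct} for the $\bbC$-valued inner products, I would evaluate both ${}_{\rho^{\otimes n}}\langle\,\cdot\,,\,\cdot\,\rangle$ and $\langle\,\cdot\,|\,\cdot\,\rangle_{\rho^{\otimes n}}$ on pairs of loops: the matched edge pairs inserted by cups and caps, each carrying a factor $W(e)^{1/2}$, show that distinct loops are orthogonal for both inner products and that on the basis vector $\ell$ the two inner products differ precisely by the scalar $W(\ell)$. By \eqref{eq:ModularOperator}, which reads ${}_a\langle f,g\rangle=\langle g|\Delta_a f\rangle_a$, this means $\Delta_{\rho^{\otimes n}}$ is diagonalized by the loop basis with $\Delta_{\rho^{\otimes n}}[\ell]=W(\ell)\,\ell$ (the sign of the exponent is immaterial, since $\ell\mapsto\overline\ell$ is an involution on loops at $v$ and $W(\overline\ell)=W(\ell)^{-1}$, so the two conventions produce the same set with the same closure). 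As every loop at $v$ of length $n$ is a nonzero basis vector of $\bfA_v(\rho^{\otimes n})$, we conclude $\Spec(\Delta_{\rho^{\otimes n}})=\set{W(\ell)}{\ell\text{ a loop at }v,\ |\ell|=n}$, and taking the union over $n$ and the closure gives the first assertion.

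For the second assertion, the corollary characterizing traciality says $\bfA_v$ is tracial if and only if $\Delta$ is the identity natural transformation; by naturality (every object is built from the $\jw{k}$, and the $\jw{k}$ from the $\rho^{\otimes k}$) this holds if and only if $\Delta_{\rho^{\otimes n}}=\id$ for all $n$, i.e.\ by the diagonalization above if and only if $W(\ell)=1$ for every loop $\ell$ at $v$. When $\Gamma$ is connected this is equivalent to $W(\ell)=1$ for all loops in $\Gamma$: conjugating a loop based at a vertex $w$ by a path $\gamma$ from $v$ to $w$ (and back along $\overline\gamma$) produces a loop at $v$ whose weight is the original weight times $W(\gamma)W(\overline\gamma)=1$; in general $\bfA_v$ only sees the connected component of $v$, so ``for all loops'' is to be read within that component.

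The step I expect to be the main obstacle is the bookkeeping in the middle paragraph: tracking the $W(e)^{1/2}$ factors produced by the cup and cap actions together with the normalization conventions for the $\bbC$-valued inner products from \cite[\S4]{MR3687214}, and verifying that they assemble into exactly $W(\ell)$ -- in particular that the two ``halves'' of a loop's weight combine correctly under $S^*S$ and that the off-diagonal contributions genuinely vanish. The reduction via $\Lambda=\set{\rho^{\otimes n}}{n\geq 0}$ and the deduction of the tracial criterion are then routine.
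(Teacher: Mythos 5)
Your proposal is correct and follows essentially the same route as the paper: reduce via the lemma on $\Lambda=\set{\rho^{\otimes n}}{n\geq 0}$ to computing $\Spec(\Delta_{\rho^{\otimes n}})$, then diagonalize $\Delta_{\rho^{\otimes n}}$ in the loop basis by comparing the left and right $\bfA_v(1_\cC)$-valued inner products via \eqref{eq:ModularOperator}. The bookkeeping you flag as the main obstacle does work out exactly as you predict: distinct loops are orthogonal for both inner products, ${}_n\langle \ell,\ell\rangle = 1$ while $\langle \ell|\ell\rangle_n = W(\ell)^{-1}$, so $\Delta_{\rho^{\otimes n}}\ell = W(\ell)\,\ell$.
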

\begin{proof}
First note that to compute $S(\bfA_{v})$, it suffices to compute the spectrum of $\Delta_n:=\Delta_{\rho^{\otimes n}}$ for each $n\in\bbN$.
Note that $\Delta_{n}$ is diagonal on the loop basis.  
Indeed, for all loops $\ell_1,\ell_2$ starting at $v$,
$$
\langle \ell_{1} | \ell_{2}\rangle_{n}
=
\delta_{ \ell_1=\ell_2}
W(\overline{\ell_2})
=
\delta_{\ell_1=\ell_2}
W(\ell_1)^{-1},
$$
and thus $\{W(\ell)^{1/2} \ell\}$ forms an orthonormal basis of $\bfA_v(n)$ with respect to the right inner product $\langle \cdot|\cdot\rangle_n$.
Now we calculate
\begin{align*}
\langle W(\ell_1)^{1/2}\ell_{1}|\Delta_n W(\ell_2)^{1/2}\ell_{2}\rangle_{n}
&=
W(\ell_1)^{1/2}W(\ell_2)^{1/2}
{}_n\langle \ell_{2}, \ell_{1}\rangle
\\&=
\delta_{ \ell_1=\ell_2}
W(\ell_1)^{1/2}W(\ell_2)^{1/2}
W(\ell_1)^{1/2}W(\overline{\ell_{2}})^{\frac{1}{2}}
\\&=
\delta_{ \ell_1=\ell_2}
W(\ell_1).
\end{align*}
This means for all loops $\ell$, $\Delta_n(\ell) = W(\ell) \ell$, and $S(\bfA_v)$ is as claimed.
\end{proof}

\begin{ex}
The $4$-regular graph with $n$ vertices
\begin{tikzpicture}[baseline=-.1cm]
	\node[vertex, label=right:{\scriptsize{$v$}}] (a) at (0:1cm) {};
	\node[vertex] (b) at (90:1cm) {};
	\node[vertex] (c) at (180:1cm) {};
	\node[vertex] (d) at (270:1cm) {};
	\draw[edge] (a) to node[right] {\scriptsize{$q^{-1}$}} (b);
	\draw[edge] (b) to node[left] {\scriptsize{$q$}} (a);
	\draw[edge] (b) to node[above] {\scriptsize{$q^{-1}$}} (c);
	\draw[edge] (c) to node[below] {\scriptsize{$q$}} (b);
	\draw[edge] (c) to node[left] {\scriptsize{$q^{-1}$}} (d);
	\draw[edge] (d) to node[right] {\scriptsize{$q$}} (c);
	\node at (-30:1cm) {$\cdot$};
	\node at (-45:1cm) {$\cdot$};
	\node at (-60:1cm) {$\cdot$};
\end{tikzpicture}
satisfies $S(\bfA_v) = q^{n\bbZ}$.
Using our embedding $\bfH:\cT\cL\cJ(\delta) \to \spbfBim(L\bbF_\infty)$ from Section \ref{sec:PlanarAlgebras}, when $q>1$, we get an extremal irreducible discrete inclusion $L\bbF_\infty\subseteq |\bfA_v|_\bfH$ where $|\bfA_v|_\bfH$ has type ${\rm III}_{q^{-n}}$.
It would be interesting to compare these discrete inclusions with the similar inclusions from \cite{MR1674759}.
\end{ex}

\begin{rem}
As explained in \cite{MR3420332}, the structure of a $\cT\cL\cJ$-module W*-category given by a fair and balanced $\delta$-graph is very similar to the structure of Jones' graph planar algebra \cite{MR1865703}.
The main difference is the weight function on the edges for the former rather than the Frobenius-Perron dimension function on the vertices for the latter.
Given a graph planar algebra with dimension function $D$, we get a fair and balanced $\delta$-graph by defining the weight function by $W(e) = D(t(e))D(s(e))^{-1}$.
This automatically implies that for any loop $\ell$, $W(\ell) = 1$, and thus $\bfA_v$ will always be tracial.
\end{rem}

\begin{ex}
Following \cite[\S3.2]{MR3420332}, starting with $A\subseteq B$ a finite index ${\rm II}_1$ subfactor with $\delta^2=[B:A]$, the fair and balanced $\delta$-graph associated to the principal graph with $v=\star$ corresponding to $L^2(A)$ gives a connected W*-algebra object $\bfA_\star\in \Vec(\cT\cL\cJ(\delta))$.
This algebra object is $*$-isomorphic to the connected W*-algebra object in $\Vec(\cT\cL\cJ(\delta))$ constructed from the planar algebra $\cP_\bullet$ of $A\subset B$ as described in Section \ref{sec:PlanarAlgebras}.
(For example, we can think of $\cP_\bullet$ as embedded in its graph planar algebra by \cite{MR2812459,gpa}, and cutting down at the vertex $\star$ is an isomorphism of $*$-algebras by \cite[Prop.~4.10]{MR3402358} which commutes with partial traces.)
It is important to note that the associated extremal irreducible discrete inclusion $N\subseteq |\bfA_\star|_\bfH$ of ${\rm II}_1$ factors is distinct from the original subfactor.
Indeed, $N\subseteq |\bfA_\star|_\bfH$ has infinite index for $\delta \geq 2$.
\end{ex}

\section{Applications}

In our final section, we give some applications of our equivalence of categories, including 
standard invariants and subfactor reconstruction for extremal irreducible discrete subfactors,
analytic properties,
and a Galois correspondence in the spirit of \cite{MR1622812,MR2561199}.

\subsection{Standard invariants and subfactor reconstruction}
\label{sec:StandardInvariants}

As a first application of our main Theorem \ref{thm:Main}, we get a well-behaved definition of the standard invariant of an extremal irreducible discrete inclusion, together with a subfactor reconstruction theorem.
We rapidly recall the definition of the standard invariant for finite index ${\rm II}_1$ subfactors \cite{MR1334479,math.QA/9909027}, and the translation into unitary Frobenius algebra objects in rigid C*-tensor categories as in \cite{MR1966524}.

Starting with a finite index ${\rm II}_1$ subfactor $M_0=N\subseteq M = M_1$, we iterate the Jones basic construction to obtain a tower of ${\rm II}_1$ factors $(M_n)_{n\geq 0}$.
The \emph{standard invariant} is the \emph{standard} $\lambda$-\emph{lattice} \cite{MR1334479} of higher relative commutants 
$$
\cP_{n,+}:= M_0'\cap M_n 
\qquad
\qquad
\cP_{n-} := M_1'\cap M_{n+1}
\qquad n\geq 0.
$$
We may also consider the standard invariant as a shaded subfactor planar algebra $\cP_\bullet$ \cite{math.QA/9909027}, or as the unitary Frobenius algebra object $A=L^2(M)$ in the rigid C*-tensor category $\cC$ of bifinite $N-N$ bimodules generated by $L^2(M)$ \cite{MR1966524}.
Equivalently, $\cC$ can be viewed as the category of \emph{even} projections $\cC(\cP_\bullet)$ similar to the discussion in Section \ref{sec:PlanarAlgebras}.
Here, the objects are projections in $\cP_{2n,+}=M_0'\cap M_{2n}$, which correspond to $N-N$ sub-bimodules of $L^2(M_n)$ by \cite{MR1424954}.
The morphisms, composition, and tensor product are defined analogously as in Section \ref{sec:PlanarAlgebras}.
The algebra  $A$ now corresponds to $\id_{\cP_{2,+}}$, and the multiplication and unit are given by
$$
m_A = 
\begin{tikzpicture}[baseline=-.1cm]
	\fill[shaded] (-.4,-.3) .. controls ++(90:.3cm) and ++(270:.3cm) .. (-.1,.3) -- (.1,.3) .. controls ++(270:.3cm) and ++(90:.3cm) .. (.4,-.3) -- (.2,-.3) arc (0:180:.2cm);
	\draw (-.2,-.3) arc (180:0:.2cm);
	\draw (-.4,-.3) .. controls ++(90:.3cm) and ++(270:.3cm) .. (-.1,.3);
	\draw (.4,-.3) .. controls ++(90:.3cm) and ++(270:.3cm) .. (.1,.3);
\end{tikzpicture}
\qquad
\qquad
i_A=
\begin{tikzpicture}[baseline=-.1cm]
	\fill[shaded] (-.2,.2) arc (-180:0:.2cm);
	\draw (-.2,.2) arc (-180:0:.2cm);
\end{tikzpicture}\,.
$$

For an extremal irreducible discrete inclusion $(N\subseteq M, E)$, the bimodule $L^2(M,\phi)$ need not be bifinite, and thus cannot be viewed as an object of the rigid C*-tensor category $\cC$ of bifinite $N-N$ bimodules generated by $L^2(M,\phi)$.
Thus we must necessarily consider a larger W*-algebra object in a larger category.
Our connected W*-algebra objects $\bfA\in \Vec(\cC)$ were designed precisely for this purpose.

\begin{defn}
The \emph{standard invariant} of an extremal irreducible discrete inclusion $(N\subseteq M, E)$ consists of:
\begin{itemize}
\item
the rigid C*-tensor category $\cC_{(N\subseteq M, E)}$ of bifinite $N-N$ bimodules generated by $L^2(M,\phi)$, (the full 
subcategory of bifinite $N-N$ bimodules which appear as summands of $\boxtimes_N^k L^2(M,\phi)$ for some $k\geq 0$), together with
\item
the connected W*-algebra object $\bfA_{(N\subseteq M, E)}\in \Vec(\cC)$ defined by 
$\bfA_{(N\subseteq M, E)}(K) = \Hom_{N-N}(K, L^2(M,\phi)) \cong \Hom_{N-N}(K\boxtimes_N L^2(M,\phi), L^2(M,\phi))$,
which corresponds to the cyclic $\cC$-module W*-subcategory of $N-M$ bimodules $\Bim(N,M)$ generated by the basepoint $L^2(M,\phi)$.
\end{itemize}
We also define an \emph{abstract standard invariant} as a pair $(\cC,\bfA)$ where $\cC$ is a rigid C*-tensor category, and $\bfA\in \Vec(\cC)$ is a connected W*-algebra object such that $\bfA$ \emph{generates} $\cC$, i.e., for every $c\in \cC$, there is an $a\in \cC$ and an $n\in \bbN$ such that $c\preceq a^{\otimes n}$ and $\bfA(a)\neq (0)$.
\end{defn}

\begin{thm}
Given an abstract standard invariant $(\cC, \bfA)$ with $\cC$ a rigid \emph{C*}-tensor category and $\bfA\in \Vec(\cC)$ a connected \emph{W*}-algebra object, there is an extremal irreducible discrete inclusion $(N\subseteq M, E)$ whose standard invariant $(\cC_{(N\subseteq M, E)}, \bfA_{(N\subseteq M, E)})$ is equivalent to $(\cC, \bfA)$, i.e., there is an equivalence of categories $\bfH : \cC \to \cC_{(N\subseteq M, E)}$ and a $*$-algebra natural isomorphism $\kappa^\bfA : \bfA_{(N\subseteq M, E)}\circ \bfH^{\op} \Rightarrow \bfA$.
\end{thm}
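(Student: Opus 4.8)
The plan is to produce $(N\subseteq M,E)$ as the von Neumann realization of $\bfA$ inside a representation of $\cC$ as bimodules. First I would take $\cC$ to be essentially countable and fix, via \cite{MR2051399,MR3405915} (as recalled in Section \ref{sec:RepresentationsOfCategories} and rebuilt diagrammatically in Section \ref{sec:PlanarAlgebras}), a fully faithful bi-involutive representation $\bfH:\cC\to\spbfBim(N)$ with $N=L\bbF_\infty$ satisfying \eqref{eq:NonStandard}; then I would set $M:=|\bfA|_\bfH$, $E:=E_N:M\to N$ from Section \ref{sec:vonNeumannRealization}, and $\phi=\tau\circ E$. By Theorem \ref{thm:Quasiregular} this is an irreducible discrete inclusion of factors with $N$ type ${\rm II}_1$, and $L^2(M,\phi)\cong L^2|\bfA|_\bfH$ (Corollary \ref{cor:IsoToGNS}). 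To see it is extremal and lies in $\DisInc_\bfH$, I would use Proposition \ref{prop:L2MH decomposition}: $L^2(M,\phi)\cong\bigoplus_{a\in\Irr(\cC)}\bfH(a)\otimes_a\bfA(a)$ as $N$--$N$ bimodules, a direct sum of copies of the bimodules $\bfH(a)$, each spherical since $\bfH$ takes values in $\spbfBim(N)$ and sphericality is inherited by subobjects and direct sums; hence $L^2(M,\phi)$ is an extremal bimodule whose irreducible summands lie in $\bfH(\cC)$, so $(N\subseteq M,E)\in\DisInc_\bfH$ is extremal.

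Next I would identify the standard invariant $(\cC_{(N\subseteq M,E)},\bfA_{(N\subseteq M,E)})$ with $(\cC,\bfA)$. For the category: $\bfH$ is fully faithful by hypothesis, and $\bfH(\cC)$ is a rigid C*-tensor subcategory of $\spbfBim(N)$ (closed under $\boxtimes_N$, conjugates, direct sums and subobjects, as $\bfH$ is a bi-involutive tensor functor and $\cC$ is a rigid C*-tensor category), which by the previous paragraph contains every bifinite summand of $L^2(M,\phi)$; since $\cC_{(N\subseteq M,E)}$ is exactly the rigid C*-tensor subcategory generated by $L^2(M,\phi)$, this gives $\cC_{(N\subseteq M,E)}\subseteq\bfH(\cC)$. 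Conversely, the decomposition of Proposition \ref{prop:L2MH decomposition} shows the irreducible $N$--$N$ bimodules appearing in $L^2(M,\phi)$ are precisely the $\bfH(b)$ with $\bfA(b)\neq(0)$, and the hypothesis that $\bfA$ generates $\cC$ (for every $c$ there are $a,n$ with $\bfA(a)\neq(0)$ and $c\preceq a^{\otimes n}$) should then force every $\bfH(c)$ to occur as a summand of some $\boxtimes_N^k L^2(M,\phi)$, whence $\bfH(\cC)\subseteq\cC_{(N\subseteq M,E)}$. For the algebra: unwinding Definition \ref{defn:UnderlyingAlgebraObject} under this equivalence gives $\bfA_{(N\subseteq M,E)}(\bfH(c))=\Hom_{N-N}(\bfH(c),L^2(M,\phi))=\alg{M}(c)$, with the same multiplication, unit and $*$-structure by Lemmas \ref{lem:InverseFunctorMultiplication} and \ref{lem:InverseFunctorStar}; that is, $\bfA_{(N\subseteq M,E)}\circ\bfH^{\op}=\alg{M}=\alg{|\bfA|_\bfH}$ as connected W*-algebra objects in $\Vec(\cC)$.

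Finally, Theorem \ref{thm:Equivalence} provides the natural isomorphism $\kappa^\bfA:\bfA\Rightarrow\alg{|\bfA|_\bfH}=\bfA_{(N\subseteq M,E)}\circ\bfH^{\op}$, which is a $*$-algebra natural isomorphism by Proposition \ref{prop:StarAlgebraNaturalIso}; its inverse is the required isomorphism $\bfA_{(N\subseteq M,E)}\circ\bfH^{\op}\Rightarrow\bfA$. The main obstacle I expect is the second step of the middle paragraph: verifying that the generating hypothesis on $\bfA$ is exactly strong enough to make $\bfH$ essentially surjective onto $\cC_{(N\subseteq M,E)}$ (i.e.\ that $\cC_{(N\subseteq M,E)}$ is not a proper subcategory of $\bfH(\cC)$), which is where one must carefully use the precise form of ``$\bfA$ generates $\cC$''. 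Everything else amounts to assembling Theorems \ref{thm:Quasiregular} and \ref{thm:Equivalence} with the bimodule decomposition of Proposition \ref{prop:L2MH decomposition}.
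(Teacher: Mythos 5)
Your proposal is correct and follows essentially the same route as the paper: realize $M=|\bfA|_\bfH$ for a fully faithful $\bfH:\cC\to\spbfBim(N)$ with $N=L\bbF_\infty$, invoke Theorem \ref{thm:Quasiregular} and Proposition \ref{prop:L2MH decomposition}, and obtain the $*$-algebra natural isomorphism from $\kappa^\bfA$ of Section \ref{sec:EquivalenceOfCategories} (you are in fact more careful than the paper, which asserts extremality and $\cC_{(N\subseteq M,E)}=\bfH(\cC)$ ``by construction''). The step you flag as the main obstacle goes through exactly as you sketch: the irreducible summands of $L^2(M,\phi)$ are the $\bfH(b)$ with $\bfA(b)\neq(0)$, so the generating hypothesis --- which is defined precisely for this purpose --- forces every $\bfH(c)$ to appear in some $\boxtimes_N^k L^2(M,\phi)$, giving $\bfH(\cC)\subseteq\cC_{(N\subseteq M,E)}$.
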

\begin{proof}
Let $\bfH: \cC \to \spbfBim(N)$ be a fully faithful representation with $N$ a ${\rm II}_1$ factor.
(Such a representation always exists for $N=L\bbF_\infty$ by \cite{MR2051399,MR3405915}.)
Then $\bfH$ is automatically an equivalence of categories onto its essential image $\bfH(\cC)$.

The inclusion $N\subseteq M:= |\bfA|_\bfH = N\rtimes_\bfH \bfA$ with its canonical conditional expectation $E=E_N$ constructed in Section \ref{sec:vonNeumannRealization} is an extremal irreducible discrete inclusion.
Moreover, we see that $\cC_{(N\subseteq M,E)} = \bfH(\cC)$ by construction.
Notice now that for $c\in \cC$, we have 
$
\bfA_{(N\subseteq M, E)}(\bfH(c))
=
\Hom_{N-N}(\bfH(c), L^2|\bfA|_\bfH)
\cong 
\bfA(c)
$
via the $*$-algebra natural isomorphism $\kappa^\bfA$ from Section \ref{sec:EquivalenceOfCategories}.
Thus $\kappa^A$ may be viewed as a $*$-algebra natural isomorphism $\bfA_{(N\subseteq M, E)}\circ \bfH^{\op}\cong \bfA$.
\end{proof}

We also get a converse result.

\begin{thm}
Suppose $(N\subseteq M, E)$ is an extremal irreducible discrete inclusion with standard invariant $(\cC_{(N\subseteq M, E)}, \bfA_{(N\subseteq M, E)})$.
There is a fully faithful representation $\bfH : \cC_{(N\subseteq M, E)} \to \spbfBim(N)$ such that the extremal irreducible discrete inclusion $N\subseteq P = |\bfA_{(N\subseteq M, E)}|_\bfH$ with expectation $E^P_N$ is isomorphic to $(N\subseteq M, E)$, i.e., there is a $*$-algebra isomorphism $\delta:P \to M$ which fixes $N$ such that $E_N^P = \delta\circ E_N^P = E \circ \delta$.
\end{thm}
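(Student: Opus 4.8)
The plan is to deduce this from Theorem~\ref{thm:Equivalence}; concretely, from the natural isomorphism $\delta\colon |\alg{\cdot}|_\bfH \Rightarrow \id$ constructed in Section~\ref{sec:EquivalenceOfCategories}. Write $\cC := \cC_{(N\subseteq M,E)}$ and $\bfA := \bfA_{(N\subseteq M,E)}$. The first step is to produce the representation $\bfH$. Since $(N\subseteq M,E)$ is discrete, $L^2(M,\phi)$ decomposes as a direct sum of bifinite $N-N$ bimodules, and since it is extremal, $L^2(M,\phi)$ is a spherical bimodule; as sphericality is preserved under subobjects, direct sums, conjugates, and fusion, every object of $\cC$ is spherical, so $\cC$ is a full bi-involutive rigid C*-tensor subcategory of $\spbfBim(N)$. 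Take $\bfH\colon \cC \to \spbfBim(N)$ to be the inclusion. Being the inclusion of a full subcategory it is fully faithful, and it is bi-involutive since the bi-involutive structure on $\cC$ is inherited from $\spbfBim(N)$. Condition \eqref{eq:NonStandard} (that the tracial evaluation and coevaluation agree with the standard ones on $\bfH(\cC)$) holds automatically, because for a spherical bifinite bimodule the tracial solutions to the conjugate equations are standard solutions (the remark following Definition~\ref{defn:StatisticalDimension}). Thus $\bfH$ is a legitimate representation in the sense of Notation~\ref{nota:RepresentationAndConnectedAlgebra}.

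Next I would check that $(N\subseteq M,E)$ is an object of $\DisInc_\bfH$ and that $\bfA = \alg{M}$. By construction $L^2(M,\phi)$ decomposes into bifinite $N-N$ bimodules lying in $\cC_{(N\subseteq M,E)} = \bfH(\cC)$, so $L^2(M,\phi)\in\Hilb(\bfH(\cC))$ and $(N\subseteq M,E)\in\DisInc_\bfH$. By Definition~\ref{defn:UnderlyingAlgebraObject} and Lemma~\ref{lem:ConnectedW*Algebra}, $\bfA(K) = \Hom_{N-N}(K,L^2(M,\phi))$ is exactly the underlying connected W*-algebra object $\alg{M}\in\ConAlg$. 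Hence $P := |\bfA|_\bfH = |\alg{M}|_\bfH$ is defined, and by Theorem~\ref{thm:Quasiregular} it gives an extremal irreducible discrete inclusion $(N\subseteq P, E^P_N)\in\DisInc_\bfH$ with canonical state $\phi_P = \tau\circ E_N^P = |\tau|_\bfH$.

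Now apply the natural isomorphism $\delta$. Its component $\delta = \delta_M\colon P = |\alg{M}|_\bfH \to M$ is the evaluation homomorphism \eqref{eq:EvaluationHomomorphism}, which by Lemma~\ref{lem:EvaluationHomomorphismAlgebraMap} is a unital $*$-homomorphism on $|\alg{M}|_\bfH^\circ$ fixing $N = \bfH^\circ(1_\cC)\otimes\alg{M}(1_\cC)$, and by Lemma~\ref{lem:*-isoPreservesStates} preserves the canonical states ($\phi_M\circ\delta_M = |\tau|_\bfH^\circ = \phi_P$ on $|\alg{M}|_\bfH^\circ$); hence it extends to an injective unital $*$-homomorphism $P\to M$, and it is surjective because $(N\subseteq M,E)$ is quasi-regular (Proposition~\ref{prop:QuasiregularIffDiscrete}) with $\delta_M(|\alg{M}|_\bfH^\circ) = \cQ\cN_M(N)$ by Proposition~\ref{prop:DenseSubalgebraOfDiscrete}. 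Thus $\delta_M$ is a $*$-algebra isomorphism $P\to M$ fixing $N$; in particular $\delta_M\circ E_N^P = E_N^P$ as maps into $N$ (trivially, since $\delta_M|_N=\id_N$), so it remains only to see $E_N^P = E\circ\delta_M$. Both $E_N^P$ and $E\circ\delta_M$ are faithful normal conditional expectations $P\to N$, and both are $\phi_P$-preserving via $\tau$, since $\tau\circ(E\circ\delta_M) = \phi_M\circ\delta_M = \phi_P = \tau\circ E_N^P$. As $N$ lies in the centralizer of $\phi_P$ (Notation~\ref{nota:Discrete}), Takesaki's theorem \cite{MR0303307} gives uniqueness of the $\phi_P$-preserving conditional expectation onto $N$, whence $E_N^P = E\circ\delta_M$, completing the proof.

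The argument is mostly bookkeeping on top of the equivalence of categories already established; the only steps requiring genuine input are the first one --- recognizing that extremality of the inclusion is precisely what places $\cC_{(N\subseteq M,E)}$ inside $\spbfBim(N)$ compatibly with standard solutions, so that it can serve as the domain of a bi-involutive representation $\bfH$ --- and the closing uniqueness-of-conditional-expectation argument that converts state preservation of $\delta$ into the identity $E_N^P = E\circ\delta$. I expect the former to be the main (mild) obstacle, essentially because one must make sure the abstractly defined standard invariant genuinely fits the hypotheses of Notation~\ref{nota:RepresentationAndConnectedAlgebra}.
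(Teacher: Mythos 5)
Your proposal is correct and follows essentially the same route as the paper: take $\bfH$ to be the inclusion of $\cC_{(N\subseteq M,E)}$ into $\spbfBim(N)$ (you justify via extremality what the paper simply asserts) and let $\delta$ be the component $\delta_M$ of the natural isomorphism $|\alg{\cdot}|_\bfH\Rightarrow\id$ from Section~\ref{sec:EquivalenceOfCategories}. The only divergence is the last step, where the paper checks $E^P_N=E\circ\delta$ by evaluating both sides on elementary tensors $\xi\otimes f$ (both give $\delta_{K=L^2(N)}f(\xi)$), while you invoke uniqueness of the $\phi_P$-preserving conditional expectation onto $N$; both verifications are valid and equally short.
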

\begin{proof}
We define $\bfH$ to be the identity functor on $\cC_{(N\subseteq M, E)}$ which is already a subcategory of $\spbfBim(N)$.
The $*$-algebra isomorphism $\delta: |\bfA_{(N\subseteq M, E)}|_\bfH \to M$ is the isomorphism $\delta_M$ from Section \ref{sec:EquivalenceOfCategories}.
Recall that $\delta$ was defined on $\bfH(K)^\circ \otimes \bfA_{(N\subseteq M, E)}(K)$ for $K\in \cC_{(N\subseteq M, E)}$ by
$\xi\otimes f \mapsto f(\xi) \in M\Omega$.
Since $N\subseteq \bfA_{(N\subseteq M, E)}$ is included as $n\Omega \otimes i^{\bfA_{(N\subseteq M, E)}}$, we see that for all $n\in N$, $\delta(n\Omega\otimes i^{\bfA_{(N\subseteq M, E)}}) = i^{\bfA_{(N\subseteq M, E)}}(n\Omega) = n\Omega \in M\Omega$, and thus $\delta$ is the identity on $N$.
Finally, for every $\xi\otimes f \in \bfH(K)^\circ \otimes \bfA_{(N\subseteq M, E)}(K)$, $E^P_N(\xi\otimes f) = \delta_{K=L^2N} f(\xi)$, which is equal to $(E\circ \delta)(\xi\otimes f)$.
This is enough to conclude $E^P_N =\delta\circ E$.
\end{proof}

As a corollary, we get another axiomatization of the standard invariant of a finite index ${\rm II}_1$ subfactor as a \emph{compact} connected W*-algebra objects in $\cC^{\natural}$ instead of a unitary Frobenius algebra in $\cC$.
Our followup article \cite{1707.02155} provides a simple translation between these types of algebra objects in this setting.

\subsection{Analytic Properties}
\label{sec:AnalyticProperties}

Let $\bfA\in \Vec(\cC)$ be a connected W*-algebra object, and as before, let $L^2(\bfA)\in \Hilb(\cC)$ be the GNS Hilbert space object with respect to the right inner products $\langle\cdot|\cdot\rangle_a$.

\begin{defn}
The \emph{multiplier algebra} of $\bfA$ is the von Neumann algebra
$$
\ell^\infty(\bfA) := \End_{\Hilb(\cC)}(L^2(\bfA)) \cong \bigoplus_{a\in\Irr(\cC)} B(L^{2}(\bfA)(a)).
$$
For an extremal irreducible discrete inclusion $(N\subseteq M, E)$, we define the multiplier algebra of the inclusion to be the von Neumann algebra
$$
\End_{N-N}(L^2(M,\phi)) \cong N'\cap JNJ' = N'\cap \langle M, N\rangle.
$$
\end{defn}

For every ucp morphism $\theta: \bfA \Rightarrow \bfA$, $m_\theta = \bigoplus_{a\in \Irr(\cC)} \theta_a$ extends to a bounded norm one map in $\ell^\infty(\bfA)$ which we call a ucp-multiplier.
Conversely, given a ucp-multiplier $m\in \ell^\infty(\bfA)$, we can recover a canonical ucp morphism $\theta: \bfA \Rightarrow \bfA$ such that $m = m_\theta$.

For an irreducible discrete inclusion $(N\subseteq M, E)$, we define a ucp-multiplier as an $N-N$ bilinear ucp-map $M\to M$ which preserves the canonical state $\phi=\tau\circ E$.

Our equivalence of categories in Theorem \ref{thm:Equivalence} immediately gives us the following corollary.

\begin{cor}
\label{cor:IsomorphismOfMultiplierAlgebras}
Let $\bfA\in \Vec(\cC)$ be a connected \emph{W*}-algebra object.
For any fully faithful bi-involutive  $\bfH: \cC\to \spbfBim(N)$, 
$\ell^\infty(\bfA) \cong  \End_{N-N}(L^2|\bfA|_\bfH)$,
and the equivalence gives a bijective correspondence of ucp-multipliers.
\end{cor}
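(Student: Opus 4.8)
\textbf{Proof proposal for Corollary \ref{cor:IsomorphismOfMultiplierAlgebras}.}

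The plan is to leverage the established equivalence $|\cdot|_\bfH : \ConAlg \to \DisInc_\bfH$ together with the fact that both multiplier algebras admit natural descriptions as endomorphism algebras of Hilbert space objects. First I would recall from Proposition \ref{prop:L2MH decomposition} that $L^2|\bfA|_\bfH \cong \bigoplus_{a\in\Irr(\cC)} \bfH(a)\otimes_a L^2(\bfA)(a)$ as $N-N$ bimodules, and from the background on $\spbfBim(N)$ that since $\bfH$ is fully faithful and bi-involutive, $\End_{N-N}(\bfH(a)\otimes_a \bfH(b))$ is determined by $\Hom_{\bfBim(N)}$-spaces that match those in $\cC$. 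The key computation is that an $N-N$ bilinear endomorphism of $L^2|\bfA|_\bfH$ must respect the isotypic decomposition: because the $\bfH(a)$ for $a\in\Irr(\cC)$ are mutually non-isomorphic irreducible $N-N$ bimodules (as $\bfH$ is fully faithful), Schur's lemma forces any element of $\End_{N-N}(L^2|\bfA|_\bfH)$ to be block-diagonal, acting on the $\bfH(a)\otimes_a L^2(\bfA)(a)$ summand as $\id_{\bfH(a)}\otimes T_a$ for some $T_a \in B(L^2(\bfA)(a))$. This gives a $*$-isomorphism $\End_{N-N}(L^2|\bfA|_\bfH) \cong \bigoplus_{a\in\Irr(\cC)} B(L^2(\bfA)(a)) = \ell^\infty(\bfA)$, using the definition of $\ell^\infty(\bfA)$ and the identification $\End_{\Hilb(\cC)}(L^2(\bfA)) \cong \bigoplus_{a\in\Irr(\cC)} B(L^2(\bfA)(a))$ from \cite[Def.~2.31]{MR3687214}.

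Next I would address the bijective correspondence of ucp-multipliers. A ucp-multiplier on $|\bfA|_\bfH$ is, by definition, an $N-N$ bilinear ucp map $\Psi : |\bfA|_\bfH \to |\bfA|_\bfH$ preserving $|\tau|_\bfH = \tau\circ E_N$. By Theorem \ref{thm:Equivalence} (the equivalence of categories), such a $\Psi$ is precisely $|\theta|_\bfH$ for a unique categorical ucp morphism $\theta : \bfA \Rightarrow \bfA$, and under the construction of Section \ref{sec:EquivalenceOfCategories}, $|\theta|_\bfH$ acts on the algebraic realization by $\eta\otimes f \mapsto \eta\otimes \theta_a(f)$. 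Tracing through the identification $L^2|\bfA|_\bfH \cong \bigoplus_a \bfH(a)\otimes_a L^2(\bfA)(a)$, the induced map $L^2|\theta|_\bfH$ (from Corollary \ref{cor:UCPGivesContraction}) restricts to $\id_{\bfH(a)}\otimes (L^2\theta)_a$ on each summand, so the associated element of $\End_{N-N}(L^2|\bfA|_\bfH)$ corresponds under the isomorphism above exactly to $m_\theta = \bigoplus_a \theta_a \in \ell^\infty(\bfA)$. Conversely, any ucp-multiplier $m\in \ell^\infty(\bfA)$ yields a canonical ucp morphism $\theta$ with $m = m_\theta$ (recalled just before the corollary), whose realization $|\theta|_\bfH$ is then a ucp-multiplier on $|\bfA|_\bfH$ mapping to $m$ under the correspondence. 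Since $|\cdot|_\bfH$ is a full and faithful bijection on morphisms by Theorem \ref{thm:Equivalence}, these assignments are mutually inverse.

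I expect the main obstacle to be the bookkeeping in the second paragraph: carefully checking that the functorial realization $|\theta|_\bfH$, when extended to $L^2|\bfA|_\bfH$ via Corollary \ref{cor:UCPGivesContraction} and then viewed as a normal ucp map on $|\bfA|_\bfH$ by Proposition \ref{prop:ExtendUCP}, induces on the bimodule decomposition precisely the block-diagonal multiplier $\bigoplus_a \theta_a$, with no twisting by the tensorator $\mu^\bfH$ or the normalized inner products \eqref{eq:InnerProductForSimples}. This amounts to unwinding the graphical calculus of Definition \ref{defn:RepresentationsOfRealizations} and Proposition \ref{prop:AlgebraicStinespring} in the special case where the Stinespring dilation is trivial (i.e.\ $\theta$ itself a $*$-morphism composed with a compression), and confirming compatibility with the isomorphism in Proposition \ref{prop:L2MH decomposition}. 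The rest — block-diagonality via Schur's lemma, the identification of $\ell^\infty(\bfA)$ with the direct sum of matrix algebras, and the abstract bijection of morphisms — follows formally from results already in the excerpt.
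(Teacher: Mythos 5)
Your proof is correct and is essentially the argument the paper has in mind: the paper simply asserts the corollary follows "immediately" from Theorem \ref{thm:Equivalence}, and your write-up supplies exactly the implicit steps — the block-diagonal/Schur's lemma identification of $\End_{N-N}(L^2|\bfA|_\bfH)$ with $\bigoplus_a B(L^2(\bfA)(a))$ via Proposition \ref{prop:L2MH decomposition} and full faithfulness of $\bfH$, and the bijection of ucp-multipliers from fullness and faithfulness of $|\cdot|_\bfH$ on endomorphisms. No gaps; the compatibility check that $|\theta|_\bfH$ induces $\bigoplus_a \theta_a$ is exactly the content of Definition \ref{defn:RealizationOfUCPMap} restricted to the dense algebraic realization.
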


\begin{defn}
We say a net of ucp-multipliers $(m_\lambda)\subset \ell^\infty(\bfA)$ converges \emph{pointwise} to $m\in \ell^\infty(\bfA)$ if for every $a\in\Irr(\cC)$, the $(m_\lambda)_a\to m_a$ in the finite dimensional von Neumann algebra $B(L^2(\bfA)(a))$.
Notice that $m_\lambda \to m$ pointwise if and only if $m_\lambda \to m$ SOT when we consider $\ell^\infty(\bfA) =\bigoplus_{a\in\Irr(\cC)} B(L^{2}(\bfA)(a)) \subset B(\bigoplus_{a\in\Irr(\cC)} L^2(\bfA)(a))$.
This identification also tells us the meaning of \emph{finite rank} and \emph{compactness} for ucp-multipliers.
\end{defn}

We gave the following definitions of analytic properties for connected W*-algebra objects in \cite[\S5.5]{MR3687214}.

\begin{defn}
\label{def:analytic} 
Let $\bfA\in \Vec(\cC)$ be a connected W*-algebra object.
Then $\bfA$
\begin{enumerate}[(1)]
\item
is \textit{amenable} if there is a net of finite rank ucp-multipliers in $\ell^{\infty}(\bfA)$ converging to the identity pointwise,
\item
has the \textit{Haagerup property} if there is a net of compact ucp-multipliers in $\ell^{\infty}(\bfA)$ converging to the identity pointwise, and
\item
has \textit{property} (T) if every net of ucp multipliers in $\ell^{\infty}(\bfA)$ which converges to the identity pointwise converges in the operator norm.
\end{enumerate}
Similarly, when $(N\subseteq M, E)$ is an irreducible discrete inclusion, we get analogous definitions of analytic properties for $M$ relative to $N$ using ucp-multipliers.
In the case that $M$ is a finite von Neumann algebra, our definitions are equivalent to the standard definitions from \cite{correspondences,MR2215135} by \cite[Lem.~9.19 and 9.20]{1511.07329}.
\end{defn}

Corollary \ref{cor:IsomorphismOfMultiplierAlgebras} immediately implies the following.

\begin{cor}
An extremal irreducible discrete inclusion $(N\subseteq M, E)$ has one of the three analytic properties in Definition \ref{def:analytic} for $M$ relative to $N$ if and only if the underlying connected \emph{W*}-algebra object $\alg{M}$ has the corresponding analytic property.
\end{cor}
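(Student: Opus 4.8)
The plan is to deduce this entirely from Corollary~\ref{cor:IsomorphismOfMultiplierAlgebras}, which already repackages the equivalence of categories as an isomorphism of multiplier algebras carrying ucp-multipliers to ucp-multipliers. Write $\cC = \cC_{(N\subseteq M,E)}$ and $\alg{M} = \bfA_{(N\subseteq M,E)}$, and take $\bfH:\cC\hookrightarrow\spbfBim(N)$ to be the inclusion functor, which is fully faithful and bi-involutive. By Theorem~\ref{thm:Equivalence} (the natural isomorphism $\delta$) there is a $*$-algebra isomorphism $|\alg{M}|_\bfH\cong M$ fixing $N$ and intertwining the conditional expectations, hence an $N-N$ bimodule isomorphism $L^2|\alg{M}|_\bfH\cong L^2(M,\phi)$; along this identification Corollary~\ref{cor:IsomorphismOfMultiplierAlgebras} supplies a von Neumann algebra isomorphism $\Theta:\ell^\infty(\alg{M})\to\End_{N-N}(L^2(M,\phi))$ and a bijection between ucp-multipliers on the two sides. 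The first step is to record that $\Theta$ is \emph{graded}: by Proposition~\ref{prop:L2MH decomposition} we have $L^2(M,\phi)\cong\bigoplus_{a\in\Irr(\cC)}\bfH(a)\otimes_a\alg{M}(a)$ as $N-N$ bimodules, and since the $\bfH(a)$ for $a\in\Irr(\cC)$ are pairwise non-isomorphic irreducible bifinite $N-N$ bimodules, this produces $\End_{N-N}(L^2(M,\phi))\cong\bigoplus_{a\in\Irr(\cC)}B(\alg{M}(a))$, matching the block decomposition $\ell^\infty(\alg{M})=\bigoplus_{a\in\Irr(\cC)}B(L^2(\alg{M})(a))$ block by block, with $\Theta$ acting as the identity on each $\Irr(\cC)$-block.

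Next I would check that this block-diagonal $\Theta$ transports verbatim every notion entering Definition~\ref{def:analytic}. Finite rank and compactness of a ucp-multiplier are, by the convention recorded after Definition~\ref{def:analytic}, the corresponding properties of the operator viewed inside $B(\bigoplus_a L^2(\alg{M})(a))$ via the block decomposition, so they are preserved because $\Theta$ is block-wise the identity; pointwise convergence of a net of ucp-multipliers is SOT convergence in this same direct-sum representation, hence also preserved; and operator-norm convergence is preserved because $\Theta$ is isometric. Convergence \emph{to the identity} is preserved since $\Theta$ carries the identity multiplier (which corresponds to $\id_{\alg{M}}$) to the identity multiplier (which corresponds to $\id_M$), both arising from the identity natural transformation under Theorem~\ref{thm:Equivalence}. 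Consequently the bijection of ucp-multipliers restricts to bijections of: nets of finite-rank ucp-multipliers converging pointwise to the identity; nets of compact ucp-multipliers converging pointwise to the identity; and arbitrary nets of ucp-multipliers converging pointwise to the identity, with the sub-family of those that also converge in norm matched on both sides. Reading off Definition~\ref{def:analytic}, this says exactly that $\alg{M}$ is amenable (resp.\ has the Haagerup property, resp.\ has property (T)) if and only if $M$ is amenable relative to $N$ (resp.\ has the Haagerup property relative to $N$, resp.\ has property (T) relative to $N$).

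I do not expect a genuine obstacle: the whole argument amounts to the observation that the isomorphism of Corollary~\ref{cor:IsomorphismOfMultiplierAlgebras} respects the $\Irr(\cC)$-grading, which it does by construction via Proposition~\ref{prop:L2MH decomposition}. The one point I would be careful to spell out is that, under the bijection of ucp-multipliers, the categorically defined multiplier $m_\theta = \bigoplus_{a\in\Irr(\cC)}\theta_a$ of a ucp morphism $\theta:\alg{M}\Rightarrow\alg{M}$ is sent to the operator on $L^2(M,\phi)$ implementing the $N-N$ bilinear state-preserving ucp map $|\theta|_\bfH$; this is immediate from the block formula for $|\theta|_\bfH$ in Definition~\ref{defn:RealizationOfUCPMap} together with the contraction $L^2|\theta|_\bfH$ of Corollary~\ref{cor:UCPGivesContraction}, and it guarantees that ``ucp-multiplier'' and all its refinements mean the same thing on both sides of $\Theta$.
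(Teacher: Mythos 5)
Your proposal is correct and follows exactly the route the paper takes: the paper's entire proof is the one-line assertion that Corollary \ref{cor:IsomorphismOfMultiplierAlgebras} ``immediately implies'' the statement, and your writeup simply fills in the routine verifications (the $\Irr(\cC)$-graded block structure of the multiplier-algebra isomorphism via Proposition \ref{prop:L2MH decomposition}, and the preservation of finite rank, compactness, pointwise/SOT convergence, norm convergence, and the identity multiplier) that the authors leave implicit.
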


We now give a concrete application of the above correspondence.

\begin{defn}
In \cite{MR3406647}, the authors defined a ucp-multiplier of $\cC$ to be an $m\in \ell^\infty(\Irr(\cC))$ such that for every $a,b\in \cC$, the map $m_{a,b}:\cC(a\otimes b, a\otimes b) \to \cC(a\otimes b, a\otimes b)$ defined below is a ucp map.
First, we have an isomorphism of vector spaces
\begin{align*}
\bigoplus_{c\in\Irr(\cC)}
\cC(a, a\otimes c ) \otimes \cC(c\otimes b,b)
&\to
\cC(a\otimes b, a\otimes b)
\\
\alpha\otimes \beta
&\mapsto
(\id_a \otimes \beta)
\circ
(\alpha \otimes \id_b).
\end{align*}
We decompose $\psi \in \cC(a\otimes b, a\otimes b)$ as a sum $\sum_{c\in \Irr(\cC)} \psi_c$, where $\psi_c \in \cC(a, a\otimes c ) \otimes \cC(c\otimes b,b)$ for each $c\in \Irr(\cC)$.
We define $m_{a,b}(\psi) = \sum_{c\in \Irr(\cC)} m(c) \psi_c$.
\end{defn}

By \cite[Lem.~3.7]{MR3406647} and \cite[Ex.~5.33]{MR3687214}, the set of ucp multipliers $m$ on $\cC$ is in bijective correspondence with ucp morphisms $\theta: \bfS \Rightarrow \bfS$ where $\bfS\in \Vec(\cC\boxtimes \cC^{\op})$ is the symmetric enveloping algebra object, which is a connected W*-algebra object. 

\begin{defn}
Let $\bfA\in \Vec(\cC)$ be an arbitrary C*-algebra object, and let $m\in \ell^{\infty}(\Irr(\cC))$ be a ucp-multipler. 
We define a natural transformation $\theta^m:\bfA\Rightarrow \bfA$ given by $\theta^m_{c}:\bfA(c)\rightarrow \bfA(c)$ by $f\mapsto m(c) f$ for $c\in \Irr(\cC)$.
\end{defn}

\begin{prop}
If $m\in \ell^{\infty}(\Irr(\cC))$ is a ucp-multiplier, then $\theta^m:\bfA\Rightarrow \bfA$ is a ucp morphism.
\end{prop}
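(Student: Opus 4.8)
The plan is to verify the two conditions in the definition of a ucp morphism: that $\theta^m$ is cp in the categorical sense, and that it is unital. Unitality is immediate, since $1_\cC \in \Irr(\cC)$ and $m$ is a ucp-multiplier of $\cC$, which forces $m(1_\cC) = 1$ (apply the defining condition with $a = b = 1_\cC$, where $\cC(1_\cC,1_\cC) = \bbC$ and the only ucp map $\bbC\to\bbC$ fixing $1$ is the identity); hence $\theta^m_{1_\cC}(i_\bfA) = m(1_\cC) i_\bfA = i_\bfA$. One should also check quickly that $\theta^m$ is a $*$-natural transformation: naturality follows because $m$ is a central multiplier (it acts by the scalar $m(c)$ on the whole isotypic block $\bfA(c)$ for $c\in\Irr(\cC)$ and is extended to all of $\cC$ by naturality of the decomposition into simples), and compatibility with the $*$-structure $j$ follows from the fact that $m(\overline c) = \overline{m(c)} = m(c)$ since $m(c)$ is a positive scalar (ucp-multipliers of $\cC$ take positive real values; this is part of the setup in \cite{MR3406647}).

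The substantive point is categorical complete positivity: for every $c\in\cC$ and $f\in\bfA(c)$, I must show that $\theta^m$ applied to the ``square'' morphism in $\cM_\bfA(c_\bfA, c_\bfA)$ pictured in Lemma \ref{lem:SufficientForUCP} is positive. By Lemma \ref{lem:SufficientForUCP}, it suffices to check positivity of $\theta^m$ on the element
$$
x_f :=
\begin{tikzpicture}[baseline=-.1cm]
    \draw (.5,.5) arc (90:-90:.5cm);
    \draw (1,0) -- (1.5,0);
    \draw (-.5,.5) arc (-90:-180:.5cm);
    \draw (-.5,-.5) arc (90:180:.5cm);
    \filldraw (1,0) circle (.05cm);
    \roundNbox{unshaded}{(0,.5)}{.3}{.2}{.2}{$j_c(f)$}
    \roundNbox{unshaded}{(0,-.5)}{.3}{.2}{.2}{$f$}
    \node at (1.3,.2) {\scriptsize{$\bfA$}};
    \node at (.7,-.7) {\scriptsize{$\bfA$}};
    \node at (.7,.7) {\scriptsize{$\bfA$}};
    \node at (-1.2,-.9) {\scriptsize{$\mathbf{c}$}};
    \node at (-1.2,.9) {\scriptsize{$\mathbf{c}$}};
\end{tikzpicture}
\in \cM_\bfA(c_\bfA, c_\bfA) = \bfA(\overline c\otimes c),
$$
which I would first rewrite, using the multiplication and $*$-structure of $\bfA$, as $\mu^\bfA_{\overline c, c}(j_c(f)\otimes f)$, an explicitly positive element of $\bfA(\overline c\otimes c)$. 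Now the key step: decompose $x_f\in\bfA(\overline c\otimes c)$ into isotypic components along the simple summands $d\in\Irr(\cC)$ appearing in $\overline c\otimes c$, via the natural isomorphism $\bfA(\overline c\otimes c)\cong\bigoplus_{d\in\Irr(\cC)}\cC(d,\overline c\otimes c)\otimes_d\bfA(d)$. Then $\theta^m(x_f)$ is obtained by scaling the $d$-component by $m(d)$. The claim is that this scaling procedure is exactly the map $m_{\overline c, c}$ from the definition of ucp-multiplier of $\cC$ applied in an appropriate sense — or more precisely, that positivity of $m_{a,b}$ for all $a,b$ (with a suitable choice $a = \bar c$-related, $b = c$, using Frobenius reciprocity $\cC(d,\overline c\otimes c)\cong\cC(c, c\otimes d)$) transfers to positivity of $\theta^m$ on $x_f$.

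The cleanest route, which I would adopt, is to route through the symmetric enveloping algebra object $\bfS\in\Vec(\cC\boxtimes\cC^{\op})$: by \cite[Lem.~3.7]{MR3406647} together with \cite[Ex.~5.33]{MR3687214}, the ucp-multiplier $m$ corresponds to a genuine ucp morphism $\theta^\bfS:\bfS\Rightarrow\bfS$, so we already know the scalars $m(d)$ ``come from'' a bona fide completely positive categorical map. Concretely, pick a standard basis $\{\alpha\}$ of $\cC(d,\overline c\otimes c)$ for each $d$ and write $x_f = \sum_d\sum_{\alpha,\beta}\bfA(\alpha^*)[\cdots]\otimes\cdots$; positivity of $x_f$ means the matrix of coefficients indexed by $(d,\alpha)$ is positive in the appropriate C*-algebra, and applying $\theta^m$ multiplies the $(d,\alpha),(d,\beta)$ entry by $m(d)$, i.e.\ performs a block-diagonal Schur multiplication by the positive-semidefinite pattern matrix dictated by $m$ — positivity is then preserved by (the categorical analog of) the Schur product theorem, exactly as encoded by the fact that $m$ is a ucp-multiplier of $\cC$. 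I expect the main obstacle to be purely bookkeeping: matching the index conventions in Lemma \ref{lem:SufficientForUCP}'s criterion (which is phrased for $\cM_\bfA(d_\bfA,d_\bfA)$ with $c = \overline d\otimes d$) against the ucp-multiplier definition's convention (which decomposes $\cC(a\otimes b, a\otimes b)$ along $\cC(a,a\otimes d)\otimes\cC(d\otimes b, b)$), and confirming that the scalars $m(d)$ appear with the same labeling in both — once that dictionary is fixed, positivity is inherited directly from the hypothesis on $m$ with no further analysis needed.
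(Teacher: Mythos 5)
Your overall strategy is in the right ballpark, but the proof is incomplete: the one step that carries all of the content is exactly the step you defer as ``bookkeeping.'' The hypothesis on $m$ is a positivity statement about the maps $m_{a,b}$ on the hom-spaces $\cC(a\otimes b,a\otimes b)$ \emph{inside} $\cC$, whereas the conclusion is a positivity statement about $\theta^m$ acting on $\bfA(\overline{c}\otimes c)$, a different algebra altogether. These are linked only by an explicit identity that you assert (``the claim is that this scaling procedure is exactly the map $m_{\overline{c},c}$ \dots applied in an appropriate sense'') but never establish: namely, that rescaling the isotypic components of an element of $\bfA(\overline{c}\otimes c)$ by the scalars $m(d)$ can be rewritten as inserting a single fixed morphism of $\cC$ into the diagram. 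The paper's proof supplies precisely this bridge, and does so without Lemma \ref{lem:SufficientForUCP}: for an arbitrary $f\in\cM_\bfA(c_\bfA,c_\bfA)\cong\bfA(\overline{c}\otimes c)$ it shows diagrammatically that $\widehat{\theta^m}(f^*\circ f)$ equals the same picture with $m(\psi)$ inserted in the middle, where $\psi=\coev_c\circ\coev_c^*\in\cC(c\otimes\overline{c},c\otimes\overline{c})$; since $\psi\geq 0$ and $m_{c,\overline{c}}$ preserves positivity, $m(\psi)$ factors as $g^*\circ g$ and the whole diagram is manifestly positive. Your reduction via Lemma \ref{lem:SufficientForUCP} to the elements $\mu^\bfA_{\overline{c},c}(j_c(f)\otimes f)$ is legitimate, but it does not exempt you from proving the analogous insertion identity for those elements; until it is written down, ``positivity is inherited directly from the hypothesis on $m$ with no further analysis needed'' is an assertion, not an argument. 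The detour through the symmetric enveloping algebra object $\bfS$ does not help either: knowing that $m$ corresponds to a ucp morphism of $\bfS$ says nothing about $\bfA$ without the same transfer computation.

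Two smaller points. Your justification of compatibility with the $*$-structure is wrong: ucp-multipliers need not take positive, or even real, values --- already for $\cC=\fdHilb(\bbZ)$ the characters $n\mapsto e^{in\theta}$ are ucp-multipliers. What you actually need, and what does hold, is $m(\overline{c})=\overline{m(c)}$ (the categorical analogue of $\varphi(g^{-1})=\overline{\varphi(g)}$ for positive definite functions). Also, the Schur-product framing is a reasonable intuition but is not the mechanism here; the paper's Schur product lemma is used elsewhere (for boundedness of realized representations), while the positivity in this proposition comes from the single positive morphism $m(\coev_c\circ\coev_c^*)$.
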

\begin{proof} 
Let $c\in\cC$. 
We must show that $\theta^m_{\overline{c}\otimes c}$ is positive.
To do so, we work in the cyclic $\cC$-module W*-category $\cM_\bfA$.
For $f\in \bfA(\overline{c}\otimes c)$, we have
$$
\widehat{\theta^m}_{\overline{c}\otimes c}(f^*\circ f)
=
\widehat{\theta^m}_{\overline{c}\otimes c}
\left(
\,\,
\begin{tikzpicture}[baseline = .4cm]
    \draw (.3,1) arc (90:-90:.5cm);
    \draw (.8,.5) -- (1.2,.5);
    \draw (0,0) -- (0,1);
    \draw (0,1.3) arc (0:180:.3cm) -- (-.6,.9) arc (0:-180:.3cm) -- (-1.2,1.7);
    \draw (0,-.3) arc (0:-180:.3cm) -- (-.6,.1) arc (0:180:.3cm) -- (-1.2,-.7);
    \filldraw (.8,.5) circle (.05cm);
    \roundNbox{dashed}{(-.9,.5)}{.4}{.1}{.1}{}
    \roundNbox{unshaded}{(0,0)}{.3}{0}{0}{$f$}
    \roundNbox{unshaded}{(0,1)}{.3}{0}{0}{$f^*$}
    \node at (-1.4,1.5) {\scriptsize{$\mathbf{c}$}};
    \node at (-.8,1.2) {\scriptsize{$\overline{\mathbf{c}}$}};
    \node at (.2,1.5) {\scriptsize{$\mathbf{c}$}};
    \node at (.2,.5) {\scriptsize{$\mathbf{c}$}};
    \node at (1,.7) {\scriptsize{$\bfA$}};
    \node at (.2,-.5) {\scriptsize{$\mathbf{c}$}};
    \node at (-.8,-.2) {\scriptsize{$\overline{\mathbf{c}}$}};
    \node at (-1.4,-.5) {\scriptsize{$\mathbf{c}$}};
\end{tikzpicture}
\right)
=
\begin{tikzpicture}[baseline = .4cm]
    \draw (.3,1) arc (90:-90:.5cm);
    \draw (.8,.5) -- (1.2,.5);
    \draw (0,0) -- (0,1);
    \draw (0,1.3) arc (0:180:.3cm) -- (-.6,.5);
    \draw (0,-.3) arc (0:-180:.3cm) -- (-.6,.5);
    \draw (-1.2,-.7) -- (-1.2,1.7);
    \filldraw (.8,.5) circle (.05cm);
    \roundNbox{unshaded}{(-.9,.5)}{.3}{.2}{.2}{$m(\psi)$}
    \roundNbox{unshaded}{(0,0)}{.3}{0}{0}{$f$}
    \roundNbox{unshaded}{(0,1)}{.3}{0}{0}{$f^*$}
    \node at (-1.4,1.5) {\scriptsize{$\mathbf{c}$}};
    \node at (-.8,1.2) {\scriptsize{$\overline{\mathbf{c}}$}};
    \node at (.2,1.5) {\scriptsize{$\mathbf{c}$}};
    \node at (.2,.5) {\scriptsize{$\mathbf{c}$}};
    \node at (1,.7) {\scriptsize{$\bfA$}};
    \node at (.2,-.5) {\scriptsize{$\mathbf{c}$}};
    \node at (-.8,-.2) {\scriptsize{$\overline{\mathbf{c}}$}};
    \node at (-1.4,-.5) {\scriptsize{$\mathbf{c}$}};
\end{tikzpicture}
$$
where $\psi = \coev_c \circ \coev_c^* \in \cC(c\otimes \overline{c}, c\otimes \overline{c})$.
Since $\psi\geq 0$ and $m\in \ell^\infty(\cC)$ is a ucp-multiplier, $m(\psi) \geq 0$, which immediately implies the right hand side is positive.
\end{proof}

\begin{cor} 
If $\cC$ is amenable or has the Haagerup property, then every connected \emph{W*}-algebra object in $\Vec(\cC)$ is amenable or has the Haagerup property respectively. 
\end{cor}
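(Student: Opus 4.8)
The plan is to push the approximating net of ucp-multipliers of $\cC$ forward to the connected \emph{W*}-algebra object $\bfA\in\Vec(\cC)$ via the assignment $m\mapsto\theta^m$ of the preceding Proposition, and then to observe that the associated ucp-multipliers of $\bfA$ automatically inherit the relevant finiteness and pointwise convergence.

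First I would record the characterization of the analytic properties of $\cC$ in the convenient form: by \cite{MR3406647}, $\cC$ is amenable precisely when there is a net $(m_\lambda)$ of \emph{finitely supported} ucp-multipliers of $\cC$ (so $m_\lambda\in\ell^\infty(\Irr(\cC))$ with $\supp(m_\lambda)$ finite) converging pointwise to the constant multiplier $1$, and $\cC$ has the Haagerup property precisely when such a net exists with each $m_\lambda\in c_0(\Irr(\cC))$ instead. (Rescaling by $m_\lambda(1_\cC)>0$, which is harmless since $m_\lambda(1_\cC)\to 1$, we may assume each $m_\lambda$ is unital, so that $\theta^{m_\lambda}$ is genuinely a ucp morphism.) Given such a net, the preceding Proposition yields ucp morphisms $\theta^{m_\lambda}:\bfA\Rightarrow\bfA$, and as in \cite[\S5.5]{MR3687214} each one determines a ucp-multiplier $m_{\theta^{m_\lambda}}=\bigoplus_{a\in\Irr(\cC)}(\theta^{m_\lambda})_a\in\ell^\infty(\bfA)=\bigoplus_{a\in\Irr(\cC)}B(L^2(\bfA)(a))$.

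The heart of the argument is the immediate computation that, by the very definition of $\theta^m$, the block $(\theta^{m_\lambda})_a:\bfA(a)\to\bfA(a)$ is multiplication by the scalar $m_\lambda(a)$ for each $a\in\Irr(\cC)$; equivalently, $m_{\theta^{m_\lambda}}$ acts on the summand $L^2(\bfA)(a)$ as $m_\lambda(a)\cdot\id_{L^2(\bfA)(a)}$. Since $\bfA$ is connected, Proposition \ref{prop:DimensionBound} gives $\dim L^2(\bfA)(a)=\dim\bfA(a)\le d_a^2<\infty$. Hence when $m_\lambda$ has finite support, $m_{\theta^{m_\lambda}}$ is a finite direct sum of finite-dimensional blocks, so it is finite rank; and when $m_\lambda\in c_0(\Irr(\cC))$, truncating $m_{\theta^{m_\lambda}}$ to the finite set $\{a:|m_\lambda(a)|\ge\varepsilon\}$ produces a finite-rank operator within norm $\varepsilon$ of $m_{\theta^{m_\lambda}}$, so $m_{\theta^{m_\lambda}}$ is compact. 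Finally, $m_\lambda(a)\to 1$ for each fixed $a\in\Irr(\cC)$ is exactly the statement that $(\theta^{m_\lambda})_a\to\id_{L^2(\bfA)(a)}$ in $B(L^2(\bfA)(a))$, i.e.\ that $m_{\theta^{m_\lambda}}\to\id$ pointwise in the sense of Definition \ref{def:analytic}. Therefore $(m_{\theta^{m_\lambda}})$ witnesses amenability, respectively the Haagerup property, of $\bfA$, proving the corollary.

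I do not anticipate a genuine obstacle: once the preceding Proposition is available the rest is bookkeeping. The only points deserving care are (i) citing the correct equivalent form of amenability and the Haagerup property for $\cC$ from \cite{MR3406647} (in terms of finitely supported, resp.\ $c_0$, ucp-multipliers tending pointwise to $1$), and (ii) the use of Proposition \ref{prop:DimensionBound} — this is where connectedness of $\bfA$ is essential, since it is precisely what turns ``$m_\lambda$ finitely supported on $\Irr(\cC)$'' into ``$m_{\theta^{m_\lambda}}$ finite rank on $\bigoplus_{a\in\Irr(\cC)}L^2(\bfA)(a)$''.
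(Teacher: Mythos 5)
Your proof is correct and follows essentially the same route as the paper: push the approximating net of ucp-multipliers of $\cC$ forward via $m\mapsto\theta^m$, observe that each block acts by the scalar $m_\lambda(a)$ on the finite-dimensional space $\bfA(a)$, and conclude that finite support (resp.\ $c_0$) and pointwise convergence are inherited. Your additional details (the explicit truncation argument for compactness and the normalization remark) are harmless elaborations of what the paper leaves implicit.
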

\begin{proof}
Let $(m_{\lambda})\in \ell^\infty(\cC)$ be a net of ucp-multipliers converging to $\id$ pointwise, and let $\bfA\in \Vec(\cC)$ be a connected W*-algebra object.  
Notice the $m_\lambda$ act by scalar multiplication on each component of $\ell^\infty(\bfA)$.
If $m_{\lambda}$ is finitely supported or $c_{0}$, then the corresponding map $\theta^{m_\lambda} \in \ell^\infty(\bfA)$ is as well, since the spaces $\bfA(c)$ are finite dimensional for all $c\in\Irr(\cC)$.
Finally, $m_\lambda \to m$ pointwise implies $\theta^{m_\lambda} \to \id$ pointwise.
\end{proof}

We can thus determine many analytic properties of an extremal irreducible discrete subfactor by looking at its support category.

\begin{cor} 
Let $(N\subseteq M, E)$ be an extremal irreducible discrete inclusion, and let $\cC$ be the rigid \emph{C*}-tensor category generated by $L^2(M,\phi)$.
If $\cC$ is amenable or has the Haagerup property, then $M$ has the corresponding property relative to $N$.  
\end{cor}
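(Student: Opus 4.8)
The plan is to reduce the statement to the two corollaries immediately preceding it by passing through the equivalence of categories, so the argument is essentially bookkeeping. First I would verify that the hypotheses match the setup of Notation \ref{nota:RepresentationAndConnectedAlgebra}. Since $(N\subseteq M, E)$ is extremal, $L^2(M,\phi)$ is a spherical $N-N$ bimodule; as sphericality is preserved under subobjects, direct sums, conjugates, and fusion, the rigid C*-tensor category $\cC$ of bifinite $N-N$ bimodules generated by $L^2(M,\phi)$ is a full bi-involutive subcategory of $\spbfBim(N)$. Hence the inclusion functor $\bfH : \cC \hookrightarrow \spbfBim(N)$ is a fully faithful bi-involutive representation in the required sense, and by construction $L^2(M,\phi)\in \Hilb(\bfH(\cC))$, so that $(N\subseteq M, E)\in \DisInc_\bfH$.

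Next I would invoke the equivalence $\DisInc_\bfH\cong \ConAlg$ of Theorem \ref{thm:Equivalence}: the underlying connected W*-algebra object $\alg{M}\in \Vec(\cC)$ of Definition \ref{defn:UnderlyingAlgebraObject} satisfies $|\alg{M}|_\bfH\cong M$ as an inclusion over $N$ (this is also the content of the reconstruction theorem of Section \ref{sec:StandardInvariants}). Assuming $\cC$ is amenable (resp. has the Haagerup property), the corollary asserting that these properties of $\cC$ are inherited by every connected W*-algebra object in $\Vec(\cC)$ shows that $\alg{M}$ is amenable (resp. has the Haagerup property) in the sense of Definition \ref{def:analytic}. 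Finally, the corollary identifying the analytic properties of the inclusion $(N\subseteq M,E)$ relative to $N$ with those of $\alg{M}$ --- which rests on the isomorphism of multiplier algebras and the bijective correspondence of ucp-multipliers in Corollary \ref{cor:IsomorphismOfMultiplierAlgebras} --- yields the desired property for $M$ relative to $N$.

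The only point requiring genuine care, rather than bookkeeping, is the first step: checking that the category $\cC$ generated by $L^2(M,\phi)$ actually sits inside $\spbfBim(N)$, which is precisely where extremality of the inclusion is used. Once that is in hand, the translation of a net of finite-rank (resp. compact) ucp-multipliers of $\cC$ into $\theta^{m_\lambda}\in \ell^\infty(\alg{M})$, and then via Corollary \ref{cor:IsomorphismOfMultiplierAlgebras} into a net of $N-N$ bilinear, $\phi$-preserving ucp maps on $M$ converging pointwise to the identity, is automatic --- each space $\alg{M}(c)$ is finite dimensional, so finite-support (resp. $c_0$) behaviour of $m_\lambda$ is preserved, and pointwise convergence of multipliers is unchanged. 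Hence no essentially new estimate is needed beyond those already established.
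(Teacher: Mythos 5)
Your proposal is correct and follows exactly the route the paper intends: the corollary is an immediate combination of the two preceding corollaries (every connected W*-algebra object in $\Vec(\cC)$ inherits amenability/Haagerup from $\cC$, and the inclusion has an analytic property relative to $N$ iff its underlying algebra object $\alg{M}$ does, via Corollary \ref{cor:IsomorphismOfMultiplierAlgebras} and the reconstruction $M\cong|\alg{M}|_\bfH$). Your explicit check that extremality places $\cC$ inside $\spbfBim(N)$, so that the identity inclusion serves as the fully faithful bi-involutive representation $\bfH$ and the equivalence of categories applies, is a correct spelling out of a point the paper leaves tacit.
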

In particular, by \cite{MR1278111}, $\cT\cL\cJ_\bullet$ is amenable for $\delta= 2$, and by \cite{MR3406647}, $\cT\cL\cJ_\bullet$ has the Haagerup property for $\delta\geq 2$.
Thus for any planar algebra $\cP_\bullet$ with parameter $\delta\geq 2$ (respectively $=2$), $M(\cP_\bullet)$ has the Haagerup property (respectively is amenable) relative to $M(\cT\cL\cJ_\bullet)$.

\subsection{Galois correspondence}
\label{sec:Galois}

In the spirit of \cite{MR1622812,MR2561199}, our main Theorem \ref{thm:Equivalence} also gives us a Galois correspondence between intermediate subfactors $N\subseteq P\subseteq M$ for an extremal irreducible discrete inclusion $(N\subseteq M, E)$ and intermediate connected W*-algebra objects $\mathbf{1}\subseteq \bfB \subseteq \bfA$ in $\Vec(\cC)$.
One should compare with \cite[Lem.~3.8]{MR1622812} in the setting of inclusions of infinite factors.

\begin{cor}
The equivalence of categories in Theorem \ref{thm:Equivalence} restricts to an equivalence between the following categories:
\begin{itemize}
\item
The subcategory of $\ConAlg$ whose morphisms are unital $*$-algebra natural transformations, and
\item
the subcategory of $\DisInc_\bfH$ whose morphisms are normal unital $N-N$ biinear $*$-homomorphisms which preserve the canonical state.
\end{itemize}
\end{cor}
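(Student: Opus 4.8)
The plan is to show that the functors $|\cdot|_\bfH$ and $\alg{\cdot}$ from Theorem~\ref{thm:Equivalence} carry unital $*$-algebra natural transformations to normal unital $N-N$ bilinear $*$-homomorphisms preserving the canonical state, and vice versa. Since the equivalence is already established, and the objects are unchanged, it remains only to verify that the two classes of morphisms correspond under $\theta \mapsto |\theta|_\bfH$ and $\psi \mapsto \alg{\psi}$. In other words, I need: (i) if $\theta: \bfA \Rightarrow \bfB$ is a unital $*$-algebra natural transformation (not merely ucp), then $|\theta|_\bfH: |\bfA|_\bfH \to |\bfB|_\bfH$ is a unital $*$-homomorphism; and (ii) conversely, if $\psi: M \to P$ is a normal unital $N-N$ bilinear $*$-homomorphism preserving the canonical state, then $\alg{\psi}: \alg{M} \Rightarrow \alg{P}$ is a unital $*$-algebra natural transformation.

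For direction (i), the key observation is already recorded in the Remark following Proposition~\ref{prop:ExtendUCP}: ``if $\theta: \bfA \Rightarrow \bfB$ is a unital $*$-algebra morphism, then $|\theta|_\bfH$ is an $N-N$ bilinear unital $*$-algebra homomorphism.'' This follows by combining parts (1) and (2) of Remarks~\ref{rems:ExtendUCPMaps}: part (2) says that if the Stinespring map $T = \Ad(|v|_\bfH)$ is a $*$-homomorphism then so is the extension, and when $\theta$ is a $*$-homomorphism we may take the Stinespring dilation with $v$ unitary, so $T$ is a $*$-homomorphism. More directly, on the algebraic realization $|\theta|_\bfH(\eta \otimes f) = \eta \otimes \theta_a(f)$ is manifestly multiplicative and $*$-preserving because $\theta$ commutes with all the structure maps $\mu^\bfA, j^\bfA$ used to define the multiplication and involution on $|\bfA|_\bfH^\circ$ in Definition~\ref{defn:AlgebraRealization}; the extension to the von Neumann completion preserves these properties by the uniqueness and normality statements in Proposition~\ref{prop:ExtendUCP} together with Remarks~\ref{rems:ExtendUCPMaps}(2). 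State preservation is Lemma~\ref{lem:StatePreserving}, and unitality is clear since $\theta_{1_\cC}(i_\bfA) = i_\bfB$.

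For direction (ii), suppose $\psi: M \to P$ is a normal unital $N-N$ bilinear $*$-homomorphism with $\phi_P \circ \psi = \phi_M$. By Lemma~\ref{lem:L2UCP}, $\psi$ extends to a contractive $N-N$ bilinear map $L^2\psi: L^2(M,\phi_M) \to L^2(P,\phi_P)$; since $\psi$ is a state-preserving $*$-homomorphism, $L^2\psi$ is in fact an isometry intertwining both the left and right actions appropriately. Then $\alg{\psi}_c(f) = L^2\psi \circ f$ for $f \in \Hom_{N-N}(\bfH(c), L^2(M,\phi_M))$. To see $\alg{\psi}$ is a $*$-algebra natural transformation I would check multiplicativity using the explicit formula from Lemma~\ref{lem:InverseFunctorMultiplication}: $\mu^{\alg{M}}_{a,b}(f \otimes g) = \mu_M \circ (f \boxtimes g) \circ (\mu^\bfH_{a,b})^{-1}$, so that $\alg{\psi}_{a\otimes b}(\mu^{\alg{M}}_{a,b}(f \otimes g)) = L^2\psi \circ \mu_M \circ (f \boxtimes g) \circ (\mu^\bfH_{a,b})^{-1}$, and since $\psi$ is multiplicative we have $L^2\psi \circ \mu_M = \mu_P \circ (L^2\psi \boxtimes L^2\psi)$ on the appropriate dense subspace of bounded vectors (using Proposition~\ref{prop:MultiplicationMapBounded}), which rearranges to $\mu^{\alg{P}}_{a,b}(\alg{\psi}_a(f) \otimes \alg{\psi}_b(g))$. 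Compatibility with the $*$-structure follows similarly from the description of $j^{\alg{M}}_c(f) = S_\phi f$ in Lemma~\ref{lem:InverseFunctorStar}, since $\psi$ being a $*$-homomorphism gives $\psi(f(\zeta)^*) = \psi(f(\zeta))^* = (L^2\psi \circ f)(\zeta)^*$, and unitality is immediate from $\psi(1_M) = 1_P$.

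The main obstacle is the careful bookkeeping in direction (ii): namely, verifying that $L^2\psi$ intertwines the multiplication maps $\mu_M$ and $\mu_P$ at the level of the (unbounded) Connes fusion, rather than merely on bounded vectors. The subtlety is that $\mu_M: \bfH(a) \boxtimes_N L^2(M,\phi) \to L^2(M,\phi)$ is only densely defined a priori (Proposition~\ref{prop:MultiplicationMapBounded} gives boundedness), and one must ensure $L^2\psi$ maps $M$-bounded vectors in $L^2(M,\phi_M)$ to $M$-bounded vectors (really $P$-bounded) in $L^2(P,\phi_P)$ compatibly. This is handled by Remark~\ref{rem:IdentifyQN}: bounded vectors in $L^2(M,\phi)$ lying in bifinite $N-N$ sub-bimodules are exactly $\cQ\cN_M(N)\Omega$, and a unital $*$-homomorphism $\psi$ carries the quasi-normalizer of $N$ in $M$ into the quasi-normalizer of $N$ in $P$ (if $xN \subseteq \sum Nx_i$ then $\psi(x)N = \psi(xN) \subseteq \sum N\psi(x_i)$), so $L^2\psi$ restricts appropriately and the intertwining identity holds on a dense domain, hence everywhere by continuity. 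Once this is in place, everything else is a routine diagram chase, and I would conclude by noting that naturality of $\alg{\psi}$ in $c$ is inherited verbatim from the functoriality already established.
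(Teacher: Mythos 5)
Your proposal is correct and follows the same route as the paper: direction (i) is exactly the paper's appeal to Remarks \ref{rems:ExtendUCPMaps} (combined with Lemma \ref{lem:StatePreserving}), and direction (ii) carries out, via Lemmas \ref{lem:InverseFunctorMultiplication} and \ref{lem:InverseFunctorStar} and the quasi-normalizer identification of Remark \ref{rem:IdentifyQN}, the verification the paper dismisses as ``straightforward to show.'' Your treatment of the bounded-vector bookkeeping (a unital $N$-$N$ bilinear $*$-homomorphism maps $\cQ\cN_M(N)$ into $\cQ\cN_P(N)$) is a valid and welcome filling-in of that gap.
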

\begin{proof}
Given a $*$-algebra natural transformation $\theta: \bfA \Rightarrow \bfB$, $|\theta|_\bfH^\circ$ is a unital $*$-algebra map $N-N$ biinear $*$-homomorphism which preserves the canonical state.
Thus by Remarks \ref{rems:ExtendUCPMaps}, $|\theta|_\bfH : |\bfA|_\bfH \to |\bfB|_\bfH$ is a normal $*$-homomorphism.
Conversely, if $\psi : M \to P$ is such a $*$-homomorphism, it is straightforward to show that $\alg{\psi}: \alg{M} \Rightarrow \alg{P}$ is a $*$-algebra natural transformation. 
\end{proof}

\bibliographystyle{amsalpha}
{\footnotesize{
\bibliography{../../../Documents/research/penneys/bibliography}

\providecommand{\bysame}{\leavevmode\hbox to3em{\hrulefill}\thinspace}
\providecommand{\MR}{\relax\ifhmode\unskip\space\fi MR }
\providecommand{\MRhref}[2]{%
  \href{http://www.ams.org/mathscinet-getitem?mr=#1}{#2}
}
\providecommand{\href}[2]{#2}
\begin{thebibliography}{BMPS12}

\bibitem[AMP15]{1509.00038}
Narjess Afzaly, Scott Morrison, and David Penneys, \emph{The classification of
  subfactors with index at most $5\frac{1}{4}$}, 2015, \arXiv{1509.00038}.

\bibitem[AP17]{ClaireSorinII_1}
Claire Anantharaman and Sorin Popa, \emph{An introduction to {${\rm II}_1$}
  factors}, 2017, preprint available at
  \url{http://www.math.ucla.edu/~popa/books.html}.

\bibitem[BDH14]{MR3342166}
Arthur Bartels, Christopher~L. Douglas, and Andr{\'e} Henriques,
  \emph{Dualizability and index of subfactors}, Quantum Topol. \textbf{5}
  (2014), no.~3, 289--345, \mathscinet{MR3342166} \doi{10.4171/QT/53}
  \arXiv{1110.5671}. \MR{3342166}

\bibitem[BHP12]{MR3405915}
Arnaud Brothier, Michael Hartglass, and David Penneys, \emph{Rigid {$C\sp
  *$}-tensor categories of bimodules over interpolated free group factors}, J.
  Math. Phys. \textbf{53} (2012), no.~12, 123525, 43, \mathscinet{MR3405915}
  \doi{10.1063/1.4769178} \arxiv{1208.5505}. \MR{3405915}

\bibitem[Bis97]{MR1424954}
Dietmar Bisch, \emph{Bimodules, higher relative commutants and the fusion
  algebra associated to a subfactor}, Operator algebras and their applications
  (Waterloo, ON, 1994/1995), 13-63, Fields Inst. Commun., 13, Amer. Math. Soc.,
  Providence, RI, 1997, \mathscinet{MR1424954}, \googlebooks{_InIRTO8Y7gC}.

\bibitem[BMPS12]{MR2979509}
Stephen Bigelow, Scott Morrison, Emily Peters, and Noah Snyder,
  \emph{Constructing the extended {H}aagerup planar algebra}, Acta Math.
  \textbf{209} (2012), no.~1, 29--82, \mathscinet{MR2979509},
  \arXiv{0909.4099}, \doi{10.1007/s11511-012-0081-7}. \MR{2979509}

\bibitem[Con73]{MR0341115}
Alain Connes, \emph{Une classification des facteurs de type {${\rm III}$}},
  Ann. Sci. \'Ecole Norm. Sup. (4) \textbf{6} (1973), 133--252,
  \mathscinet{MR0341115}. \MR{0341115 (49 \#5865)}

\bibitem[Con74]{MR0358374}
A.~Connes, \emph{Almost periodic states and factors of type {${\rm
  III}\sb{1}$}}, J. Functional Analysis \textbf{16} (1974), 415--445,
  \mathscinet{MR0358374}. \MR{0358374}

\bibitem[DCY15]{MR3420332}
Kenny De~Commer and Makoto Yamashita, \emph{Tannaka-{K}re\u\i n duality for
  compact quantum homogeneous spaces {II}. {C}lassification of quantum
  homogeneous spaces for quantum {$\rm SU(2)$}}, J. Reine Angew. Math.
  \textbf{708} (2015), 143--171, \mathscinet{MR3420332}
  \doi{10.1515/crelle-2013-0074}. \MR{3420332}

\bibitem[GJS10]{MR2732052}
Alice Guionnet, Vaughan F.~R. Jones, and Dimitri Shlyakhtenko, \emph{Random
  matrices, free probability, planar algebras and subfactors}, Quanta of maths,
  Clay Math. Proc., vol.~11, Amer. Math. Soc., Providence, RI, 2010,
  \mathscinet{MR2732052}, \arXiv{0712.2904v2}, pp.~201--239.

\bibitem[GJS11]{MR2807103}
\bysame, \emph{A semi-finite algebra associated to a subfactor planar algebra},
  J. Funct. Anal. \textbf{261} (2011), no.~5, 1345--1360, \arXiv{0911.4728},
  \mathscinet{MR2807103}, \doi{10.1016/j.jfa.2011.05.004}. \MR{2807103
  (2012j:46091)}

\bibitem[Har13]{MR3110503}
Michael Hartglass, \emph{Free product von {N}eumann algebras associated to
  graphs, and {G}uionnet, {J}ones, {S}hlyakhtenko subfactors in infinite
  depth}, J. Funct. Anal. \textbf{265} (2013), no.~12, 3305--3324,
  \mathscinet{MR3110503}, \doi{10.1016/j.jfa.2013.09.011}, \arxiv{1208.2933}.
  \MR{3110503}

\bibitem[HO89]{MR1055223}
Richard~H. Herman and Adrian Ocneanu, \emph{Index theory and {G}alois theory
  for infinite index inclusions of factors}, C. R. Acad. Sci. Paris S\'er. I
  Math. \textbf{309} (1989), no.~17, 923--927, \mathscinet{MR1055223}.

\bibitem[HP17a]{MR3624399}
Michael Hartglass and David Penneys, \emph{{$C^*$}-algebras from planar
  algebras {I}: {C}anonical {$C^*$}-algebras associated to a planar algebra},
  Trans. Amer. Math. Soc. \textbf{369} (2017), no.~6, 3977--4019,
  \mathscinet{MR3624399} \doi{10.1090/tran/6781} \arXiv{1401.2485}.
  \MR{3624399}

\bibitem[HP17b]{MR3663592}
Andr\'e Henriques and David Penneys, \emph{Bicommutant categories from fusion
  categories}, Selecta Math. (N.S.) \textbf{23} (2017), no.~3, 1669--1708,
  \mathscinet{MR3663592} \doi{10.1007/s00029-016-0251-0} \arxiv{1511.05226}.
  \MR{3663592}

\bibitem[ILP98]{MR1622812}
Masaki Izumi, Roberto Longo, and Sorin Popa, \emph{A {G}alois correspondence
  for compact groups of automorphisms of von {N}eumann algebras with a
  generalization to {K}ac algebras}, J. Funct. Anal. \textbf{155} (1998),
  no.~1, 25--63, \mathscinet{MR1622812}.

\bibitem[JMS14]{MR3166042}
Vaughan F.~R. Jones, Scott Morrison, and Noah Snyder, \emph{The classification
  of subfactors of index at most 5}, Bull. Amer. Math. Soc. (N.S.) \textbf{51}
  (2014), no.~2, 277--327, \mathscinet{MR3166042}, \arxiv{1304.6141},
  \doi{10.1090/S0273-0979-2013-01442-3}. \MR{3166042}

\bibitem[Jon83]{MR0696688}
Vaughan F.~R. Jones, \emph{Index for subfactors}, Invent. Math. \textbf{72}
  (1983), no.~1, 1--25, \mathscinet{MR696688}, \doi{10.1007/BF01389127}.

\bibitem[Jon99]{math.QA/9909027}
\bysame, \emph{Planar algebras {I}}, 1999, \arXiv{math.QA/9909027}.

\bibitem[Jon00]{MR1865703}
\bysame, \emph{The planar algebra of a bipartite graph}, Knots in {H}ellas '98
  ({D}elphi), Ser. Knots Everything, vol.~24, World Sci. Publ., River Edge, NJ,
  2000, \mathscinet{MR1865703}, pp.~94--117. \MR{1865703}

\bibitem[Jon08]{MR2501843}
\bysame, \emph{Two subfactors and the algebraic decomposition of bimodules over
  {$\rm II\sb 1$} factors}, Acta Math. Vietnam. \textbf{33} (2008), no.~3,
  209--218, \mathscinet{MR2501843}, available at
  \url{http://math.berkeley.edu/~vfr/algebraic.pdf}. \MR{2501843 (2010f:46095)}

\bibitem[JP11]{MR2812459}
Vaughan F.~R. Jones and David Penneys, \emph{The embedding theorem for finite
  depth subfactor planar algebras}, Quantum Topol. \textbf{2} (2011), no.~3,
  301--337, \arXiv{1007.3173}, \mathscinet{MR2812459}, \doi{10.4171/QT/23}.

\bibitem[JP17a]{MR3687214}
Corey Jones and David Penneys, \emph{Operator algebras in rigid {$\rm
  C^*$}-tensor categories}, Comm. Math. Phys. \textbf{355} (2017), no.~3,
  1121--1188, \mathscinet{MR3687214} \doi{10.1007/s00220-017-2964-0}
  \arxiv{1611.04620}. \MR{3687214}

\bibitem[JP17b]{1707.02155}
Corey Jones and David Penneys, \emph{Q-systems and compact {W}*-algebra
  objects}, 2017, \arxiv{1707.02155}.

\bibitem[JSW10]{MR2645882}
Vaughan Jones, Dimitri Shlyakhtenko, and Kevin Walker, \emph{An orthogonal
  approach to the subfactor of a planar algebra}, Pacific J. Math. \textbf{246}
  (2010), no.~1, 187--197, \mathscinet{MR2645882},
  \doi{10.2140/pjm.2010.246.187}, \arxiv{0807.4146}. \MR{2645882 (2011i:46075)}

\bibitem[Kos86]{MR829381}
Hideki Kosaki, \emph{Extension of {J}ones' theory on index to arbitrary
  factors}, J. Funct. Anal. \textbf{66} (1986), no.~1, 123--140,
  \mathscinet{MR829381} \doi{10.1016/0022-1236(86)90085-6}. \MR{829381
  (87g:46093)}

\bibitem[Lon89]{MR1027496}
Roberto Longo, \emph{Index of subfactors and statistics of quantum fields.
  {I}}, Comm. Math. Phys. \textbf{126} (1989), no.~2, 217--247,
  \mathscinet{MR1027496}.

\bibitem[LP15]{1507.04794}
Zhengwei Liu and David Penneys, \emph{The generator conjecture for {$3^G$}
  subfactor planar algebras}, 2015, \arXiv{1507.04794}.

\bibitem[LR97]{MR1444286}
R.~Longo and J.~E. Roberts, \emph{A theory of dimension}, $K$-Theory
  \textbf{11} (1997), no.~2, 103--159, \mathscinet{MR1444286}
  \doi{10.1023/A:1007714415067}. \MR{1444286}

\bibitem[MPS10]{MR2559686}
Scott Morrison, Emily Peters, and Noah Snyder, \emph{Skein theory for the
  {$D_{2n}$} planar algebras}, J. Pure Appl. Algebra \textbf{214} (2010),
  no.~2, 117--139, \arXiv{0808.0764} \mathscinet{MR2559686}
  \doi{10.1016/j.jpaa.2009.04.010}. \MR{MR2559686}

\bibitem[M{\"u}g03]{MR1966524}
Michael M{\"u}ger, \emph{From subfactors to categories and topology. {I}.
  {F}robenius algebras in and {M}orita equivalence of tensor categories}, J.
  Pure Appl. Algebra \textbf{180} (2003), no.~1-2, 81--157,
  \mathscinet{MR1966524} \doi{10.1016/S0022-4049(02)00247-5}
  \arXiv{math.CT/0111204}.

\bibitem[MW10]{gpa}
Scott Morrison and Kevin Walker, \emph{The graph planar algebra embedding
  theorem}, 2010, preprint available at
  \href{http://tqft.net/gpa}{tqft.net/gpa}.

\bibitem[Nel16]{1604.03900}
Brent Nelson, \emph{On finite free {F}isher information for eigenvectors of a
  modular operator}, 2016, \arxiv{1604.03900}.

\bibitem[NT13]{MR3204665}
Sergey Neshveyev and Lars Tuset, \emph{Compact quantum groups and their
  representation categories}, Cours Sp\'ecialis\'es [Specialized Courses],
  vol.~20, Soci\'et\'e Math\'ematique de France, Paris, 2013,
  \mathscinet{MR3204665}. \MR{3204665}

\bibitem[NY16]{MR3509018}
Sergey Neshveyev and Makoto Yamashita, \emph{Drinfeld center and representation
  theory for monoidal categories}, Comm. Math. Phys. \textbf{345} (2016),
  no.~1, 385--434, \mathscinet{MR3509018} \doi{10.1007/s00220-016-2642-7}
  \arxiv{1501.07390}. \MR{3509018}

\bibitem[Ocn88]{MR996454}
Adrian Ocneanu, \emph{Quantized groups, string algebras and {G}alois theory for
  algebras}, Operator algebras and applications, Vol.\ 2, London Math. Soc.
  Lecture Note Ser., vol. 136, Cambridge Univ. Press, Cambridge, 1988,
  \mathscinet{MR996454}, pp.~119--172.

\bibitem[Ost03]{MR1976233}
Viktor Ostrik, \emph{Module categories over the {D}rinfeld double of a finite
  group}, Int. Math. Res. Not. (2003), no.~27, 1507--1520,
  \mathscinet{MR1976233},\arXiv{0202130}. \MR{MR1976233 (2004h:18005)}

\bibitem[Pen13]{MR3040370}
David Penneys, \emph{A {P}lanar {C}alculus for {I}nfinite {I}ndex
  {S}ubfactors}, Comm. Math. Phys. \textbf{319} (2013), no.~3, 595--648,
  \mathscinet{MR3040370} \arXiv{1110.3504} \doi{10.1007/s00220-012-1627-4}.
  \MR{3040370}

\bibitem[Pet13]{PetersonNotes}
Jesse Peterson, \emph{{N}otes on von {N}eumann algebras}, 2013, Available at
  \url{https://math.vanderbilt.edu/peters10/teaching/spring2013/vonNeumannAlgebras.pdf}.

\bibitem[Pop86]{correspondences}
Sorin Popa, \emph{Correspondences}, INCREST Preprint, 1986, available at
  \url{http://www.math.ucla.edu/~popa/popa-correspondences.pdf}.

\bibitem[Pop90]{MR1055708}
\bysame, \emph{Classification of subfactors: the reduction to commuting
  squares}, Invent. Math. \textbf{101} (1990), no.~1, 19--43,
  \mathscinet{MR1055708}, \doi{10.1007/BF01231494}.

\bibitem[Pop94a]{MR1278111}
\bysame, \emph{Classification of amenable subfactors of type {II}}, Acta Math.
  \textbf{172} (1994), no.~2, 163--255, \mathscinet{MR1278111},
  \doi{10.1007/BF02392646}.

\bibitem[Pop94b]{MR1302385}
\bysame, \emph{Symmetric enveloping algebras, amenability and {AFD} properties
  for subfactors}, Math. Res. Lett. \textbf{1} (1994), no.~4, 409--425,
  \mathscinet{MR1302385}, \doi{10.4310/MRL.1994.v1.n4.a2}. \MR{1302385
  (95i:46095)}

\bibitem[Pop95a]{MR1334479}
\bysame, \emph{An axiomatization of the lattice of higher relative commutants
  of a subfactor}, Invent. Math. \textbf{120} (1995), no.~3, 427--445,
  \mathscinet{MR1334479} \doi{10.1007/BF01241137}.

\bibitem[Pop95b]{MR1339767}
\bysame, \emph{Classification of subfactors and their endomorphisms}, CBMS
  Regional Conference Series in Mathematics, vol.~86, Published for the
  Conference Board of the Mathematical Sciences, Washington, DC, 1995,
  \mathscinet{MR1339767}. \MR{1339767 (96d:46085)}

\bibitem[Pop99]{MR1729488}
\bysame, \emph{Some properties of the symmetric enveloping algebra of a
  subfactor, with applications to amenability and property {T}}, Doc. Math.
  \textbf{4} (1999), 665--744 (electronic), \mathscinet{MR1729488}.

\bibitem[Pop06]{MR2215135}
\bysame, \emph{On a class of type {${\rm II}\sb 1$} factors with {B}etti
  numbers invariants}, Ann. of Math. (2) \textbf{163} (2006), no.~3, 809--899,
  \mathscinet{MR2215135}, \doi{10.4007/annals.2006.163.809}. \MR{2215135
  (2006k:46097)}

\bibitem[PP86]{MR860811}
Mihai Pimsner and Sorin Popa, \emph{Entropy and index for subfactors}, Ann.
  Sci. \'{E}cole Norm. Sup. (4) \textbf{19} (1986), no.~1, 57--106,
  \mathscinet{MR860811}.

\bibitem[PP15]{MR3402358}
David Penneys and Emily Peters, \emph{Calculating two-strand jellyfish
  relations}, Pacific J. Math. \textbf{277} (2015), no.~2, 463--510,
  \mathscinet{MR3402358} \doi{10.2140/pjm.2015.277-2} \arXiv{1308.5197}.
  \MR{3402358}

\bibitem[PS03]{MR2051399}
Sorin Popa and Dimitri Shlyakhtenko, \emph{Universal properties of {$L({\bf
  F}\sb \infty)$} in subfactor theory}, Acta Math. \textbf{191} (2003), no.~2,
  225--257, \mathscinet{MR2051399} \doi{10.1007/BF02392965}. \MR{MR2051399
  (2005b:46140)}

\bibitem[PSV15]{1511.07329}
Sorin Popa, Dimitri Shlyakhtenko, and Stefaan Vaes, \emph{Cohomology and
  $l^2$-{B}etti numbers for subfactors and quasi-regular inclusions}, 2015,
  \arxiv{1511.07329}, to appear {Int. Math. Res. Not.}

\bibitem[PV15]{MR3406647}
Sorin Popa and Stefaan Vaes, \emph{Representation theory for subfactors,
  {$\lambda$}-lattices and {$\rm C^*$}-tensor categories}, Comm. Math. Phys.
  \textbf{340} (2015), no.~3, 1239--1280, \mathscinet{MR3406647}
  \doi{10.1007/s00220-015-2442-5} \arxiv{1412.2732}. \MR{3406647}

\bibitem[Shl97a]{MR1444786}
Dimitri Shlyakhtenko, \emph{Free quasi-free states}, Pacific J. Math.
  \textbf{177} (1997), no.~2, 329--368, \mathscinet{MR1444786}
  \doi{10.2140/pjm.1997.177.329}. \MR{1444786}

\bibitem[Shl97b]{MR2697238}
Dimitri~Y. Shlyakhtenko, \emph{Free quasi-free states}, ProQuest LLC, Ann
  Arbor, MI, 1997, Thesis (Ph.D.)--University of California, Berkeley,
  \mathscinet{MR2697238}. \MR{2697238}

\bibitem[Sun92]{MR1049618}
V.~S. Sunder, \emph{{${\rm II}_1$} factors, their bimodules and hypergroups},
  Trans. Amer. Math. Soc. \textbf{330} (1992), no.~1, 227--256,
  \mathscinet{MR1049618} \doi{10.2307/2154162}. \MR{1049618}

\bibitem[Tak72]{MR0303307}
Masamichi Takesaki, \emph{Conditional expectations in von {N}eumann algebras},
  J. Functional Analysis \textbf{9} (1972), 306--321, \mathscinet{MR0303307}.
  \MR{0303307}

\bibitem[Tak02]{MR1873025}
Masamichi Takesaki, \emph{Theory of operator algebras. {I}}, Encyclopaedia of
  Mathematical Sciences, vol. 124, Springer-Verlag, Berlin, 2002, Reprint of
  the first (1979) edition, Operator Algebras and Non-commutative Geometry, 5,
  ISBN: 3-540-42248-X, \mathscinet{MR1873025}.

\bibitem[Tak03]{MR1943007}
M.~Takesaki, \emph{Theory of operator algebras. {III}}, Encyclopaedia of
  Mathematical Sciences, vol. 127, Springer-Verlag, Berlin, 2003, Operator
  Algebras and Non-commutative Geometry, 8, \mathscinet{MR1943007}
  \doi{10.1007/978-3-662-10453-8}. \MR{1943007 (2004g:46080)}

\bibitem[Tom09]{MR2561199}
Reiji Tomatsu, \emph{A {G}alois correspondence for compact quantum group
  actions}, J. Reine Angew. Math. \textbf{633} (2009), 165--182,
  \mathscinet{MR2561199} \doi{10.1515/CRELLE.2009.063}. \MR{2561199}

\bibitem[Ued99]{MR1674759}
Yoshimichi Ueda, \emph{A minimal action of the compact quantum group {${\rm
  SU}_q(n)$} on a full factor}, J. Math. Soc. Japan \textbf{51} (1999), no.~2,
  449--461, \mathscinet{MR1674759} \doi{10.2969/jmsj/05120449}. \MR{1674759}

\bibitem[Vae05]{MR2113893}
Stefaan Vaes, \emph{Strictly outer actions of groups and quantum groups}, J.
  Reine Angew. Math. \textbf{578} (2005), 147--184, \mathscinet{MR2113893}
  \doi{10.1515/crll.2005.2005.578.147} \arxiv{0211272}. \MR{2113893}

\bibitem[Wan95]{MR1316765}
Shuzhou Wang, \emph{Free products of compact quantum groups}, Comm. Math. Phys.
  \textbf{167} (1995), no.~3, 671--692, \mathscinet{MR1316765}. \MR{1316765}

\bibitem[Yam04]{MR2091457}
Shigeru Yamagami, \emph{Frobenius duality in {$C^*$}-tensor categories}, J.
  Operator Theory \textbf{52} (2004), no.~1, 3--20, \mathscinet{MR2091457}.
  \MR{2091457 (2005f:46109)}

\end{thebibliography}
}}
\end{document}